\newtheorem{theo}{Theorem}[section]
\newtheorem{prop}[theo]{Proposition}
\newtheorem{lemm}[theo]{Lemma}
\theoremstyle{definition}
\newtheorem{def1}[theo]{Definition}
\theoremstyle{remark}
\newtheorem{rema}[theo]{Remark}
\newcommand{\Op}{\operatorname{Op}}
\newcommand{\nwc}{\newcommand}
\nwc{\eps}{\epsilon}
\nwc{\ep}{\epsilon}
\nwc{\vareps}{\varepsilon}
\nwc{\Oph}{\operatorname{Op}_\hbar}
\nwc{\la}{\langle}
\nwc{\ra}{\rangle}
\nwc{\mf}{\mathbf} 
\nwc{\blds}{\boldsymbol} 
\nwc{\ml}{\mathcal} 
\nwc{\defeq}{\stackrel{\rm{def}}{=}}
\nwc{\cE}{\ml{E}}
\nwc{\cN}{\ml{N}}
\nwc{\cO}{\ml{O}}
\nwc{\cP}{\ml{P}}
\nwc{\cU}{\ml{U}}
\nwc{\cV}{\ml{V}}
\nwc{\cW}{\ml{W}}
\nwc{\tU}{\widetilde{U}}
\nwc{\IN}{\mathbb{N}}
\nwc{\IR}{\mathbb{R}}
\nwc{\IZ}{\mathbb{Z}}
\nwc{\IC}{\mathbb{C}}
\nwc{\IT}{\mathbb{T}}
\nwc{\IS}{\mathbb{S}}
\nwc{\tP}{\widetilde{P}}
\nwc{\tPi}{\widetilde{\Pi}}
\nwc{\tV}{\widetilde{V}}
\nwc{\supp}{\operatorname{supp}}
\nwc{\rest}{\restriction}
\nwc{\x}{\mathbf{x}}
\nwc{\y}{\mathbf{y}}
\nwc{\z}{\mathbf{z}}
\nwc{\w}{\mathbf{w}}
\begin{document}

\title[Pollicott-Ruelle spectrum and Witten Laplacians]{Pollicott-Ruelle spectrum and Witten Laplacians}

\author[Nguyen Viet Dang]{Nguyen Viet Dang}

\address{Institut Camille Jordan (U.M.R. CNRS 5208), Universit\'e Claude Bernard Lyon 1, B\^atiment Braconnier, 43, boulevard du 11 novembre 1918, 
69622 Villeurbanne Cedex }

\email{dang@math.univ-lyon1.fr}

\author[Gabriel Rivi\`ere]{Gabriel Rivi\`ere}

\address{Laboratoire Paul Painlev\'e (U.M.R. CNRS 8524), U.F.R. de Math\'ematiques, Universit\'e Lille 1, 59655 Villeneuve d'Ascq Cedex, France}

\email{gabriel.riviere@math.univ-lille1.fr}

\begin{abstract} We study the asymptotic behaviour of eigenvalues and eigenmodes of the Witten Laplacian on a smooth compact Riemannian 
manifold without boundary. We show that they converge to the Pollicott-Ruelle spectrum of the corresponding 
gradient flow acting on appropriate anisotropic Sobolev spaces. In particular, our results relate the approach of Laudenbach and Harvey--Lawson to Morse theory using currents, 
which was discussed in previous work of the authors, and 
Witten's point of view based on semiclassical analysis and tunneling. As an application of our methods, 
we also construct a natural family of quasimodes satisfying the 
Witten-Helffer-Sj\"ostrand tunneling formulas and the Fukaya conjecture on Witten deformation of the wedge product.

\end{abstract}

\maketitle

\section{Introduction}\label{s:intro}

Let $M$ be a smooth ($\ml{C}^{\infty}$), compact, oriented, boundaryless manifold of dimension $n\geq 1$. Let $f:M\rightarrow \IR$ 
be a smooth Morse function whose set of critical points is denoted by $\text{Crit}(f)$. In~\cite{Wi82}, Witten introduced the following semiclassical deformation of the de Rham coboundary operator:
$$\forall \hbar>0,\quad d_{f,\hbar}:=e^{-\frac{f}{\hbar}}de^{\frac{f}{\hbar}}=d+\frac{df}{\hbar}\wedge:\Omega^{\bullet}(M)\rightarrow\Omega^{\bullet+1}(M)$$
where $\Omega^\bullet(M)$ denotes smooth differential forms on $M$.
Then, fixing a smooth Riemannian metric $g$ on $M$, he considered the adjoint of this operator with respect to the induced scalar product on 
the space of $L^2$ forms
$L^2(M,\Lambda(T^*M))$~:
$$\forall \hbar>0,\quad d_{f,\hbar}^*=
d^*+\frac{\iota_{V_f}}{\hbar}:\Omega^{\bullet}(M)\rightarrow\Omega^{\bullet-1}(M),$$
where $V_f$ is the gradient vector field associated with the pair $(f,g)$, i.e. the unique vector field satisfying
$$\forall x\in M,\quad df(x)=g_x(V_f(x),.).$$
The operator $D_\hbar=\left( d_{f,\hbar}+d_{f,\hbar}^* \right)$ is the analog of a Dirac operator and its square $D^2_\hbar$ is usually 
defined to be the Witten Laplacian~\cite{ZhangWitten}. In the present paper, we take a different convention and we choose
to rescale $D^2_\hbar$ by a factor $\frac{\hbar}{2}$. Hence, following~\cite{Bi86}, the \textbf{Witten Laplacian} is defined as
$$\boxed{W_{f,\hbar}:=\frac{\hbar}{2}\left(d_{f,\hbar}d_{f,\hbar}^*+d_{f,\hbar}^*d_{f,\hbar}\right).}$$ 
This defines a selfadjoint, elliptic operator whose principal symbol coincides with the principal 
symbol of the Hodge--De Rham Laplace operator acting on forms. It has 
a discrete spectrum on $L^2(M,\Lambda^k(T^*M))$ that we denote, for every $0\leq k\leq n$, by
$$0\leq \lambda_1^{(k)}(\hbar)\leq\lambda_2^{(k)}(\hbar)\leq\ldots\leq\lambda_j^{(k)}(\hbar)\rightarrow+\infty\ \text{as}\ j\rightarrow+\infty.$$
It follows from the works of Witten~\cite{Wi82} and Helffer-Sj\"ostrand~\cite{HeSj85} that there exists a constant $\eps_0>0$ such that, 
for every $0\leq k\leq n$ and for 
$\hbar>0$ small enough, there are exactly $c_k(f)$ eigenvalues inside the interval $[0,\eps_0]$, where $c_k(f)$ is the number of critical 
points of index $k$ -- see e.g. the recent proof of Michel and Zworski in~\cite[Prop.~1]{MiZw17}.

The purpose of the present work is to describe the convergence of the spectrum (meaning both eigenvalues and eigenmodes) of the Witten Laplacian. This
will be achieved by using microlocal techniques that were developped in the context of dynamical systems 
questions~\cite{dang2016spectral, dangrivieremorsesmale1}. Note that \emph{part of these results} 
could probably be obtained by more 
classical methods in the spirit of the works of Helffer-Sj\"ostrand~\cite{HeSj85} and Simon~\cite{Si83} on harmonic oscillators. 
We refer the reader to the book of Helffer and 
Nier~\cite{HeNi05} for a detailed account of the state of the art on these aspects.  
For instance, Frenkel, Losev and Nekrasov~\cite{frenkel2011instantons} did very explicit
computations of the Witten's spectrum for the case of the height function on the sphere, and they implicitely
connect the spectrum of the Witten Laplacian to a dynamical spectrum as we shall do it here.
They also give a strategy to derive asymptotic expansions
for dynamical correlators of holomorphic gradient flows acting on compact K\"ahler manifolds.
Yet, unlike~\cite{frenkel2011instantons}, we attack the problem from the dynamical viewpoint rather than from 
the semiclassical perspective. Also, we work in the $C^\infty$ case instead of the
compact K\"ahler case and we make use of tools from microlocal analysis 
to replace tools from complex geometry.

In fact, the main purpose of the present work is to propose an approach to these problems having a 
more dynamical flavour than these references. In some sense, 
this point of view shares some similarities with Bismut's 
approach to the study of the Witten Laplacian~\cite{Bi86}. 
However, we stress that our study of the limit operator is more inspired by the study of the so-called transfer operators in dynamical 
systems~\cite{Go15, Ba16, Zw17}. This dynamical perspective allows us to make some explicit connection between the spectrum 
of the Witten Laplacian and the dynamical results from~\cite{Lau92, HaLa01, dang2016spectral, DaRi17b}.

\section{Main results}

\subsection{Semiclassical versus dynamical convergence and a question by Harvey--Lawson.}

In order to illustrate our results, we let $\varphi_f^t$ be the flow induced by 
the gradient vector field $V_f$. Then, among other results, we shall prove the following Theorem:
\\
\\
\fbox{
\begin{minipage}{0.94\textwidth} 
\begin{theo}[Semiclassical versus dynamical convergence]\label{t:maintheo-harveylawsonwitten} Let $f$ be a smooth Morse function and $g$ be a smooth Riemannian metric such that $V_f$ is $\ml{C}^1$-linearizable near every critical point
and satisfies the Smale transversality assumption. 

Then, there exists $\eps_0>0$ small enough such that, for every $0\leq k\leq n$, and for every $(\psi_1,\psi_2)\in\Omega^k(M)\times\Omega^{n-k}(M)$,
\begin{equation}\label{e:semiclassicaleqlargetime}
\forall 0<\eps\leq\eps_0,\quad\lim_{\hbar\rightarrow 0^+}\int_M\mathbf{1}_{[0,\eps]}\left(W_{f,\hbar}^{(k)}\right)
 \left(e^{-\frac{f}{\hbar}}\psi_1\right)\wedge\left(e^{\frac{f}{\hbar}}\psi_2\right)=\lim_{t\rightarrow+\infty}\int_M\varphi_f^{-t*}(\psi_1)\wedge\psi_2
\end{equation} 
 where $\mathbf{1}_{[0,\eps]}\left(W_{f,\hbar}^{(k)}\right)$ is the spectral projector
 on $[0,\varepsilon]$
for the self--adjoint elliptic operator $W_{f,\hbar}^{(k)}$ and both sides have well--defined limits.
\end{theo}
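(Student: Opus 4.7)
The plan is to conjugate the Witten Laplacian by the multiplication operator $U_\hbar\psi:=e^{f/\hbar}\psi$ and exploit the identity
$$\tilde W_\hbar\ :=\ U_\hbar\,W_{f,\hbar}\,U_\hbar^{-1}\ =\ \frac{\hbar}{2}\,\Delta+\mathcal{L}_{V_f},$$
which follows from $U_\hbar d_{f,\hbar}U_\hbar^{-1}=d$ and $U_\hbar d_{f,\hbar}^*U_\hbar^{-1}=d^*+2\iota_{V_f}/\hbar$ together with Cartan's formula $\{d,\iota_{V_f}\}=\mathcal{L}_{V_f}$. The functional calculus then gives $\mathbf 1_{[0,\eps]}(W_{f,\hbar})=U_\hbar^{-1}\mathbf 1_{[0,\eps]}(\tilde W_\hbar)U_\hbar$, so plugging this into the left-hand side of \eqref{e:semiclassicaleqlargetime} the exponentials $e^{\pm f/\hbar}$ cancel exactly and the LHS reduces to
$$\int_M\mathbf 1_{[0,\eps]}(\tilde W_\hbar)\psi_1\wedge\psi_2.$$

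For the right-hand side, since the pullback $\varphi_f^{-t*}$ equals $e^{-t\mathcal{L}_{V_f}}$ on forms, the large-time limit is the pairing $\int_M \Pi_0\psi_1\wedge\psi_2$, where $\Pi_0$ is the Pollicott--Ruelle spectral projector onto the generalized kernel of $\mathcal{L}_{V_f}$ on an appropriate anisotropic Sobolev space adapted to the Morse--Smale flow. The hypotheses of $\ml{C}^1$-linearizability near critical points and Smale transversality enter precisely to guarantee that such a space exists, that $\mathcal{L}_{V_f}$ has discrete spectrum there, and that $0$ is isolated in the spectrum with a finite-dimensional (generalized) eigenspace; existence of this limit is the content of the authors' earlier work. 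It remains to prove that as $\hbar\to 0^+$,
$$\mathbf 1_{[0,\eps]}(\tilde W_\hbar)\psi_1\ \longrightarrow\ \Pi_0\psi_1$$
in a sense strong enough that the pairing with the smooth form $\psi_2$ passes to the limit.

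To establish this, I would fix a small loop $\gamma\subset\IC$ encircling $0$ and contained in the spectral gap of $\mathcal{L}_{V_f}$ on the anisotropic space, and write both projectors as contour integrals
$$\mathbf 1_{[0,\eps]}(\tilde W_\hbar)=\frac{1}{2\pi i}\oint_\gamma(z-\tilde W_\hbar)^{-1}dz,\qquad \Pi_0=\frac{1}{2\pi i}\oint_\gamma(z-\mathcal{L}_{V_f})^{-1}dz.$$
Then, via the resolvent identity
$$(z-\tilde W_\hbar)^{-1}-(z-\mathcal{L}_{V_f})^{-1}=-\frac{\hbar}{2}(z-\tilde W_\hbar)^{-1}\Delta(z-\mathcal{L}_{V_f})^{-1},$$
I would show that the difference converges to $0$, uniformly for $z\in\gamma$, when tested against smooth forms. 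This reduces to uniform-in-$\hbar$ bounds on $(z-\tilde W_\hbar)^{-1}$ acting between the anisotropic space and a suitable scale that absorbs $\Delta$.

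The main obstacle is precisely this uniform resolvent estimate. The operator $\tilde W_\hbar=\frac{\hbar}{2}\Delta+\mathcal{L}_{V_f}$ is a second-order elliptic perturbation of a first-order, non-selfadjoint operator whose Fredholm theory on the anisotropic Sobolev space relies on radial-point/escape-function estimates at the critical points of $f$ in $T^*M$. The challenge is to construct an escape function, or equivalently a semiclassical order function, that is compatible \emph{simultaneously} with the hyperbolic radial-point dynamics of $V_f$ (so that $\mathcal{L}_{V_f}$ is Fredholm with good bounds on $\gamma$) and with the semiclassical ellipticity of $\frac{\hbar}{2}\Delta$ away from the zero section. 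Once this combined weight is in place, one obtains uniform Fredholm estimates for $\tilde W_\hbar-z$ on $\gamma$ with $\hbar$-independent constants, and a Neumann-type argument closes the resolvent convergence. Combining the convergence of spectral projectors with the dynamical identification of $\Pi_0$ then yields both the existence of both limits in \eqref{e:semiclassicaleqlargetime} and their equality.
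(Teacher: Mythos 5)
Your overall route is the paper's own: the conjugation $e^{f/\hbar}W_{f,\hbar}e^{-f/\hbar}=\mathcal{L}_{V_f}+\tfrac{\hbar}{2}\Delta_g$, the exact cancellation of the exponentials in the pairing, anisotropic Sobolev spaces with $\hbar$-uniform estimates, contour integrals for the projectors together with the resolvent identity carrying the factor $\tfrac{\hbar}{2}\Delta_g$ (this is literally the Lipschitz estimate the paper uses), and the identification of the $t\to+\infty$ limit with the Pollicott--Ruelle projector at $0$. However, two steps are genuinely missing. First, the identity $\mathbf{1}_{[0,\eps]}(\tilde W_\hbar)=\frac{1}{2i\pi}\oint_\gamma(z-\tilde W_\hbar)^{-1}dz$ with $\gamma$ a small loop around $0$ is not free: it asserts that for small $\hbar$ the Witten spectrum in $[0,\eps]$ lies entirely inside $\gamma$, i.e.\ that no eigenvalue of $W^{(k)}_{f,\hbar}$ lingers in $[r_0,\eps]$; similarly your hoped-for uniform resolvent bound on $\gamma$ is equivalent to knowing that no Witten eigenvalue approaches the circle. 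An escape function compatible with both $\mathcal{L}_{V_f}$ and $\tfrac{\hbar}{2}\Delta_g$ cannot by itself deliver this, because such weights give invertibility of $\tilde W_\hbar-z$ only \emph{modulo a compact (smoothing) remainder} on a half-plane, not at a prescribed $z$. The paper closes exactly this gap by the Dyatlov--Zworski device: it adds the trace-class operator $\widehat{\chi}_R=-iR(1+\Delta_g)^{-L}$ so that $\widehat{P}_\hbar+\widehat{\chi}_R-z$ is invertible uniformly in $\hbar$ on a half-plane (square-root lemma with uniform constants), proves that $\hbar\mapsto\widehat{\chi}_R(\widehat{P}_\hbar+\widehat{\chi}_R-z)^{-1}$ is Lipschitz with values in trace-class operators, and deduces localization of eigenvalues with multiplicities from continuity of the Fredholm determinant and Rouch\'e; only then do the contour integral and your resolvent identity yield convergence of the spectral projectors. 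Your outline needs this (or an equivalent) ingredient.

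Second, you defer the right-hand side to ``the authors' earlier work'', but under the hypotheses of the theorem (only $\mathcal{C}^1$-linearizability) that earlier work does not apply as stated. The existence of the $t\to+\infty$ limit requires knowing that $0$ is an isolated Pollicott--Ruelle resonance with \emph{no Jordan block} (otherwise terms growing like $t^l$ appear and the limit fails), together with a quantitative spectral gap governed by the Lyapunov exponents, which is what determines $\eps_0$ and which interval $[0,\eps]$ is admissible. In the present paper this is proved from scratch: the local spectra of Baladi--Tsujii near each critical point are glued along Smale's partial order, the supports of eigencurrents are controlled by propagation arguments, and Jordan blocks at $0$ are excluded; some such argument must be supplied for your proof to establish the statement as written, including the claim that ``both sides have well-defined limits''.
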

\end{minipage}
}
\\
\begin{rema}
The Smale transversality assumption means that the stable 
and unstable manifolds satisfy some transversality conditions~\cite{Sm60} -- see appendix~\ref{a:order-function} for a brief reminder. 
Recall that, given a Morse function $f$, this property is satisfied 
by a dense open set of Riemannian metrics thanks to the Kupka-Smale Theorem~\cite{Ku63, Sm63}. The hypothesis of being $\ml{C}^1$-linearizable 
near every critical point
means that, near every $a$ in $\text{Crit}(f)$, one can find a $\ml{C}^1$-chart such that the vector field can be written locally as 
$V_f(x)=L_f(a)x\partial_x,$ where $L_f(a)$ is the unique (symmetric) matrix satisfying $d^2f(a)=g_a(L_f(a).,.).$ 
By fixing a finite number of nonresonance conditions on the eigenvalues of $L_f(a)$, 
the Sternberg-Chen Theorem~\cite{Ne69} ensures that, for a given $f$, one can find an open and dense subset of Riemannian metrics 
satisfying this property. 
\end{rema}

As we shall see in our proof, the rank of the operator $\mathbf{1}_{[0,\eps_0]}\left(-W_{f,\hbar}^{(k)}\right)$ is equal to $c_k(f)$. Hence, 
on the one hand, the spectral projector $\mathbf{1}_{[0,\eps_0]}\left(-W_{f,\hbar}^{(k)}\right)$ appearing on the left-hand
side of equation (\ref{e:semiclassicaleqlargetime}) projects on the above mentionned
sum of eigenspaces with small eigenvalues of the Witten Laplacian. 
On the other hand, the term on the 
right-hand side of (\ref{e:semiclassicaleqlargetime}) 
only involves the classical flow generated by the gradient flow 
and the limit of this quantity was described by Harvey 
and Lawson~\cite{HaLa01} in the case of certain metrics 
adapted to the Morse function -- see also~\cite{Mi15, dang2016spectral} for extension 
of this result. This dynamical limit can in fact be expressed in terms of certain currents carried by the stable and 
unstable manifolds of the gradient flow which were first constructed by Laudenbach~\cite{Lau92} -- see Theorem~\ref{t:maintheo-current} below.
In a nutshell, our Theorem identifies a certain semiclassical limit of scalar product of quasimodes
for the Witten Laplacian with
a large time limit of some dynamical correlation for the gradient flow which converges 
to equilibrium~:
\begin{eqnarray*}
\boxed{\lim_{\hbar\rightarrow 0^+}\underset{\text{Quantum object}}{\underbrace{\left\langle\mathbf{1}_{[0,\eps]}\left(W_{f,\hbar}^{(k)}\right)
 \left(e^{-\frac{f}{\hbar}}\psi_1\right),e^{\frac{f}{\hbar}}\psi_2\right\rangle_{L^2}}}=\lim_{t\rightarrow+\infty}
 \underset{\text{Dynamical object}}{\underbrace{\left\la \varphi_f^{-t*}(\psi_1),\psi_2\right\ra_{L^2}}} ,}
\end{eqnarray*}
for every $(\psi_1, \psi_2)\in\Omega^k(M)^2$. From this point of view, this Theorem gives some insights on a question raised by Harvey and Lawson in~\cite[Intro.]{HaLa00} who asked about the 
connection between their approach to Morse theory and Witten's one -- see also~\cite[Chap.~9]{CdV} for related questions.

\subsection{Asymptotics of Witten eigenvalues and zeros of the Ruelle zeta function}

Theorem~\ref{t:maintheo-harveylawsonwitten} is the Corollary of much more general results. In order to state these generalized statements, 
we define the \textbf{dynamical Ruelle determinant}~\cite[p.~65-68]{Ba16}, for every $0\leq k\leq n$,
$$\boxed{\zeta_{R}^{(k)}(z):=\exp\left(-\sum_{l=1}^{+\infty}\frac{e^{-lz}}{l}
\sum_{a\in\text{Crit}(f)}\frac{\text{Tr}\left(\Lambda^k\left(d\varphi_f^{-l}(a)\right)\right)}{\left|\text{det}\left(\text{Id}-d\varphi_f^{-l}(a)\right)\right|}\right).}$$
This quantity is related to the notion of distributional determinants~\cite[p.~313]{GuSt90}. This function is well defined 
for $\text{Re}(z)$ large enough, and, from appendix~\ref{a:holomorphic}, it has an holomorphic extension 
to $\IC$. The zeros of this holomorphic extension can be explicitely described in terms of the Lyapunov exponents
of the flow $\varphi_f^t$ at the critical points of $f$~:
$$\forall a\in\text{Crit}(f),\quad\chi_1(a)\leq\ldots\leq\chi_r(a)<0<\chi_{r+1}(a)\leq\ldots\leq \chi_n(a)$$
where the numbers $(\chi_j(a))_{j=1}^n$
are the eigenvalues of $L_f(a)$ which is the unique 
(symmetric) matrix satisfying $d^2f(a)=g_a(L_f(a).,.)$. 
Our first result of spectral theoretic nature reads as follows~:
\\
\\
\fbox{
\begin{minipage}{0.94\textwidth}
\begin{theo}[Convergence of Witten eigenvalues]\label{t:maintheo-eigenvalues} Suppose that the assumptions of Theorem~\ref{t:maintheo-harveylawsonwitten} are satisfied. 
Then, the following holds, for every $0\leq k\leq n$:
\begin{enumerate}
 \item for every $j\geq 1$, $-\lambda_j^{(k)}(\hbar)$ converges as 
$\hbar\rightarrow 0^+$ to a zero of the dynamical Ruelle determinant $\zeta_{R}^{(k)}(z)$, 
 \item conversely, any zero of the dynamical Ruelle determinant $\zeta_{R}^{(k)}(z)$ is the limit of a 
 sequence $(-\lambda_j^{(k)}(\hbar))_{\hbar\rightarrow 0^+}.$
\end{enumerate}
\end{theo}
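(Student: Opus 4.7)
The plan is to reduce Theorem~\ref{t:maintheo-eigenvalues} to a resolvent-convergence argument connecting the self-adjoint $L^2$-spectrum of $W_{f,\hbar}^{(k)}$ with the Pollicott--Ruelle spectrum of $-\ml{L}_{V_f}^{(k)}$ acting on a well-chosen anisotropic Sobolev space $\ml{H}^{(k)}$. On such spaces, built from an escape function subordinate to the stable/unstable decomposition of $T^*M$ above $\text{Crit}(f)$ as in the authors' prior work \cite{dang2016spectral,dangrivieremorsesmale1}, the operator $-\ml{L}_{V_f}^{(k)}$ has a discrete spectrum; a flat-trace / Atiyah--Bott computation, combined with the Weierstrass product factorisation sketched in appendix~\ref{a:holomorphic}, identifies this spectrum, with algebraic multiplicities, with the zero set of $\zeta_R^{(k)}$. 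On the Witten side, expanding $d_{f,\hbar}=e^{-f/\hbar}de^{f/\hbar}$ yields
\[
W_{f,\hbar} \;=\; \tfrac{\hbar}{2}\Delta_g \;+\; \tfrac{1}{2}\bigl(\ml{L}_{V_f}+\ml{L}_{V_f}^*\bigr) \;+\; \tfrac{1}{2\hbar}|df|^2,
\]
so the dynamical first-order piece is precisely $\ml{L}_{V_f}$, while the singular potential $|df|^2/(2\hbar)$ is the Agmon potential forcing $L^2$-concentration at $\text{Crit}(f)$.

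For the forward direction, fix $\lambda_0\in\IC$ and suppose $-\lambda_j^{(k)}(\hbar_n)\to\lambda_0$ along a sequence $\hbar_n\to 0^+$. If $\lambda_0$ is not a zero of $\zeta_R^{(k)}$, then $(-\ml{L}_{V_f}^{(k)}-\lambda_0)$ is invertible on $\ml{H}^{(k)}$. I would then build an approximate inverse $\widetilde R_\hbar(\lambda_0)$ of $W_{f,\hbar}^{(k)}+\lambda_0$ by patching a semiclassical elliptic parametrix away from $\text{Crit}(f)$ (where the Agmon potential dominates) with, at each critical point $a$, a WKB quantisation of $(-\ml{L}_{V_f}^{(k)}-\lambda_0)^{-1}$ dressed by a Gaussian $e^{-\phi_a/\hbar}$ adapted to $\mathrm{Hess}(f)(a)$. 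Uniform $\hbar$-boundedness of $\widetilde R_\hbar(\lambda_0)$ then forbids any eigenvalue of $W_{f,\hbar}^{(k)}$ from accumulating at $-\lambda_0$, yielding the desired contradiction.

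For the reverse direction, if $\lambda_0$ is a zero of $\zeta_R^{(k)}$ of algebraic multiplicity $m$, the corresponding generalised eigencurrents of $-\ml{L}_{V_f}^{(k)}$ on $\ml{H}^{(k)}$ are distributions microlocally supported on the conormal bundles of the stable/unstable manifolds attached to the relevant critical points, with asymptotic structure determined by the Lyapunov data $\chi_j(a)$. For each such eigencurrent a Gaussian-dressed WKB quantisation at the critical point produces an $L^2$ quasimode $u_\hbar$ satisfying $\|(W_{f,\hbar}^{(k)}+\lambda_0)u_\hbar\|_{L^2}=o(1)\|u_\hbar\|_{L^2}$. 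Self-adjointness of $W_{f,\hbar}^{(k)}$ then forces an eigenvalue in an $o(1)$-neighbourhood of $-\lambda_0$, and linear independence of the quasimodes attached to $m$ independent eigencurrents yields the correct multiplicity.

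The principal obstacle is bridging the two very different functional-analytic frameworks: the self-adjoint $L^2$-framework for $W_{f,\hbar}^{(k)}$ and the non-self-adjoint Fredholm framework for $\ml{L}_{V_f}^{(k)}$ on anisotropic Sobolev spaces. The common technical device is the WKB/Agmon conjugation by $e^{-\phi/\hbar}$ together with a harmonic-oscillator quantisation of the Pollicott--Ruelle eigencurrents at each critical point, which turns distributions living on stable/unstable manifolds into honest $L^2$-forms. Controlling the remainder in this quantisation \emph{uniformly in} $\hbar$ across the whole manifold, and then matching the two spectral decompositions via a common resolvent identity, is where the bulk of the technical work has to go; the $\ml{C}^1$-linearisation hypothesis is precisely what lets this local quantisation be performed without residual resonance obstructions at each critical point.
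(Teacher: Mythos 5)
Your proposal does not follow the paper's route, and as it stands it has genuine gaps at exactly the points you yourself flag as ``the bulk of the technical work''. The paper never needs to bridge the self-adjoint $L^2$ framework and the anisotropic framework by quasimodes or parametrices: the key observation is the exact conjugation identity~\eqref{e:conjugation}, $e^{f/\hbar}W_{f,\hbar}e^{-f/\hbar}=\ml{L}_{V_f}+\frac{\hbar}{2}\Delta_g$, which places the (conjugated) Witten Laplacian and the Lie derivative in \emph{one} family $\widehat{H}_\hbar=-\ml{L}_{V_f}-\frac{\hbar}{2}\Delta_g$ acting on the \emph{same} anisotropic Sobolev spaces, uniformly in $\hbar\in[0,1]$. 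For $\hbar>0$ the resonances of $\widehat H_\hbar$ are exactly the numbers $-\lambda_j^{(k)}(\hbar)$, and convergence (with multiplicities) to the $\hbar=0$ spectrum is obtained \`a la Dyatlov--Zworski~\cite{DZsto}: the resonances are the zeros of the Fredholm determinant $D_{m_{N_0,N_1}}(\hbar,z)$, the map $\hbar\mapsto K_m(\hbar)$ is Lipschitz with values in trace-class-valued holomorphic families (Lemma~\ref{l:continuity}), and Rouch\'e gives Theorem~\ref{t:convergence-eigenvalue}. The remaining step, identifying $\ml{R}_k$ with the zeros of $\zeta_R^{(k)}$ \emph{with multiplicities}, is itself a substantial part of the paper (Section~\ref{s:Pollicott-Ruelle}): it is done by treating the time-one map near each critical point via Baladi--Tsujii~\cite{BaTs08} and then gluing the local spectra through a ``good basis'' of eigencurrents supported on $\overline{W^u(a)}$ and a dual basis on $\overline{W^s(a)}$ (Proposition~\ref{p:multiplicity}); your one-line appeal to ``flat-trace/Atiyah--Bott plus Weierstrass factorisation'' assumes precisely this nontrivial identification, which is not off the shelf for a non-Anosov gradient flow under mere $\ml{C}^1$-linearizability (Appendix~\ref{a:holomorphic} only computes the zeros of $\zeta_R^{(k)}$, not that they coincide with the resonances).

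On the Witten side your two directions are not carried by any defined construction. In the forward direction, ``a WKB quantisation of $(-\ml{L}_{V_f}^{(k)}-\lambda_0)^{-1}$ dressed by a Gaussian'' is not a meaningful recipe: that resolvent is a global, non-local operator on an anisotropic space built from the flow, and no mechanism is given that turns it into an $L^2$-bounded approximate inverse of $W_{f,\hbar}^{(k)}+\lambda_0$ with uniform-in-$\hbar$ error; what would actually work there is harmonic approximation at each critical point (IMS localization plus comparison with the explicit quadratic model, in the spirit of~\cite{Si83, HeSj85}, as the paper itself notes), but that is a different argument whose error control you defer rather than supply. In the reverse direction, ``quantising'' eigencurrents that are supported on whole closures of unstable manifolds into localized Gaussian quasimodes is again undefined, and even granting such quasimodes you only obtain a \emph{lower} bound on the number of eigenvalues near each zero; statement (1) of the theorem, that each fixed $-\lambda_j^{(k)}(\hbar)$ converges, additionally requires an \emph{upper} bound on the eigenvalue count in compact sets, i.e.\ a quantitative version of your forward direction, which is missing. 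So the proposal is best read as a sketch of the classical Helffer--Sj\"ostrand/Simon route with its two hard steps (uniform harmonic approximation, and the spectrum-versus-$\zeta_R^{(k)}$ identification) left open, whereas the paper's conjugation-plus-determinant-continuity argument dissolves the first difficulty and devotes Section~\ref{s:Pollicott-Ruelle} to the second.
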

\end{minipage}
}
\\
\\
Following Appendix~\ref{a:holomorphic}, this Theorem shows that the Witten eigenvalues converge, as $\hbar\rightarrow 0$, to 
integer combinations of the Lyapunov exponents. The result is in fact more precise than what we stated here for the sake of simplicity. 
In fact, we will also prove the convergence of the spectral projectors of the Witten Laplacian -- see Theorem~\ref{t:maintheo-proj} below. 
Hence, we will verify that, for every zero of degree $N\geq 1$ of the dynamical Ruelle determinant, one can find exactly $N$ eigenvalues of the 
Witten Laplacian converging to this zero. For every $0\leq k\leq n$, $\zeta_{R}^{(k)}(z)$ has a zero 
of multiplicity $c_k(f)$ according to Appendix~\ref{a:holomorphic}. Hence, we recover the above results on the bottom of the 
spectrum of the Witten Laplacian. These small eigenvalues are known to be exponentially small in terms of $\hbar$~\cite{HeSj85, HeNi05, MiZw17} 
but our proof does not say a priori anything on this aspect of the Witten-Helffer-Sj\"ostrand result. Contrary to the upcoming results, we 
emphasize that this Theorem could be recovered from the techniques of~\cite{HeSj85, Si83} but the dynamical interpretation will come out more clearly 
from our analysis.

\subsection{Correlation function of gradient vector fields}

Before stating our results on Witten Laplacian in their full generality, we would like to describe a dynamical question which 
was studied in great details in~\cite{dang2016spectral} in the case of Morse-Smale gradient flows -- see also~\cite{BaTs08, dyatlov2016pollicott} 
for earlier related results. A classical question in dynamical systems is to study the asymptotic behaviour of the correlation function
$$\boxed{\forall 0\leq k\leq n,\ \forall (\psi_1,\psi_2)\in\Omega^k(M)\times\Omega^{n-k}(M),\ C_{\psi_1,\psi_2}(t):=\int_M\varphi_f^{-t*}(\psi_1)\wedge\psi_2,}$$
which already appeared in the statement of Theorem~\ref{t:maintheo-harveylawsonwitten}. In the case of certain Riemannian metrics adapted 
to the Morse function, the limit of this quantity was identified by Harvey 
and Lawson in~\cite{HaLa00, HaLa01}. In~\cite{dang2016spectral}, we extended this result to more general Riemannian metrics and we also gave the 
full asymptotic expansion of $C_{\psi_1,\psi_2}(t)$ -- see also~\cite{dangrivieremorsesmale1, DaRi17b} for extension of 
these results to Morse--Smale flows admitting periodic orbits. Reference~\cite{HaLa01} was entirely based on the theory of 
currents \`a la Federer which is essentially geometric measure theoretic. On the contrary, our approach
was inspired by the recent developments in hyperbolic dynamical systems related to the study of the 
so-called Pollicott--Ruelle spectrum~\cite{Po85, Ru87a} 
following the pioneering works of Blank, Keller and Liverani~\cite{BlKeLi02}. We refer for instance 
to the book of Baladi~\cite{Ba16} or to the survey article of Gou\"ezel~\cite{Go15} for detailed accounts and 
references related to these dynamical questions. More specifically, we used a microlocal approach to this spectral problems 
on currents. Closely related to our microlocal approach to this problem, we also refer to the survey 
of Zworski for the relation of these questions with scattering theory~\cite{Zw17}. In the sequel, we will always denote by 
$\mathcal{D}^{\prime,k}(M)$ the space of De Rham currents of degree $k$.

Following~\cite{Po85, Ru87a}, it will be simpler to consider the Laplace transform of $C_{\psi_1,\psi_2}$, i.e. for $\text{Re}(z)$ large enough, 
$$\boxed{\hat{C}_{\psi_1,\psi_2}(z):=\int_0^{+\infty}e^{-tz} C_{\psi_1,\psi_2}(t)dt.}$$
One of the consequence of the results from~\cite{dangrivieremorsesmale1} is that this function admits a meromorphic extension to $\IC$ under 
the assumptions of Theorem~\ref{t:maintheo-eigenvalues}. In~\cite{dang2016spectral, DaRi17b}, we also gave an explicit description of the poles 
and residues of this function under $\ml{C}^{\infty}$-linearization properties of the vector field $V_f$
which were for instance verified as soon as infinitely many nonresonance assumptions are satisfied. Our next theorem extends part of these 
results without this smooth linearization assumption:

\begin{theo}[Correlation spectrum]\label{t:maintheo-flow} Suppose that the assumptions of Theorem~\ref{t:maintheo-harveylawsonwitten} are satisfied. Then, 
for every $0\leq k\leq n$ and for every $(\psi_1,\psi_2)\in\Omega^k(M)\times\Omega^{n-k}(M)$, the function $\hat{C}_{\psi_1,\psi_2}(z)$
admits a meromorphic extension to $\IC$ whose poles are contained inside 
$$\ml{R}_k:=\left\{z_0:\zeta_R^{(k)}(z_0)=0\right\}.$$
 Moreover, for every $z_0\in\IR$, there exists a continuous linear 
 map
 $$\pi_{z_0}^{(k)}:\Omega^k(M)\rightarrow \ml{D}^{\prime k}(M),$$
 whose rank is equal to the multiplicity\footnote{ When $z_0\notin\ml{R}_k$, one has $\pi_{z_0}^{(k)}=0$.} of $z_0$ as a 
 zero of $\zeta_R^{(k)}(z)$ and such that, for every 
 $(\psi_1,\psi_2)\in\Omega^k(M)\times\Omega^{n-k}(M)$, the residue of $\hat{C}_{\psi_1,\psi_2}$ at $z_0$ satisfies 
 $$\boxed{\operatorname{Res}_{z_0}\left(\hat{C}_{\psi_1,\psi_2} \right)=\int_M\pi_{z_0}^{(k)}(\psi_1)\wedge\psi_2.}$$
\end{theo}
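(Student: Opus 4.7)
The plan is to realize $\hat{C}_{\psi_1,\psi_2}(z)$ as a matrix element of the resolvent of the generator of the pullback semigroup on an appropriate anisotropic space, and then to reduce both the location of poles and the rank identity to the Atiyah--Bott--Guillemin trace formula for the gradient flow. Denote by $\ml{L}_{-V_f}^{(k)}$ the Lie derivative along $-V_f$ acting on $k$-forms, so that $\varphi_f^{-t\ast}=e^{-t\ml{L}_{-V_f}^{(k)}}$ and, for $\Re(z)\gg 1$,
\[
\hat{C}_{\psi_1,\psi_2}(z)=\int_M\bigl(z+\ml{L}_{-V_f}^{(k)}\bigr)^{-1}(\psi_1)\wedge\psi_2.
\]
The meromorphic extension of $\hat{C}_{\psi_1,\psi_2}$ will follow at once from the meromorphic extension of the resolvent as a map from smooth forms to currents on $M$.

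The first step is to build, under the Smale and $\ml{C}^1$-linearizability hypotheses, the anisotropic Sobolev spaces $\ml{H}_{mG}^k(M)$ that already appear in the authors' earlier works~\cite{dang2016spectral, dangrivieremorsesmale1, DaRi17b} on Morse--Smale gradient flows. The construction relies on an order/escape function $G$ on $T^*M$ that is negative on the unstable conormal bundle and positive on the stable conormal bundle of the (closure of the) set of critical points, produced from the linearized dynamics via the appendix on order functions. Pseudodifferential conjugation by a quantization of $e^{mG}$ then turns $\ml{L}_{-V_f}^{(k)}$ into an operator whose essential spectrum lies in $\{\Re(z)<-C(m)\}$ with $C(m)\to+\infty$ as $m\to+\infty$, so that the resolvent $(z+\ml{L}_{-V_f}^{(k)})^{-1}$ extends meromorphically to all of $\IC$ as a bounded family $\Omega^k(M)\to\ml{D}'^k(M)$, with discrete spectrum of finite multiplicity (the Pollicott--Ruelle resonances).

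The second step identifies the resonance set with $\ml{R}_k$. Because every critical point is hyperbolic and $\ml{C}^1$-linearizable, and because the Smale transversality assumption forbids tangencies of invariant manifolds, the only closed orbits of $\varphi_f^t$ are the fixed points; consequently the flat trace of $\varphi_f^{-t\ast}$ on $k$-forms is exactly
\[
\operatorname{Tr}^{\flat}\bigl(\varphi_f^{-t\ast}\bigr)_{|\Omega^k}=\sum_{a\in\mathrm{Crit}(f)}\frac{\operatorname{Tr}\bigl(\Lambda^k d\varphi_f^{-t}(a)\bigr)}{\bigl|\det(\mathrm{Id}-d\varphi_f^{-t}(a))\bigr|}.
\]
Taking the Laplace transform and exponentiating, I obtain that $\zeta_R^{(k)}(z)$ coincides (up to an entire nonvanishing factor) with the Fredholm determinant of $\mathrm{Id}$ minus a trace-class perturbation built from the resolvent, as in the standard treatments of Ruelle zeta functions (see \cite{Ba16,Zw17} and the authors' previous work). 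This determinantal identity shows that the poles of $(z+\ml{L}_{-V_f}^{(k)})^{-1}$ are contained in $\ml{R}_k$ and, by analysis of the order of vanishing, that the algebraic multiplicity of any resonance $z_0$ equals the multiplicity of $z_0$ as a zero of $\zeta_R^{(k)}$. Defining $\pi_{z_0}^{(k)}$ to be the spectral projector
\[
\pi_{z_0}^{(k)}=\frac{1}{2i\pi}\oint_{\gamma_{z_0}}\bigl(z+\ml{L}_{-V_f}^{(k)}\bigr)^{-1}dz,
\]
for a small loop $\gamma_{z_0}$ encircling only $z_0$, this is a continuous finite-rank map $\Omega^k(M)\to\ml{D}'^k(M)$ whose rank matches the required multiplicity, and the residue formula for $\hat{C}_{\psi_1,\psi_2}$ follows by pairing with $\psi_2$ and applying the residue theorem.

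The main obstacle is the second step under the mere $\ml{C}^1$-linearizability assumption: the flat trace formula and, more importantly, the identification of the Fredholm determinant with $\zeta_R^{(k)}$ require that one controls microlocally the Schwartz kernel of $\varphi_f^{-t\ast}$ near the fixed points with only $\ml{C}^1$ regularity of the conjugating chart. This is precisely where one must combine the anisotropic machinery with the fine analysis of the linearized flow carried out in the appendix on the holomorphic extension of $\zeta_R^{(k)}$, and it is also what forces the statement to be phrased in terms of containment of the pole set in $\ml{R}_k$ rather than equality (with equality recovered \emph{a posteriori} through the rank computation).
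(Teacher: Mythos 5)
Your first step (meromorphic extension of $\hat{C}_{\psi_1,\psi_2}$ via the resolvent of the Lie derivative on the anisotropic spaces) matches the paper, which obtains it from the resonance expansion of Proposition~\ref{p:correlation} integrated against $e^{-zt}$. The gap is in your second step, and you have in fact put your finger on it yourself without closing it: the identification of the resonance multiplicities with the orders of the zeros of $\zeta_R^{(k)}$ through a \emph{global} flat-trace/Fredholm-determinant identity is not available in this setting. The references you invoke (Giulietti--Liverani--Pollicott, Dyatlov--Zworski, Baladi's book) prove such determinantal identities for Anosov flows or for hyperbolic diffeomorphisms/basic sets, where uniform hyperbolicity on the whole trapped set is used both to make sense of the flat trace of (a smoothed) resolvent --- via wavefront control of its Schwartz kernel near the diagonal --- and, much more seriously, to show that the algebraic multiplicity of each resonance equals the order of vanishing of the determinant. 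For a Morse--Smale gradient flow on a closed manifold, with heteroclinic connections between critical points carrying different Lyapunov data and with only $\mathcal{C}^1$-linearizable charts, no such global statement exists off the shelf, and your phrase ``by analysis of the order of vanishing'' is precisely the missing argument: nothing in the proposal rules out extra resonances, Jordan-block phenomena, or a mismatch between the global rank of $\pi_{z_0}^{(k)}$ and the sum of local contributions.

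The paper deliberately avoids any global trace formula. It uses the Baladi--Tsujii results only \emph{locally}, near each critical point, where the time-one map $\varphi_f^{-1}$ is a hyperbolic diffeomorphism with a single fixed point, so that the local correlation poles are (with multiplicity) the zeros of the local determinant $\zeta_{R,a}^{(k)}$. The genuinely new ingredient, which your plan has no substitute for, is the local-to-global gluing of Proposition~\ref{p:multiplicity}: one constructs a ``good'' basis of generalized eigencurrents supported in closures of unstable manifolds (Lemma~\ref{l:goodbasis}), shows the dual basis is supported in closures of stable manifolds (Lemma~\ref{l:dualbasissupport}), and uses propagation of support together with Smale's partial order to prove $\operatorname{Rk}(\pi_{z_0}^{(k)})=\sum_{a}\operatorname{Rk}(\pi_{a,z_0}^{(k)})$; combined with the explicit computation of the zeros of $\zeta_{R,a}^{(k)}$ in Appendix~\ref{a:holomorphic}, this yields both the containment of the poles in $\ml{R}_k$ and the exact rank statement. (A minor side remark: your identity $\varphi_f^{-t*}=e^{-t\ml{L}_{-V_f}}$ has a sign slip, since $\ml{L}_{-V_f}=-\ml{L}_{V_f}$ and $\varphi_f^{-t*}=e^{-t\ml{L}_{V_f}}$; this is cosmetic compared with the structural gap above.)
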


The poles of this meromorphic function are referred to as the Pollicott--Ruelle resonances or the correlation spectrum of the gradient flow. They describe 
in some sense the structure of the long time 
dynamics of the gradient flow. As in~\cite{dang2016spectral, DaRi17b}, we recover the poles of the meromorphic extension and the rank of 
their residues. Yet, compared to these references, the proof given in the present work does not say anything on the order of the poles and 
on the currents generating the range of the residues. Recall that the strategy in~\cite{dang2016spectral, DaRi17b} was to construct \emph{explicitely} 
some natural families of currents associated with each poles. Here, we shall follow slightly more standard arguments 
from~\cite{BaTs08, GiLiPo13, DyZw13} which consists into relating $\hat{C}_{\psi_1,\psi_2}(z)$ with some flat determinant 
associated with the Lie derivative of $V_f$. 

Among other things, this result shows that the correlation spectrum depends only on the Lyapunov exponents of the flow. In other 
words, the global correlation spectrum of a gradient flow depends only on the $0$-jet of the metric at the critical points. Thus, 
this result gives in some sense some insights on Bowen's first problem in~\cite{Bow} from the perspective of the global dynamic of the flow instead of 
the local one. Note that, if we were interested in the local dynamics near critical points, this could be recovered from the results 
of Baladi and Tsujii in~\cite{BaTs08}. Still regarding Bowen's question, we will also verify that the range of the residues are 
generated by families of currents carried by the unstable manifolds of the gradient flows -- see section~\ref{s:Pollicott-Ruelle}.

Besides this support property, we do not say 
much things on the structure of these residues except in the case $z_0=0$.  In that case, we can say a little bit more. 
For that purpose, given any critical point $a$ of $f$ of index $k$, we introduce its unstable manifold
$$W^u(a):=\left\{x\in M:\lim_{t\rightarrow -\infty}\varphi^t_f(x)=a\right\}.$$
Recall from the works of Smale~\cite{Sm60} that this defines a smooth embedded submanifold of $M$ whose dimension is equal to $n-k$ and 
whose closure is the union of unstable manifolds. Then, one has~:

\begin{theo}[Vacuum states]\label{t:maintheo-current} Suppose that the assumptions of Theorem~\ref{t:maintheo-harveylawsonwitten} are satisfied 
and fix $0\leq k\leq n$. Then, for every $a\in\operatorname{Crit}(f)$ of index $k$, there exists $(U_a,S_a)$ in $\ml{D}^{\prime k}(M)\times \ml{D}^{\prime n-k}(M)$ 
such that the support of $U_a$ is equal to $\overline{W^u(a)}$ and such that
 $$\ml{L}_{V_f}(U_a)=0.$$
Moreover, for every 
$$0<\Lambda<\operatorname{min}\left\{|\chi_j(a)|:1\leq j\leq n,\ a\in\operatorname{Crit}(f)\right\},$$
one has, for every $(\psi_1,\psi_2)\in\Omega^k(M)\times\Omega^{n-k}(M)$,
 $$\boxed{\int_M\varphi_f^{-t*}(\psi_1)\wedge\psi_2=\sum_{a:\operatorname{dim} W^u(a)=n-k}\int_M \psi_1\wedge S_a\int_M U_a\wedge\psi_2 
 +\ml{O}_{\psi_1,\psi_2}(e^{-\Lambda t}).}$$
\end{theo}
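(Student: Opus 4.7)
The plan is to deduce this statement from Theorem~\ref{t:maintheo-flow} by isolating the residue at $z_0=0$ of $\hat{C}_{\psi_1,\psi_2}$ and then decomposing the resulting rank-$c_k(f)$ operator $\pi_0^{(k)}$ as a sum of rank-one contributions indexed by the critical points of index $k$. In more detail, by the description of the zeros of $\zeta_R^{(k)}$ recalled in Appendix~\ref{a:holomorphic} in terms of the Lyapunov exponents $\chi_j(a)$, one verifies that $0$ is a zero of multiplicity $c_k(f)$ and that any other zero has real part at most $-\Lambda$ (by the very definition of $\Lambda$). Combining Theorem~\ref{t:maintheo-flow} with polynomial bounds on $\hat{C}_{\psi_1,\psi_2}(z)$ along vertical lines (available on the anisotropic Sobolev spaces used to meromorphically extend the resolvent of $\ml{L}_{V_f}$), a standard contour deformation in the inverse Laplace formula yields
\[
C_{\psi_1,\psi_2}(t)=\operatorname{Res}_0\!\left(\hat{C}_{\psi_1,\psi_2}\right)+\ml{O}_{\psi_1,\psi_2}\!\left(e^{-\Lambda t}\right),
\]
and by Theorem~\ref{t:maintheo-flow} the residue equals $\int_M \pi_0^{(k)}(\psi_1)\wedge\psi_2$.

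Next, I would decompose $\pi_0^{(k)}$ explicitly. Since its rank equals the number $c_k(f)$ of critical points of index $k$, the natural guess is a diagonal decomposition labelled by these points. To produce it, one exploits the local resonant analysis of Baladi--Tsujii~\cite{BaTs08} near each critical point $a$ of index $k$: in the linearized model, there is a one-dimensional space of resonant states at $0$ on forms of degree $k$, spanned by the ``vacuum'' built from the wedge of the $n-k$ covectors dual to the unstable directions at $a$. Using cutoffs localised near the critical points together with the microlocal framework developed in~\cite{dang2016spectral}, these local resonant states extend to globally defined currents $U_a$ spanning the range of $\pi_0^{(k)}$. A parallel construction on $(n-k)$-forms for the reverse-time flow produces the family $(S_a)$, and a direct computation in the local model shows that $\pi_0^{(k)}$ is diagonal in these bases, giving the asserted splitting of the residue into a sum over critical points of index $k$.

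Finally, the invariance $\ml{L}_{V_f}U_a=0$ follows from the fact that $0$ is a simple pole of the resolvent on the range of $\pi_0^{(k)}$, i.e.\ there are no Jordan blocks, as one checks on the local linear model (the corresponding eigenvalue combinations of $L_f(a)$ summing to $0$ are non-degenerate there). For the support property, the key point is that $U_a$ belongs to an anisotropic Sobolev space whose elements have wavefront set contained in the unstable codirections at fixed points of the flow; combining this regularity with the invariance $\varphi_f^{t*}U_a=U_a$ and the fact that $U_a$ is concentrated near $a$ by construction, propagation under $\varphi_f^t$ as $t\to+\infty$ forces $\supp U_a\subseteq\overline{W^u(a)}$, while non-triviality of $U_a$ combined with the minimality of $\overline{W^u(a)}$ among closed $\varphi_f^t$-invariant sets containing a neighbourhood of $a$ in $W^u(a)$ gives equality.

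The main obstacle is precisely this support statement. The microlocal anisotropic estimates only yield crude wavefront information and the splitting in the previous paragraph only shows that each $U_a$ can be chosen concentrated near $a$; upgrading ``concentrated near $a$'' to the sharp support equality $\supp U_a=\overline{W^u(a)}$ requires a careful interplay between the Baladi--Tsujii local model and the global geometry of stable and unstable manifolds, and is where the bulk of the work will go.
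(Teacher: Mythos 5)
Your overall architecture -- Ruelle spectrum on anisotropic spaces, Baladi--Tsujii local vacua at the critical points, globalization of these local states, identification of the resonances with integer combinations of Lyapunov exponents -- is the one the paper follows. The first genuine gap is your very first step: deducing the large-time expansion from Theorem~\ref{t:maintheo-flow} by an inverse Laplace transform. Meromorphy of $\hat{C}_{\psi_1,\psi_2}$ together with the location of its poles does \emph{not} give $C_{\psi_1,\psi_2}(t)=\operatorname{Res}_0(\hat{C}_{\psi_1,\psi_2})+\ml{O}(e^{-\Lambda t})$ unless one controls $\hat{C}_{\psi_1,\psi_2}(z)$, i.e.\ the resolvent, uniformly on the vertical line $\operatorname{Re}(z)=-\Lambda$ as $|\operatorname{Im}(z)|\to\infty$; in the factorization \eqref{e:keyidentity1} the factor $\bigl(\operatorname{Id}-\widehat{\chi}_R(\widehat{P}_0+\widehat{\chi}_R-z)^{-1}\bigr)^{-1}$ is not bounded there by anything established in the paper (the fact that the resonances are real excludes poles on that line but gives no quantitative bound). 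The paper argues in the opposite direction: the time-domain expansion is Proposition~\ref{p:correlation}, obtained from the discrete spectrum of the semigroup $\varphi_f^{-t*}$ on $\ml{H}^{m_{N_0,N_1}}_k(M)$ (Lemma~\ref{l:spectrum-propagator}, which rests on the resolvent bound \eqref{e:norm-resolvent} and \cite{EnNa00}), and Theorem~\ref{t:maintheo-flow} is then \emph{deduced} from it by taking Laplace transforms. You should use that route; as written, your ``polynomial bounds along vertical lines'' are asserted, not available.

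Second, several steps you present as local are in fact global. The absence of Jordan blocks at $z_0=0$ does not follow from non-degeneracy in the linearized model alone: one must first know that a putative generalized eigencurrent localizes, near a suitable critical point of index $k$, to a multiple of the local vacuum, and this uses the existence of a basis of $\operatorname{Ran}\,\pi_0^{(k)}$ with $\operatorname{supp}(U_a)\subset\overline{W^u(a)}$ together with Smale's partial order (this is exactly the argument of subsection~\ref{ss:noJordan}). Likewise, the diagonal form of $\pi_0^{(k)}$ in your bases is not a ``direct computation in the local model'': it requires the support statement for the dual family, $\operatorname{supp}(S_a)\subset\overline{W^s(a)}$, which the paper proves by induction on the order $\preceq$ (Lemma~\ref{l:dualbasissupport}). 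These points, together with the support equality you rightly single out as the main obstacle, are all handled in the paper by the cut-and-project/propagation construction of the ``good basis'' (Lemma~\ref{l:goodbasis}, based on the propagation lemmas of \cite{DaRi17b}) and by the local identification $U_a=[W^u(a)]$ away from $\partial W^u(a)$ (Lemma~\ref{r:current}), which yields both inclusions $\overline{W^u(a)}\subset\operatorname{supp}(U_a)$ and $\operatorname{supp}(U_a)\subset\overline{W^u(a)}$. So the plan is viable in outline, but the items you defer constitute the actual substance of the proof, and the one step you treat as routine (the contour deformation) must be replaced by the semigroup argument or substantiated by resolvent estimates you have not provided.
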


In particular, the pole at $z_0=0$ is simple. In the case of a locally flat metric, this result was proved by 
Harvey and Lawson~\cite{HaLa01}, except for the size of the remainder 
term. In~\cite{dang2016spectral}, we showed how to prove this Theorem when the flow satisfies some more general (smooth) linearization properties than the ones 
appearing in~\cite{Lau92, HaLa01}. Here, we will extend the argument from~\cite{dang2016spectral} 
to show that Theorem~\ref{t:maintheo-current} remains true under the rather weak assumptions of 
Theorem~\ref{t:maintheo-harveylawsonwitten}. As we shall see in Lemma~\ref{r:current}, the currents $U_a$ coincides with 
the current of integration $[W^u(a)]$ when restricted to the open set $M-\partial W^u(a)$ with 
$\partial W^u(a)=\overline{W^u(a)}-W^u(a)$. Hence, the previous Theorem shows how one can extend 
$[W^u(a)]$ into a globally defined current which still satisfies the transport equation $\ml{L}_{V_f}(U_a)=0.$ This 
extension was done by Laudenbach in the case of 
locally flat metrics in~\cite{Lau92} by analyzing carefully the structure of the boundary $\partial W^u(a)$. Here, we 
make this extension for more general metrics via a spectral method and the analysis of the structure of the boundary 
is in some sense hidden in the construction of our spectral framework~\cite{dang2016spectral, dangrivieremorsesmale1}. We emphasize that 
Laudenbach's construction shows that these extensions are currents of \emph{finite mass} while our method does not say a priori 
anything on that aspect.

Finally, following~\cite{DaRi17c}, we note that 
$((U_a)_{a\in\text{Crit}(f)},d)$ generate a finite dimensional complex which is nothing else but the Thom-Smale-Witten 
complex~\cite[Eq.~(2.2)]{Wi82}. In section~\ref{s:proofs}, we will explain how to prove some topological statement which complements what was proved in~\cite{DaRi17c}, namely~:
\\
\fbox{
\begin{minipage}{0.94\textwidth}
\begin{theo}[Witten's instanton formula]\label{t:maintheo-instanton} Suppose that the assumptions of Theorem~\ref{t:maintheo-harveylawsonwitten} are 
satisfied. Then, for every pair of critical points $(a,b)$ with $\operatorname{ind}(b)=\operatorname{ind}(a)+1$, there 
exists\footnote{An explicit expression is given in paragraph~\ref{ss:proof-ruelle}.} $n_{ab}\in\IZ$ such that
\begin{eqnarray}\label{instantonformula-intro}
\forall a\in\operatorname{Crit}(f),\quad dU_a=\sum_{b:\operatorname{ind}(b)=\operatorname{ind}(a)+1}n_{ab}U_b
\end{eqnarray}
where $n_{ab}$ counts algebraically
the number of instantons connecting
$a$ and $b$. 

In particular, the
complex $((U_a)_{a\in \text{Crit}(f)},d)$ can be defined over $\mathbb{Z}$ and realizes in the space of currents
the Morse homology over $\mathbb{Z}$.
\end{theo}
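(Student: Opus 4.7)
The plan is to exploit the fact that the vacuum currents $U_a$ of Theorem~\ref{t:maintheo-current} form a basis of the kernel of $\ml{L}_{V_f}$ at resonance $z_0=0$, together with the compatibility of $d$ with the dynamics, to deduce the instanton formula essentially for free; the signs and integrality will come from a pairing argument.

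First, I would observe that the de Rham differential commutes with the Lie derivative, by Cartan's formula $\ml{L}_{V_f}=d\iota_{V_f}+\iota_{V_f}d$. Combined with $\ml{L}_{V_f}U_a=0$ and $\operatorname{supp}(dU_a)\subseteq\operatorname{supp}(U_a)=\overline{W^u(a)}$, this shows that $dU_a$ is itself a resonant state at $z_0=0$, of degree $\operatorname{ind}(a)+1$, whose support is contained in $\overline{W^u(a)}$. I would then invoke the fuller statement underlying Theorem~\ref{t:maintheo-current} (or rather its refinement established in Section~\ref{s:Pollicott-Ruelle}) which identifies the space of resonant states at $z_0=0$ in degree $k+1$ with the span of $\{U_b:\operatorname{ind}(b)=k+1\}$. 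Expanding $dU_a$ in this basis produces unique scalars $n_{ab}\in\IC$ with
\[
dU_a=\sum_{b:\operatorname{ind}(b)=\operatorname{ind}(a)+1}n_{ab}\,U_b,
\]
and the support constraint already forces the sum to range only over such $b$ with $\overline{W^u(b)}\subset\overline{W^u(a)}$.

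To extract an explicit expression for $n_{ab}$ and prove integrality, I would use the biorthogonality relation $\int_M U_a\wedge S_b=\delta_{ab}$ that is read off the asymptotic expansion in Theorem~\ref{t:maintheo-current}. Pairing the instanton formula with $S_b$ yields $n_{ab}=\int_M dU_a\wedge S_b$. On the open set $M\setminus\partial W^u(a)$, Lemma~\ref{r:current} identifies $U_a$ with the current of integration $[W^u(a)]$, and similarly $S_b$ is locally the integration current on $W^s(b)$. By the Smale transversality assumption, $W^u(a)\cap W^s(b)$ is a smooth one-dimensional submanifold (the union of instantons from $a$ to $b$), and a Stokes' integration by parts, combined with a regularization near $\partial W^u(a)$ using the flow $\varphi_f^{-t}$ (or equivalently working in the anisotropic Sobolev framework where the wavefront sets of $U_a$ and $S_b$ are disjoint), reduces the integral to a signed count of these instantons, the orientations coming from the co-orientations of $W^u(a)$ and $W^s(b)$. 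This yields $n_{ab}\in\IZ$ as the algebraic count of instantons. The identity $d^2=0$ automatically makes $((U_a)_a,d)$ into a chain complex over $\IZ$, and the identification with Morse homology then follows from the comparison established in~\cite{DaRi17c}.

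The main obstacle is the last computational step: neither $dU_a$ nor $S_b$ is a smooth form, so the pairing $\int_M dU_a\wedge S_b$ requires justification. The wavefront-set analysis built into the construction of $U_a$ and $S_b$ guarantees that $\operatorname{WF}(U_a)$ and $\operatorname{WF}(S_b)$ are transverse, so H\"ormander's criterion makes $U_a\wedge S_b$ well-defined. The localization of the integral to the compact set $\overline{W^u(a)}\cap W^s(b)$ and the identification of its value with a signed count of instantons is the technical heart of the argument; everything else is an essentially formal consequence of the spectral theory already developed for the Pollicott--Ruelle resolvent of the gradient flow.
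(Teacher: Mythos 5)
Your skeleton is the paper's own: since $d$ commutes with $\ml{L}_{V_f}$, the current $dU_a$ is a resonant state at $z_0=0$ in degree $\operatorname{ind}(a)+1$, hence expands in the basis $(U_b)_{\operatorname{ind}(b)=\operatorname{ind}(a)+1}$, and pairing with the dual states $S_b$ gives $n_{ab}=\langle dU_a,S_b\rangle$. The whole content of the theorem, however, lies in evaluating this pairing as a signed count of instantons, and there your sketch has a genuine gap. You propose to localize $\int_M dU_a\wedge S_b$ to $W^u(a)\cap W^s(b)$ and count the flow lines there. But $dU_a$ vanishes on the open set $M\setminus\partial W^u(a)$ (there $U_a=[W^u(a)]$ is the integration current of a submanifold which is closed in that open set), so the integrand is concentrated on $\partial W^u(a)\cap\operatorname{supp}(S_b)$ --- a region containing $b$ but \emph{no point of any instanton}, since $\gamma_{ab}\subset W^u(a)$ is disjoint from $\partial W^u(a)$. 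On that region no explicit description of $U_a$ is available (the paper stresses that its spectral construction says nothing about the structure or mass of $U_a$ at $\partial W^u(a)$), and near $b$ the only information is $dU_a=n_{ab}[W^u(b)]$, which makes the direct evaluation circular. The wavefront/transversality argument you invoke only guarantees that the pairing is \emph{defined}; it does not compute it, and integrality of $n_{ab}$ hinges precisely on this computation.

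The paper's way around this --- which your phrase ``regularization near $\partial W^u(a)$ using the flow'' gestures at but does not supply --- is to truncate near $a$ rather than near the boundary: take $\chi$ equal to $1$ near $a$ and vanishing near $\partial W^u(a)$, so that $d(\chi[W^u(a)])=d\chi\wedge[W^u(a)]$, and compute $\langle d(\chi[W^u(a)]),[W^s(b)]\rangle=\int_M d\chi\wedge[W^u(a)]\wedge[W^s(b)]$ explicitly: by Smale transversality and Lemma~\ref{l:intersection-current} this equals $(-1)^{n-1}\sum_{\gamma_{ab}}\sigma(\gamma_{ab})\left(\chi(b)-\chi(a)\right)=n_{ab}$ with $n_{ab}=(-1)^n\sum_{\gamma_{ab}}\sigma(\gamma_{ab})$, the Stokes integration being carried out along the instantons where they cross $\operatorname{supp}(d\chi)$. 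This value is unchanged when $\chi$ is replaced by $\varphi_f^{-t*}\chi$, and one then lets $t\to+\infty$: the absence of Jordan blocks at $0$, Lemma~\ref{l:spectrum-propagator}, the support property $\operatorname{supp}(S_a)\cap\operatorname{supp}(\chi[W^u(a)])=\{a\}$ and Lemma~\ref{r:current} give $\varphi_f^{-t*}(\chi[W^u(a)])\to U_a$ in the anisotropic space, and the continuity of $d$ and of the duality pairing on these spaces yields $n_{ab}=\langle dU_a,S_b\rangle$. These dynamical and spectral ingredients are the substantive part of the proof and are missing from your argument; the final passage to a complex over $\IZ$ and the comparison with Morse homology via~\cite{DaRi17c} are as you say.
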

\end{minipage}
}
\\
\\
In the case of locally flat metrics, the relation between the formula for the boundary of unstable currents and Witten's instanton formula follows 
for instance from the works of Laudenbach in~\cite{Lau92}.
In~\cite{dang2016spectral,DaRi17c}, we were able to prove
that the complex $((U_a)_{a\in\text{Crit}(f)},d)$ forms a subcomplex of the De Rham complex of currents
which was quasi--isomorphic to the De Rham complex
$(\Omega^\bullet(M),d)$ but we worked in 
the (co)homology with coefficients in $\mathbb{R}$.
The instanton formula~\eqref{instantonformula-intro} allows us to actually consider
$((U_a)_{a\in\text{Crit}(f)},d)$ as a 
$\mathbb{Z}$--module and 
directly relate it to the famous Morse complex 
defined over $\mathbb{Z}$ appearing in the litterature 
whose
integral homology groups contain more 
information than working with real coefficients~\cite[p.~620]{GMS}.

\subsection{Asymptotics of Witten eigenmodes}

So far, we mostly discussed the asymptotic properties of the eigenvalues of the Witten Laplacian. Yet, as we already pointed, 
our results are more precise. We shall now describe the asymptotic 
behaviour of its eigenmodes as $\hbar$ goes to $0$ in terms of the residues of the correlation function. More precisely, one has~:
\\
\\
\fbox{
\begin{minipage}{0.94\textwidth}
\begin{theo}[Convergence of Witten spectral projectors]\label{t:maintheo-proj} Suppose that 
the assumptions of Theorem~\ref{t:maintheo-harveylawsonwitten} are satisfied. Let $0\leq k\leq n$ and let 
$z_0\in \IR$. Then, there exists $\eps_0>0$ small enough such that, for every $(\psi_1,\psi_2)\in\Omega^k(M)\times\Omega^{n-k}(M)$,
$$\forall0<\eps\leq\eps_0,\quad \lim_{\hbar\rightarrow 0^+}\int_M\mathbf{1}_{[z_0-\eps,z_0+\eps]}\left(-W_{f,\hbar}^{(k)}\right)
 \left(e^{-\frac{f}{\hbar}}\psi_1\right)\wedge\left(e^{\frac{f}{\hbar}}\psi_2\right)=\int_M\pi_{z_0}^{(k)}(\psi_1)\wedge\psi_2.$$
\end{theo}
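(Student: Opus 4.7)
The starting point is the conjugation identity
\[
P_\hbar \;:=\; e^{f/\hbar}\, W_{f,\hbar}\, e^{-f/\hbar} \;=\; \tfrac{\hbar}{2}\,\Delta + \ml{L}_{V_f},
\]
which I would check by a direct computation: conjugating $d_{f,\hbar}=e^{-f/\hbar}de^{f/\hbar}$ by $e^{f/\hbar}$ gives $d$, while the same conjugation on $d_{f,\hbar}^{*}=e^{f/\hbar}d^{*}e^{-f/\hbar}$ produces $d^{*}+2\iota_{V_f}/\hbar$, and Cartan's formula $d\iota_{V_f}+\iota_{V_f}d=\ml{L}_{V_f}$ absorbs the cross-terms. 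Since $W_{f,\hbar}$ is self-adjoint and $P_\hbar$ is similar to it via the (non-unitary) multiplier $e^{f/\hbar}$, the spectral projector transfers equivariantly: for any Borel set $I$ and any positively-oriented contour $\gamma$ enclosing $I$,
\[
\mathbf{1}_I(-W_{f,\hbar}^{(k)}) \;=\; e^{-f/\hbar}\,\Pi_\hbar\,e^{f/\hbar}, \qquad \Pi_\hbar \;:=\; \frac{1}{2\pi i}\oint_\gamma (z + P_\hbar^{(k)})^{-1}\, dz,
\]
the right-hand side being understood as a Riesz projector on an appropriate functional space (see below), since $P_\hbar$ is no longer self-adjoint. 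Because the scalar factors $e^{\pm f/\hbar}$ cancel under the wedge pairing, the quantity to be computed simplifies to
\[
\int_M \mathbf{1}_{[z_0-\eps,z_0+\eps]}(-W_{f,\hbar}^{(k)})(e^{-f/\hbar}\psi_1)\wedge(e^{f/\hbar}\psi_2) \;=\; \int_M \Pi_\hbar(\psi_1)\wedge\psi_2.
\]

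Next, I would rewrite the target as a contour integral. For $\Re(z)$ large one has $\hat{C}_{\psi_1,\psi_2}(z)=\int_M(z+\ml{L}_{V_f})^{-1}(\psi_1)\wedge\psi_2$, and Theorem~\ref{t:maintheo-flow} yields a meromorphic extension to $\IC$ whose residue at $z_0$ equals $\int_M\pi_{z_0}^{(k)}(\psi_1)\wedge\psi_2$. The residue formula therefore gives
\[
\int_M \pi_{z_0}^{(k)}(\psi_1)\wedge\psi_2 \;=\; \frac{1}{2\pi i}\oint_\gamma \int_M (z + \ml{L}_{V_f})^{-1}(\psi_1)\wedge\psi_2\,dz,
\]
so the theorem reduces to the two assertions that (i) for every $z\in\gamma$,
\[
\int_M (z + P_\hbar)^{-1}(\psi_1)\wedge\psi_2 \;\xrightarrow[\hbar\to 0^+]{}\; \int_M(z+\ml{L}_{V_f})^{-1}(\psi_1)\wedge\psi_2,
\]
and (ii) this convergence is uniform in $z\in\gamma$, so that Fubini's theorem lets me commute $\frac{1}{2\pi i}\oint_\gamma$ with $\lim_{\hbar\to 0^+}$.

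To prove (i)--(ii), I would work on the anisotropic Sobolev spaces $\ml{H}^{m,N}$ constructed in~\cite{dang2016spectral,dangrivieremorsesmale1}, on which $\ml{L}_{V_f}$ acting on $k$-forms has discrete Pollicott--Ruelle spectrum consisting of the zeros of $\zeta_R^{(k)}$. The second resolvent identity
\[
(z + P_\hbar)^{-1} - (z + \ml{L}_{V_f})^{-1} \;=\; -\tfrac{\hbar}{2}\,(z+P_\hbar)^{-1}\,\Delta\,(z+\ml{L}_{V_f})^{-1}
\]
formally makes the difference an $O(\hbar)$ quantity when tested against smooth forms, conditional on a uniform estimate of the shape
\[
\left\|(z+P_\hbar)^{-1}\right\|_{\ml{H}^{m,N-2}\to\ml{H}^{m,N}} \;\leq\; C, \qquad z\in\gamma,\ \hbar\in(0,\hbar_0].
\]
Establishing such a bound amounts to showing that the meromorphic Fredholm framework carrying $\ml{L}_{V_f}$ survives the viscous deformation $\tfrac{\hbar}{2}\Delta$, and that no spurious resonances migrate into a fixed neighbourhood of $z_0$ as $\hbar\to 0^+$.

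I expect this last point to be the main technical obstacle: the perturbation $\tfrac{\hbar}{2}\Delta$ is second-order elliptic while $\ml{L}_{V_f}$ is first-order hyperbolic, so $\Delta$ is not relatively bounded with respect to $\ml{L}_{V_f}$ and Kato-type analytic perturbation theory does not apply off the shelf. The cure would be a semiclassical microlocal argument: construct an escape function / radial estimate on $T^{*}M$ adapted to the gradient flow (as in the very definition of $\ml{H}^{m,N}$) and verify that adding the small viscosity $\tfrac{\hbar}{2}\Delta$ preserves the Fredholm index and the location of resonances on any fixed compact subset of $\IC$ avoiding the Pollicott--Ruelle spectrum. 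Once this uniform estimate is in place on the contour $\gamma$, the pointwise resolvent convergence and the contour integration assemble to give the announced identity.
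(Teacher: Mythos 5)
Your reduction (conjugation by $e^{f/\hbar}$, cancellation of the exponential weights under the wedge pairing, rewriting both sides as contour integrals of resolvents, and the resolvent identity in $\hbar$) is sound and is indeed the same Dyatlov--Zworski-type strategy the paper follows. But the proof is not complete: the entire difficulty of the theorem is concentrated in the estimate you state as a hypothesis, namely a bound on $(z+P_\hbar)^{-1}$ along a contour $\gamma$ through real points near $z_0$, uniform in $\hbar\in(0,\hbar_0]$. Two problems. First, as stated the bound is the wrong one: you ask for a gain of two anisotropic derivatives ($\ml{H}^{m,N-2}\to\ml{H}^{m,N}$) uniformly in $\hbar$, which cannot survive the limit $\hbar\to 0^+$ (the elliptic constant degenerates and the limiting operator $\ml{L}_{V_f}$ is first order and non-elliptic, with genuinely distributional resonant states); what is needed, and what suffices for your second-resolvent-identity argument, is a uniform bound \emph{without gain} on a fixed anisotropic space. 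Second, and more seriously, no such bound can even be formulated on $\gamma$ before one knows that the eigenvalues of $P_\hbar$ (the Witten eigenvalues, which are real and accumulate at $-\infty$) do not approach $\gamma$ as $\hbar\to 0^+$, i.e.\ before one has the convergence-with-multiplicity of eigenvalues. An escape-function/radial estimate alone does not give this: it only controls the anti-adjoint part up to a compact error and yields invertibility in a half-plane $\{\operatorname{Im}(z)\gg 1\}$, not uniform resolvent bounds at real spectral parameters. So "verify that no spurious resonances migrate near $z_0$" is not a side remark to be checked; it is the theorem's core, and you have essentially assumed it.

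For comparison, the paper fills exactly this hole as follows: it introduces the trace-class modification $\widehat{\chi}_R=-iR(1+\Delta_g)^{-L}$ and proves, via the square-root Lemma~\ref{l:squareroot} and Lemma~\ref{l:resolventbounded}, that $(\widehat{P}_\hbar+\widehat{\chi}_R-z)^{-1}$ is holomorphic and bounded \emph{uniformly in $\hbar\in[0,1]$} on the half-plane $\{\operatorname{Im}(z)>(C-C_N)/3\}$ (also on a range of ordinary Sobolev spaces); the resolvent identity in $\hbar$ then shows $\hbar\mapsto K_m(\hbar)=\widehat{\chi}_R(\widehat{P}_\hbar+\widehat{\chi}_R-z)^{-1}$ is Lipschitz with values in trace-class operators, hence the Fredholm determinant $D_{m}(\hbar,z)$ is continuous in $\hbar$, and Rouch\'e's theorem gives Theorem~\ref{t:convergence-eigenvalue} (non-migration of eigenvalues and stability of multiplicities). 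Only then are the Riesz projectors compared, and not through a direct bound on $(z+P_\hbar)^{-1}\Delta(z+\ml{L}_{V_f})^{-1}$, but through the splitting $(\widehat{P}_\hbar-z)^{-1}=(\widehat{P}_\hbar+\widehat{\chi}_R-z)^{-1}+(\widehat{P}_\hbar-z)^{-1}\widehat{\chi}_R(\widehat{P}_\hbar+\widehat{\chi}_R-z)^{-1}$: the first term is holomorphic inside $\gamma$ and drops out of the contour integral, while the second converges because the unmodified resolvent is uniformly bounded on $\gamma$ (a consequence of the eigenvalue convergence) and the $\widehat{\chi}_R$-factor is continuous in $\hbar$ in trace norm. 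Finally, identifying the limit with $\pi_{z_0}^{(k)}$ and choosing $\eps_0$ uses the explicit computation of the Pollicott--Ruelle spectrum (Section~\ref{s:Pollicott-Ruelle}); your appeal to the residue statement of Theorem~\ref{t:maintheo-flow} is legitimate for that last step, but the uniform Fredholm machinery above is what you would still have to supply to make your contour argument rigorous.
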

\end{minipage}
}
\\
\\
This Theorem tells us that, up to renormalization by $e^{\frac{f}{\hbar}}$, the spectral projectors of the Witten Laplacian converges to 
the residues of the dynamical correlation function. Note that, in the case $z_0=0$, one has in fact, from 
Theorems~\ref{t:maintheo-flow} and~\ref{t:maintheo-current},
$$\boxed{\lim_{\hbar\rightarrow 0^+}\int_M\mathbf{1}_{[0,\eps_0]}\left(W_{f,\hbar}^{(k)}\right)
 \left(e^{-\frac{f}{\hbar}}\psi_1\right)\wedge\left(e^{\frac{f}{\hbar}}\psi_2\right)=
 \sum_{a:\operatorname{dim} W^u(a)=n-k}\left(\int_M \psi_1\wedge S_a\right)\left(\int_M U_a\wedge\psi_2\right).}$$
In that manner, we recover that the bottom of the spectrum of 
the Witten Laplacian acting on $k$ forms is asymptotically of dimension equal to the number 
of critical points of index $k$. Still considering the bottom of the spectrum of 
the Witten Laplacian, we can define an analogue of Theorem~\ref{t:maintheo-instanton} at 
the semiclassical level. For that purpose, we need to introduce analogues of the Helffer-Sj\"ostrand WKB states 
for the Witten Laplacian~\cite{HeSj85, HeNi05}. We fix $\eps_0>0$ small enough so that the range of $\mathbf{1}_{[0,\eps_0)}(W_{f,\hbar}^{(k)})$ 
in every degree $k$ is equal to the number of critical points 
of index $k$. Then, for $\hbar>0$ small enough, we define the following \textbf{WKB states}:
\begin{equation}\label{e:WKBquasimodes}
\boxed{U_a(\hbar):=\mathbf{1}_{[0,\eps_0)}(W_{f,\hbar}^{(k)})\left(e^{\frac{f(a)-f}{\hbar}} U_a\right)\in\Omega^{k}(M),}
\end{equation}
where $k$ is the index of the point $a$. We will show in Proposition~\ref{p:wavefrontset} that, for every critical point $a$,
the sequence $(e^{\frac{f-f(a)}{\hbar}}U_a(\hbar))_{\hbar\rightarrow 0^+}$ converges 
to $U_a$ in $\ml{D}'(M)$. As a corollary of Theorem~\ref{t:maintheo-instanton}, these WKB 
states verify the following exact tunneling formulas~:
\\
\\
\fbox{
\begin{minipage}{0.94\textwidth}
\begin{theo}[Witten--Helffer--Sj\"ostrand tunneling formula]\label{t:maintheo-tunneling}
Suppose that the assumptions of Theorem~\ref{t:maintheo-harveylawsonwitten} are satisfied. Then, for every critical point $a$ of $f$ and for 
every $\hbar>0$ small enough,
\begin{eqnarray*}
d_{f,\hbar}U_a(\hbar)&=&\sum_{b:\operatorname{ind}(b)=\operatorname{ind}(a)+1}n_{ab}e^{\frac{f(a)-f(b)}{\hbar}}U_{b}(\hbar),
\end{eqnarray*}
where $n_{ab}$ is the same integer as in Theorem~\ref{t:maintheo-instanton}.
\end{theo}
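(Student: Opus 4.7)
The plan is to deduce the tunneling formula directly from the boundary formula $dU_a = \sum_b n_{ab} U_b$ provided by Theorem~\ref{t:maintheo-instanton}, together with the intertwining properties of $d_{f,\hbar}$ with the Witten Laplacian.

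First I would record the algebraic commutation relation. Since $d_{f,\hbar}^2 = 0$, a direct expansion of $W_{f,\hbar}$ gives
\[
d_{f,\hbar}^{(k)}\, W_{f,\hbar}^{(k)} \;=\; \frac{\hbar}{2}\, d_{f,\hbar}^{(k)} d_{f,\hbar}^{(k)*} d_{f,\hbar}^{(k)} \;=\; W_{f,\hbar}^{(k+1)}\, d_{f,\hbar}^{(k)}.
\]
Choosing $\eps_0 > 0$ smaller than the spectral gap that isolates the first $c_k(f)$ and $c_{k+1}(f)$ eigenvalues simultaneously (possible thanks to the Witten--Helffer--Sj\"ostrand gap recalled in the introduction), the functional calculus for the self-adjoint elliptic operator $W_{f,\hbar}$ then yields
\[
d_{f,\hbar}\,\mathbf{1}_{[0,\eps_0)}(W_{f,\hbar}^{(k)}) \;=\; \mathbf{1}_{[0,\eps_0)}(W_{f,\hbar}^{(k+1)})\, d_{f,\hbar}.
\]
Since $W_{f,\hbar}$ is elliptic, its spectral projector onto a finite-dimensional eigenspace has a smooth Schwartz kernel, so the operator $\mathbf{1}_{[0,\eps_0)}(W_{f,\hbar}^{(k)})$ extends to currents with values in smooth forms, and the identity above extends to $\ml{D}'$.

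Second, I would exploit the conjugation $d_{f,\hbar} = e^{-f/\hbar} d\, e^{f/\hbar}$, which combined with the smooth multiplier $e^{(f(a)-f)/\hbar}$ gives the crucial identity
\[
d_{f,\hbar}\!\left(e^{\frac{f(a)-f}{\hbar}}U_a\right) \;=\; e^{-f/\hbar} d\!\left(e^{f(a)/\hbar} U_a\right) \;=\; e^{\frac{f(a)-f}{\hbar}}\, dU_a.
\]
Applying Theorem~\ref{t:maintheo-instanton} to the right-hand side and then pulling out the constants $e^{(f(a)-f(b))/\hbar}$ from the sum, I obtain
\[
d_{f,\hbar}\!\left(e^{\frac{f(a)-f}{\hbar}}U_a\right) \;=\; \sum_{b:\operatorname{ind}(b)=\operatorname{ind}(a)+1} n_{ab}\, e^{\frac{f(a)-f(b)}{\hbar}}\left(e^{\frac{f(b)-f}{\hbar}} U_b\right).
\]

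Third, I would apply $\mathbf{1}_{[0,\eps_0)}(W_{f,\hbar}^{(k+1)})$ to both sides. On the left, the commutation relation turns it into $d_{f,\hbar} U_a(\hbar)$. On the right, the projector commutes with the scalar constants $e^{(f(a)-f(b))/\hbar}$ and, applied to $e^{(f(b)-f)/\hbar}U_b$, it produces exactly $U_b(\hbar)$ by the definition \eqref{e:WKBquasimodes}. Combining these two computations gives the announced formula.

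The only step requiring genuine care is the first one, namely making sense of the spectral projector applied to the current $e^{(f(a)-f)/\hbar} U_a$ and justifying that the commutation relation with $d_{f,\hbar}$ persists at the level of currents. Ellipticity of $W_{f,\hbar}$ provides smoothness of the range of the low-lying spectral projector, so this is a standard regularization argument; all remaining manipulations are purely algebraic consequences of $d_{f,\hbar}^2 = 0$, of the conjugation formula, and of the boundary formula from Theorem~\ref{t:maintheo-instanton}.
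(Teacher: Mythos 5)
Your argument is correct and coincides with the paper's own proof: one first uses the conjugation identity $d_{f,\hbar}=e^{-f/\hbar}d\,e^{f/\hbar}$ together with Theorem~\ref{t:maintheo-instanton} to get $d_{f,\hbar}\bigl(e^{\frac{f(a)-f}{\hbar}}U_a\bigr)=\sum_b n_{ab}e^{\frac{f(a)-f(b)}{\hbar}}e^{\frac{f(b)-f}{\hbar}}U_b$, and then applies the spectral projector, whose commutation with $d_{f,\hbar}$ the paper justifies via the contour-integral (resolvent) representation rather than your functional-calculus phrasing --- an inessential difference. Note also that no spectral-gap condition on $\eps_0$ is actually needed for the intertwining of the projectors in degrees $k$ and $k+1$; it is only used, as in the paper's setup, to fix the rank of the low-lying spectral subspaces.
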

\end{minipage}
}
\\
\\
The formula we obtain may seem slightly different from the one appearing in~\cite[Eq.~(3.27)]{HeSj85}. This is mostly due to the choice of normalization, and we 
will compare more precisely our quasimodes with the ones of Helffer-Sj\"ostrand in Section~\ref{s:helffer-sjostrand}.

\subsection{A conjecture by Fukaya}

Before stating Fukaya's conjecture on Witten Laplacians, we start with a brief overview of the context in which they appear. These problems 
are related to symplectic topology and Morse theory, and it goes without saying that the reader
is strongly advised to consult the original papers of Fukaya for further details~\cite{Fu93,Fu97,Fu05}. 
In symplectic topology, one would like to attach invariants
to symplectic manifolds in particular to Lagrangian
submanifolds since they play a central role in symplectic geometry. 
Motivated by Arnold's conjectures on
Lagrangian intersections, 
Floer constructed an infinite dimensional generalization of Morse homology 
named Lagrangian Floer homology which is the homology of some chain complex 
$(CF(L_0,L_1),\partial)$ associated to pairs of Lagrangians
$(L_0,L_1)$ generated by the intersection points of $L_0$ and $L_1$~\cite[Def.~1.4, Th.~1.5]{Auroux}. Then, 
for several Lagrangians satisfying precise geometric assumptions, 
it is possible to define some product
operations on the corresponding Floer complexes~\cite[Sect.~2]{Auroux} 
and the collection of all these 
operations and the relations among them form a so called \emph{$A_\infty$ structure} 
first described by Fukaya. 
The important result is that the $A_\infty$ structure, 
up to some natural equivalence relation, 
does not depend on the various choices that were made to define it
in the same way as
the Hodge--De Rham cohomology theory of a compact Riemannian manifold
does not depend on the choice of metric $g$.

Let us briefly motivate these notions of $A_\infty$ structures 
by discussing a simple example.
On a given smooth compact manifold $M$, consider the 
De Rham complex $(\Omega^\bullet(M),d)$ with the
corresponding De Rham cohomology $H^\bullet(M)=\text{Ker}(d)/\text{Ran}(d)$. 
From classical results of 
differential topology if
$N$ is another smooth manifold diffeomorphic to
$M$, then we have 
a quasi-isomorphism
between $(\Omega^\bullet(M),d)$ and
$(\Omega^\bullet(N),d)$ which implies that
the corresponding cohomologies are isomorphic $H^\bullet(M)\simeq H^\bullet(N)$.
This means that the space of cocycles is an invariant of our space.
However, there are manifolds which have the
same cohomology groups, hence the same homology groups 
by Poincar\'e duality, and which are not homeomorphic
hence (co)homology is not enough to specify the topology 
of a given manifold.
To get more invariants, we should be able to give some informations on relations among (co)cycles.
Recall that
$(\Omega^\bullet(M),d,\wedge)$ is a differential graded algebra
where the algebra structure comes from the wedge product $\wedge$, and the fact 
that $\wedge$ satisfies the Leibniz rule
w.r.t. the differential $d$ readily implies that
$\wedge : \Omega^\bullet(M)\times \Omega^\bullet(M)\mapsto \Omega^\bullet(M)$ induces
a bilinear map on cohomology $\mathfrak{m}_2:H^\bullet(M)\times H^\bullet(M)\mapsto H^\bullet(M)$ called the cup--product. 
By Poincar\'e duality, this operation on cohomology geometrically encodes
intersection theoretic informations among cycles and gives
more informations than the usual (co)homology groups.
Algebras of $A_\infty$ type are far reaching generalizations of differential graded algebras
where the wedge product is replaced by a sequence
of $k$--multilinear products for all $k\geqslant 2$ with 
relations among them generalizing the Leibniz rule~\cite{Val}.

In perfect analogy with symplectic topology, Fukaya introduced $A_{\infty}$ structures in Morse theory~\cite[Chapter 1]{Fu93}. 
In that case, the role of Lagrangian pairs $(L_0,L_1)$ is played by a pair of smooth functions $(f_0,f_1)$ such that $f_0-f_1$ is Morse. Note 
that it is not a priori possible to endow the Morse complex with the wedge product $\wedge$ of currents since currents carried by 
the same unstable manifold cannot be intersected because of the lack of transversality. The idea is to perturb the Morse functions to create 
transversality. Thus, we should deal with several pairs of smooth functions. In that context, Fukaya formulated 
conjectures~\cite[Sect.~4.2]{Fu05} related to the $A_{\infty}$ structure associated with the Witten Laplacian. He predicted 
that the WKB states of Helffer and Sj\"ostrand should verify more 
general asymptotic formulas than the tunneling formulas associated with the action of the twisted coboundary operator $d_{f,\hbar}$~\cite[Conj.~4.1 and 4.2]{Fu05}.
Indeed, as in the above discussion, after twisting the De Rham coboundary operator $d$, 
the next natural idea is to find some twisted version of Cartan's exterior
product $\wedge$ and see if one can find some analogue
of the tunneling formulas for twisted products.
At the semiclassical limit $\hbar\rightarrow 0^+$, Fukaya conjectured that one should recover 
the Morse theoretical analogue of the wedge product modulo some exponential corrections related to 
\emph{disc instantons}~\cite{Get12,Get13}. Hence, as for the coboundary operator, the cup product in 
Morse cohomology would appear in the asymptotics of the Helffer-Sj\"ostrand WKB states. This conjecture was recently solved by Chan--Leung--Ma in~\cite{ChLeMa14} via WKB 
approximation methods.

As a last application of our analysis, we would like to show that our families of WKB states $(U(\hbar))_{\hbar\rightarrow 0^+}$ also verifies 
Fukaya's asymptotic formula in the case of products of order $2$~\cite[Conj.~4.1]{Fu05}. This approach could probably be adapted to treat the case of higher 
order products. Yet, this would be at the expense of a more delicate combinatorial work that would be beyond the scope of the present article and we shall discuss 
this elsewhere. Let us now describe precisely the framework of Fukaya's conjecture for products of order $2$ which corresponds to the classical wedge product $\wedge$ -- see also paragraph~\ref{ss:fukaya} 
for more details. 
Consider three smooth real valued functions $(f_1,f_2,f_3)$ on $M$, and define their differences~:
$$f_{12}=f_2-f_1,f_{23}=f_3-f_2,f_{31}=f_1-f_3.$$
We assume the functions $(f_{12},f_{23},f_{31})$ to be Morse. To every such pair $(ij)$, we associate a Riemannian metric
$g_{ij}$, and we make the assumption that the corresponding gradient vector fields $V_{f_{ij}}$ satisfy the Morse-Smale property\footnote{This means that the $V_{f_{ij}}$ verify the Smale 
transversality assumptions.} and that they are $\ml{C}^1$-linearizable. 
In particular, they are amenable to the above spectral analysis, and, for any critical point $a_{ij}$ of $f_{ij}$ and for every $0<\hbar\leq 1$, we can associate 
a WKB state $U_{a_{ij}}(\hbar)$. From elliptic regularity, these are smooth differential forms on $M$ and Fukaya predicted that the integral
$$\int_M U_{a_{12}}(\hbar)\wedge U_{a_{23}}(\hbar)\wedge U_{a_{31}}(\hbar)$$
has a nice asymptotic formula whenever the intersection $W^u(a_{12})\cap W^u(a_{23})\cap W^u(a_{31})$ consists of finitely many points. Note that we implicitely supposed that
\begin{equation}
\dim W^u(a_{12})+\dim W^u(a_{23})+\dim W^u(a_{31})=2n.
\end{equation}
As was already mentioned, such an asymptotic should hold when appropriate transversality assumptions are verified by $(f_{12},f_{23},f_{31})$. 
More precisely, Fukaya's conjecture concerns triple of Morse functions such that, for every
 $$(a_{12},a_{23},a_{31})\in \text{Crit}(f_{12}) \times \text{Crit}(f_{23})\times 
\text{Crit}(f_{31}),$$
one has, for every $x\in W^u(a_{12})\cap W^u(a_{23})\cap W^u(a_{31})$,
$$T_xM=(T_x W^u(a_{12})\cap T_x W^u(a_{23}))+T_x W^u(a_{31}),$$
and similarly for any permutation of $(12, 23, 31)$. In that case, we say that the vector fields $(V_{f_{12}},V_{f_{23}},V_{f_{31}})$ 
satisfy the \textbf{generalized Morse-Smale property}. Our last result 
shows that the WKB states we have constructed verify Fukaya's conjecture~:
\\
\\
\fbox{
\begin{minipage}{0.94\textwidth}
\begin{theo}[Fukaya's instanton formula]\label{t:fukaya} Using the above notations, let $(V_{f_{12}},V_{f_{23}},V_{f_{31}})$ be a family of Morse-Smale 
gradient vector fields which are 
$\ml{C}^{1}$-linearizable, and which verify the generalized Morse-Smale property. Then, for every 
$$(a_{12},a_{23},a_{31})\in \operatorname{Crit}(f_{12}) \times \operatorname{Crit}(f_{23})\times 
\operatorname{Crit}(f_{31}),$$
 such that $\dim W^u(a_{12})+\dim W^u(a_{23})+\dim W^u(a_{31})=2n,$
 $$U_{a_{12}}\wedge U_{a_{23}}\wedge U_{a_{31}}$$
 defines an element of $\ml{D}^{\prime n}(M)$ satisfying
 $\int_MU_{a_{12}}\wedge U_{a_{23}}\wedge U_{a_{31}}\in\IZ$,
 and
 $$\lim_{\hbar\rightarrow 0^+} e^{-\frac{f_{12}(a_{12})+f_{23}(a_{23})+f_{31}(a_{31})}{\hbar}}
 \int_M U_{a_{12}}(\hbar)\wedge U_{a_{23}}(\hbar)\wedge U_{a_{31}}(\hbar)=
\int_MU_{a_{12}}\wedge U_{a_{23}}\wedge U_{a_{31}}.$$
\end{theo}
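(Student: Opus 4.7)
The strategy splits into three tasks: cancelling the exponential prefactor, defining and evaluating the triple wedge product at $\hbar=0$, and then justifying the passage to the limit of the integral.

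First I would rewrite the normalized integral in a form where the exponentials disappear. Setting $\tilde U_{a_{ij}}(\hbar):=e^{(f_{ij}-f_{ij}(a_{ij}))/\hbar}U_{a_{ij}}(\hbar)$, the identity $f_{12}+f_{23}+f_{31}\equiv 0$ yields
$$e^{-\frac{f_{12}(a_{12})+f_{23}(a_{23})+f_{31}(a_{31})}{\hbar}}\,U_{a_{12}}(\hbar)\wedge U_{a_{23}}(\hbar)\wedge U_{a_{31}}(\hbar)=\tilde U_{a_{12}}(\hbar)\wedge\tilde U_{a_{23}}(\hbar)\wedge\tilde U_{a_{31}}(\hbar).$$
By Proposition~\ref{p:wavefrontset}, each factor $\tilde U_{a_{ij}}(\hbar)$ converges to $U_{a_{ij}}$ in $\mathcal{D}'(M)$ as $\hbar\to 0^+$, so the remaining analysis is purely a question about weak products of currents, with no exponential factor left to track.

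To show that $U_{a_{12}}\wedge U_{a_{23}}\wedge U_{a_{31}}\in\mathcal{D}^{\prime n}(M)$, I would apply H\"ormander's criterion for the multiplication of distributions. The construction of the $U_a$'s in Theorem~\ref{t:maintheo-current} through the anisotropic Sobolev framework of~\cite{dang2016spectral,dangrivieremorsesmale1} supplies the microlocal bound $\mathrm{WF}(U_{a_{ij}})\subset N^*W^u(a_{ij})$, the conormal bundle of the unstable manifold. The generalized Morse--Smale hypothesis is exactly the transversality statement ensuring that the three conormal cones meet only at the zero section, both pairwise and triply, so the H\"ormander criterion guarantees that the triple wedge product is a well-defined top-degree current supported in the finite set $W^u(a_{12})\cap W^u(a_{23})\cap W^u(a_{31})$. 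Each such intersection point lies in the open set $M\setminus\bigcup_{ij}\partial W^u(a_{ij})$ where Lemma~\ref{r:current} identifies $U_{a_{ij}}$ with the current of integration $[W^u(a_{ij})]$ on a smooth oriented submanifold. The transverse triple intersection of three oriented submanifolds of complementary total codimension then contributes $\pm 1$ at each intersection point by the classical signed count, which yields $\int_M U_{a_{12}}\wedge U_{a_{23}}\wedge U_{a_{31}}\in\mathbb{Z}$.

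The main obstacle is justifying that
$$\int_M\tilde U_{a_{12}}(\hbar)\wedge\tilde U_{a_{23}}(\hbar)\wedge\tilde U_{a_{31}}(\hbar)\ \longrightarrow\ \int_M U_{a_{12}}\wedge U_{a_{23}}\wedge U_{a_{31}},$$
since the pointwise product of distributions is not sequentially continuous for the weak topology of $\mathcal{D}'(M)$. My plan is to upgrade the $\mathcal{D}'$-convergence of Proposition~\ref{p:wavefrontset} to convergence in the H\"ormander space $\mathcal{D}'_{\Gamma_{ij}}(M)$ for fixed closed conic neighborhoods $\Gamma_{ij}$ of $N^*W^u(a_{ij})$ chosen small enough that $(\Gamma_{12},\Gamma_{23},\Gamma_{31})$ still satisfies H\"ormander's summability condition at every point. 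Concretely this amounts to a uniform-in-$\hbar$ microlocal estimate on $\tilde U_{a_{ij}}(\hbar)$ in directions transverse to $N^*W^u(a_{ij})$, which I would derive from the defining relation $U_{a_{ij}}(\hbar)=\mathbf{1}_{[0,\eps_0)}(W_{f_{ij},\hbar})(e^{(f_{ij}(a_{ij})-f_{ij})/\hbar}U_{a_{ij}})$ together with the uniform anisotropic resolvent bounds that underlie the proof of Theorem~\ref{t:maintheo-proj}. Once such microlocal control is in hand, the continuity of the trilinear wedge product $\mathcal{D}'_{\Gamma_{12}}\times\mathcal{D}'_{\Gamma_{23}}\times\mathcal{D}'_{\Gamma_{31}}\to\mathcal{D}^{\prime n}(M)$ followed by pairing with the constant function $1$ yields the claimed limit, and it is precisely this uniform microlocal control that I expect to require the most delicate part of the argument.
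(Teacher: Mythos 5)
Your proposal follows essentially the same route as the paper's proof: cancel the exponential prefactors via the conjugation identity $e^{f/\hbar}W_{f,\hbar}e^{-f/\hbar}=\ml{L}_{V_f}+\tfrac{\hbar\Delta_g}{2}$ and $f_{12}+f_{23}+f_{31}=0$, establish uniform-in-$\hbar$ microlocal control of the projected states through the anisotropic Sobolev framework (this is exactly what Proposition~\ref{p:wavefrontset} provides, including convergence in the normal topology of $\ml{D}^{\prime}_{\Gamma_-(V_{f_{ij}})}(M)$), and conclude by hypocontinuity of the wedge product on H\"ormander spaces together with the transverse-intersection identification of the integer via Lemma~\ref{r:current}. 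Two minor points to tighten: the correct wavefront bound is $\operatorname{WF}(U_{a_{ij}})\subset\Gamma_-(V_{f_{ij}})=\bigcup_{b}N^*(W^u(b))$ rather than $N^*W^u(a_{ij})$ alone (the boundary strata of $\overline{W^u(a_{ij})}$ contribute), and the finiteness of the intersection together with the fact that $\overline{W^u(a_{12})}\cap\overline{W^u(a_{23})}\cap\overline{W^u(a_{31})}$ avoids the boundaries $\partial W^u(a_{ij})$ requires the short dimension-counting argument the paper gives, both of which are handled by the generalized Morse--Smale hypothesis exactly as you invoke it.
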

\end{minipage}
}
\\
\\
The integers $\int_MU_{a_{12}}\wedge U_{a_{23}}\wedge U_{a_{31}}$ defined by triple intersections of unstable currents
have a deep geometrical meaning. On the one hand, they 
actually \emph{count} the number of Y shaped gradient flow trees~\cite[p.~8]{Fu93} as described in subsection~\ref{r:geometry-fukaya}. On the other hand,
they give a representation of the
cup--product in Morse cohomology at the cochain level as explained in subsection~\ref{ss:fukaya}.
We emphasize that there are two parts in this Theorem. The first one says that the product of the three currents $U_{a_{ij}}$ 
is well defined. This will follow from the fact that we can control the wavefront sets of these currents. The second part is, up to some normalization factors, 
the asymptotic formula conjectured by Fukaya for the WKB states of Helffer--Sj\"ostrand~\cite{HeSj85}. Here, 
our states are constructed in a slightly different manner. Yet, they belong to the same 
eigenspaces as the ones from~\cite{HeSj85} -- see Section~\ref{s:helffer-sjostrand} for a comparison. Finally, we note that, going through the details of the proof, we would get that the rate 
of convergence is in fact of order $\mathcal{O}(\hbar)$. However, for simplicity of exposition, we do not keep track of this aspect in our argument.

\subsection{Lagrangian intersections}
\label{ss:lagintersec} 
We would like to recall 
the nice symplectic interpretation of 
the exponential prefactors appearing in Theorems~\ref{t:maintheo-tunneling} 
and~\ref{t:fukaya}. 
Let us start with the case of Theorem~\ref{t:maintheo-tunneling} where we 
only consider a pair $(f,0)$ of functions where $f-0=f$ is Morse. We can consider the pair of 
exact Lagrangian submanifolds
$\Lambda_f:=\left\{(x;d_xf):x\in M\right\}\subset T^*M$ and $\underline{0}\subset T^*M$\footnote{$\Lambda_f$ is the graph of $df$ whereas the zero section is the graph of $0$.}.
Given $(a,b)$ in $\text{Crit}(f)^2$, we can define a disc $D$ whose boundary $\partial D\subset \Lambda_f\cup \underline{0}$ 
is a $2$-gon made of
the
union of two smooth curves $e_1$ and $e_2$ joining $a$ and $b$, $e_1$ and $e_2$
are respectively contained in the Lagrangian submanifolds $\Lambda_f$ and $\underline{0}$.
 Denote by $\theta$ the Liouville one form and 
by $\omega=d\theta$ the canonical symplectic form on $T^*M$. Then, by the Stokes formula
$$\int_D\omega=\int_{\partial D}\theta=\int_{e_1}df=f(a)-f(b),$$
where we choose $e_1$ to be oriented from $b$ to $a$. Hence, the exponents in the asymptotic formula of Theorem~\ref{t:maintheo-tunneling} can 
be interpreted as the symplectic area of the disc $D$ defined by $\Lambda_f$ and the zero section of 
$T^*M$\footnote{Hence the name \emph{disc instantons}.}. 
A similar 
interpretation holds in the case of Theorem~\ref{t:fukaya} and the picture goes as follows. We consider a triangle ($3$-gon) 
$T$ inside $T^*M$ with vertices $(v_{12},v_{23},v_{31})\in (T^*M)^3$ whose projection on $M$ are equal to 
$(a_{12},a_{23},a_{31})$. The edges $(e_1,e_2,e_3)$ are contained in the three Lagrangian submanifolds $\Lambda_{f_1}$, $\Lambda_{f_2}$ and 
$\Lambda_{f_3}$. To go from $v_{23}$ to $v_{12}$, we follow some smooth curve $e_2$ in 
$\Lambda_{f_2}$, from $v_{31}$ to $v_{23}$ follow some line $e_3$ in 
$\Lambda_{f_3}$ and from $v_{12}$ to $v_{31}$, we follow some line $e_1$ in 
$\Lambda_{f_1}$. These three lines define the triangle $T$ and we can compute
$$\int_{T}\theta=\sum_{j=1}^3\int_{e_j}df_j=-f_1(a_{12})+f_1(a_{31})-f_2(a_{23})+f_2(a_{12})-f_3(a_{31})+f_3(a_{23}),$$
which is (up to the sign) the term appearing in the exponential factor of Theorem~\ref{t:fukaya}. Note that
the triangle $T$ does not
necessarily bound a disk.

\subsection{Convergence of the Witten Laplacian to the gradient vector field}\label{ss:conjugation}

The \textbf{key observation} to prove our different results is the following exact relation~\cite[equation (3.6)]{frenkel2011instantons}: 
\begin{equation}\label{e:conjugation}
\boxed{e^{\frac{f}{\hbar}}W_{f,\hbar}e^{-\frac{f}{\hbar}}=\ml{L}_{V_f}+\frac{\hbar\Delta_g}{2},}
\end{equation}
where $\ml{L}_{V_f}$ is the Lie derivative along the gradient vector field and $\Delta_g=dd^*+d^*d\geq 0$ is the Laplace Beltrami operator. Indeed, one has~\cite[equations (3.4), (3.5)]{frenkel2011instantons}~:
$$ e^{\frac{f}{\hbar}}W_{f,\hbar}e^{-\frac{f}{\hbar}}= \frac{\hbar}{2} e^{\frac{f}{\hbar}}\left(d_{f,\hbar}+d_{f,\hbar}^*\right)^2 e^{-\frac{f}{\hbar}} = 
 \frac{\hbar}{2}\left(d+d_{2f,\hbar}^*\right)^2 =  \frac{\hbar}{2}\left(d d_{2f,\hbar}^*+d_{2f,\hbar}^* d\right),$$
which yields~\eqref{e:conjugation} thanks to the Cartan formula. Hence, the rough idea 
is to prove that the spectrum of the Witten Laplacian converges to the spectrum of the Lie derivative, provided that it makes sense. This kind 
of strategy was used by Frenkel, Losev and Nekrasov~\cite{frenkel2011instantons} to compute the spectrum of $\ml{L}_{V_f}$ in the case of the 
height function on the canonical $2$-sphere. However, their strategy is completely different from ours. Frenkel, 
Losev and Nekrasov computed explicitely the spectrum of the Witten Laplacian and show how to take the limit as 
$\hbar\rightarrow 0^+$. Here, we will instead compute the spectrum of the limit operator explicitely and show
without explicit computations why the spectrum of the Witten 
Laplacian should converge to the limit spectrum. 
In particular, our proof makes no explicit use of the classical results of Helffer 
and Sj\"ostrand on the Witten Laplacian~\cite{HeSj85}.

Our first step will be to define an appropriate functional framework where one can study the spectrum of $\ml{L}_{V_f}+\frac{\hbar\Delta_g}{2}$ for 
$0\leq \hbar\leq \hbar_0$. Recall that, following the microlocal strategy of Faure and Sj\"ostrand for the study of the analytical properties 
of hyperbolic dynamical systems~\cite{FS}, we constructed in~\cite{dang2016spectral} some families of anisotropic Sobolev spaces 
$\ml{H}^{m_{\Lambda}}(M)$ indexed by a parameter $\Lambda>0$ and such that~:
$$-\ml{L}_{V_f}:\ml{H}^{m_{\Lambda}}(M)\rightarrow \ml{H}^{m_{\Lambda}}(M)$$
has discrete spectrum on the half plane $\{\text{Re}(z)>-\Lambda\}$. This spectrum is intrinsic and it turns out to be the correlation 
spectrum appearing in Theorem~\ref{t:maintheo-flow}. For an Anosov vector field $V$, Dyatlov and Zworski proved that the correlation spectrum is in 
fact the limit of the spectrum of an operator of the form $\ml{L}_{V}+\frac{\hbar\Delta_g}{2}$~\cite{DZsto} -- see also~\cite{BlKeLi02, FRS08, Zw15, Dr16} 
for related questions. We will thus show how to adapt the strategy of Dyatlov and Zworski to our framework. 
It means that we will prove that the family of operators 
$$\left(\widehat{H}_\hbar:=-\ml{L}_{V_f}-\frac{\hbar\Delta_g}{2}\right)_{\hbar\in [0,+\infty)}$$ 
has nice spectral properties on the anisotropic Sobolev spaces
$\ml{H}^{m_\Lambda}(M)$ constructed in~\cite{dang2016spectral}. This will be the object of 
section~\ref{s:anisotropic}. Once these properties will be established, we will verify in which sense the spectrum of $\widehat{H}_\hbar$
converges to the spectrum of $\widehat{H}_0$ in the semiclassical limit $\hbar\rightarrow 0^+$ -- see section~\ref{s:convergence} for 
details. In~\cite{dang2016spectral}, we computed explicitely the spectrum of $\widehat{H}_0$ on these anisotropic Sobolev spaces. Under some 
(generic) smooth linearization properties, we obtained an explicit description of the eigenvalues and a rather explicit description of the 
generalized eigenmodes. Here, we generalize the results of~\cite{dang2016spectral} by relaxing these smoothness assumptions and 
by computing the spectrum under the more general assumptions of 
Theorem~\ref{t:maintheo-harveylawsonwitten}. For that purpose, we will make
crucial use of some earlier results of Baladi and Tsujii on hyperbolic 
diffeomorphisms~\cite{BaTs08} in order to compute the eigenvalues. Compared with~\cite{dang2016spectral, DaRi17b}, we will however get a 
somewhat less precise description of the corresponding eigenmodes. This will be achieved in section~\ref{s:Pollicott-Ruelle}. Then, in 
section~\ref{s:proofs}, we combine these results to prove Theorems~\ref{t:maintheo-harveylawsonwitten} to~\ref{t:maintheo-tunneling}. 
In section~\ref{s:fukaya}, we describe the wavefront set of the generalized eigenmodes and we show how to use this information to prove Theorem~\ref{t:fukaya}. 
Finally, in section~\ref{s:helffer-sjostrand}, we briefly compare our quasimodes with the ones appearing in~\cite{HeSj85}.

The article ends with three appendices. Appendix~\ref{a:order-function} contains a brief reminder on Morse-Smale gradient flows 
and on the dynamical construction from~\cite{dang2016spectral}. Appendix~\ref{a:holomorphic} shows how to prove the holomorphic extension of the 
dynamical Ruelle determinant in our framework. Appendix~\ref{a:lemma} contains the proof of a technical lemma needed for our analysis of 
wavefront sets.

\subsection{Conventions} In all this article, $\varphi_f^t$ is a Morse-Smale gradient flow which is $\ml{C}^1$-linearizable acting on a smooth, compact, 
oriented and boundaryless manifold of dimension $n\geq 1$.

\subsection*{Acknowledgements} When we gave talks on our previous work~\cite{dang2016spectral}, a recurrent question was about the 
relation between our results and the Witten Laplacian. We thank the many people who asked us this question as it motivated us to explore 
seriously this problem. We also warmly thank Alexis Drouot, Colin Guillarmou, Fr\'ed\'eric H\'erau and Benoit Merlet for useful discussions 
related to this work. The second author 
is partially supported by the Agence Nationale de la Recherche through the Labex CEMPI (ANR-11-LABX-0007-01) and the 
ANR project GERASIC (ANR-13-BS01-0007-01).



\section{Anisotropic Sobolev spaces and Pollicott-Ruelle spectrum}\label{s:anisotropic}

In~\cite{dang2016spectral, dangrivieremorsesmale1}, we have shown how one can build a proper spectral theory for the operator $-\ml{L}_{V_f}$. 
In other words, we constructed some anisotropic Sobolev spaces of currents on which we could prove that the spectrum of $-\ml{L}_{V_f}$ is discrete in a certain 
half-plane $\text{Re}(z)>-C_0$. The corresponding discrete eigenvalues are intrinsic and are the so-called Pollicott-Ruelle resonances. 
Our construction was based on a microlocal approach that was initiated by Faure and Sj\"ostrand in the framework of Anosov flows~\cite{FS} 
and further developped by Dyatlov and Zworski in~\cite{DyZw13}. As was already explained in paragraph~\ref{ss:conjugation}, we will try to relate the 
spectrum of the Witten Laplacian to the spectrum of $-\ml{L}_{V_f}$ by the use of the relation~\eqref{e:conjugation}. Hence, our first 
step will be to show that our construction from~\cite{dang2016spectral} can be adapted to fit (in a uniform manner) with the operator~: 
$$\boxed{\widehat{H}_\hbar:=-\ml{L}_{V_f}-\frac{\hbar\Delta_g}{2}.}$$ 
In the case of Anosov flows, this perturbation argument was introduced by Dyatlov and Zworski in~\cite{DZsto}. As their spectral construction is slightly 
different from the one of Faure and Sj\"ostrand in~\cite{FS} and as our proof of the meromorphic extension of $\hat{C}_{\psi_1,\psi_2}$ in~\cite{dang2016spectral} 
is closer to~\cite{FS} than to~\cite{DZsto}, we need to slightly revisit some of the arguments given in~\cite{FS,dang2016spectral} to fit the framework 
of~\cite{DZsto}. This is the purpose of this section where we will recall the definition of anisotropic Sobolev spaces and of the corresponding 
Pollicott-Ruelle resonances.

\subsection{Anisotropic Sobolev spaces}

\subsubsection{The order function.}
 We start with the definition of an anisotropic Sobolev space in the terminology of~\cite{FRS08, FS}. 
First of all, such spaces require the existence of an order function $m_{N_0,N_1}(x;\xi)$ in $\ml{C}^{\infty}(T^*M)$ with bounded derivatives which 
is adapted to the dynamics of $\varphi_f^t$. The important 
properties of this function are recalled in appendix~\ref{a:order-function}, more specifically Lemma~\ref{l:escape-function}. 
Let us just mention that $N_0$ and $N_1$ are two large positive parameters which indicate the regularity of the distribution along the 
unstable and stable manifolds of the flow. Once this function is constructed, we set
$$A_{N_0,N_1}(x;\xi):=\exp G_{N_0,N_1}^0(x;\xi),$$
where $G_{N_0,N_1}^0(x;\xi):=m_{N_0,N_1}(x;\xi)\ln(1+\|\xi\|_x^2)$ belongs to the class of symbols $S^{\eps}(T^*M)$ for every $\eps>0$. We shall 
denote this property by $G_{N_0,N_1}^0\in S^{+0}(T^*M)$. We emphasize that the construction below will require to deal with symbols of variable 
order $m_{N_0,N_1}$ whose pseudodifferential calculus was described in~\cite[Appendix]{FRS08}.

\subsubsection{Anisotropic Sobolev currents.}

Let us now define the spaces we shall work with. Let $0\leq k\leq n$. We consider the vector bundle $\Lambda^k(T^*M)\mapsto M$ of exterior $k$ forms. 
We define $\mathbf{A}_{N_0,N_1}^{(k)}(x;\xi):=A_{N_0,N_1}(x;\xi)\textbf{Id}$ which belongs to $\Gamma(T^*M,\text{End}(\Lambda^k(T^*M)))$
which is the product of the weight $A_{N_0,N_1}\in C^\infty(T^*M)$ with
the canonical identity section $\textbf{Id}$ of the endomorphism bundle $\text{End}(\Lambda^k(T^*M))\mapsto M$. 
We fix the canonical inner product $\la,\ra_{g^*}^{(k)}$ on $\Lambda^k(T^*M)$ induced by the metric $g$ on $M$. 
This allows to define the Hilbert space $L^2(M,\Lambda^k(T^*M))$ and to introduce an 
anisotropic Sobolev space of currents by setting
$$\ml{H}^{m_{N_0,N_1}}_k(M)=\Op(\mathbf{A}_{N_0,N_1}^{(k)})^{-1}L^2(M,\Lambda^k(T^*M)),$$
where $\Op(\mathbf{A}_{N_0,N_1}^{(k)})$ is a formally selfadjoint pseudodifferential operator with principal symbol $\mathbf{A}_m^{(k)}$. We refer 
to~\cite[App.~C.1]{DyZw13} for a brief reminder of pseudodifferential operators 
with values in vector bundles -- see also~\cite{BDIP02}. In particular, adapting the proof of~\cite[Cor.~4]{FRS08} to the vector bundle valued framework, 
one can verify that $\mathbf{A}_{N_0,N_1}^{(k)}$ is an elliptic symbol, and 
thus $\Op(\mathbf{A}_{N_0,N_1}^{(k)})$ can be chosen to be invertible. Mimicking the proofs of~\cite{FRS08}, we can deduce some properties of 
these spaces of currents. First of all, they are endowed with a Hilbert 
structure inherited from the $L^2$-structure on $M$. The space $$\ml{H}^{m_{N_0,N_1}}_k(M)^{\prime}=\Op(\mathbf{A}_{N_0,N_1}^{(k)})L^2(M,\Lambda^k(T^*M))$$
is the topological dual of $\ml{H}^{m_{N_0,N_1}}_k(M)$ which is in fact reflexive. We also note that the space $\ml{H}_k^{m_{N_0,N_1}}(M)$ can be 
identified with $\ml{H}_0^{m_{N_0,N_1}}(M)\otimes_{\ml{C}^{\infty}(M)}\Omega^k(M)$. Finally, one has
$$\Omega^k(M) \subset \ml{H}_k^{m_{N_0,N_1}}(M) \subset \mathcal{D}^{\prime,k}(M),$$ 
where the injections are continuous.

\subsubsection{Hodge star and duality for anisotropic Sobolev currents.}

Recall now that the Hodge star operator~\cite[Part I.4]{BDIP02} is the unique isomorphism 
$\star_k :\Lambda^k(T^*M)\rightarrow \Lambda^{n-k}(T^*M)$ such that, for every $\psi_1$ in $\Omega^k(M)$ and $\psi_2$ in $\Omega^{n-k}(M)$, 
$$\int_M\psi_1\wedge\psi_2=\int_M\la \psi_1,\star_k^{-1}\psi_2\ra_{g^*(x)}^{(k)}\omega_g(x),$$
where $\la .,.\ra_{g^*(x)}^{(k)}$ is the induced Riemannian metric on $\Lambda^k(T^*M)$ and where $\omega_g$ is the Riemannian volume form. In particular, 
$\star_k$ induces an isomorphism from $\ml{H}^{m_{N_0,N_1}}_k(M)^{\prime}$ to $\ml{H}^{-m_{N_0,N_1}}_{n-k}(M)$, 
whose Hilbert structure is given by the scalar product
$$(\psi_1,\psi_2)\in\ml{H}^{-m_{N_0,N_1}}_{n-k}(M)^2\mapsto \la \star_k^{-1}\psi_1,\star_k^{-1}\psi_2\ra_{\ml{H}_k^m(M)'}.$$
Thus, the topological dual of $\ml{H}_k^{m_{N_0,N_1}}(M)$ can be identified with $\ml{H}^{-m_{N_0,N_1}}_{n-k}(M)$, where, for every $\psi_1$ in $\Omega^k(M)$ 
and $\psi_2$ in $\Omega^{n-k}(M)$, one has the following duality relation:
$$\la\psi_1,\psi_2\ra_{\ml{H}_k^m\times\ml{H}_{n-k}^{-m}}=\int_{M}\psi_1\wedge\psi_2
=\la \Op(\mathbf{A}_{N_0,N_1}^{(k)})\psi_1,\Op(\mathbf{A}_{N_0,N_1}^{(k)})^{-1}\star_k^{-1}\overline{\psi_2}\ra_{L^2}=\la\psi_1,\star_k^{-1}\psi_2
\ra_{\ml{H}_k^m\times(\ml{H}_{k}^m)^{\prime}}.$$


\subsection{Pollicott-Ruelle resonances} Now that we have defined the appropriate spaces, 
we have to explain the spectral properties of 
$i\widehat{H}_\hbar:=-i\ml{L}_{V_f}-\frac{i\hbar\Delta_g}{2}$ 
acting on $\ml{H}_k^{m_{N_0,N_1}}(M)$. Following Faure and Sj\"ostrand in~\cite{FS}, we
introduce the following conjugation of the operator $i\widehat{H}_\hbar$~:
\begin{equation}
\widehat{P}_\hbar=\widehat{A}_N (i\widehat{H}_\hbar)\widehat{A}_N^{-1},
\end{equation}
where we use the notation $\widehat{A}_N$ instead of $\Op(\mathbf{A}^{(k)}_{N_0,N_1})$ for simplicity. 
For similar reasons, we also omit the dependence in $k$.


In any case, the spectral properties of $\widehat{H}_{\hbar}$ acting on the anisotropic Sobolev space $\ml{H}_k^{m_{N_0,N_1}}(M)$ are the same as those of the operator 
$\widehat{P}_{\hbar}$ acting on the simpler Hilbert space $L^2(M,\Lambda^k(T^*M))$. We will now apply the strategy of Faure and Sj\"ostrand 
in order to derive some spectral properties of the above operators. 
Along the way, 
we keep track of the dependence in $\hbar$ 
which is needed to apply the arguments from Dyatlov and Zworski~\cite{DZsto} 
on the convergence of the spectrum. 
In all this section, we follow closely the proofs 
from~\cite[Sect.~3]{FS} and we emphasize the differences.

\subsubsection{The conjugation argument} The first step in Faure-Sj\"ostrand's proof consists in computing the symbol of the operator 
$\widehat{P}_{\hbar}$. Starting from this operator, we separate it in two terms
\begin{eqnarray*}
{P}_\hbar
=\widehat{Q}_{1}+\hbar\widehat{Q}_{2}=-
\underset{\text{hyperbolic part}}{\underbrace{\widehat{A}_N i\left(\mathcal{L}_{V_f}\right)\widehat{A}_N^{-1}}}-
\underset{\text{elliptic perturbation}}{\underbrace{\hbar
\widehat{A}_N i\left(\frac{\Delta_g}{2}\right)\widehat{A}_N^{-1}}}
\end{eqnarray*}
which we will treat separately for the sake of simplicity. The key ingredient of~\cite{FS} is the following lemma~\cite[Lemma~3.2]{FS}:
\begin{lemm}\label{l:pseudo} The operator $\widehat{Q}_{1}+i\ml{L}_{V_f}$
is a pseudodifferential operator in $\Psi^{+0}(M,\Lambda^k(T^*M))$ whose symbol in any given system of coordinates is of the form
$$i(X_{H_f}.G_N^0)(x;\xi)\mathbf{Id}+\ml{O}(S^0)+\ml{O}_m(S^{-1+0}),$$
where $X_{H_f}$ is the Hamiltonian vector field
generating the characteristic flow of $\mathcal{L}_{V_f}$ in $T^*M$ whose definition is recalled in Appendix~\ref{a:order-function}. The operator $\widehat{Q}_{2}$ is a pseudodifferential operator in $\Psi^{2}(M,\Lambda^k(T^*M))$ whose symbol in any given system of coordinates is of the form
$$-i\frac{\|\xi\|_x^2}{2}\mathbf{Id}+\ml{O}_m(S^{1+0}).$$
\end{lemm}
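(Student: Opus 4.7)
The plan is to prove Lemma~\ref{l:pseudo} by expanding the conjugation $\widehat{A}_N\widehat{B}\widehat{A}_N^{-1}$ using the pseudodifferential symbol calculus adapted to variable-order symbols from~\cite[Appendix]{FRS08} together with the operator-valued version on vector bundles from~\cite[App.~C.1]{DyZw13}. Writing $\widehat{A}_N = \Op(\mathbf{A}^{(k)}_{N_0,N_1}) = \Op(e^{G_N^0})\mathbf{Id}$ modulo lower-order terms, the key general identity I would use is
$$\widehat{A}_N\widehat{B}\widehat{A}_N^{-1} = \widehat{B} + [\widehat{G}_N,\widehat{B}] + \tfrac{1}{2}[\widehat{G}_N,[\widehat{G}_N,\widehat{B}]] + \ldots,$$
where each commutator with $\widehat{G}_N \in \Psi^{+0}$ lowers the order by one thanks to $G_N^0 \in S^{+0}$, and the leading symbol of $[\widehat{G}_N,\widehat{B}]$ is $\tfrac{1}{i}\{G_N^0,b\}$ with $b$ the full symbol of $\widehat{B}$.

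The next step is to identify the principal symbols of $\mathcal{L}_{V_f}$ and $\Delta_g$ acting on $\Lambda^k(T^*M)$. For the Lie derivative, Cartan's formula $\mathcal{L}_{V_f} = d\iota_{V_f} + \iota_{V_f}d$ together with the algebraic identity $\iota_{V_f}(\xi\wedge\cdot) + \xi\wedge\iota_{V_f}(\cdot) = H_f(x,\xi)\mathbf{Id}$ immediately yields the scalar principal symbol $iH_f(x,\xi)\mathbf{Id}$; the subprincipal part is matrix-valued but lies in $S^0$. For the Hodge--de Rham Laplacian $\Delta_g = dd^* + d^*d$, the classical Weitzenb\"ock-type computation gives principal symbol $\|\xi\|_x^2\mathbf{Id}$ on forms, again with matrix-valued subprincipal part in $S^1$. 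The fact that both leading symbols are proportional to $\mathbf{Id}$ is what allows the expansion to produce scalar Poisson brackets times $\mathbf{Id}$ at the next level, since the matrix-valued leading symbols trivially commute with themselves.

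For $\widehat{Q}_1$, I would apply the expansion to $\widehat{B} = i\mathcal{L}_{V_f}$, whose principal symbol is $-H_f\mathbf{Id}$. The leading term of the conjugation is exactly $-H_f\mathbf{Id}$, which cancels when we add $+i\mathcal{L}_{V_f}$ to form $\widehat{Q}_1 + i\mathcal{L}_{V_f}$. The next symbol contribution, $\tfrac{1}{i}\{G_N^0,-H_f\}\mathbf{Id}$, equals $iX_{H_f}(G_N^0)\mathbf{Id}$ by the identity $\{G_N^0,H_f\} = -X_{H_f}(G_N^0)$ and the fact that $-1/i = i$. The two remainder terms in the lemma then arise from two distinct sources: the subprincipal symbol of $\mathcal{L}_{V_f}$ (matrix-valued, in $S^0$) which after conjugation still sits in $\Psi^0$, giving the $\mathcal{O}(S^0)$ term; and the higher commutators $[\widehat{G}_N,[\widehat{G}_N,\cdot]],\ldots$ in the BCH-type expansion, which produce symbols in $S^{-1+0}$ since each further bracket with $G_N^0 \in S^{+0}$ loses one full order. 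For $\widehat{Q}_2$, an entirely parallel computation applied to $\widehat{B} = -i\Delta_g/2$ shows that the principal symbol $-i\|\xi\|_x^2/2\mathbf{Id}$ is preserved by conjugation, while all corrections, including $\tfrac{1}{i}\{G_N^0,-i\|\xi\|^2/2\}\mathbf{Id}$, lie in $S^{1+0}$.

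The main obstacle I anticipate is purely technical: carefully justifying the variable-order, operator-valued symbol expansion, in particular showing that the asymptotic series of commutators makes sense in the variable-order calculus of~\cite{FRS08} extended to bundle-valued symbols, and that the stated remainder classes $\mathcal{O}_m(S^{-1+0})$ and $\mathcal{O}_m(S^{1+0})$ correctly absorb both the higher brackets and the matrix-valued subprincipal contributions. No new geometric or dynamical input is required here beyond what is already encoded in $G_N^0$; this is why this lemma is purely a calculus statement, while the dynamical content (Lyapunov exponents, Hamiltonian flow of $H_f$ on $T^*M$) will enter in the next step when one analyzes the sign of $X_{H_f}(G_N^0)$ near the trapped set using Lemma~\ref{l:escape-function}.
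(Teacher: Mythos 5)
Your proposal is correct and follows essentially the route the paper intends: the paper omits the proof and defers to~\cite[Lemma~3.2]{FS}, and your argument (conjugation by $\widehat{A}_N=\Op(e^{G_N^0})\mathbf{Id}$ expanded through commutators in the variable-order calculus of~\cite{FRS08}, extended to bundle-valued symbols as in~\cite[App.~C.1]{DyZw13}, with the scalar principal symbols $iH_f\mathbf{Id}$ and $\|\xi\|_x^2\mathbf{Id}$ making the adaptation verbatim) is exactly that proof. One bookkeeping point: since $\widehat{Q}_1=-\widehat{A}_N\,i\ml{L}_{V_f}\,\widehat{A}_N^{-1}$, the overall minus sign must be carried into the commutator term as well; as written, $\tfrac{1}{i}\{G_N^0,-H_f\}=-iX_{H_f}(G_N^0)$, and it is precisely this extra minus sign (which you already invoked to cancel the leading terms) that restores the stated value $+i\,X_{H_f}(G_N^0)\mathbf{Id}$ — for $\widehat{Q}_2$ you handled the sign consistently, so the slip is purely presentational and does not affect the argument.
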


In this Lemma, the notation $\ml{O}(.)$ means that the remainder is independent of the order function $m_{N_0,N_1}$, while the 
notation $\ml{O}_m(.)$ means that it depends on $m_{N_0,N_1}$. As all the principal symbols are proportional to $\mathbf{Id}_{\Lambda^k(T^*M)}$, 
the proof of~\cite{FS} can be adapted almost verbatim to encompass the case of 
a general vector bundle and of the term corresponding to the Laplace-Beltrami operator. Hence, we shall omit it and refer to this reference 
for a detailed proof. In particular, this Lemma says 
that $\widehat{Q}_1$ is an element in $\Psi^1(M,\Lambda^k(T^*M))$. We can consider that it acts on the 
domain $\Omega^k(M)$ which is dense in $L^2(M,\Lambda^k(T^*M))$. In particular, according to~\cite[Lemma~A.1]{FS}, 
it has a unique closed extension as 
an unbounded operator on $L^2(M,\Lambda^k(T^*M)).$ For $\widehat{Q}_2$, 
this property comes from the fact that the symbol is elliptic~\cite[Chapter 13 p.~125]{Wong}. The same properties also hold for the adjoint operator.

\subsubsection{The anti-adjoint part of the operator and its symbol}\label{sss:meromorphic}

We now verify that this operator has a discrete spectrum in a certain half-plane in $\IC$. Following~\cite{FS}, this will be done by arguments 
from analytic Fredholm theory. Recall that the strategy consists in studying the properties of the anti-adjoint part of the operator
\begin{equation}\label{e:antiadjoint}\widehat{P}_{\text{Im}}(\hbar):=\frac{i}{2}\left(\widehat{P}_{\hbar}^*-\widehat{P}_{\hbar}\right)=\frac{i}{2}\left(\widehat{Q}_1^*-\widehat{Q}_1\right)
+\frac{i\hbar}{2}\left(\widehat{Q}_2^*-\widehat{Q}_2\right),\end{equation}
whose symbol is, according to Lemma~\ref{l:pseudo}, given in any given system of coordinates by
$$P_{\text{Im}}(x;\xi)=X_{H_f}.G_N^0(x;\xi)\mathbf{Id}+\ml{O}(S^0)+\ml{O}_m(S^{-1+0})-\hbar\frac{\|\xi\|^2_x}{2}+\ml{O}_m(S^{1+0}).$$
We already note that, according to Lemma~\ref{l:escape-function}, there exists some constant $C>0$ independent of $m_{N_0,N_1}$ such that
\begin{equation}\label{e:upperbound-squareroot}X_{H_f}.G_N^0(x;\xi)\mathbf{Id}+\ml{O}(S^0)+\ml{O}_m(S^{-1+0})\leq (-C_N +C)\mathbf{Id}+\ml{O}_m(S^{-1+0}),\end{equation}
where $C_N$ is the constant defined by Lemma~\ref{l:escape-function}.

\subsubsection{Squareroot Lemma}
At this stage of the proof, we will need to pay some attention to the dependence of our arguments 
in $\hbar>0$ in order to be able to apply the arguments from~\cite{DZsto}. 
To handle this additional difficulty compared to~\cite{FS}, 
we need the following 
Lemma which yields a decomposition of some 
operator with positive symbol
as some operator square plus a small error which can be made
of trace class~:
\begin{lemm}[Squareroot Lemma]\label{l:squareroot} Let $L\geq 0$. Then, there exists $\widehat{a_L}\in \Psi^{+0}$ such that
\begin{eqnarray}
\frac{i}{2}\left(\widehat{Q}_{1}^*-\widehat{Q}_{1}\right)-\frac{C-C_N}{3}=-\widehat{a_L}^*\widehat{a_L}\mod(\Psi^{-L-1+0}). 
\end{eqnarray}
 Similarly, there exists $\widehat{b_L}\in \Psi^{1}$ such that
\begin{eqnarray}
\frac{i}{2}\left(\widehat{Q}_{2}^*-\widehat{Q}_{2}\right)=-\widehat{b_L}^*\widehat{b_L}\mod(\Psi^{-L-1+0}).
\end{eqnarray}
\end{lemm}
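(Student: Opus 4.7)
The plan is to apply the standard symbolic square-root construction for a positive elliptic pseudodifferential operator, iterated finitely many times in order to gain arbitrary order of regularization, to each of the two anti-adjoint parts separately.

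For the first identity, I would start by combining Lemma~\ref{l:pseudo} with the upper bound~\eqref{e:upperbound-squareroot} to identify the principal symbol of the formally self-adjoint operator $\widehat{R}_1 := \frac{C - C_N}{3} - \frac{i}{2}(\widehat{Q}_{1}^* - \widehat{Q}_{1}) \in \Psi^{+0}$. Since $-i\ml{L}_{V_f}$ has real principal symbol $\xi(V_f(x))\mathbf{Id}$, the full anti-Hermitian contribution comes only from the conjugation by $\widehat{A}_N$, and a direct computation gives
$$r_1(x;\xi) = \left(\tfrac{C - C_N}{3} - X_{H_f}.G_N^0(x;\xi)\right)\mathbf{Id} + \ml{O}(S^0) + \ml{O}_m(S^{-1+0}).$$
By~\eqref{e:upperbound-squareroot}, once $N_0, N_1$ are fixed so that $C_N > C$, this symbol is bounded below by $\frac{2(C_N - C)}{3}\mathbf{Id} + \ml{O}_m(S^{-1+0})$ on $T^*M$, hence uniformly positive outside a compact subset of $T^*M$. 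I would then add a compactly supported non-negative scalar correction $\chi \geq 0$ whose quantization is smoothing (hence automatically in $\Psi^{-L-1+0}$) so that the modified symbol $\tilde r_1 := r_1 + \chi$ is strictly positive and bounded away from zero everywhere.

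Next, I would set $a_0 := \sqrt{\tilde r_1}\,\mathbf{Id} \in S^{+0}$ and verify, via the composition and adjoint formulas of the variable-order calculus of~\cite[Appendix]{FRS08}, that $\Op(a_0)^*\Op(a_0) = \Op(\tilde r_1) \mod \Psi^{-1+0}$. Iterating: given $\widehat{a}^{(\ell)} := \Op(a_0 + \cdots + a_{\ell})$ satisfying $\widehat{a}^{(\ell)*}\widehat{a}^{(\ell)} - \widehat{R}_1 = \Op(e_{\ell+1}) \mod \Psi^{-\ell-2+0}$ with $e_{\ell+1} \in S^{-\ell-1+0}$, the scalar algebraic equation $2 a_0 a_{\ell+1} = e_{\ell+1}$ has the explicit solution $a_{\ell+1} = e_{\ell+1}/(2\sqrt{\tilde r_1}) \in S^{-\ell-1+0}$, which improves the approximation by one order. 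Stopping after $L+1$ steps and setting $\widehat{a_L} := \Op(a_0 + a_1 + \cdots + a_L)$ yields the desired factorization modulo $\Psi^{-L-1+0}$.

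For the second identity, the argument is identical but easier: by Lemma~\ref{l:pseudo}, the symbol of $-\frac{i}{2}(\widehat{Q}_{2}^* - \widehat{Q}_{2})$ equals $\frac{\|\xi\|_x^2}{2}\mathbf{Id} + \ml{O}_m(S^{1+0})$, which is scalar, elliptic of order $2$ and non-negative at the principal level, with corrections of strictly smaller order. Starting from $a_0 := (\|\xi\|_x / \sqrt{2})\,\mathbf{Id}$ (smoothly cut off near the zero section at the cost of a smoothing contribution) and running the same iterative scheme produces $\widehat{b_L} \in \Psi^{1}$ with the claimed factorization. The main technical obstacle is really bookkeeping --- keeping track of how the composition and adjunction formulas act on the variable-order symbols determined by $m_{N_0,N_1}$, and checking that each iteration genuinely lowers the error by exactly one order --- whereas the vector-bundle-valued setting introduces no essential complication, since every principal symbol that appears is a scalar multiple of $\mathbf{Id}_{\Lambda^k(T^*M)}$ and the scalar square-root is taken pointwise on $T^*M$.
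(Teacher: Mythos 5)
Your proposal is correct and takes essentially the same route as the paper: it uses the bound~\eqref{e:upperbound-squareroot} to get positivity of the full shifted symbol for $\|\xi\|$ large, takes a square root of that full symbol after a compactly supported (hence smoothing) modification, and runs the standard one-order-per-step iterative correction, stopped after finitely many steps, treating $\widehat{Q}_2$ analogously. The only adjustments are cosmetic: with your convention for $e_{\ell+1}$ the recursion should read $a_{\ell+1}=-e_{\ell+1}/(2a_0)$, and since the $\ml{O}(S^0)$ part of the symbol is a priori endomorphism-valued, the pointwise square root is a Hermitian-matrix square root rather than a scalar multiple of $\mathbf{Id}$ (just as implicitly in the paper's relation $-\tilde{p}_m=a_0^2$).
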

Note that the operators $\widehat{a_L}$ and $\widehat{b_L}$ also depend on $m_{N_0,N_1}$ even if we did not mention this dependence explicitely.
\begin{proof} We only deal with the case of $\widehat{Q}_1$ as the other case can be treated similarly. Recall that
$$\widehat{Q}_{1}=-\widehat{A}_N i\left(\mathcal{L}_{V_f}\right)\widehat{A}_N^{-1}.$$ 
The symbol of this operator was given in Lemma~\ref{l:pseudo}. Observe that the term $-X_{H_f}G_N^0$ appearing there belongs to\footnote{That
we write shortly as $S^{+0}$ in the sequel.} $S^{+0}(T^*M,\Lambda^k(T^*M))$ 
since it contains a $\log$ term of the form $\left(-X_{H_f}.m\right) \log\left(1+\vert\xi\vert^2 \right)$. Therefore the symbol $-\tilde{p}_m(x;\xi)$ of 
$$\widehat{P}_m:=\frac{i}{2}\left(\widehat{P}_{1}^*-\widehat{P}_{1}\right)-\frac{C-C_N}{3}$$ belongs to $S^{+0}$ and it
satisfies the following~:
\begin{eqnarray*}
-\tilde{p}_m(x;\xi)=\underset{\in S^{+0}}{\underbrace{-X_{H_f}.G_N^0\mathbf{Id}}}+\frac{C-C_N}{3}\mathbf{Id}+p_0(x;\xi)+p_{-1,m}(x;\xi)
\end{eqnarray*}
where $p_0\in S^0$ (independent of $m$) and $p_{-1,m}\in S^{-1+0}$ (depending on $m$). Moreover, from~\eqref{e:upperbound-squareroot}, one finds that,
for $\vert \xi\vert \geqslant R$ with $R\geq 1$ large enough,
$$ -\tilde{p}_m(x;\xi) \geqslant  \frac{C_N-C}{3}\mathbf{Id}.$$
This implies that we can find a smooth symbol $a_0(x;\xi)\in S^{+0}$ such that
$-\tilde{p}_m=a_0^2 $ for $\vert \xi\vert \geqslant R$ with $R\geq 1$ large enough.
By construction, $a_0$ belongs to $S^{+0}$, it depends on $m$ and $a_0^2=-\tilde{p}_m \text{ mod}(S^{-\infty})$.
Then, the theorem for composition of pseudodifferential operators yields
$$ \Op(a_0)^*\Op(a_0)=\Op(a_0^2)\mod(\Psi^{-1+0})=-\widehat{P}_m \mod(\Psi^{-1+0}).$$
Let us now show by induction that there exists a sequence of symbols
$(a_i)_{i=0}^\infty$,
$a_i\in S^{-i+0}$ such that for every $l\geq 0$,
$$ \left( \Op(a_0)^*+\dots+\Op(a_l)^* \right)\left( \Op(a_0)+\dots+ \Op(a_l) \right)=-\widehat{P}_m \mod(\Psi^{-(l+1)+0}). $$
Assume we could construct the first $l$ terms $(a_0,\dots,a_l)$ satisfying this property.
This implies that
$$ \left( \Op(a_0)^*+\dots+\Op(a_l)^* \right)\left(\Op(a_0)+\dots+ \Op(a_l) \right)=-\widehat{P}_m+ 
\Op(r_{l+1}) \mod(\Psi^{-(l+2)+0}),$$
where $r_{l+1}\in S^{-(l+1)+0}$ is selfadjoint. We are now looking for $\Op(a_{l+1})\in\Psi^{-(l+1)+0}$ such that
$$ \left( \Op(a_0)^*+\dots+\Op(a_{l+1})^* \right)\left( \Op(a_0)+\dots+ \Op(a_{l+1}) \right)=-\widehat{P}_m \mod(\Psi^{-(l+2)+0}).$$
Equivalently, we have to ensure that
$$ \Op(a_0)^* \Op(a_{l+1})+ \Op(a_{l+1})^* \Op(a_0)=\Op(r_{l+1})\mod(\Psi^{-(l+2)+0}).$$
Now let $a_0^{-1}$ denote any selfadjoint symbol such that $a_0^{-1}a_0=a_0a_0^{-1}=\mathbf{Id}$ for $\vert\xi\vert$ large enough.
We set
$\Op(a_{l+1})=-\frac{1}{2}\Op(a_0^{-1})\Op(r_{l+1})\in\Psi^{-(l+1)+0}$ which 
implies the previous equality from the composition formula for pseudodifferential operators. It concludes the proof of the Lemma.
\end{proof}

We can now follow the proof of~\cite{FS}. First of all, 
arguing as in~\cite[Lemma~3.3]{FS}, we can show 
that $\widehat{P}_{\hbar}$ has empty 
spectrum for $\text{Im}(z)>C_0$, where $C_0$ is 
some positive constant that may depend on $m$ but which can be made uniform in terms of $\hbar\in [0,1)$. In other words, the resolvent
$$\left(\widehat{P}_{\hbar}-z\right)^{-1}: L^2(M,\Lambda^k(T^*M))\rightarrow L^2(M,\Lambda^k(T^*M))$$
defines a bounded operator for  $\text{Im}(z)>C_0$. Now, we will show how to extend it meromorphically to some half-plane
$$\left\{z:\text{Im}(z)\geq\frac{C-C_N}{3}\right\}.$$

\subsubsection{From resolvent to the semigroup}\label{r:resolvent} Before doing that, we already note that the proof of~\cite[Lemma~3.3]{FS} implicitely shows that, 
for every $z$ in $\mathbb{C}$ satisfying $\text{Im} (z)>C_0$, one has
\begin{equation}\label{e:norm-resolvent}
 \left\|\left(\widehat{P}_0-z\right)^{-1}\right\|_{L^2(M,\Lambda^k(T^*M))\rightarrow L^2(M,\Lambda^k(T^*M))}\leq\frac{1}{\text{Im}(z)-C_0},
\end{equation}
which will allow to relate the spectrum of the generator to the spectrum of the corresponding semigroup $\varphi_f^{-t*}$. In particular, 
combining this observation with~\cite[Cor.~3.6,~p.76]{EnNa00}, we know that, for $t\geq 0$
$$\varphi_f^{-t*}:\ml{H}_k^{m_{N_0,N_1}}(M)\rightarrow\ml{H}_k^{m_{N_0,N_1}}(M)$$
generates a strongly continuous semigroup whose norm verifies
$$\forall t\geq 0,\quad\|\varphi_f^{-t*}\|_{\ml{H}_k^{m_{N_0,N_1}}(M)\rightarrow \ml{H}_k^{m_{N_0,N_1}}(M)}\leq e^{tC_0}.$$

\subsubsection{Resolvent construction and meromorphic continuation}

We fix some large integer $L>\dim(M)/2$ to ensure that the operator $(1+\Delta_g)^{-L}$ is \textbf{trace class}. 
For every $u$ in $\ml{C}^{\infty}(M)$ and for every $0\leq \hbar\leq 1$,
we start from the identity~\eqref{e:antiadjoint} which yields:
$$ \text{Im}\left\la \left(\widehat{P}_{\hbar}+\frac{C_N-C}{3}\right) u,u\right\ra=\left\langle \frac{1}{2}\left(\widehat{Q}^*_1-\widehat{Q}_1\right)u,u \right\rangle+
\hbar\left\langle  \frac{1}{2}\left(\widehat{Q}^*_2-\widehat{Q}_2\right)u,u \right\rangle .$$
From the squareroot Lemma~\ref{l:squareroot} applied to the operators appearing on the right hand side with $2(L+1)$ instead of $L$, we 
now find that, 
\begin{equation}\label{e:garding1}\text{Im}\left\la \left(\widehat{P}_{\hbar}+\frac{C_N-C}{3}\right) u,u\right\ra=-\left\|\widehat{a_{2(L+1)}}u\right\|^2-\hbar\left\|\widehat{b_{2(L+1)}}u\right\|^2+
\left\la \widehat{R_m}u,u\right\ra,\end{equation}
where the remainder $\widehat{R_m}$ belongs to $\Psi^{-2(L+1)}(M)$, $\widehat{a_{2(L+1)}}\in\Psi^{+0}(M)$ and $\widehat{b_{2(L+1)}}\in \Psi^1(M)$.
We made appear a negative term $-\left\|\widehat{a_{2(L+1)}}u\right\|^2-\hbar\left\|\widehat{b_{2(L+1)}}u\right\|^2$
and our goal is to give an upper bound to control the error term $\left\la \widehat{R_m}u,u\right\ra $. For that purpose, we write
\begin{equation}\label{e:garding2}\left\la \widehat{R_m}u,u\right\ra=\left\la\underset{\in \Psi^{0}(M)}{\underbrace{(1+\Delta_g)^{\frac{L}{2}}\widehat{R_m}(1+\Delta_g)^{\frac{L}{2}}}}
(1+\Delta_g)^{-\frac{L}{2}}u,(1+\Delta_g)^{-\frac{L}{2}}u\right\ra\leq  R\left\la(1+\Delta_g)^{-L}u,u\right\ra.\end{equation}
for some large enough parameter $R>0$ depending on $m_{N_0,N_1}$ and $L$. Observe that the inequality follows from the Cauchy-Schwarz inequality and the $L^2$ 
boundedness of pseudodifferential operators in $\Psi^0(M)$.

Let us now set~:
$$ \widehat{\chi}_R:=-iR (1+\Delta_g)^{-L}.$$
 Combining~\eqref{e:garding1} and~\eqref{e:garding2} yields
 $\text{Im}\left\la \left(\widehat{P}_{\hbar}+\frac{C_N-C}{3}\right) u,u\right\ra\leq 
 \la\widehat{R_m}u,u\ra \leq R\left\la(1+\Delta_g)^{-L}u,u\right\ra$ thus~:
\begin{equation}\label{e:bound-imaginary-part}\text{Im}\left\la \left(\widehat{P}_{\hbar}+\frac{C_N-C}{3}\right) u,u\right\ra\leq-\text{Im}\la \widehat{\chi}_Ru,u\ra.\end{equation}
We can now argue like in~\cite[Lemma~3.3]{FS} to conclude that $\left(\widehat{P}_{\hbar}+\widehat{\chi}_R-z\right)$ is invertible 
for $\text{Im}(z)>\frac{C-C_N}{3}$. In fact, set $\eps=\text{Im}(z)-\frac{C-C_N}{3}.$ From~\eqref{e:bound-imaginary-part}, we find that
$$\text{Im}\left\la \left(\widehat{P}_{\hbar}+\widehat{\chi}_R-z\right) u,u\right\ra\leq -\eps\|u\|^2.$$
Applying the Cauchy-Schwarz inequality, we find that
$$\left\| \left(\widehat{P}_{\hbar}+\widehat{\chi}_R-z\right) u\right\|\left\|u\right\|\geq\left|
\text{Im}\left\la \left(\widehat{P}_{\hbar}+\widehat{\chi}_R-z\right) u,u\right\ra\right|\geq\eps\|u\|^2.$$
This implies that $\left(\widehat{P}_{\hbar}+\widehat{\chi}_R-z\right)$ is injective. We can 
argue similarly for the adjoint operator and obtain that
 $$\left\| \left(\widehat{P}_{\hbar}^*+\widehat{\chi}_R-\overline{z}\right) u\right\|\left\|u\right\|\geq\eps\|u\|^2,$$
from which we can infer that $\left(\widehat{P}_{\hbar}+\widehat{\chi}_R-z\right)$ is surjective~\cite[Th.~II.19]{Br}. Hence, we can conclude that 
$$\left(\widehat{P}_{\hbar}+\widehat{\chi}_R-z\right)^{-1}: L^2(M,\Lambda^k(T^*M))\rightarrow L^2(M,\Lambda^k(T^*M))$$
defines a bounded operator for  $\text{Im}(z)>\frac{C-C_N}{3}$ and that its operator norm satisfies the estimate
$$\left\|\left(\widehat{P}_{\hbar}+\widehat{\chi}_R-z\right)^{-1}\right\|_{L^2\rightarrow L^2}\leq\frac{1}{\text{Im}(z)-\frac{C-C_N}{3}}.$$
The conclusion then follows from analytic Fredholm theory combined with the following expression:
\begin{equation}\label{e:keyidentity1}
\text{Im}(z)>\frac{C-C_N}{3}\Longrightarrow \widehat{P}_\hbar -z=\left(\text{Id}- \widehat{\chi}_R\left( P_\hbar +\widehat{\chi}_{R}-z\right)^{-1}\right)
\left(\widehat{P}_\hbar +\widehat{\chi}_{R}-z\right).
\end{equation}
Note that $\widehat{\chi}_R\in \Psi^{-2L}(M)$ is by definition a trace class operator for $L$ large enough (at least $>\dim(M)/2$~\cite[Prop.~B.20]{Zw17}). It implies that the operator
\begin{eqnarray*}\label{e:smoothingop}
\widehat{\chi}_R\left( \widehat{P}_\hbar +\widehat{\chi}_{R}-z\right)^{-1}
\end{eqnarray*}
is trace-class for every $\hbar\geqslant 0$ as composition of a trace class operator and a bounded one. Moreover, it depends holomorphically on $z$ in the domain $\{\text{Im}(z)>-\frac{C_N-C}{3}\}$ implying that $\widehat{P}_\hbar -z$ is a holomorphic family of Fredholm operators for $z$ in the same domain. Finally, we can apply arguments from analytic Fredholm theory
to $\widehat{P}_\hbar -z$
~\cite[Th.~D.4 p.~418]{Zworski} which yields the analytic continuation
of $(P_\hbar-z)^{-1}$ as a
meromorphic family of Fredholm operators for $z\in \{\text{Im}(z)>\frac{C-C_N}{3}\}$  and, arguing as in~\cite[Lemma~3.5]{FS}, we 
can conclude that $\widehat{P}_\hbar$ has discrete spectrum with finite multiplicity on $\text{Im}(z)>\frac{C-C_N}{3}.$


\subsubsection{Definition of the resonances}

Since $\widehat{P}_\hbar$ is conjugated to $\widehat{H}_\hbar$, the above discussion
implies that $\widehat{H}_{\hbar}$ has a discrete spectrum with finite multiplicity on $\text{Im}(z)>\frac{C-C_N}{3}$ as 
an operator acting on $\ml{H}^{m_{N_0,N_1}}_k(M)$. It can be shown~\cite[Sect.~4]{FS} that \textbf{this discrete spectrum is independent of 
the choice of the order function} $m_{N_0,N_1}$, meaning 
that both the eigenvalues and the generalized eigenmodes (on $\ml{H}^{m_{N_0,N_1}}_k(M)$) are intrinsic. 
In particular, the argument from this reference also shows that, for $\hbar>0$, this discrete spectrum is the same as the one obtained for 
$\widehat{H}_{\hbar}$ viewed as an operator acting on 
$L^2(M,\Lambda^k(T^*M))$. In that case, we recall that the resonances are, for $\hbar>0$, the Witten eigenvalues, i.e. of the form (up to 
multiplication by $-i$)
$$0\geq -\lambda_1^{(k)}(\hbar)\geq -\lambda_2^{(k)}(\hbar)\geq\ldots\geq -\lambda_j^{(k)}(\hbar)\rightarrow-\infty\ \text{as}\ j\rightarrow+\infty.$$
Our next step will be to show that this Witten spectrum indeed converges to the Pollicott-Ruelle spectrum.

\subsubsection{Convention} In the following, we shall denote this intrinsic discrete spectrum by $\ml{R}_k(\hbar)$. 
They correspond to the eigenvalues of $\widehat{H}_{\hbar}$ acting on an appropriate Sobolev space of currents
of degree $k$. 
When $\hbar>0$, these are the Witten eigenvalues (up to a factor $-1$) while, for $\hbar=0$, they 
represent the correlation spectrum of the gradient flow, which is often referred as the Pollicott-Ruelle spectrum.

\subsubsection{Boundedness on standard Sobolev spaces} Denote by $H^s(M,\Lambda^k(T^*M))$ the 
standard Sobolev space of index $s>0$, i.e.
$$H^s(M,\Lambda^k(T^*M)):=(1+\Delta_g^{(k)})^{-\frac{s}{2}}L^2(M,\Lambda^k(T^*M)).$$
The above construction shows that
$$\left(\widehat{P}_{\hbar}+\widehat{\chi}_R-z\right)^{-1}: L^2(M,\Lambda^k(T^*M))\rightarrow L^2(M,\Lambda^k(T^*M))$$
defines a bounded operator for  $\text{Im}(z)>\frac{C-C_N}{3}$ which depends holomorphically on $z$. For the sequel, we will in fact 
need something slightly stronger:
\begin{lemm}\label{l:resolventbounded} Let $s_0>0$ and let $N_0,N_1$ such that $N_0,N_1>4(\|f\|_{\ml{C}^0}+s_0)$. Then, there exists $R>0$ 
such that, for every $z$ satisfying $\operatorname{Im}(z)>-\frac{C_N-C}{3}$ and for every $s\in[-s_0,s_0]$, the resolvent~:
\begin{equation}
\left(P_\hbar +\widehat{\chi}_{R}-z\right)^{-1}
\end{equation}
exists as a holomorphic function of $z\in \{\operatorname{Im}(z)>-\frac{(C_N-C)}{3}\}$ valued in bounded operator from 
$H^{2s}(M,\Lambda^k(T^*M))\mapsto H^{2s}(M,\Lambda^k(T^*M))$. Moreover, one has, for every $0\leq \hbar\leq 1$,
$$\left\|\left(\widehat{P}_\hbar +\widehat{\chi}_{R}-z\right)^{-1}\right\|_{H^{2s}\rightarrow H^{2s}}\leq\frac{1}{\operatorname{Im}(z)+\frac{\left(C_N-C \right)}{3}}.$$
\end{lemm}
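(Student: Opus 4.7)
The plan is to reduce the $H^{2s}$ estimate to an $L^2$ estimate via conjugation with a Bessel potential, and then rerun the squareroot--G\aa rding argument of Section~\ref{sss:meromorphic} with a shifted order function. Set $J_s := (1+\Delta_g^{(k)})^s$, so that $J_s^{-1}:L^2\to H^{2s}$ is an isometry and the $H^{2s}\to H^{2s}$ bound for $(\widehat{P}_\hbar+\widehat{\chi}_R-z)^{-1}$ is equivalent to an $L^2\to L^2$ bound for the conjugated family
$$\widetilde{P}_{\hbar,s}+\widetilde{\chi}_{R,s}-z \;:=\; J_s\bigl(\widehat{P}_\hbar+\widehat{\chi}_R-z\bigr)J_s^{-1}$$
acting on $L^2(M,\Lambda^k(T^*M))$.

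The key observation is that $J_s\widehat{A}_N$ is a pseudodifferential operator whose principal symbol is $\exp\bigl((m_{N_0,N_1}+s)\ln(1+\|\xi\|_x^2)\bigr)\mathbf{Id}$. Thus, modulo symbols in $S^{-1+0}$, the conjugated operator $\widetilde{P}_{\hbar,s}$ is the operator one would obtain by carrying out the whole construction of Section~\ref{sss:meromorphic} with the shifted order function $m_{N_0,N_1}^{(s)}:=m_{N_0,N_1}+s$ in place of $m_{N_0,N_1}$. The additional term $s\,X_{H_f}\!\cdot\!\log(1+\|\xi\|_x^2)$ appearing in the computation of the symbol of the anti-adjoint part is bounded on $T^*M$ by a constant comparable to $|s|\,\|V_f\|_{\ml{C}^0}\lesssim s_0\,\|f\|_{\ml{C}^0}$. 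The quantitative hypothesis $N_0,N_1>4(\|f\|_{\ml{C}^0}+s_0)$ is precisely what ensures that, for every $s\in[-s_0,s_0]$, the shifted order function still satisfies the escape inequality of Lemma~\ref{l:escape-function} with an escape constant bounded below by $C_N$, so that the bound~\eqref{e:upperbound-squareroot} persists without deterioration.

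With these preparations in hand, one reruns the proofs of Lemmas~\ref{l:pseudo} and~\ref{l:squareroot} for $\widetilde{P}_{\hbar,s}$ and obtains, for every $u\in\Omega^k(M)$,
$$\operatorname{Im}\left\la\bigl(\widetilde{P}_{\hbar,s}+\tfrac{C_N-C}{3}\bigr)u,u\right\ra_{L^2}\;\leq\;\bigl\la \widetilde{R}_{m,s}u,u\bigr\ra_{L^2},$$
with $\widetilde{R}_{m,s}\in\Psi^{-2(L+1)}(M,\Lambda^k(T^*M))$ depending continuously on $s\in[-s_0,s_0]$ and $\hbar\in[0,1]$. Choosing $L>\dim(M)/2+2s_0$ guarantees that $(1+\Delta_g)^{-L}$ is still trace class after absorbing the Bessel conjugations, and the analog of~\eqref{e:garding2} provides a single $R>0$, depending only on $m_{N_0,N_1}$, $L$ and $s_0$, such that $\widetilde{R}_{m,s}\leq -\operatorname{Im}\widetilde{\chi}_{R,s}$ on $L^2$ uniformly in $(s,\hbar)$.

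From this point the proof is identical to the end of Section~\ref{sss:meromorphic}: Cauchy--Schwarz gives
$$\bigl\|\bigl(\widetilde{P}_{\hbar,s}+\widetilde{\chi}_{R,s}-z\bigr)u\bigr\|\,\|u\|\;\geq\;\Bigl(\operatorname{Im}(z)+\tfrac{C_N-C}{3}\Bigr)\|u\|^2,$$
the same estimate for the adjoint yields surjectivity, and analytic Fredholm theory furnishes the holomorphic dependence on $z$ and the claimed norm bound on $H^{2s}$. The main obstacle is the second step, namely verifying that the escape-function mechanism is preserved under the shift $m_{N_0,N_1}\leadsto m_{N_0,N_1}+s$ with exactly the same constant $C_N-C$; this is where the precise lower bound on $N_0,N_1$ in terms of $\|f\|_{\ml{C}^0}+s_0$ is used.
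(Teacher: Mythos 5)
Your proposal follows essentially the same route as the paper: conjugating by $(1+\Delta_g)^{s}$ reduces the $H^{2s}$ statement to an $L^2$ one, the conjugated operator is the one associated with the shifted escape function $G_N^s=(m_{N_0,N_1}+s)\log(1+\|\xi\|_x^2)$ handled by Lemma~\ref{l:escape-function}, and the squareroot/G\aa rding argument of paragraph~\ref{sss:meromorphic} is rerun with a constant $R$ uniform in $s\in[-s_0,s_0]$ and $\hbar\in[0,1]$, exactly as in the paper's proof. One minor remark: your heuristic that the extra term is bounded by $|s|\,\|V_f\|_{\ml{C}^0}\lesssim s_0\|f\|_{\ml{C}^0}$ is not quite the true mechanism (the hypothesis $N_0,N_1>4(\|f\|_{\ml{C}^0}+s_0)$ enters through the sign and size conditions on $m_{N_0,N_1}+s$ near $\Gamma_\pm$ in Lemma~\ref{l:escape-function}), but since you invoke that lemma directly this does not affect the correctness of the argument.
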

\begin{proof} This Lemma is just a generalization of the above argument. Fix $s_0$ in $\IR_+$ and $s\in[-s_0,s_0]$. 
We set $N_0,N_1$ to be large enough, precisely $N_0,N_1>4(\|f\|_{\ml{C}^0}+|s_0|)$ as in the statement of Lemma~\ref{l:escape-function}. 
As above, we note that this problem is equivalent to considering the spectrum of the operator
$$(1+\Delta_g)^{s}\left(\widehat{P}_{\hbar}+\widehat{\chi}_R\right)(1+\Delta_g)^{-s},$$
acting on $L^2(M)$. One more time, we removed the dependence in $k$ for simplicity. As in Lemma~\ref{l:pseudo}, 
the principal symbol of this operator remains $\xi(V_f(x))-i\hbar\|\xi\|^2$ from the composition rules for pseudodifferential operators. 
For $\hbar>0$, this defines an elliptic operator. Thus, it 
has again a unique closed extension as an unbounded operator on $L^2$. When $\hbar=0$, we can make use of~\cite[Lemma~A.1]{FS} to get 
the same conclusion. Now, we have to show that  
$$(1+\Delta_g)^{s}\left(\widehat{P}_{\hbar}+\widehat{\chi}_R-z\right)(1+\Delta_g)^{-s}$$
is invertible in some half-plane for $\text{Im}(z)$ greater than some constant which depends on the constant 
$C_N$ from Lemma~\ref{l:escape-function}. In the case $s=0$, we explained how to make use of Lemma~\ref{l:squareroot} 
to prove such a result. Here, we can argue similarly. In fact, one has
$$(1+\Delta_g)^{s}\left(\widehat{P}_{\hbar}+\widehat{\chi}_R-z\right)(1+\Delta_g)^{-s}=
(1+\Delta_g)^{s}\widehat{P}_{\hbar}(1+\Delta_g)^{-s}
+\widehat{\chi}_R-z.$$
Lemma~\ref{l:pseudo} still holds for the operator $(1+\Delta_g)^{s}\widehat{P}_{\hbar}(1+\Delta_g)^{-s}$ except that we should replace 
$G_{N}^0$ by $G_N^s(x;\xi)=(m_{N_0,N_1}(x;\xi)+s)\log(1+\|\xi\|^2)$ in the above proof\footnote{Note that they satisfy the 
same properties from Lemma~\ref{l:escape-function}.}. Then, we find by using the squareroot Lemma~\ref{l:squareroot} (extended to this case) that
$$\text{Im}\left\la \left((1+\Delta_g)^{s}\widehat{P}_{\hbar}(1+\Delta_g)^{-s}+\frac{C_N-C}{3}\right) u,u\right\ra\leq R \la (1+\Delta_g)^{-L}u,u\ra,$$
provided that $R>0$ is large enough. Note that $R$ can be made uniform for $s\in[-s_0,s_0]$. We conclude one more time by arguing as 
in~\cite[Lemma~3.3]{FS} and paragraph~\ref{sss:meromorphic}. Precisely, for $\text{Im}(z)+\frac{\left(C_N-C \right)}{3}>0$, 
$$\left(\widehat{P}_\hbar +\widehat{\chi}_{R}-z\right)^{-1}:H^{2s}(M,\Lambda^k(T^*M))\rightarrow H^{2s}(M,\Lambda^k(T^*M))$$
defines a bounded operator on the standard Sobolev space which depends holomorphically on $z$. Moreover, its norm is bounded as follows
$$\left\|\left(\widehat{P}_\hbar +\widehat{\chi}_{R}-z\right)^{-1}\right\|_{H^{2s}\rightarrow H^{2s}}\leq\frac{1}{\text{Im}(z)+\frac{\left(C_N-C \right)}{3}}.$$
\end{proof}

\subsection{Pollicott-Ruelle resonances as 
zeros of a Fredholm determinant}\label{ss:determinant} 
From expression~\eqref{e:keyidentity1}, 
we know that, for $\text{Im}(z)>\frac{C-C_N}{3}$, 
$z$ belongs to the spectrum of 
$\widehat{P}_{\hbar}$ if and only if 
the operator $\left(\text{Id}+ \widehat{\chi}_R\left( P_\hbar -\widehat{\chi}_R-z\right)^{-1}\right)$ 
is not invertible. As we have shown that 
$$\widehat{\chi}_R\left( \widehat{P}_\hbar +\widehat{\chi}_{R}-z\right)^{-1}$$
is a trace class operator on $L^2(M,\Lambda^k(T^*M))$, 
this is equivalent to saying that $z$ is a zero of the 
Fredholm determinant~\cite[Prop.~B.25]{DZresonances} 
$$D_{m_{N_0,N_1}}(\hbar,z):=\text{det}_{L^2}\left(\text{Id}- \widehat{\chi}_R\left( \widehat{P}_\hbar +\widehat{\chi}_{R}-z\right)^{-1}\right).$$
Moreover, the multiplicity of $z$ as an eigenvalue of 
$\widehat{P}_{\hbar}$ coincides with the multiplicity of $z$ as a zero of 
$D_m(\hbar,z)$~\cite[Prop.~B.29]{DZresonances}.

\section{From the Witten spectrum to the Pollicott-Ruelle spectrum}\label{s:convergence}

Now that we have recalled the precise notion of resonance spectrum for the limit operator $-\ml{L}_{V_f}$, 
we would like to explain how the 
Witten spectrum converges to the resonance spectrum of the Lie derivative. 
This will be achieved by an argument due to Dyatlov 
and Zworski~\cite{DZsto} in the context of \emph{Anosov} flows -- see also~\cite{Zw15}. 
In this section, we briefly recall their proof adapted to our framework.

\begin{rema}
 In~\cite{DZsto}, Dyatlov and Zworski prove something slightly stronger as they obtain smoothness in $\hbar$. Here, we are aiming at something 
 simpler and we shall not prove smoothness which would require some more work that would be beyond the scope of the present article 
 -- see~\cite{DZsto} for details in the Anosov case.
\end{rema}

\subsection{Convergence of the eigenvalues} We fix $N_0$, $N_1$, $s_0>2$ and $R$ as in the statement of Lemma~\ref{l:resolventbounded}. 
Using the conventions of this paragraph, we start by studying the regularity of the operator 
$$\hbar\in[0,1]\mapsto  K_m(\hbar):=\widehat{\chi}_R\left( \widehat{P}_\hbar +\widehat{\chi}_{R}-z\right)^{-1}.$$
Recall that $K_m(\hbar)$ is an holomorphic map on $\{\text{Im}(z)>(C-C_N)/3\}$ with values on the space of trace class operators on $L^2$. 
For $\hbar,\hbar'\in[0,1]$, we now write
\begin{equation}\label{e:lipschitz}\frac{\left( 
\widehat{P}_\hbar +\widehat{\chi}_{R}-z\right)^{-1}-\left( 
\widehat{P}_{\hbar'} +\widehat{\chi}_{R}-z\right)^{-1}}{\hbar-\hbar'}=-\left( 
\widehat{P}_\hbar +\widehat{\chi}_{R}-z\right)^{-1}\widehat{Q}_2\left( 
\widehat{P}_{\hbar'} +\widehat{\chi}_{R}-z\right)^{-1},\end{equation}
where we recall that
$$\widehat{Q}_2=-i\widehat{A}_N \left(\frac{\Delta_g}{2}\right)\widehat{A}_N^{-1}.$$
From Lemma~\ref{l:resolventbounded} with $s_0>2$, we find that~\eqref{e:lipschitz} is bounded for 
$\text{Im}(z)>\frac{C-C_N}{3}$ and uniformly for $\hbar\in[0,1]$ as an operator from $L^2$ to $H^{-2}$. 
Hence, we have verified that $$\hbar\mapsto\left(\widehat{P}_\hbar +\widehat{\chi}_{R}-z\right)^{-1}$$ 
defines a \emph{Lipschitz (thus continuous)} map in $\hbar$ with values in the set $\text{Hol}(\{\text{Im}(z)>\frac{C-C_N}{3} \},\mathcal{B}(L^2,H^{-2}))$ of holomorphic functions in $z$ valued in the Banach space $\mathcal{B}(L^2,H^{-2})$ of bounded operators from 
$L^{2}$ to $H^{-2}$. Recall now that
$$\widehat{\chi}_R=-iR(1+\Delta_g)^{-L}$$
is trace class from $H^{-2}$ to $L^2$ for $L$ large enough (precisely\footnote{This follows from the Weyl's law.} $L>\text{dim}(M)/2+1$). 
Denote by $\mathcal{L}^1(H^{-2}(M),L^2(M))\subset \mathcal{B}(H^{-2}(M),L^2(M))$
the set of trace class operators acting on these spaces~\cite[Sect.~B.4]{DZresonances}. By continuity of
the composition map 
$ (A,B)\in  \mathcal{L}^1(H^{-2},L^2)\times  \mathcal{B}(L^2,H^{-2}) \mapsto AB\in  \mathcal{L}^1(L^2,L^2) $~\cite[Eq.~(B.4.6)]{DZresonances}, the operator
$$K_m(\hbar)=\underset{\text{trace class}}{\underbrace{\widehat{\chi}_R}}
\underset{\text{Lipschitz in }\mathcal{B}(L^2,H^{-2})}{\underbrace{\left( \widehat{P}_\hbar +\widehat{\chi}_{R}-z\right)^{-1}}}$$ is the composition of a Lipschitz operator valued in $\text{Hol}(\{\text{Im}(z)>\frac{C-C_N}{3} \},\mathcal{B}(L^2,H^{-2}))$ with the fixed trace class operator $\widehat{\chi}_R\in \mathcal{L}^1$. $K_m$ must therefore be 
a Lipschitz map in $\hbar\in [0,1]$ valued in $\text{Hol}(\{\text{Im}(z)>\frac{C-C_N}{3} \},\mathcal{L}^1(L^2,L^2))$. 
We have
thus shown the following Lemma:
\begin{lemm}\label{l:continuity} Let $N_0,N_1>4(\|f\|_{\ml{C}^0}+2)$ and let $R>0$ be as in the statement of Lemma~\ref{l:resolventbounded} 
with $s_0=2$. Then, the map
 $$\hbar\mapsto  K_m(\hbar)$$
 is Lipschitz (hence continuous) from $[0,1]$ to the space of holomorphic functions on $\{\operatorname{Im}(z)>(C-C_N)/3\}$ with values on the space of trace class 
 operators on $L^2.$
\end{lemm}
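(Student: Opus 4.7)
The plan is to make rigorous the informal argument sketched immediately before the statement, which is essentially a three-step assembly of: a Lipschitz estimate for the auxiliary resolvent in a coarser norm, the strong smoothing of $\widehat{\chi}_R$, and continuity of the composition of trace-class and bounded operators.

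Step 1 (Lipschitz estimate for $(\widehat{P}_\hbar + \widehat{\chi}_R - z)^{-1}$). Fix $z$ with $\operatorname{Im} z > (C-C_N)/3$ and $\hbar, \hbar' \in [0,1]$. The resolvent identity (\ref{e:lipschitz}) already expresses the difference quotient as a product involving $\widehat{Q}_2$. Applying Lemma~\ref{l:resolventbounded} with $s_0 = 2$ (for which the hypothesis $N_0, N_1 > 4(\|f\|_{\ml{C}^0} + 2)$ is exactly what we assume), both resolvents on the right-hand side of (\ref{e:lipschitz}) are bounded uniformly on $L^2$ and on $H^2$ for $\hbar \in [0,1]$ and locally uniformly in $z$, with norm controlled by $(\operatorname{Im} z + (C_N - C)/3)^{-1}$. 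Since $\widehat{Q}_2 \in \Psi^{2}(M, \Lambda^k(T^*M))$ maps $H^2$ boundedly into $L^2$, the composite on the right-hand side of (\ref{e:lipschitz}) is bounded as an operator $L^2 \to H^{-2}$, uniformly in $\hbar, \hbar'$ and locally uniformly in $z$. This gives that $\hbar \mapsto (\widehat{P}_\hbar + \widehat{\chi}_R - z)^{-1}$ is Lipschitz from $[0,1]$ into $\operatorname{Hol}(\{\operatorname{Im} z > (C-C_N)/3\}, \mathcal{B}(L^2, H^{-2}))$, holomorphy in $z$ being preserved by the uniform bound.

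Step 2 (trace-class smoothing from $H^{-2}$ to $L^2$). I would check that $\widehat{\chi}_R = -iR(1+\Delta_g)^{-L}$, with $L > \dim(M)/2 + 1$, lies in $\mathcal{L}^1(H^{-2}, L^2)$. Factoring $(1+\Delta_g)^{-L} = (1+\Delta_g)^{-1} \circ (1+\Delta_g)^{-(L-1)}$, the first factor is an isomorphism from $H^{-2}$ to $L^2$, while the second is trace class on $L^2$ by Weyl's law, since $L - 1 > \dim(M)/2$. The composition therefore belongs to $\mathcal{L}^1(H^{-2}, L^2)$.

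Step 3 (composition). The continuity estimate
\begin{equation*}
\|AB\|_{\mathcal{L}^1(L^2, L^2)} \leq \|A\|_{\mathcal{L}^1(H^{-2}, L^2)} \, \|B\|_{\mathcal{B}(L^2, H^{-2})}
\end{equation*}
(see~\cite[Eq.~(B.4.6)]{DZresonances}), applied with the fixed operator $A = \widehat{\chi}_R$ of Step 2 and with $B = (\widehat{P}_\hbar + \widehat{\chi}_R - z)^{-1}$ from Step 1, shows that $K_m(\hbar) = AB$ is Lipschitz from $[0,1]$ into $\operatorname{Hol}(\{\operatorname{Im} z > (C-C_N)/3\}, \mathcal{L}^1(L^2, L^2))$, as claimed. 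Holomorphy in $z$ survives the composition because $A$ does not depend on $z$ and right-multiplication by a fixed trace-class operator is continuous and linear in the bounded-operator-valued factor.

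The main subtlety is really the choice of intermediate space: the second-order operator $\widehat{Q}_2$ forces us to control the difference quotient only in $\mathcal{B}(L^2, H^{-2})$, which in turn requires $\widehat{\chi}_R$ to smooth two additional derivatives beyond the trace-class threshold on $L^2$. This is exactly the reason for taking $L > \dim(M)/2 + 1$ rather than merely $L > \dim(M)/2$. Once this alignment of spaces is in place, the rest is elementary continuity and holomorphy bookkeeping.
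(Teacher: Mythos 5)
Your argument is correct and follows essentially the same route as the paper's proof: the resolvent identity~\eqref{e:lipschitz}, the uniform bounds of Lemma~\ref{l:resolventbounded} with $s_0=2$, the trace-class property of $\widehat{\chi}_R$ from $H^{-2}$ to $L^2$ for $L>\dim(M)/2+1$, and the continuity of the composition $\ml{L}^1(H^{-2},L^2)\times\ml{B}(L^2,H^{-2})\rightarrow\ml{L}^1(L^2,L^2)$ from~\cite[Eq.~(B.4.6)]{DZresonances}. One cosmetic remark: to land in $\ml{B}(L^2,H^{-2})$ the natural chain is the right resolvent bounded on $L^2$, then $\widehat{Q}_2\in\Psi^{2}$ mapping $L^2$ to $H^{-2}$, then the left resolvent bounded on $H^{-2}$ (available from Lemma~\ref{l:resolventbounded} since $s_0=2$ allows $s=-1$), rather than the $H^{2}\rightarrow L^{2}$ bookkeeping you wrote, which as stated would only give boundedness in $\ml{B}(H^{2},L^{2})$; the fix uses exactly the same lemma you already invoke.
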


\begin{rema} Note that, for the sake of simplicity, we omitted the dependence in the degree $k$ in that statement. 
\end{rema}

Let us now draw some consequences of this Lemma. From~\cite[Sect.~B.5, p.~426]{DZresonances}, the determinant map
$$D_{m_{N_0,N_1}}(\hbar,\dot):z\in\left\{\text{Im}(z)>\frac{C-C_N}{3}\right\}\mapsto\text{det}_{L^2}\left(\text{Id}-\widehat{\chi}_R
\left( \widehat{P}_\hbar +\widehat{\chi}_{R}-z\right)^{-1}\right)$$
is holomorphic. Moreover, one knows from~\cite[Prop.~B.26]{DZresonances} that
$$\left|D_{m_{N_0,N_1}}(\hbar,z)-D_{m_{N_0,N_1}}(\hbar',z)\right|\leq\|K_m(\hbar,z)-K_m(\hbar',z)\|_{\text{Tr}}
e^{1+\|K_m(\hbar,z)\|_{\text{Tr}}+\|K_m(\hbar',z)\|_{\text{Tr}}},$$
which, combined with Lemma~\ref{l:continuity}, implies that $\hbar\mapsto D_{m_{N_0,N_1}}(\hbar,.)$ is a continuous map from $[0,1]$ to the space of holomorphic functions on 
$\left\{\text{Im}(z)>\frac{C-C_N}{3}\right\}$.

Fix now $z_0$ an eigenvalue of $\widehat{P}_{0}$ lying on the half-plane $\left\{\text{Im}(z)>\frac{C-C_N}{3}\right\}$ and having 
algebraic multiplicity 
$m_{z_0}$. This corresponds to a zero of multiplicity $m_{z_0}$ of the determinant map $D_{m_{N_0,N_1}}(0,.)$ evaluated at 
$\hbar=0$. As the spectrum of $\widehat{P}_0$ is discrete with finite multiplicity on this half plane, we can find a small enough $r_0>0$ such that the 
closed disk centered at $z_0$ of radius $r_0$ contains only the eigenvalue $z_0$. The map $\hbar\mapsto D_{m_{N_0,N_1}}(\hbar,.)\in \text{Hol}(\{\text{Im}(z)>\frac{C-C_N}{3} \})$ 
being continuous, we know that, for every $0<r_1\leq r_0$, for $\hbar\geq 0$ small enough (which depends on $z_0$ and on $r_1$) and for $|z-z_0|=r_1$,
$$|D_{m_{N_0,N_1}}(\hbar,z)-D_{m_{N_0,N_1}}(0,z)|<\min_{z':|z'-\lambda_0|=r_0}|D_{m_{N_0,N_1}}(0,z')|\leq |D_{m_{N_0,N_1}}(0,z)|.$$
Hence, from the Rouch\'e Theorem and for $\hbar\geq 0$ small enough, the number of zeros counted with multiplicity of $D_{m_{N_0,N_1}}(\hbar)$ lying on the disk 
$\{z:|z-z_0|\leq r_1\}$ equals $m_{z_0}$. As, for $\hbar>0$, the Witten eigenvalues lie 
on the real axis, we have shown the following Theorem~:
\begin{theo}\label{t:convergence-eigenvalue} Let $0\leq k\leq n$. Then, the set of Pollicott-Ruelle resonances 
$\ml{R}_k=\ml{R}_k(0)$ of $-\mathcal{L}_{V_f}^{(k)}$
is contained inside $(-\infty,0]$. 
Moreover, given any $z_0$ in $(-\infty,0]$, there exists $r_0>0$ such that, for every $0<r_1\leq r_0$, for $\hbar>0$ 
small enough (depending on $z_0$ and $r_1$), the number of elements (counted with algebraic multiplicity) inside 
$$\ml{R}_k(\hbar)\cap\{z:|z-z_0|\leq r_1\}$$ 
 is constant and equal to the algebraic multiplicity of $z_0$ as an eigenvalue of $-\ml{L}_{V_f}^{(k)}$.
\end{theo}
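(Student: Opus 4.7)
The plan is to assemble the continuity-plus-Rouch\'e argument developed in the paragraphs just above, combined with non-negativity of the Witten Laplacian. First, I would recall from Section~\ref{ss:determinant} that, via the conjugation relating $\widehat{H}_\hbar$ to $\widehat{P}_\hbar$, the eigenvalues of $\widehat{H}_\hbar$ acting on $\ml{H}_k^{m_{N_0,N_1}}(M)$ lying in the half-plane $\{\operatorname{Im}(z) > (C-C_N)/3\}$ coincide, with algebraic multiplicity, with the zeros of the Fredholm determinant $D_{m_{N_0,N_1}}(\hbar,z)$. Choosing $N_0, N_1$ large enough as in Lemma~\ref{l:resolventbounded} with $s_0=2$, Lemma~\ref{l:continuity} yields continuity of the map $\hbar \mapsto D_{m_{N_0,N_1}}(\hbar,\cdot)$ from $[0,1]$ into holomorphic functions on this half-plane, by the standard trace-norm estimate for Fredholm determinants recalled before the statement.

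Second, I would apply Rouch\'e's theorem exactly as in the paragraph preceding the statement: given $z_0$ an eigenvalue of $\widehat{H}_0$ of algebraic multiplicity $m_{z_0}$, pick $r_0>0$ so small that $\{|z-z_0|\leq r_0\}$ contains no other eigenvalue of $\widehat{H}_0$; then for every $0<r_1\leq r_0$ and every $\hbar>0$ small enough, the determinant $D_{m_{N_0,N_1}}(\hbar,\cdot)$ has exactly $m_{z_0}$ zeros in $\{|z-z_0|\leq r_1\}$, counted with multiplicity. This gives both the location and the multiplicity information, \emph{provided} one identifies the zeros of $D_{m_{N_0,N_1}}(\hbar,\cdot)$ for $\hbar>0$ with the (negatives of the) Witten eigenvalues.

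This identification is the third step and follows from the key relation~\eqref{e:conjugation}: for $\hbar>0$ one has $e^{f/\hbar}W_{f,\hbar}^{(k)}e^{-f/\hbar}=-\widehat{H}_\hbar$, so the eigenvalues of $\widehat{H}_\hbar$ acting on $L^2$ are precisely the $\{-\lambda_j^{(k)}(\hbar)\}_j\subset(-\infty,0]$ since $W_{f,\hbar}^{(k)}\geq 0$. Because the intrinsic spectrum on $\ml{H}_k^{m_{N_0,N_1}}(M)$ agrees with the $L^2$ spectrum as soon as $\hbar>0$ (by the discussion in the definition of the resonances), this shows $\ml{R}_k(\hbar)\subset(-\infty,0]$ for every $\hbar>0$.

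Combining these ingredients yields both conclusions. If some $z_0\in\ml{R}_k(0)$ did not lie in $(-\infty,0]$, one could choose a small disk around $z_0$ disjoint from $(-\infty,0]$; by the Rouch\'e count this disk would have to contain at least one Witten eigenvalue for every $\hbar>0$ small enough, contradicting step three. Hence $\ml{R}_k(0)\subset(-\infty,0]$, and the multiplicity statement for $z_0\in(-\infty,0]$ is precisely the Rouch\'e count itself. The only mildly delicate point is to ensure that the threshold $(C-C_N)/3$ is low enough to capture the chosen $z_0$, which is achieved by enlarging $N_0,N_1$ thanks to Lemma~\ref{l:escape-function}; I do not anticipate any substantive obstacle beyond this bookkeeping.
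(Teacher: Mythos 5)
Your proposal follows essentially the same route as the paper: the trace-norm continuity of $\hbar\mapsto K_m(\hbar)$ from Lemma~\ref{l:continuity}, the resulting continuity of the Fredholm determinant $D_{m_{N_0,N_1}}(\hbar,\cdot)$, the Rouch\'e argument on a small disk around $z_0$ (with $N_0,N_1$ enlarged so that $z_0$ lies in the half-plane of discreteness), and finally the observation that for $\hbar>0$ the spectrum $\ml{R}_k(\hbar)$ consists of the non-positive real Witten eigenvalues, which forces $\ml{R}_k(0)\subset(-\infty,0]$. You merely spell out more explicitly than the paper the identification of the $\hbar>0$ spectrum via the conjugation~\eqref{e:conjugation}; there is no substantive difference.
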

 This Theorem shows that the Witten eigenvalues converge to the Pollicott-Ruelle resonances of $-\ml{L}_{V_f}$. Yet, for the moment, it does not prove 
Theorem~\ref{t:maintheo-eigenvalues} as we still have to relate this limit spectrum to the zeros of the Ruelle dynamical determinant.
Note that this Theorem combined with the results from section~\ref{s:anisotropic} implies
the following~:
\begin{lemm}[Spectrum of the semigroup]\label{l:spectrum-propagator} 
For every $0\leq k\leq n$, the operator
$$-\ml{L}_{V_f}^{(k)}:\ml{H}^{m_{N_0,N_1}}_k(M)\rightarrow\ml{H}^{m_{N_0,N_1}}_k(M)$$
 has only finitely many eigenvalues on the half-plane\footnote{Recall from paragraph~\ref{sss:meromorphic} that the spectrum is discrete on this half-plane.} 
 $\{\operatorname{Im}(z)>(C-C_N)/3\}$. Following~\cite[Sect.~IV.2]{EnNa00} -- see also~\cite[Par.~5.3]{dang2016spectral}, we find that, 
 for every $t_0>0$, the spectrum of 
 $$\varphi^{-t_0*}:\ml{H}^{m_{N_0,N_1}}_k(M)\rightarrow\ml{H}^{m_{N_0,N_1}}_k(M)$$
 is discrete with finite multiplicities inside
 $\left\{z:|z|>e^{t_0\frac{C-C_N}{3}}\right\},$
 and, when counted with algebraic multiplicities, it coincides with the set
 $\left\{e^{t_0z_0}:z_0\in\ml{R}_k\right\}.$
\end{lemm}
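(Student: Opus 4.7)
The plan is to deduce the Lemma from the content of Section~\ref{s:anisotropic} together with standard $C_0$-semigroup arguments, in the spirit of~\cite[Par.~5.3]{dang2016spectral}.

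First I would address the finiteness claim. From paragraph~\ref{sss:meromorphic} applied at $\hbar=0$, the operator $\widehat{P}_0$ has discrete spectrum of finite multiplicity on the half-plane $\{\operatorname{Im}(z)>(C-C_N)/3\}$, and this transfers to $-\ml{L}_{V_f}^{(k)}$ on $\ml{H}^{m_{N_0,N_1}}_k(M)$ through the conjugation $\widehat{P}_0=\widehat{A}_N(-i\ml{L}_{V_f}^{(k)})\widehat{A}_N^{-1}$, so that eigenvalues of $\widehat{P}_0$ are exactly $i$ times eigenvalues of $-\ml{L}_{V_f}^{(k)}$. Theorem~\ref{t:convergence-eigenvalue} moreover asserts that $\ml{R}_k\subset(-\infty,0]$, so every resonance is real. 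Combining the two facts, the resonances sitting in the prescribed half-plane form a discrete subset of a bounded interval of the real line, hence constitute a finite set.

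Next I would translate the spectral picture from the generator $-\ml{L}_{V_f}^{(k)}$ to the one-parameter family $\varphi_f^{-t*}$. By paragraph~\ref{r:resolvent}, $\varphi_f^{-t*}$ is a strongly continuous semigroup on $\ml{H}^{m_{N_0,N_1}}_k(M)$ with growth bound governed by $C_0$, and its infinitesimal generator is $-\ml{L}_{V_f}^{(k)}$ (the Lie derivative being the generator of pullback by the flow). The inclusion $\{e^{t_0z_0}:z_0\in\ml{R}_k\}\subseteq \sigma(\varphi_f^{-t_0*})$ with matching multiplicities is obtained by applying the semigroup to generalized eigenvectors of the generator. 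For the reverse inclusion restricted to the annular region $\{|z|>e^{t_0(C-C_N)/3}\}$, I would use the finite-dimensional reduction afforded by Part~1: the spectral projector
\[
\Pi_{\ml{R}_k}=\sum_{z_0\in\ml{R}_k}\frac{1}{2\pi i}\oint_{\gamma_{z_0}}\bigl(z+\ml{L}_{V_f}^{(k)}\bigr)^{-1}\,dz
\]
is of finite rank, commutes with $\varphi_f^{-t*}$, and decomposes the space into an invariant finite-dimensional piece (on which spectral mapping is trivial by exponentiation of a Jordan form) and a complementary closed invariant subspace on which the restricted generator has no eigenvalues in the half-plane. A uniform growth estimate on the restricted semigroup then forces, by the Gelfand--Beurling spectral radius formula, its spectrum to lie inside the closed disk $\{|z|\leq e^{t_0(C-C_N)/3}\}$. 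This is precisely the scheme invoked in~\cite[Sect.~IV.2]{EnNa00} and carried out in~\cite[Par.~5.3]{dang2016spectral} for the same operator $-\ml{L}_{V_f}^{(k)}$.

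The only genuine difficulty is the reverse inclusion for the semigroup, because the spectral mapping theorem can fail for arbitrary $C_0$-semigroups. What resolves the issue here is exactly the finiteness established in Part~1: the invariant decomposition associated to the finitely many resonances reduces the analysis to a finite-dimensional block (where the claim is tautological) plus a complement whose spectrum is controlled by a single norm estimate. No new microlocal input is required beyond what has already been proved in Section~\ref{s:anisotropic} and in~\cite{dang2016spectral}.
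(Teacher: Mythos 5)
Your first part (finiteness) is fine and matches the paper's reasoning: discreteness with finite multiplicity of $\widehat{P}_0$ on $\{\operatorname{Im}(z)>(C-C_N)/3\}$ from Section~\ref{s:anisotropic}, conjugation by $\widehat{A}_N$, and the reality of $\ml{R}_k$ from Theorem~\ref{t:convergence-eigenvalue} confine the eigenvalues to a bounded real segment. The inclusion $\{e^{t_0z_0}\}\subset\sigma(\varphi_f^{-t_0*})$ via generalized eigenvectors is also unproblematic.

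The gap is in the reverse inclusion, at exactly the point you flag and then wave away. After splitting off the finite-dimensional range of $\Pi_{\ml{R}_k}$, you need that the semigroup restricted to the complementary invariant subspace has growth (equivalently, spectral radius of $\varphi_f^{-t_0*}$ there) bounded by $e^{t_0(C-C_N)/3}$. You assert ``a uniform growth estimate on the restricted semigroup'' but never produce it, and it does \emph{not} follow from the restricted generator having no eigenvalues (or even no spectrum) in the half-plane: for $C_0$-semigroups the growth bound can strictly exceed the spectral bound, which is precisely the failure of the spectral mapping theorem you invoke one sentence earlier. The quantitative input that closes this hole in the paper's framework is the structure established in Section~\ref{s:anisotropic}: $\widehat{P}_0=(\widehat{P}_0+\widehat{\chi}_R)-\widehat{\chi}_R$, where the resolvent estimate $\|(\widehat{P}_0+\widehat{\chi}_R-z)^{-1}\|\leq (\operatorname{Im}(z)-\frac{C-C_N}{3})^{-1}$ gives, via Hille--Yosida, a semigroup of norm at most $e^{t(C-C_N)/3}$, and the perturbation $\widehat{\chi}_R$ is trace class, hence compact, so the \emph{essential} growth bound is unchanged; this is what the citation of~\cite[Sect.~IV.2]{EnNa00} (as carried out in~\cite[Par.~5.3]{dang2016spectral}) supplies, and it yields both the discreteness of $\sigma(\varphi_f^{-t_0*})$ outside the disk of radius $e^{t_0(C-C_N)/3}$ and the identification, with multiplicities, with exponentials of generator eigenvalues. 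Your argument never uses $\widehat{\chi}_R$ or any compactness, so the crucial bound is unsupported; a Gearhart--Pr\"uss-type alternative on the Hilbert space $\ml{H}^{m_{N_0,N_1}}_k(M)$ would require uniform resolvent bounds on the half-plane for the restricted generator, which you also do not establish.
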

We also record the following useful statement on the 
asymptotics of the correlation function of $\varphi_f^{-t*}$:
\begin{prop}\label{p:correlation} Let $0\leq k\leq n$ then for any $z_0\in \mathcal{R}_k$, there is an integer
$d_{z_0}^{(k)}\geq 1$ and a continuous linear map
with finite rank,
$$\pi_{z_0}^{(k)}:\Omega^k(M)\rightarrow\ml{D}^{\prime k}(M),$$
s.t. for any $\Lambda>0$, there  exist $N_0,N_1$ large enough 
so that
for every $(\psi_1,\psi_2)\in\Omega^k(M)\times\Omega^{n-k}(M)$ and for every $t\geq 0$,
$$\int_M\varphi^{-t*}(\psi_1)\wedge\psi_2=
\sum_{z_0\in\ml{R}_k:z_0>-\Lambda}e^{tz_0}\sum_{l=0}^{d_{z_0}^{(k)}-1}\frac{t^l}{l!}
\int_{M}\left(\ml{L}_{V_f}^{(k)}+z_0\right)^{l}
\left(\pi_{z_0}^{(k)}(\psi_1)\right)\wedge\psi_2$$
$$+\ml{O}\left(e^{-\Lambda t}\|\psi_1\|_{\ml{H}^{m_{N_0,N_1}}_k}\|\psi_2\|_{\ml{H}^{-m_{N_0,N_1}}_{n-k}}\right).$$
In fact, the result also holds for any $\psi_1$ in $\ml{H}^{m_{N_0,N_1}}_k(M)$. 
\end{prop}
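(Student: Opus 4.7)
The plan is to spectrally decompose the strongly continuous semigroup $\varphi_f^{-t*}$ on the Hilbert space $\ml{H}^{m_{N_0,N_1}}_k(M)$, extracting a finite-rank ``resonant'' contribution that produces the announced sum and controlling the complement by an exponential decay estimate.

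Given $\Lambda>0$, I would first choose $N_0,N_1$ via Lemma~\ref{l:escape-function} so that $(C_N-C)/3>\Lambda$. By the discussion of Section~\ref{s:anisotropic} combined with Theorem~\ref{t:convergence-eigenvalue}, the generator $-\ml{L}_{V_f}^{(k)}$ then has discrete spectrum of finite multiplicity on $\{\operatorname{Re}(z)>-\Lambda\}$, coinciding with $\ml{R}_k\cap(-\Lambda,0]$. For each such $z_0$ I define the Riesz spectral projector
$$\pi_{z_0}^{(k)}:=\frac{1}{2\pi i}\oint_{|z-z_0|=\eps}\bigl(z-(-\ml{L}_{V_f}^{(k)})\bigr)^{-1}\,dz,$$
where $\eps>0$ is chosen to isolate $z_0$ among the resonances. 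This is a finite-rank continuous operator on $\ml{H}^{m_{N_0,N_1}}_k(M)$ which, through the continuous embeddings $\Omega^k(M)\hookrightarrow\ml{H}^{m_{N_0,N_1}}_k(M)\hookrightarrow\ml{D}^{\prime k}(M)$, restricts to a continuous map $\Omega^k(M)\to\ml{D}^{\prime k}(M)$; the intrinsic character of the Pollicott--Ruelle spectrum (Section~\ref{s:anisotropic}) ensures $\pi_{z_0}^{(k)}$ is independent of the chosen order function. Let $d_{z_0}^{(k)}\geq 1$ denote the nilpotency index of $\ml{L}_{V_f}^{(k)}+z_0$ on $\operatorname{Ran}\pi_{z_0}^{(k)}$. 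Setting $\Pi_\Lambda:=\sum_{z_0\in\ml{R}_k,\,z_0>-\Lambda}\pi_{z_0}^{(k)}$, which commutes with $\ml{L}_{V_f}$ and with the semigroup, a direct Jordan decomposition on the finite-dimensional range of $\Pi_\Lambda$ produces the announced polynomial-times-exponential expression for $\varphi_f^{-t*}\Pi_\Lambda$.

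The crux of the argument is the exponential decay estimate
$$\bigl\|\varphi_f^{-t*}(\operatorname{Id}-\Pi_\Lambda)\bigr\|_{\ml{H}^{m_{N_0,N_1}}_k\to\ml{H}^{m_{N_0,N_1}}_k}\leq C\,e^{-\Lambda t},\qquad t\geq 0.$$
I would derive this through the Gearhart--Pr\"uss theorem for $C_0$--semigroups on Hilbert spaces, which is available since $\ml{H}^{m_{N_0,N_1}}_k(M)$ is Hilbert and $\varphi_f^{-t*}$ is strongly continuous on it (paragraph~\ref{r:resolvent}). The theorem reduces the desired decay to a uniform resolvent bound for $(z+\ml{L}_{V_f}^{(k)})^{-1}(\operatorname{Id}-\Pi_\Lambda)$ along the vertical line $\{\operatorname{Re}(z)=-\Lambda'\}$ for any $\Lambda'\in(0,\Lambda)$. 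For bounded $|\operatorname{Im}(z)|$ the bound follows from meromorphy of the resolvent plus a compactness argument, the finitely many poles in the strip $\{-\Lambda<\operatorname{Re}(z)\leq 0\}$ having been excised by $\operatorname{Id}-\Pi_\Lambda$. For large $|\operatorname{Im}(z)|$ I would revisit the Fredholm construction of Section~\ref{s:anisotropic}: translating through the conjugation, the bound reduces to controlling $(I-K_m(w))^{-1}$ on the line $\{\operatorname{Im}(w)=-\Lambda'\}$ as $|\operatorname{Re}(w)|\to\infty$, and the smoothing factor $\widehat{\chi}_R\in\Psi^{-2L}(M)$ combined with Lemma~\ref{l:resolventbounded} forces the trace norm of $K_m(w)$ to decay in that regime, yielding a uniform Neumann inversion.

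The conclusion then follows by pairing with $\psi_2$. Using the duality $(\alpha,\beta)\in\ml{H}^{m_{N_0,N_1}}_k\times\ml{H}^{-m_{N_0,N_1}}_{n-k}\mapsto\int_M\alpha\wedge\beta$ recalled in Section~\ref{s:anisotropic}, the Cauchy--Schwarz inequality yields
$$\left|\int_M\varphi_f^{-t*}(\operatorname{Id}-\Pi_\Lambda)\psi_1\wedge\psi_2\right|\leq\bigl\|\varphi_f^{-t*}(\operatorname{Id}-\Pi_\Lambda)\psi_1\bigr\|_{\ml{H}^{m_{N_0,N_1}}_k}\|\psi_2\|_{\ml{H}^{-m_{N_0,N_1}}_{n-k}},$$
producing the stated remainder $\ml{O}(e^{-\Lambda t}\|\psi_1\|_{\ml{H}^{m_{N_0,N_1}}_k}\|\psi_2\|_{\ml{H}^{-m_{N_0,N_1}}_{n-k}})$. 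Density of $\Omega^k(M)$ in $\ml{H}^{m_{N_0,N_1}}_k(M)$ together with the continuity of both sides in $\psi_1$ extends the formula to general $\psi_1$ in the anisotropic Sobolev space. The main obstacle in this scheme is the uniform resolvent bound along the vertical boundary line: one must upgrade the half-plane estimate of Lemma~\ref{l:resolventbounded} into a boundary-line bound, and it is precisely here that the smoothing effect of $\widehat{\chi}_R$ at high frequencies becomes indispensable.
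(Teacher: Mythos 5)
Your overall architecture (Riesz projectors $\pi_{z_0}^{(k)}$ for the generator on $\ml{H}^{m_{N_0,N_1}}_k(M)$, Jordan decomposition on the finite-dimensional range giving the $t^l e^{tz_0}$ terms, duality pairing plus density for the extension to $\psi_1\in\ml{H}^{m_{N_0,N_1}}_k(M)$) is consistent with the statement, but the step that carries all the weight — the exponential decay of $\varphi_f^{-t*}(\operatorname{Id}-\Pi_\Lambda)$ — is not actually proved. Gearhart--Pr\"uss requires a \emph{uniform} bound on $(z-\widehat{H}_0)^{-1}(\operatorname{Id}-\Pi_\Lambda)$ along an entire vertical line, in particular as $|\operatorname{Im}(z)|\to\infty$, and nothing in the paper provides this: Lemma~\ref{l:resolventbounded} bounds only the \emph{modified} resolvent $(\widehat{P}_0+\widehat{\chi}_R-z)^{-1}$, and passing to the true resolvent requires inverting $\operatorname{Id}-K_m(z)$, which is exactly where high-frequency control is missing. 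Your claim that the trace norm of $K_m(z)$ decays in that regime is asserted, not derived: the only available estimate is $\|K_m(z)\|_{\operatorname{Tr}}\leq\|\widehat{\chi}_R\|_{\operatorname{Tr}}\,\|(\widehat{P}_0+\widehat{\chi}_R-z)^{-1}\|$, which is uniformly bounded but does not tend to $0$; producing genuine decay would require a separate microlocal argument (the operator at $\hbar=0$ is non-normal and non-elliptic, and being far from the — real — resonances gives no resolvent bound by itself). So the "main obstacle" you flag at the end is a real gap, not a technicality.

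The paper avoids vertical-line estimates altogether: Proposition~\ref{p:correlation} is deduced from Lemma~\ref{l:spectrum-propagator}, i.e.\ from the \emph{propagator} rather than the generator. The resolvent bound of paragraph~\ref{r:resolvent} plus~\cite{EnNa00} shows $\varphi_f^{-t*}$ is a strongly continuous semigroup on $\ml{H}^{m_{N_0,N_1}}_k(M)$, and by~\cite[Sect.~IV.2]{EnNa00} the time-$t_0$ map is quasicompact: its spectrum outside the disk of radius $e^{t_0(C-C_N)/3}$ is discrete and equals $\{e^{t_0z_0}:z_0\in\ml{R}_k\}$ with multiplicities. Splitting off the finite-rank spectral projector onto the eigenvalues of modulus $>e^{-\Lambda t_0}$, the complementary part has spectral radius $<e^{-\Lambda t_0}$, so its iterates decay like $e^{-\Lambda n t_0}$ by the spectral radius formula, and the semigroup bound on $[0,t_0]$ interpolates to all $t\geq 0$. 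If you want to keep your scheme, replace the Gearhart--Pr\"uss step by this quasicompactness argument (or else supply a genuine high-frequency resolvent estimate, which would be substantial new work not contained in the paper).
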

This Proposition follows from Lemma~\ref{l:spectrum-propagator}. We emphasize that $\pi_{z_0}^{(k)}$ corresponds to the spectral projector 
associated with the eigenvalue $z_0$ and it is intrinsic thanks to~\cite[Th.~1.5]{FS}, i.e. independent of the choice of the order function 
$m_{N_0,N_1}$. Once we will have determined the set $\ml{R}_k$, we will be able to deduce Theorem~\ref{t:maintheo-flow} by integrating over $[0,+\infty)$ 
the equality from Proposition~\ref{p:correlation} against $e^{-zt}$.

\subsection{Convergence of the spectral projectors}

Now that we have stated this Proposition of independent interest, we come back to the convergence of the spectral projectors of the Witten Laplacian 
to the operators $\pi_{z_0}^{(k)}$: 
\begin{theo}\label{t:conv-proj} Let $0\leq k\leq n$ and $z_0$ be an element\footnote{For $z_0\notin\ml{R}_k$, one has $\pi_{z_0}^{(k)}=0$.} 
in $\IR$. Then, there exists $r_0>0$ such that, for every 
 $(\psi_1,\psi_2)\in\Omega^k(M)\times\Omega^{n-k}(M)$,
 $$\forall 0<r_1\leq r_0,\quad\lim_{\hbar\rightarrow 0^+}\int_M\mathbf{1}_{[z_0-r_1,z_0+r_1]}\left(-W_{f,\hbar}^{(k)}\right)
 \left(e^{-\frac{f}{\hbar}}\psi_1\right)\wedge\left(e^{\frac{f}{\hbar}}\psi_2\right)=\int_{M}\pi_{z_0}^{(k)}(\psi_1)\wedge\psi_2.$$
\end{theo}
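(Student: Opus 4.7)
The strategy is to represent the Witten spectral projector as a Cauchy contour integral of the resolvent of $-W_{f,\hbar}^{(k)}$, to transfer this integral to the resolvent of $\widehat{H}_\hbar$ through the conjugation relation~\eqref{e:conjugation}, and then to pass to the limit $\hbar\to 0^+$ by exploiting the trace-class Lipschitz dependence of the Birman--Schwinger type operator $K_m(\hbar)$ supplied by Lemma~\ref{l:continuity}. First, I would invoke Theorem~\ref{t:convergence-eigenvalue}: for any $0<r_1\leq r_0$ and any $\hbar\in(0,\hbar_0]$ small enough, the positively oriented circle $\gamma=\{z:|z-z_0|=r_1\}$ in $\IC$ encloses exactly those eigenvalues of $-W_{f,\hbar}^{(k)}$ which converge to $z_0$, and it lies at positive distance from the whole spectrum of $-W_{f,\hbar}^{(k)}$. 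The functional calculus for the selfadjoint operator $W_{f,\hbar}^{(k)}$ then gives
$$\mathbf{1}_{[z_0-r_1,z_0+r_1]}\left(-W_{f,\hbar}^{(k)}\right)=\frac{1}{2\pi i}\oint_\gamma\left(z+W_{f,\hbar}^{(k)}\right)^{-1}dz,$$
uniformly in small $\hbar$.

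Next I would rewrite everything in terms of $\widehat{H}_\hbar$. Because multiplication by $e^{\pm f/\hbar}$ is a pointwise operation, the identity $\widehat{H}_\hbar=e^{f/\hbar}(-W_{f,\hbar}^{(k)})e^{-f/\hbar}$ implies $\left(z+W_{f,\hbar}^{(k)}\right)^{-1}\left(e^{-f/\hbar}\psi_1\right)=e^{-f/\hbar}(z-\widehat{H}_\hbar)^{-1}\psi_1$, so that the pairing to be studied reduces, after cancellation of the exponential prefactors in the wedge, to
$$\int_M\mathbf{1}_{[z_0-r_1,z_0+r_1]}\left(-W_{f,\hbar}^{(k)}\right)\left(e^{-\frac{f}{\hbar}}\psi_1\right)\wedge\left(e^{\frac{f}{\hbar}}\psi_2\right)=\frac{1}{2\pi i}\oint_\gamma\int_M(z-\widehat{H}_\hbar)^{-1}\psi_1\wedge\psi_2\,dz.$$
The remaining task is to pass to the limit $\hbar\to 0^+$ under the contour integral; for this it suffices to show that $(z-\widehat{H}_\hbar)^{-1}\psi_1$ converges to $(z-\widehat{H}_0)^{-1}\psi_1$ in a topology on $\ml{H}^{m_{N_0,N_1}}_k(M)$ that pairs continuously with $\psi_2$, uniformly in $z\in\gamma$. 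Thanks to the duality described in Section~\ref{s:anisotropic}, it is enough to choose $N_0,N_1$ large enough so that $\psi_2\in\ml{H}^{-m_{N_0,N_1}}_{n-k}(M)$, which holds for any smooth form.

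The key step is the convergence of the resolvents. After the Faure--Sj\"ostrand conjugation by $\widehat{A}_N$, it reduces to convergence on $L^2$ of $(z-\widehat{P}_\hbar)^{-1}$; using the factorization from paragraph~\ref{sss:meromorphic}, one writes
$$(z-\widehat{P}_\hbar)^{-1}=-\left(\widehat{P}_\hbar+\widehat{\chi}_R-z\right)^{-1}\left(\mathrm{Id}-K_m(\hbar)\right)^{-1},$$
valid away from the discrete spectrum. Lemma~\ref{l:resolventbounded} provides a bound on the first factor that is uniform in $\hbar\in[0,1]$ and in $z\in\gamma$, while Lemma~\ref{l:continuity} yields the Lipschitz continuity of $\hbar\mapsto K_m(\hbar)$ in the trace-class norm, holomorphically and uniformly in $z\in\gamma$. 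Since $\gamma$ sits at positive distance from the spectrum of $\widehat{P}_0$, $(\mathrm{Id}-K_m(\hbar))^{-1}$ is uniformly bounded for small $\hbar$ and converges in bounded operator norm to $(\mathrm{Id}-K_m(0))^{-1}$; hence $(z-\widehat{P}_\hbar)^{-1}\to(z-\widehat{P}_0)^{-1}$ in operator norm, uniformly in $z\in\gamma$. Passing to the limit under the contour integral yields
$$\lim_{\hbar\to 0^+}\frac{1}{2\pi i}\oint_\gamma\int_M(z-\widehat{H}_\hbar)^{-1}\psi_1\wedge\psi_2\,dz=\frac{1}{2\pi i}\oint_\gamma\int_M(z-\widehat{H}_0)^{-1}\psi_1\wedge\psi_2\,dz=\int_M\pi_{z_0}^{(k)}(\psi_1)\wedge\psi_2,$$
the last equality being the very definition of the Pollicott--Ruelle spectral projector $\pi_{z_0}^{(k)}$ of $\widehat{H}_0=-\ml{L}_{V_f}^{(k)}$ on $\ml{H}^{m_{N_0,N_1}}_k(M)$. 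The main obstacle is precisely the uniformity in $z\in\gamma$ of the resolvent convergence in this third step; once the order function is chosen so that both $\psi_1$ and $\psi_2$ lie comfortably inside the relevant anisotropic Sobolev spaces, this uniformity is a consequence of the compactness of $\gamma$ combined with the already-established trace-class continuity of $K_m$ coming from Lemma~\ref{l:continuity}.
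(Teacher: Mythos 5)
Your skeleton is the same as the paper's: represent the projector by a Cauchy integral over a circle $\ml{C}(z_0,r_0)$ kept away from the spectrum by Theorem~\ref{t:convergence-eigenvalue}, transfer to $\widehat{H}_\hbar$ (equivalently to $\widehat{P}_\hbar$) by the conjugation, and pass to the limit using the Fredholm factorization~\eqref{e:keyidentity1} together with Lemmas~\ref{l:resolventbounded} and~\ref{l:continuity}. The one step that is not justified is the claim that $(z-\widehat{P}_\hbar)^{-1}\to(z-\widehat{P}_0)^{-1}$ in operator norm on $L^2$, uniformly on $\gamma$. In your factorization the factor $(\mathrm{Id}-K_m(\hbar))^{-1}$ indeed converges in $\ml{B}(L^2,L^2)$ (trace-norm convergence of $K_m$ plus a Neumann series, since $\gamma$ avoids the spectrum of $\widehat{P}_0$), but the other factor $(\widehat{P}_\hbar+\widehat{\chi}_R-z)^{-1}$ is only known to be Lipschitz in $\hbar$ as a map with values in $\ml{B}(L^2,H^{-2})$: by~\eqref{e:lipschitz} its $\hbar$-increment involves the second-order operator $\widehat{Q}_2$, and the resolvent of the first-order, non-elliptic operator $\widehat{P}_0$ gains no derivatives, so $\widehat{Q}_2(\widehat{P}_0+\widehat{\chi}_R-z)^{-1}$ is unbounded on $L^2$ and the difference of resolvents is $\hbar$ times an operator with no uniform $L^2\to L^2$ bound; the elliptic gain of the $\hbar$-resolvent comes with constants that degenerate as $\hbar\to 0$. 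Norm-resolvent convergence on $L^2$ therefore does not follow from the quoted lemmas, and there is no reason to expect it.

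This overstatement is not fatal, because the statement to be proved is only a weak pairing against smooth forms: the factorization gives convergence of $(z-\widehat{P}_\hbar)^{-1}$ in $\ml{B}(L^2,H^{-2})$ norm uniformly on the compact contour (equivalently, strong convergence on $L^2$, using the uniform bounds of Lemma~\ref{l:resolventbounded} and density of $H^{2}$), and since $\psi_1,\psi_2$ are smooth and $\widehat{A}_N^{\pm 1}$ only move Sobolev regularity by an arbitrarily small amount, one can pass to the limit under the contour integral in the pairing. This is essentially how the paper argues: it splits $(\widehat{P}_\hbar-z)^{-1}=(\widehat{P}_\hbar+\widehat{\chi}_R-z)^{-1}+(\widehat{P}_\hbar-z)^{-1}\widehat{\chi}_R(\widehat{P}_\hbar+\widehat{\chi}_R-z)^{-1}$, notes that the first term is holomorphic near $z_0$ and drops out of the contour integral, and lets the smoothing factor $\widehat{\chi}_R$ absorb the derivatives lost to $\widehat{Q}_2$, which yields strong $L^2$ convergence of $\Pi_{z_0}^{(k)}(\hbar)\psi_1$ and hence of the pairing. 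So keep your argument, but replace the norm-resolvent claim by this weaker (and provable) mode of convergence.
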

Knowing this Theorem, the proof of Theorem~\ref{t:maintheo-proj} now relies on the fact that we 
will be able to determine exactly the Pollicott-Ruelle spectrum of the vector field $V_f$. This will be the object of the next section and, before 
that, we prove Theorem~\ref{t:conv-proj}.
\begin{proof} Using Theorem~\ref{t:convergence-eigenvalue}, it is enough to show the existence of $r_0$ and to prove convergence for $r_1=r_0$. 
As before, it is also enough to prove this result for the conjugated operators 
$$\widehat{P}_{\hbar}=-\widehat{A}_N i\left(\mathcal{L}_{V_f}\right)\widehat{A}_N^{-1}-\hbar
\widehat{A}_N i\left(\frac{\Delta_g}{2}\right)\widehat{A}_N^{-1}$$ acting on the standard Hilbert space 
$L^2(M,\Lambda^k(T^*M))$. Fix $z_0$ in $\IR$ and $N_0,N_1$ large enough to ensure that $z_0>\frac{C-C_N}{3}$. The spectral 
projector\footnote{Note that this is eventually $0$.} associated with $z_0$ can be 
written~\cite[Th.~C.6]{DZresonances}~:
$$\Pi_{z_0}^{(k)}:=\frac{1}{2i\pi}\int_{\ml{C}(z_0,r_0)}\left(z-\widehat{P}_0 \right)^{-1} dz$$
where $\ml{C}(z_0,r_0)$ is a small circle of radius $r_0$ centered at $z_0$ such that $z_0$ is 
\emph{the only 
eigenvalue} of $\widehat{P}_0$ inside the 
closed disk surrounded by $\ml{C}(z_0,r_0)$. 
When $z_0$ is not an eigenvalue, we choose 
the disk small enough to ensure that there is no eigenvalues inside it. 
If we denote by $m_{z_0}$ the algebraic 
multiplicity of $z_0$ (which is eventually $0$), then, for $\hbar$ small enough, 
the spectral projector associated 
to $\widehat{P}_{\hbar}$, 
$$\Pi_{z_0}^{(k)}(\hbar):=\frac{1}{2i\pi}\int_{\ml{C}(z_0,r_0)}\left(z-\widehat{P}_\hbar \right)^{-1} dz,$$
has rank $m_{z_0}$ from Theorem~\ref{t:convergence-eigenvalue}. We can now argue as in~\cite[Prop.~5.3]{DZsto} and show that, for every $\psi_1$ 
in $\Omega^k(M)$ and every $\psi_2$ in $\Omega^{n-k}(M)$,
\begin{equation}\label{e:conv-proj}\lim_{\hbar\rightarrow 0^+}\int_M\Pi_{z_0}^{(k)}(\hbar)(\psi_1)\wedge\psi_2
 =\int_M\Pi_{z_0}^{(k)}(\psi_1)\wedge\psi_2.
\end{equation}
Once this equality will be proved, we will be able to conclude by recalling that the generalized eigenmodes are independent of the choice 
of the order function $m_{N_0,N_1}$ used to define $\widehat{A}_N$ and by observing that
$$\pi_{z_0}^{(k)}=-i\widehat{A}_N^{-1}\Pi_{z_0}^{(k)}\widehat{A}_N,$$
 and
 $$e^{\frac{f}{\hbar}}\mathbf{1}_{[z_0-r_0,z_0+r_0]}\left(-W_{f,\hbar}^{(k)}\right)e^{-\frac{f}{\hbar}}=
 -i\widehat{A}_N^{-1}\Pi_{z_0}^{(k)}(\hbar)\widehat{A}_N.$$
 Hence, it remains to prove~\eqref{e:conv-proj}. For that purpose, we use the conventions of Lemma~\ref{l:resolventbounded} and write
$$ (\widehat{P}_\hbar-z)^{-1}= (\widehat{P}_\hbar+\widehat{\chi}_R-z)^{-1}+(\widehat{P}_\hbar-z)^{-1}\widehat{\chi}_R
(\widehat{P}_\hbar+\widehat{\chi}_R-z)^{-1}.$$
By construction of the compact operator $\widehat{\chi}_R$, 
the family $ (\widehat{P}_\hbar+\widehat{\chi}_R-z)^{-1}$ is holomorphic 
and \emph{has no poles}
in some neighborhood of $z_0$ as $z_0>\frac{C-C_N}{3}$. Therefore, 
only the term $(\widehat{P}_\hbar-z)^{-1}\widehat{\chi}_R(\widehat{P}_\hbar+\widehat{\chi}_R-z)^{-1}$
contributes to the contour integral defining the spectral projector $\Pi_{\lambda_0}^{(k)}(\hbar)$:
\begin{eqnarray*}
\Pi_{z_0}^{(k)}(\hbar)=\frac{-1}{2i\pi}\int_{\ml{C}(z_0,r_0)}(\widehat{P}_\hbar-z)^{-1}\widehat{\chi}_R(\widehat{P}_\hbar+\widehat{\chi}_R-z)^{-1}dz.
\end{eqnarray*}
From Theorem~\ref{t:convergence-eigenvalue}, we know that, for $|z-z_0|=r_0$ and for $\hbar$ small enough, 
the operator $(\widehat{P}_\hbar-z)^{-1}$ is uniformly bounded as 
an operator in $\mathcal{B}\left(L^2(M),L^2(M)\right)$. Moreover, we have seen that the map 
$$\hbar\in[0,1]\mapsto\left(z\mapsto\widehat{\chi}_R(\widehat{P}_\hbar+\widehat{\chi}_R-z)^{-1}\right)$$
is continuous (in fact Lipschitz) with values in the set
$\text{Hol}\left(\{\text{Im}(z)>\frac{C-C_N}{3}\},\mathcal{L}^1 \right)$ 
of holomorphic functions with values in trace-class operators on $L^2$. 
This implies that, for every 
$\psi_1$ in $L^2(M,\Lambda^k(T^*M))$, 
$$\Pi_{z_0}^{(k)}(\hbar)(\psi_1)=\frac{-1}{2i\pi}\int_{\ml{C}(z_0,r_0)}(\widehat{P}_\hbar-z)^{-1}\widehat{\chi}_R(\widehat{P}_0+\widehat{\chi}_R-z)^{-1}(\psi_1)dz+o(1),$$
as $\hbar\rightarrow 0^+$. Then, we write
$$(\widehat{P}_\hbar-z)^{-1}\widehat{\chi}_R=(\widehat{P}_0-z)^{-1}\widehat{\chi}_R
+\hbar
\underbrace{(\widehat{P}_\hbar-z)^{-1}\widehat{Q}_2(\widehat{P}_0-z)^{-1}\widehat{\chi}_R}.$$
The term underbraced in factor of $\hbar$ being uniformly bounded as an operator from $L^2$ to $L^2$, we finally find that, for every 
$\psi_1$ in $L^2(M,\Lambda^k(T^*M))$, 
$$\lim_{\hbar\rightarrow 0^+}\left\|\left(\Pi_{z_0}^{(k)}(\hbar)-\Pi_{z_0}^{(k)}\right)(\psi_1)\right\|_{L^2}=0,$$
which concludes the proof of~\eqref{e:conv-proj}.
\end{proof}

\section{Computation of the Pollicott-Ruelle resonances}\label{s:Pollicott-Ruelle}

In~\cite{dang2016spectral}, we gave a full description of the Pollicott-Ruelle spectrum of a Morse-Smale gradient flow under certain 
nonresonance assumptions. Our proof was based on an explicit construction of the generalized eigenmodes and we shall now give a 
slightly different proof based on the works of Baladi and Tsujii on Axiom A diffeomorphisms~\cite{BaTs08, Ba16}. This new proof will only make use of the assumptions 
that the gradient flow is $\ml{C}^1$-linearizable. Yet, in some sense, it will be less self-contained as we shall use the results 
of~\cite{BaTs08} as a ``black-box'' while, in the proof of~\cite{dang2016spectral}, we determined the spectrum by hands even if it was under 
more restrictive assumptions. Another 
advantage of the proof from~\cite{dang2016spectral} was that it gave
an explicit local form of the eigenmodes and
some criteria under which we do not have Jordan blocks 
-- see also~\cite{DaRi17b} for slightly more precise results.
The key idea compared with~\cite{dang2016spectral, DaRi17b} is to use the localized results of Baladi--Tsujii to guess the global
resonance spectrum from the one near each
critical point. To go from local to global, we will use the geometry of the stratification
by unstable manifolds to \emph{glue together}, in some sense, these 
\emph{local spectras} and make them into a global spectrum.

Before starting our proof, let us recall the following classical result of Smale which will be useful to organize our induction 
arguments~\cite{Sm60} -- see~\cite{dangrivieremorsesmale1} for a brief reminder of Smale's works:
\begin{theo}[Smale partial order relation]\label{t:smale} 
Suppose that $\varphi_f^t$ is a Morse-Smale gradient flow. 
Then, for every $a$ in $\operatorname{Crit}(f)$, 
the closure of the unstable manifold $W^u(a)$ 
is the union of certain unstable manifolds $W^u(b)$
for some critical points in $\operatorname{Crit}(f)$. 
Moreover, we say that $b\preceq a$ (resp $b\prec a$), 
if $W^u(b)$ is contained in the closure of $W^u(a)$ 
(resp $W^u(b)\subset \overline{W^u(a)},
W^u(b)\neq W^u(a)$). 
Then, $\preceq$ is a 
\textbf{partial order relation} on 
$\operatorname{Crit}(f)$.
Finally if $b\prec a$, 
then $\operatorname{dim}W^u(b)< \operatorname{dim}W^u(a).$
\end{theo}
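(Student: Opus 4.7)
The plan rests on two structural facts: $\overline{W^u(a)}$ is compact (since $M$ is) and $\varphi_f^t$-invariant, and, because $f$ is a strict Lyapunov function for the flow, every orbit has $\alpha$- and $\omega$-limit sets reduced to a single critical point. I would first establish the stratification. Given $x \in \overline{W^u(a)}$, invariance and compactness force $\alpha(x)$ to be a non-empty compact invariant subset of $\overline{W^u(a)} \cap \operatorname{Crit}(f)$, hence a single point $c$. So $x \in W^u(c)$ with $c \in \overline{W^u(a)}$; since $W^u(c)$ is the forward $\varphi_f^t$-saturation of any small local unstable disk at $c$, the invariance of $\overline{W^u(a)}$ yields $W^u(c) \subset \overline{W^u(a)}$, proving the stratification claim.

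Next, the order axioms. Reflexivity is immediate from $W^u(a) \subset \overline{W^u(a)}$, and transitivity follows by taking closures: $W^u(b) \subset \overline{W^u(a)}$ gives $\overline{W^u(b)} \subset \overline{W^u(a)}$, so if $W^u(c) \subset \overline{W^u(b)}$ then $W^u(c) \subset \overline{W^u(a)}$. Antisymmetry I would defer, as it will drop out of the strict dimension inequality.

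The heart of the proof is that dimension decrease when $b \prec a$. I would show that $W^u(a) \cap W^s(b) \ne \emptyset$; granted this, the Smale transversality assumption yields
$$\dim\bigl(W^u(a) \cap W^s(b)\bigr) = \dim W^u(a) + \dim W^s(b) - n = \dim W^u(a) - \dim W^u(b),$$
and this intersection, being non-empty, $\varphi_f^t$-invariant and disjoint from $\operatorname{Crit}(f)$, contains a full orbit and therefore has positive dimension; hence $\dim W^u(b) < \dim W^u(a)$. To produce the intersection, pick $x_n \in W^u(a)$ with $x_n \to b$, work in a $\ml{C}^1$-linearizing chart at $b$ in which the local stable and unstable manifolds $W^s_{\operatorname{loc}}(b)$, $W^u_{\operatorname{loc}}(b)$ appear as linear subspaces, and invoke the inclination ($\lambda$-) lemma: the forward iterates of small disks inside $W^u(a)$ through the $x_n$ become $\ml{C}^1$-close to $W^u_{\operatorname{loc}}(b)$, and hence must cross $W^s_{\operatorname{loc}}(b)$, producing a point of $W^u(a) \cap W^s(b)$. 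Antisymmetry then follows, since $b \preceq a$ and $a \preceq b$ with $b \ne a$ would force strict dimensional inequalities in both directions. The main obstacle is precisely this last construction: bridging the purely topological accumulation $b \in \overline{W^u(a)}$ with the geometric existence of a transverse heteroclinic connection is what really uses both the $\lambda$-lemma and the $\ml{C}^1$-linearization hypothesis.
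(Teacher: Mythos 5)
The paper does not actually prove this statement -- it is quoted from Smale's classical work (\cite{Sm60}, see also \cite{Web06} and the authors' earlier papers) -- so your proposal has to be judged on its own. Its architecture is the standard one (stratification via $\alpha$-limit sets, dimension drop via transversality along a connecting orbit, antisymmetry from the dimension inequality), and the easy parts (limit sets of a gradient orbit are single critical points, transitivity of $\preceq$ by taking closures, the dimension count $\dim\bigl(W^u(a)\cap W^s(b)\bigr)=\dim W^u(a)-\dim W^u(b)\geq 1$ once a connecting orbit is known to exist) are fine. The gap lies precisely at the two places where you invoke the inclination lemma.

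First, in the stratification step, invariance of $\overline{W^u(a)}$ together with $c\in\overline{W^u(a)}$ does \emph{not} yield that a local unstable disk at $c$ lies in $\overline{W^u(a)}$: the backward orbit of your point $x$ only sweeps out a curve inside $W^u(c)$, and without transversality the conclusion is simply false (for a saddle--saddle connection on a surface, $\overline{W^u(p)}$ contains $q$ but not $W^u(q)$), so the Morse--Smale hypothesis must enter exactly here and ``invariance'' cannot replace it. Second, and more seriously, your use of the $\lambda$-lemma to produce a point of $W^u(a)\cap W^s(b)$ is circular: the hypothesis of the inclination lemma is a disk meeting $W^s(b)$ transversally at a point of $W^s(b)$, which is precisely the nonempty intersection you are trying to construct. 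Points $x_n\in W^u(a)$ with $x_n\to b$ need not have forward orbits lingering near $b$; in a linearizing chart they may exit along the unstable directions, and the local exit-point analysis only shows that $\overline{W^u(a)}$ accumulates on $W^u_{\operatorname{loc}}(b)$ -- it does not give an orbit of $W^u(a)$ converging forward to $b$. The standard way to close both gaps is a compactness (broken-trajectory) argument producing a chain of critical points $a=c_0,c_1,\dots,c_k=b$ with $W^u(c_i)\cap W^s(c_{i+1})\neq\emptyset$ for each $i$, followed either by summing the dimension drops along the chain, or by applying the $\lambda$-lemma legitimately at each $c_i$ (where a genuinely transverse disk is available) to upgrade the chain to a direct connection $W^u(a)\cap W^s(b)\neq\emptyset$ and to the inclusion $W^u(b)\subset\overline{W^u(a)}$; this induction along Smale's order is the real content of the theorem and is missing from your argument.
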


In this section, we use the results of Baladi and Tsujii~\cite{BaTs08}. For that purpose, we treat near every critical point the time one map
$\varphi_0:=\varphi^{-1}_f$ of the flow $\varphi^t_f$ as a hyperbolic diffeomorphism with only one fixed point. Recall that 
$\varphi_f^t$ is a Morse-Smale gradient flow which is $\ml{C}^1$-linearizable, hence amenable to the analysis of the previous sections.

\subsection{Local spectra from the work of Baladi--Tsujii.} We start by 
recalling the results of~\cite{BaTs08}. Fix $0\leq k\leq n$, 
the degree of the differential forms we are going to consider and a critical point $a$ of $f$. 
Note that the reference~\cite{BaTs08} mostly deals with
$0$-forms i.e. functions on $M$, which corresponds to $k=0$. 
General results for transfer operators
acting on vector bundles are given in~\cite[section 2]{BaTs08} and~\cite[section 6.4]{Ba16}.
In this paragraph, we consider the transfer operator acting on sections of the bundle
$\Lambda^kT^*M\mapsto M$ of $k$-forms on $M$ by pull--back~:
$ u\in\Gamma(M,\Lambda^kT^*M) \mapsto \varphi_0^*u\in\Gamma(M,\Lambda^kT^*M)$.
For any open subset $U\subset M$, we will denote by $\Omega_c^\bullet(U)$
the differential forms with compact support in $U$. 
Then, one can find a small enough open neighborhood 
$V_a$ of $a$ in $M$ such that, for every $(\psi_1,\psi_2)\in\Omega_c^k(V_a)\times\Omega^{n-k}_c(V_a)$, the map
$$\hat{c}_{\psi_1,\psi_2,a}:z\mapsto \sum_{l=1}^{+\infty}e^{-lz}\int_M\varphi_0^{l*}(\psi_1)\wedge\psi_2$$
has a meromorphic extension to $\IC$. This is a straightforward consequence 
of~\cite[Theorem 2.1]{BaTs08} and~\cite[Theorem 6.12 p.~178]{Ba16} once we note that
smooth differential forms are contained in the Banach spaces of distributional sections of $\Lambda^kT^*M$ used in these references. 
The result of~\cite{BaTs08} is in fact much more general as it holds 
for any Axiom A diffeomorphism provided that the observables are 
supported in the neighborhood of a basic set (which is here reduced to the critical point $a$). Note that this result could also be deduced from the analysis 
in~\cite{GoLi08}. Moreover in~\cite[Theorem 2.2]{BaTs08} (see also \cite[Theorem 6.13 p.~179]{Ba16}), 
Baladi and Tsujii proved the stronger result that the poles of $\hat{c}_{\psi_1,\psi_2,a}$ where $\psi_1,\psi_2$ run over 
$\Omega_c^k(V_a)\times\Omega^{n-k}_c(V_a)$ are exactly equal (with multiplicities) to
the \textbf{zeros} of some \emph{dynamical Ruelle determinant} $\zeta_{R,a}^{(k)}$~:
$\left\{ z_0 : \zeta_{R,a}^{(k)}(z_0)=0\right\}$, where~\cite[p.~179]{Ba16}~:
$$\boxed{\zeta_{R,a}^{(k)}(z):=\exp\left(-\sum_{l=1}^{+\infty}\frac{e^{-lz}}{l}
\frac{\text{Tr}\left(\Lambda^k\left(d\varphi_0^l(a)\right)\right)}{\left|\text{det}\left(\text{Id}-d\varphi_0^l(a)\right)\right|}\right).}$$
Actually, it has been proved in the litterature~\cite{Li05, GiLiPo13, DyZw13, dyatlov2016pollicott} 
for various classes of dynamical systems that the poles
of dynamical correlations correspond 
to the zeros of the dynamical Ruelle determinant. 
Moreover, for any such pole $z_0$, one can find a continuous linear map
$$\pi_{a,z_0}^{(k)}:\Omega_c^k(V_a)\rightarrow\ml{D}^{\prime k}(V_a),$$
which is of finite rank equal to the multiplicity of $z_0$ as a zero of $\zeta_{R,a}^{(k)}$ 
and such that the residue of $\hat{c}_{\psi_1,\psi_2,a}(z)$ at $z=z_0$ is equal to
$$\int_M\pi_{a,z_0}^{(k)}(\psi_1)\wedge\psi_2.$$
Again, $\pi_{a,z_0}^{(k)}$ corresponds to the spectral projector of $\varphi_0^{*}$ acting 
on a certain anisotropic Banach space of currents in $\ml{D}^{\prime k}(V_a)$.
Now the key observation is to note
that the spectral projector $\pi_{a,z_0}^{(k)}:\Omega_c^k(V_a)\mapsto \ml{D}^{\prime k}(V_a)$, whose existence follows from 
the work~\cite{BaTs08},
is just the localized version
of the global spectral projector $\pi^{(k)}_{z_0}$
whose existence follows from
proposition~\ref{p:correlation}. Indeed, for $\psi_1\in\Omega_c^k(V_a)$, we find that
\begin{equation}\label{e:baladi-tsujii}\forall \psi_1\in \Omega^k_c(V_a),\quad \pi_{z_0}^{(k)}(\psi_1)=\pi_{a,z_0}^{(k)}(\psi_1),\end{equation}
where \emph{equality holds in the sense of currents in} $\ml{D}^{\prime k}(V_a)$.
The above means that every element of $\{z_0 : \zeta^{(k)}_{R,a}(z_0)=0 \}$
contributes to the set $\mathcal{R}_k$ of Pollicott--Ruelle resonances
of the transfer operator
acting on $k$-forms. In the next paragraph, we shall prove
that $\mathcal{R}_k$ exactly equals the union over $\text{Crit}(f)$ of local spectras
$$\mathcal{R}_k=\bigcup_{a\in \text{Crit}(f)}\{z_0 : \zeta^{(k)}_{R,a}(z_0)=0 \}$$
where the zeros are counted with multiplicity.

\subsection{Gluing local spectras}

 The main purpose of this section is to prove the following statement:
\begin{prop}\label{p:multiplicity} Let $0\leq k\leq n$ and let $z_0\in\IR$. Then, one has
$$\operatorname{Rk}\left(\pi_{z_0}^{(k)}\right)=\sum_{a\in\operatorname{Crit}(f)}\operatorname{Rk}\left(\pi_{a,z_0}^{(k)}\right).$$
\end{prop}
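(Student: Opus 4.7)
The plan is to establish the claimed identity by proving two opposite inequalities. In both directions, the key tool is the local--global identity~\eqref{e:baladi-tsujii} equating $\pi_{z_0}^{(k)}$ and $\pi_{a,z_0}^{(k)}$ on test forms compactly supported in $V_a$, together with the Smale partial order (Theorem~\ref{t:smale}) and support properties of the generalized eigenmodes.

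\textbf{Lower bound.} For each critical point $a$, I would pick test forms $(\psi^{(a,i)})_{i=1}^{r_a}$ in $\Omega_c^k(V_a)$, with $r_a:=\operatorname{Rk}(\pi_{a,z_0}^{(k)})$, such that the local currents $\bigl(\pi_{a,z_0}^{(k)}(\psi^{(a,i)})\bigr)_{i=1}^{r_a}$ are linearly independent in $\ml{D}^{\prime k}(V_a)$, and set $u^{(a,i)}:=\pi_{z_0}^{(k)}(\psi^{(a,i)})$. Relation~\eqref{e:baladi-tsujii} yields $u^{(a,i)}|_{V_a}=\pi_{a,z_0}^{(k)}(\psi^{(a,i)})$. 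A propagation argument based on the inclusion $\operatorname{supp}(\varphi_f^{-t*}\psi^{(a,i)})\subseteq\varphi_f^{t}(V_a)$ and the fact that the forward flow of $V_a$ accumulates on $\overline{W^u(a)}$, combined with the spectral formula in Proposition~\ref{p:correlation}, shows $\operatorname{supp}(u^{(a,i)})\subseteq\overline{W^u(a)}$. Ordering $\operatorname{Crit}(f)=\{a_1,\ldots,a_N\}$ compatibly with the Smale order (so that $a_j\preceq a_k\Rightarrow j\leq k$), the block matrix $\bigl(u^{(a_k,i)}|_{V_{a_j}}\bigr)_{j,k}$ becomes upper-triangular (since $b\not\preceq a$ implies $V_b\cap\overline{W^u(a)}=\emptyset$ for $V_b$ small enough) with linearly independent diagonal blocks, so the family $(u^{(a,i)})_{a,i}$ is globally linearly independent and $\operatorname{Rk}(\pi_{z_0}^{(k)})\geq\sum_a r_a$.

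\textbf{Upper bound.} I would introduce the restriction map $L:\operatorname{Range}(\pi_{z_0}^{(k)})\to\bigoplus_a\operatorname{Range}(\pi_{a,z_0}^{(k)})$, $u\mapsto(u|_{V_a})_a$, and check both its well-definedness and its injectivity. For well-definedness, any $u\in\operatorname{Range}(\pi_{z_0}^{(k)})$ satisfies $(\ml{L}_{V_f}+z_0)^d u=0$ for some $d\geq 1$, so $u|_{V_a}$ is a generalized eigenmode of $\varphi_0^*$ in $V_a$ with eigenvalue $e^{-z_0}$ lying in a suitable anisotropic Banach space; the intrinsic characterization of the local spectral projector from~\cite{BaTs08} then places $u|_{V_a}$ in $\operatorname{Range}(\pi_{a,z_0}^{(k)})$. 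For injectivity, suppose $u|_{V_a}=0$ for every $a$ and proceed by induction on the Jordan degree of $u$. In the base case $u$ is a genuine eigenmode, so $(\ml{L}_{V_f}+z_0)u=0$ and $\operatorname{supp}(u)$ is invariant under $\varphi_f^t$; since every orbit of the Morse--Smale gradient flow converges to critical points as $t\to\pm\infty$, any non-empty closed flow-invariant set must contain at least one critical point $a\in V_a$, contradicting $u|_{V_a}=0$ and forcing $u=0$. The induction step uses that $\ml{L}_{V_f}$ is a local differential operator, so $u|_{V_a}=0$ implies $(\ml{L}_{V_f}+z_0)u|_{V_a}=0$, which reduces the Jordan size. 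Combining these yields $\operatorname{Rk}(\pi_{z_0}^{(k)})\leq\sum_a r_a$.

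The main obstacle is the well-definedness of $L$, which requires matching the intrinsic characterization of the global generalized eigenspaces on the anisotropic Sobolev spaces of Section~\ref{s:anisotropic} with the local ones of~\cite{BaTs08}: one has to ensure that the restriction of any global eigenmode retains enough regularity to be captured by the local anisotropic Banach space and to coincide with an element of $\operatorname{Range}(\pi_{a,z_0}^{(k)})$. The support statement $\operatorname{supp}(u^{(a,i)})\subseteq\overline{W^u(a)}$ used in the lower bound is the other delicate point and should follow from a careful propagation analysis applied to the residue formula in Proposition~\ref{p:correlation} combined with the geometric description of unstable manifolds.
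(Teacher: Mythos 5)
Your upper bound contains a genuine gap: the restriction map $L:u\mapsto (u|_{V_a})_a$ is not well defined, and this is precisely where the real content of the Proposition lies. The restriction to $V_a$ of a global resonant state need not belong to $\operatorname{Range}(\pi_{a,z_0}^{(k)})$. Concretely, take $z_0=0$, $k<n$, and a critical point $b$ of index $k$; by Theorem~\ref{t:maintheo-current} the state $U_b$ has support exactly $\overline{W^u(b)}$, whose boundary contains some $W^u(a)$ with $a\prec b$ and $\operatorname{ind}(a)>k$. Then $U_b|_{V_a}\neq 0$ for every neighborhood $V_a$ of $a$ (since $a\in\operatorname{supp}U_b$), while $\operatorname{Range}(\pi_{a,0}^{(k)})=\{0\}$ because $\zeta_{R,a}^{(k)}$ does not vanish at $0$ when $\operatorname{ind}(a)\neq k$ (Appendix~\ref{a:holomorphic}). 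So $L$ does not map into $\bigoplus_a\operatorname{Range}(\pi_{a,z_0}^{(k)})$, and your justification — that $u|_{V_a}$ solves $(\ml{L}_{V_f}+z_0)^d u|_{V_a}=0$ and is therefore captured by the local projector of Baladi--Tsujii — is exactly the false step: the local resonant space at $a$ is much smaller than the space of local solutions of this equation; it only sees the states \emph{attached to} $a$, not the traces near $a$ of states attached to critical points above $a$ in Smale's order. (Your injectivity argument is fine; it is the target of $L$ that is wrong.) This is what the paper's proof is built to handle: it first constructs a basis of $\operatorname{Ran}(\pi_{z_0}^{(k)})$ with $\operatorname{supp}U_j(a,z_0)\subset\overline{W^u(a)}$ and germs independent at $a$ (Lemma~\ref{l:goodbasis}), proves the dual basis is supported in $\overline{W^s(a)}$ (Lemma~\ref{l:dualbasissupport}), and only then feeds \eqref{e:baladi-tsujii} with test forms supported in a small $V_a$; the support constraints and Smale's order kill all terms except those attached to $a$, giving $\pi_{a,z_0}^{(k)}(\psi_1)=\sum_{j=1}^{m_a^{(k)}(z_0)}\left(\int_M\psi_1\wedge S_j(a,z_0)\right)U_j(a,z_0)|_{V_a}$ and hence $\operatorname{Rk}(\pi_{a,z_0}^{(k)})=m_a^{(k)}(z_0)$, from which the sum formula follows.

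Your lower bound is closer to the mark (the triangularity argument with respect to Smale's order is sound), but the key claim $\operatorname{supp}\,\pi_{z_0}^{(k)}(\psi)\subset\overline{W^u(a)}$ for an \emph{arbitrary} $\psi\in\Omega^k_c(V_a)$ is not justified as stated. The propagation statement available (\cite[Remark 4.5]{dangrivieremorsesmale1}, used in the proof of Lemma~\ref{l:dualbasissupport}) has the quantifiers in the other order: the neighborhood $V_a$ is shrunk \emph{after} fixing the compact set disjoint from $\overline{W^u(a)}$, whereas in your argument $V_a$ is fixed once by the local theory and $\psi_2$ then ranges over all forms supported off $\overline{W^u(a)}$. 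You would need a uniform eventual-avoidance statement (for each such compact set $K$ there is $T_0$ with $\varphi_f^t(\operatorname{supp}\psi)\cap K=\emptyset$ for $t\geq T_0$), which is plausible for Morse--Smale gradient flows but requires proof; the paper sidesteps this by only cutting off currents that are already supported in closures of unstable manifolds, for which $\varphi_f^t$-invariance of $\overline{W^u(a)}$ gives the vanishing for all $t\geq 0$ directly.
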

In particular, as was already explained, one can deduce from~\cite{BaTs08, Ba16} that $\operatorname{Rk}\left(\pi_{z_0}^{(k)}\right)$ 
is equal to the multiplicity of $z_0$ as a zero of $\zeta_R^{(k)}$. Note that this may be equal to $0$ if $z_0$ does not belong to
the set $\mathcal{R}_k$ of resonances.

\subsubsection{Construction of a ``good'' basis of Pollicott-Ruelle resonant states}
\label{p:goodbasisconstruction}
Let $z_0$ be an element in $\ml{R}_k$. 
We fix $(U_j)_{j=1,\ldots m_{z_0}}$ to be a basis of the range of $\pi_{z_0}^{(k)}$.
These are generalized eigenstates of eigenvalue 
$z_0$ for $-\ml{L}_{V_f}$ 
acting on suitable anisotropic Sobolev space of
currents of degree $k$. 
We aim 
at showing that we can choose this family in such a way that $\text{supp}(U_j)\subset\overline{W^u(a)}$ 
for some critical point $a$ of $f$.
Intuitively, the reader can think that we are looking for a
``good'' basis of generalized eigencurrents
with \textbf{minimal possible support} which by some
propagation argument 
should be the closure of unstable manifolds.
For that 
purpose, we shall prove that 
$U_1\in \mathcal{D}^{\prime,k}(M)$ 
can be decomposed as a sum of currents 
inside the range of $\pi_{z_0}^{(k)}$, 
each of them being supported by some $\overline{W^u(a)}$. 
By~\cite[Lemma~7.7]{DaRi17b} which is a propagation Lemma aimed at controlling supports of generalized eigencurrents, we know that  
if the current $U_1\in \mathcal{D}^{\prime,k}(M)$ is identically 
$0$ on a certain open set $V$ then this vanishing property propagates by the flow 
and $U_1$ 
vanishes identically on $\cup_{t\in\IR}\varphi_f^t(V)$. 
We set $\text{Max}(U_1)$ to be the set 
of critical points $a$ of $f$ such that the germ
of current $U_1$ 
is not identically zero near $a$ and such that, 
for every $b\succ a$, the germ of current $U_1$ 
identically 
vanishes near $b$. In particular, 
this means that, for every $a$ in $\text{Max}(U_1)$, the germ of
current
$U_1$ is supported by $W^u(a)$ in a neighborhood of $a$ by~\cite[Lemma 7.8]{DaRi17b}
which gives a control on the support of generalized eigencurrents near maximal elements of
$\text{Crit}(f)$. 

Our strategy to convert $U_1$ into currents with minimal support
is to \emph{cut and project}. Indeed, we multiply
$U_1$ by cut--off functions near some maximal critical point $a$ then 
we project the cut--off current to get back some generalized eigencurrent
which coincides with $U_1$ near $a$ but has minimal support.  
We will implicitely use the fact that anisotropic Sobolev spaces 
of currents are $C^\infty(M)$-modules which can be seen as follows~:
$u\in \mathcal{H}_k^{m_{N_0,N_1}}\Leftrightarrow \widehat{A}_N u\in L^2(M)$. Hence,
$$\forall\psi\in C^\infty(M),  \widehat{A}_N\left(\psi u\right)=\underset{\in \Psi^0(M)}{\underbrace{\widehat{A}_N\psi \widehat{A}_N^{-1}}} \underset{\in L^2}{\underbrace{\widehat{A}_N u}}\in L^2(M) $$
where we used the composition for pseudodifferential operators~\cite[Th.~8, p.~39]{FRS08} and elements in $\Psi^0(M)$ are bounded in $L^2$.

We get back to our proof. 
For every critical point $a$, we set $\chi_a$ to be a smooth cutoff function which is identically equal to $1$ near $a$ and 
$\chi_a$ vanishes away from $a$. Then, for every $a$ in $\text{Max}(U_1)$, we define
$$\tilde{U}_1(a):=\pi_{z_0}^{(k)}\left(\chi_aU_1\right).$$
We now apply Proposition~\ref{p:correlation} 
to the test current $\chi_aU_1$ belonging to 
some anisotropic Sobolev space and to 
some test form $\psi_2$ in $\Omega^{n-k}(M)$. 
If we choose $\psi_2$ compactly supported 
in $M-\overline{W^u(a)}$, then we can verify that
$$\forall\ t\geq 0,\ 
\int_M\varphi_f^{-t*}(\chi_aU_1)\wedge\psi_2=0.$$
But from the existence of the asymptotic expansion
on the right hand side~:
$$\int_M\varphi^{-t*}(\chi_a\psi_1)\wedge\psi_2=
\sum_{z_0\in\ml{R}_k:z_0>-\Lambda}e^{tz_0}\sum_{l=0}^{d_{z_0}^{(k)}-1}\frac{t^l}{l!}
\int_{M}\left(\ml{L}_{V_f}^{(k)}+z_0\right)^{l}
\left(\pi_{z_0}^{(k)}(\chi_a\psi_1)\right)\wedge\psi_2$$
$$+\ml{O}\left(e^{-\Lambda t}\|\chi_a\psi_1\|_{\ml{H}^{m_{N_0,N_1}}_k}\|\psi_2\|_{\ml{H}^{-m_{N_0,N_1}}_{n-k}}\right),$$ 
we find that
$$\forall \psi_2 \text{ s.t. }\text{supp}(\psi_2)\cap \overline{W^u(a)}=\emptyset, 
\int_{M}\pi_{z_0}^{(k)}\left(\chi_aU_1\right)\wedge \psi_2=0.$$
This implies that $\tilde{U}_1(a)$ is supported by $\overline{W^u(a)}$. If we now choose $\psi_2$ to be compactly supported in the neighborhood of 
$a$ where $\chi_a=1$, then one has
$$\int_M\varphi_f^{-t*}(\chi_aU_1)\wedge\psi_2=\int_M\varphi_f^{-t*}\left(U_1\right)\wedge\psi_2,$$
where we used the fact that $U_1$ is supported by $\overline{W^u(a)}$. Applying the asymptotic expansion of Proposition~\ref{p:correlation} one more 
time to the left hand side of the above equality, we find that $\tilde{U}_1(a)=\pi_{z_0}^{(k)}(\chi_aU_1)$ is equal to $U_1=\pi_{z_0}^{(k)}(U_1)$ in a neighborhood of $a$. 
We define 
$$\tilde{U}_1=U_1-\sum_{a\in\text{Max}(U_1)}\tilde{U}_1(a),$$
which by construction still belongs to the range of $\pi_{z_0}^{(k)}$ and which is now identically $0$ in a neighborhood of each 
$b$ satisfying $b\succeq a$ for the chosen element $a$ in $\text{Max}(U_1)$. Then, either $\tilde{U}_1=0$ in which case $U_1=\sum_a\tilde{U}_1(a)$ is decomposed 
with this minimal support property and we are done. Otherwise, we repeat the above argument 
with $\tilde{U}_1$ instead of $U_1$ and deal with critical points which 
are smaller for Smale's partial order relation. As there is only a finite number of critical points to exhaust, 
this procedure will end after a finite number of steps 
and we will find that
$$U_1=\sum_{a\in\text{Crit}(f)}\tilde{U}_1(a),$$
where either $\tilde{U}_1(a)=0$, 
or the support of $\tilde{U}_1(a)$ is contained in $\overline{W^u(a)}$. 
For the moment this procedure gives
a sequence of integers $m_a^{(k)}(z_0)\in \mathbb{N}, a\in \text{Crit}(f)$ and some
family of currents $(U_{j,a}(z_0))_{a\in \text{Crit}(f), 1\leqslant j\leqslant m_a^{(k)}(z_0)}$
which spans the image of $\pi_{z_0}^{(k)}$ and such that each $U_{j,a}(z_0)$ is supported
in $\overline{W^u(a)}$.
Note that our family of currents may not be linearly independent and we can extract a subfamily to make it into a basis of $\text{Ran}(\pi_{z_0}^{(k)})$. 
 However, we warn the reader that the notion
of linear independence we need  
is a little bit subtle and depends on the open subset in which we consider our current. 
Indeed,
we may have some currents which are linearly independent as elements in
$\ml{D}^{\prime,k}(M)$ but become dependent when we restrict them to smaller open subsets $U\subset M$.
Also, we can always construct some currents  
which are independent as elements
of $\ml{D}^{\prime,k}(U)$ but are dependent as elements of $\ml{D}^{\prime,k}(V)$ if $(U,V)$ are distinct
open subsets of $M$.
\begin{def1}[Independent germs at some given point]
A family of currents $(u_i)_{i\in I}$ in $\ml{D}^{\prime,k}(M)$ 
are linearly independent germs
at $a\in M$, if for all open neighborhoods $V_a$ of $a$, 
$(u_i)_{i\in I}$ are linearly independent as elements of $\ml{D}^{\prime,k}(V_a)$. 
\end{def1}

We start from a critical point $a$ such 
that $(U_j(a,z_0))_{j=1,\ldots,m_a^{(k)}(z_0)}$ are not independent germs at $a$ and, 
for every $b\succ a$, $(U_j(b,z_0))_{j=1,\ldots,m_b^{(k)}(z_0)}$ are linearly independent germs at $b$.
We next define a method to \emph{localize the linear dependence} near $a$ as follows.  
\begin{def1}[Local rank of germs at some point]
Consider the family of currents $(U_j(a,z_0))_{j=1,\ldots,m_a^{(k)}(z_0)}$.
Define a sequence $B_a(n)$ of balls of radius $\frac{1}{n}$ around $a$.
Consider the sequence
$r_n=\text{Rank}(U_j(a,z_0)|_{B_a(n)})_{j=1,\ldots,m_a^{(k)}(z_0)}$
where each $U_j(a,z_0)|_{B_a(n)}\in \ml{D}^{\prime,k}(B_a(n))$ is the restriction of 
$U_j(a,z_0)\in \ml{D}^{\prime,k}(M)$
to the ball $B_a(n)$.We call
$\lim_{n\rightarrow +\infty} r_n$ the rank of the germs
$(U_j(a,z_0))_{j=1,\ldots,m_a^{(k)}(z_0)}$ at $a$.
\end{def1}

If $\lim_{n\rightarrow +\infty} r_n<m_a^{(k)}(z_0)$, then there exists an open neighborhood
$V_a$ of $a$
such that the currents
$(U_j(a,z_0)|_{V_a})_{j=1,\ldots,m_a^{(k)}(z_0)}$
are linearly dependent in $\ml{D}^{\prime,k}(V_a)$ and the open subset $V_a$ is optimal as one cannot find 
a smaller open subset around $a$ on which one could write new linear relations among $(U_j(a,z_0))_{j=1,\ldots,m_a^{(k)}(z_0)}$.
It means that one can find some $j$ 
(say $j=1$) such that, on the open set $V_a$,
$$U_1(a,z_0)=\sum_{j=2}^{m_a^{(k)}(z_0)}\alpha_jU_j(a,z_0).$$
Then, we set
$$\tilde{U}(z_0)=U_1(a,z_0)-\sum_{j=2}^{m_a^{(k)}(z_0)}\alpha_jU_j(a,z_0),$$
which is equal to $0$ near $a$. Hence, by propagation~\cite[Lemma 7.7]{DaRi17b}, 
$\tilde{U}(z_0)$ is supported inside $\overline{W^u(a)}-W^u(a)$. 
Thus, proceeding by induction on 
Smale's partial order relation, we can without 
loss of generality suppose that, for every 
critical point $a$, the currents 
$(U_j(a,z_0))_{j=1,\ldots,m_a^{(k)}(z_0)}$ are linearly independent germs at $a$ and not only as elements of $\ml{D}^{\prime,k}(M)$. 

To summarize, we have proved~:
\begin{lemm}\label{l:goodbasis} Let $0\leq k\leq n$ and $z_0$ be an element of $\ml{R}_k$. For every
$a\in \text{Crit}(f)$, there exist
an integer $m_a^{(k)}(z_0)\geq 0$ together with a corresponding basis of generalized 
eigencurrents 
$$\left\{U_j(a,z_0):\ a\in\operatorname{Crit}(f),\ 1\leq j\leq m_a^{(k)}(z_0)\right\}$$
of the range of $\pi_{z_0}^{(k)}$ satisfying the following properties
$$\forall\ a\in\operatorname{Crit}(f),\ \forall\ 1\leq j\leq m_a^{(k)}(z_0),\ \operatorname{supp}(U_j(a,z_0))\subset\overline{W^u(a)},$$
and, for all $a\in \text{Crit}(f)$,
the family $\left(U_j(a,z_0)\right)_{j=1}^{m_a^{(k)}(z_0)}$ are independent germs at $a$.
\end{lemm}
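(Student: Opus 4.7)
The plan is to refine an arbitrary basis of the finite-dimensional space $\operatorname{Ran}(\pi_{z_0}^{(k)})$ by a two-stage procedure: first decompose each generalized eigencurrent into pieces whose supports lie in individual closures $\overline{W^u(a)}$, then iteratively clean up linear dependencies among the germs at each critical point by working along Smale's partial order $\preceq$ of Theorem~\ref{t:smale}. Throughout I rely on two standing tools: the propagation lemma~\cite[Lemma~7.7]{DaRi17b}, which says that the zero set of a generalized eigencurrent is flow-invariant, and the fact that the anisotropic Sobolev spaces $\mathcal{H}^{m_{N_0,N_1}}_k(M)$ are $C^\infty(M)$-modules, so cutting off by a smooth function keeps us in the functional framework where Proposition~\ref{p:correlation} applies.

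For the first stage I start from any basis element $U \in \operatorname{Ran}(\pi_{z_0}^{(k)})$ and define $\operatorname{Max}(U)$ to be the set of critical points $a$ such that the germ of $U$ at $a$ is nonzero but vanishes near every $b\succ a$. For each such $a$, pick a cutoff $\chi_a$ equal to $1$ near $a$ with small support and set $\widetilde U(a) := \pi_{z_0}^{(k)}(\chi_a U)$. Applying Proposition~\ref{p:correlation} to $\chi_a U$ paired against a test form $\psi_2\in\Omega^{n-k}_c(M\setminus\overline{W^u(a)})$: the left hand side $\int_M \varphi_f^{-t*}(\chi_a U)\wedge\psi_2$ vanishes for all $t\geq 0$ because, by maximality of $a$ in $\operatorname{Max}(U)$ and by the propagation lemma, $\chi_a U$ is supported in $\overline{W^u(a)}$ where $\chi_a\ne 0$; hence the expansion forces $\int_M \widetilde U(a)\wedge\psi_2=0$, giving $\operatorname{supp}\widetilde U(a)\subset\overline{W^u(a)}$. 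Conversely, pairing with $\psi_2$ supported in $\{\chi_a=1\}$ shows $\widetilde U(a)$ and $U$ have the same germ at $a$. Setting $\widetilde U:=U-\sum_{a\in\operatorname{Max}(U)}\widetilde U(a)$ still lies in the range of $\pi_{z_0}^{(k)}$ and vanishes near every $a\in\operatorname{Max}(U)$; its maximal support points are now strictly lower for $\preceq$, so finiteness of $\operatorname{Crit}(f)$ terminates the iteration and produces a generating family $(U_j(a,z_0))$ indexed by $a\in\operatorname{Crit}(f)$ with $\operatorname{supp}U_j(a,z_0)\subset\overline{W^u(a)}$.

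For the second stage I proceed by induction on Smale's partial order starting from the minima, assuming that for every $b\prec a$ the currents $(U_j(b,z_0))_j$ are already linearly independent germs at $b$. Suppose the germs $(U_j(a,z_0))_j$ are dependent: there is an open neighborhood $V_a$ of $a$ and scalars $(\alpha_j)$, not all zero, with $\sum_j \alpha_j U_j(a,z_0)=0$ in $\mathcal{D}^{\prime,k}(V_a)$. The corresponding global combination $\widetilde U(z_0):=\sum_j\alpha_jU_j(a,z_0)$ then vanishes on $V_a$; the propagation lemma upgrades this to vanishing on $\bigcup_{t\in\mathbb{R}}\varphi_f^t(V_a)$, forcing $\operatorname{supp}\widetilde U(z_0)\subset \overline{W^u(a)}\setminus W^u(a)=\bigcup_{b\prec a}\overline{W^u(b)}$, so $\widetilde U(z_0)$ is already in the span of pieces at strictly lower critical points, which are independent germs by the inductive hypothesis. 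Hence we can discard one redundant $U_j(a,z_0)$ at level $a$ and repeat until the remaining family is an independent germ at $a$; ascending through $\preceq$ and extracting a basis at the end yields the claimed family.

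The main obstacle is stage one, and specifically the compatibility of the spectral projector $\pi_{z_0}^{(k)}$ with multiplication by the cutoffs $\chi_a$: one needs to know that $\chi_a U$ is again a distributional section lying in the anisotropic Sobolev scale, so that Proposition~\ref{p:correlation} can be invoked with control on the remainder. This is precisely the $C^\infty(M)$-module property of $\mathcal{H}^{m_{N_0,N_1}}_k(M)$, which follows from the boundedness of $\widehat{A}_N\psi\widehat{A}_N^{-1}\in\Psi^0(M)$ on $L^2$; once this is in place, the remainder of the argument is a bookkeeping induction on Smale's order and poses no further analytic difficulty.
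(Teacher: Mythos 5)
Your first stage is essentially the paper's own argument: cut off near the points of $\operatorname{Max}(U)$, project by $\pi_{z_0}^{(k)}$, and use the asymptotic expansion of Proposition~\ref{p:correlation} twice, once against test forms supported off $\overline{W^u(a)}$ to control the support and once against test forms supported in $\{\chi_a=1\}$ to match the germ at $a$; the $C^\infty(M)$-module property is indeed what legitimizes applying the expansion to $\chi_a U$. (The paper invokes a separate statement, Lemma~7.8 of~\cite{DaRi17b}, for the fact that near a maximal point the germ of $U$ is carried by $W^u(a)$; your derivation of this from maximality plus the propagation lemma is acceptable.) The genuine gap is in your second stage, at the sentence ``so $\widetilde U(z_0)$ is already in the span of pieces at strictly lower critical points\ldots hence we can discard one redundant $U_j(a,z_0)$''. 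Knowing $\operatorname{supp}\widetilde U(z_0)\subset\bigcup_{b\prec a}\overline{W^u(b)}$ does \emph{not} imply that $\widetilde U(z_0)$ belongs to the span of the currents you attached to the points $b\prec a$: support containment in a union of lower unstable closures is not membership in that span, and your inductive hypothesis (independence of germs at the $b\prec a$) says nothing about which currents carried by those strata are reachable. A relation among germs at $a$ only says $\widetilde U(z_0)$ vanishes near $a$; it can perfectly well be a nonzero element of $\operatorname{Ran}(\pi_{z_0}^{(k)})$ living on the lower strata that your first stage never produced there. Hence the discard move can destroy the spanning property, and ``extracting a basis at the end'' then fails to produce a basis of $\operatorname{Ran}(\pi_{z_0}^{(k)})$, so the lemma is not obtained.

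The repair is what the paper actually does: do not discard, but \emph{replace} the germ-dependent current $U_1(a,z_0)$ by $\widetilde U(z_0)=U_1(a,z_0)-\sum_{j\geq 2}\alpha_j U_j(a,z_0)$, which is an invertible change of the family and so leaves the span untouched; then, since $\widetilde U(z_0)$ is supported in $\overline{W^u(a)}\setminus W^u(a)$, feed it back into the stage-one decomposition so that it is reassigned to critical points $b\prec a$. For this to be a well-founded induction one must process the critical points from the top down (choose $a$ such that the germs at every $b\succ a$ are already independent, as in the paper): the push-down adds new currents at lower, not-yet-finalized levels, and termination follows because supports move to strata of strictly smaller $\dim W^u$. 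Your bottom-up order is incompatible with this fix (the reassigned pieces would reappear at levels you have already cleaned), which is precisely what forced you into the unjustified discard.
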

We denote by
$$\left\{S_j(a,z_0):\ a\in\operatorname{Crit}(f),\ 1\leq j\leq m_a^{(k)}(z_0)\right\},$$
the dual basis (given by the adjoint operator) of this ``good'' basis. In particular, the spectral projector $\pi_{z_0}^{(k)}$ can be written as follows:
\begin{equation}\label{e:spectral-projector}\forall\psi_1\in\Omega^k(M),\quad\pi_{z_0}^{(k)}(\psi_1)=\sum_{a\in\text{Crit}(f)}
\sum_{j=1}^{m_a^{(k)}(z_0)}\left(\int_M\psi_1\wedge S_j(a,z_0)\right)U_j(a,z_0).\end{equation}
The currents $(S_j(a,z_0))_{j,a,z_0}$ are generalized eigenmodes for the dual operator $(-\ml{L}_{V_f}^{(k)})^{\dagger}=-\ml{L}_{V_{-f}}^{(n-k)}$ acting on the 
anisotropic Sobolev space $\ml{H}^{-m}_{n-k}(M)$. Also, from the definition of the duality pairing, one has, for every critical points $(a,b)$, for every indices $(j,k)$ and for every 
$(z,z')$ in $\ml{R}_k$,
\begin{equation}
\left\langle U_k(b,z^\prime),S_j(a,z)   \right\rangle=\int_M  U_k(b,z^\prime)\wedge S_j(a,z)=\delta_{jk}\delta_{zz^\prime}\delta_{ab}.
\end{equation}

\subsubsection{Support of the dual basis}

We would like to show that the dual basis 
$$\left\{S_j(a,z_0):\ a\in\operatorname{Crit}(f),\ 1\leq j\leq m_a^{(k)}(z_0)\right\}$$ defined above 
contains currents with minimal support. In fact, we will prove that
\begin{lemm}\label{l:dualbasissupport}
For all $z_0\in \ml{R}_k$,
the above dual basis satisfies the condition~:
$$\boxed{\forall\ a\in\operatorname{Crit}(f),\ \forall\ 1\leq j\leq m_a^{(k)}(z_0),\ \operatorname{supp}(S_j(a,z_0))\subset\overline{W^s(a)}.}$$
\end{lemm}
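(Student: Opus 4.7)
The plan is to mirror the proof of Lemma~\ref{l:goodbasis} for the formal adjoint $\left(-\ml{L}_{V_f}^{(k)}\right)^{\dagger}=-\ml{L}_{V_{-f}}^{(n-k)}$, whose generator is the time-reversed gradient flow. Since $W^u_{-f}(a)=W^s(a)$ for every critical point $a$, the very same inductive procedure produces, for each $z_0\in\ml{R}_k$, a basis
$$\bigl\{\widetilde{S}_k(b,z_0):\ b\in\operatorname{Crit}(f),\ 1\leq k\leq m_b^{(k)}(z_0)\bigr\}$$
of $\operatorname{Ran}\bigl((\pi_{z_0}^{(k)})^{\dagger}\bigr)$ such that $\operatorname{supp}(\widetilde{S}_k(b,z_0))\subset\overline{W^s(b)}$ and whose germs at $b$ are linearly independent. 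I would then compare this a priori basis with the actual dual basis $\{S_j(a,z_0)\}$ via the pairing matrix
$$P_{(i,b),(k,b')}:=\int_M U_i(b,z_0)\wedge \widetilde{S}_k(b',z_0),$$
which is well-defined since the two families belong to topologically dual anisotropic Sobolev spaces for a common choice of large enough $N_0,N_1$.

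The key geometric input is the implication
$$\overline{W^u(b)}\cap\overline{W^s(b')}\neq\emptyset\ \Longrightarrow\ b'\preceq b,$$
which combines Theorem~\ref{t:smale} with the absence of cycles in a Morse--Smale gradient flow (since $f$ is strictly increasing along nonconstant orbits). Indeed, any point in the intersection lies on some heteroclinic orbit from $c\preceq b$ to $d$ with $b'\preceq d$, which in turn forces $d\preceq c$, hence $b'\preceq b$ by transitivity. Consequently $P_{(i,b),(k,b')}=0$ whenever $b'\not\preceq b$, so $P$ is triangular with respect to $\preceq$; non-degeneracy of the duality pairing between these two finite-dimensional spectral subspaces forces $P$ to be invertible. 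Writing $P=D+L$ with $D$ block-diagonal in $b$ and $L$ strictly $\prec$-triangular, the absence of cycles makes $L$ nilpotent, and the Neumann expansion $P^{-1}=\bigl(\sum_{j\geq 0}(-D^{-1}L)^j\bigr)D^{-1}$ shows that $P^{-1}$ also satisfies $(P^{-1})_{(i,b),(k,b')}=0$ whenever $b'\not\preceq b$.

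Transposing, the matrix $A=(P^{-1})^T$ expressing $S_j(a,z_0)=\sum_{(k,b)}A_{(j,a),(k,b)}\widetilde{S}_k(b,z_0)$ satisfies $A_{(j,a),(k,b)}=0$ unless $a\preceq b$, and therefore
$$\operatorname{supp}(S_j(a,z_0))\subset\bigcup_{b:\,a\preceq b}\overline{W^s(b)}=\bigcup_{d:\,a\preceq d}W^s(d)=\overline{W^s(a)},$$
which is the desired inclusion. I expect the main technical obstacle to lie in the very first step: verifying that the construction of Lemma~\ref{l:goodbasis}, which crucially uses the propagation principle for generalized eigencurrents together with the asymptotic expansion of Proposition~\ref{p:correlation}, adapts verbatim to the adjoint operator acting on the anisotropic space with opposite-sign order function, and that the duality bracket used to define $P$ is genuinely compatible with the spectral decomposition. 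Both amount to checking wavefront compatibility between $U_i(b,z_0)$ (whose singular directions are conormal to unstable directions) and $\widetilde{S}_k(b',z_0)$ (whose singular directions are conormal to stable directions), which is controlled by the order function of Lemma~\ref{l:escape-function}.
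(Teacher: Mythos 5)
Your route is genuinely different from the paper's: the paper proves the support statement by a direct induction on Smale's order, testing the expansion of Proposition~\ref{p:correlation} against $\psi_1$ supported in $M\setminus\overline{W^s(a)}$ and $\psi_2$ supported near $a$, using $\overline{W^s(b)}\subset\overline{W^s(a)}$ for $b\succeq a$ together with the inductive hypothesis, and concluding from the independence of the germs $U_j(a,z_0)$ at $a$; you instead build a second ``good basis'' for the adjoint (the time-reversed flow), supported on the $\overline{W^s(b)}$, and try to transfer the support property to the dual basis through the change-of-basis matrix. Your geometric input $\overline{W^u(b)}\cap\overline{W^s(b')}\neq\emptyset\Rightarrow b'\preceq b$ is correct, and the vanishing of the duality bracket for currents with disjoint supports is unproblematic (and, contrary to your last paragraph, needs no wavefront argument: the bracket is just the $\ml{H}^{m}_k\times\ml{H}^{-m}_{n-k}$ duality, which is $\ml{C}^\infty(M)$-bilinear).

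The genuine gap is in the matrix-inversion step. Writing $P=D+L$ and expanding $P^{-1}=\bigl(\sum_{j\geq 0}(-D^{-1}L)^j\bigr)D^{-1}$ presupposes that the diagonal blocks of $P$ are square and invertible, i.e.\ that for every critical point $b$ the multiplicity $m_b^{(k)}(z_0)$ produced by Lemma~\ref{l:goodbasis} equals the multiplicity $\widetilde m_b(z_0)$ produced by your adjoint construction. Nothing in your argument gives this, and it is not a formal consequence of invertibility plus the support pattern $P_{(i,b),(k,b')}=0$ for $b'\not\preceq b$: for the chain $a\prec b\prec c$ with row multiplicities $(1,0,2)$ and column multiplicities $(1,1,1)$, the matrix with rows $(1,0,0)$, $(1,1,0)$, $(0,0,1)$ respects the pattern and is invertible, yet its inverse has the nonzero entry $-1$ in the position (row block $b$, column block $a$) with $b\not\preceq a$; so with mismatched block sizes the desired triangularity of $P^{-1}$ — hence the support conclusion — can genuinely fail. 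You would therefore have to prove $m_b^{(k)}(z_0)=\widetilde m_b(z_0)$ first; the natural source, namely the identification of $m_b^{(k)}(z_0)$ with the rank of the local Baladi--Tsujii projector in Proposition~\ref{p:multiplicity} and~\eqref{e:localtoglobal}, is established in the paper only \emph{after}, and by means of, the very support statement you are proving, so invoking it would be circular. Unless you supply an independent proof of this blockwise equality of multiplicities, the argument does not close; the paper's induction avoids the issue entirely because it never needs to compare the two spectral decompositions.
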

The above bound on the support of the dual basis actually shows that~:
\begin{equation}\label{e:intersection-support}\text{supp}\left(S_j(a,z_0)\right)\cap \text{supp}\left(U_j(a,z_0)\right)=\{a\}.\end{equation}
\begin{proof} Let $0\leq k\leq n$ and let $z_0\in\ml{R}_k$. We shall prove this Lemma by induction on Smale's partial order relation $\succeq$. 
In that manner, it is sufficient to prove that, for every $a\in\text{Crit}(f)$ such that the conclusion of the Lemma holds for all\footnote{Note that $a$ 
may be a minimum and, in that case, there is no such $b$.} $b\succ a$, one has 
$$\forall\ 1\leq j\leq m_a^{(k)}(z_0),\ \operatorname{supp}(S_j(a,z_0))\subset\overline{W^s(a)}.$$
 Fix such a critical point $a$ and $\psi_1$ compactly supported in $M-\overline{W^s(a)}$. Then, we consider $V_a$ to be a small enough neighborhood 
 of $a$ which does not interesect the support of $\psi_1$ and we fix $\psi_2$ in $\Omega_c^k(V_a)$. 
 From~\cite[Remark 4.5 p.~17]{dangrivieremorsesmale1}, we know that, if $V_a$ is chosen small enough, then $\varphi_f^{-t}(V_a)$ remains inside the 
 complementary of $\text{supp}(\psi_1)$ for $t\geq 0$. In particular, for every $t\geq 0$, $\varphi_f^{-t*}(\psi_1)\wedge\psi_2=0$ for every $t\geq 0$. 
 Applying the asymptotic expansion of Proposition~\eqref{p:correlation}, we then find that
 $$\int_M\pi_{z_0}^{(k)}(\psi_1)\wedge\psi_2=0,$$
for every $\psi_2$ in $\Omega_c^k(V_a)$. From~\eqref{e:spectral-projector}, this can be rewritten as
$$\forall\psi_2\in\Omega^k_c(V_a),\quad\sum_{b\in\text{Crit}(f)}
\sum_{j=1}^{m_b^{(k)}(z_0)}\left(\int_M\psi_1\wedge S_j(b,z_0)\right)\left(\int_M U_j(b,z_0)\wedge \psi_2\right)=0.$$
As $V_a$ is a small neighborhood of $a$ and as $U_j(b,z_0)$ is carried by $\overline{W^s(b)}$, we can apply Smale's 
Theorem~\ref{t:smale} in order to verify that only the points $b$ such that $b\succeq a$ contribute to the above sum, i.e.
$$\forall\psi_2\in\Omega^k_c(V_a),\quad\sum_{b\in\text{Crit}(f):b\succeq a}
\sum_{j=1}^{m_b^{(k)}(z_0)}\left(\int_M\psi_1\wedge S_j(b,z_0)\right)\left(\int_M U_j(b,z_0)\wedge \psi_2\right)=0.$$
We can now use our assumption on $a$ and the fact that $\overline{W^s(b)}\subset\overline{W^s(a)}$ for $b\succeq a$ in order to get 
$$\forall\psi_2\in\Omega^k_c(V_a),\quad
\sum_{j=1}^{m_a^{(k)}(z_0)}\left(\int_M\psi_1\wedge S_j(a,z_0)\right)\left(\int_M U_j(a,z_0)\wedge \psi_2\right)=0.$$
As the germs of currents are independent at $a$, we can deduce that $\int_M\psi_1\wedge S_j(a,z_0)=0$ for every 
$1\leq j \leq m_a^{(k)}(z_0)$, which concludes the proof of the Lemma.
\end{proof}

\subsection{Proof of Proposition~\ref{p:multiplicity}} We can now conclude the proof of Proposition~\ref{p:multiplicity}. With the above conventions, 
it is sufficient to show that $m_a^{(k)}(z_0)=\text{Rk}(\pi_{z_0,a}^{(k)})$. Hence, we fix a critical point $a$ and thanks 
to~\eqref{e:baladi-tsujii}, we can write that, for every $\psi_1$ in $\Omega^{k}_c(V_a)$,
$\pi_{z_0,a}^{(k)}(\psi_1)|_{V_a}=\sum_{b\in \text{Crit}(f)}\sum_{j=1}^{m_b^{(k)}(z_0)}\left(\int_M\psi_1\wedge S_j(b,z_0)\right) 
U_j(b,z_0)|_{V_a} .$
Now we choose $V_a$ small enough around $a$ such that $V_a\cap \overline{W^u(b)}=\emptyset$ (resp $V_a\cap \overline{W^s(b)}=\emptyset$) 
unless $b \succeq a$ (resp unless $b\preceq a$). 
Then 
$S_j(b,z_0)\wedge \psi_1=0$ unless $b\preceq a$
because $\text{supp}(S_j(b,z_0))\subset \overline{W^s(b)}$ does not meet
$V_a$ hence $\text{supp}(\psi_1)$.
In the same manner, $U_j(b,z_0)|_{V_a}=0 $
unless $b\succeq a$
since $\text{supp}(U_j(b,z_0))\subset \overline{W^u(b)}$ does not meet
$V_a$ unless $b\succeq a$. 
Therefore, all these cancellations imply that~:
$\sum_{b\in \text{Crit}(f)}\sum_{j=1}^{m_b^{(k)}(z_0)}\left(\int_M\psi_1\wedge S_j(b,z_0)\right) 
U_j(b,z_0)|_{V_a}=\sum_{j=1}^{m_a^{(k)}(z_0)}\left(\int_M\psi_1\wedge S_j(a,z_0)\right) 
U_j(a,z_0)|_{V_a}$ yielding~:
\begin{equation}\label{e:localtoglobal}\pi_{z_0,a}^{(k)}(\psi_1)=\sum_{j=1}^{m_a^{(k)}(z_0)}\left(\int_M\psi_1\wedge S_j(a,z_0)\right) 
U_j(a,z_0)|_{V_a}.\end{equation}
Thanks to Lemma~\ref{l:goodbasis}, 
we know that the currents $U_j(a,z_0)|_{V_a}$ are linearly independent in 
$\ml{D}^{\prime k}(V_a)$. Using~\eqref{e:intersection-support} and the fact that $S_j(a,z_0)$ is the dual basis of $U_j(a,z_0)$, we can verify that the 
$S_j(a,z_0)$ are also independent germs at $a$. Hence, one can verify that the range 
of $\pi_{z_0,a}^{(k)}$ is spanned by the currents $(U_j(a,z_0)|_{V_a})_{j=1,\ldots, m_a^{(k)}(z_0)}$ which concludes the proof 
of Proposition~\ref{p:multiplicity}.

\subsection{No Jordan blocks for $z_0=0$}
\label{ss:noJordan}
 Let $0\leq k\leq n$. Thanks to Appendix~\ref{a:holomorphic}, 
we know that the multiplicity of $1$ as a zero of $\zeta_{R}^{(k)}(z)$ is equal to 
the number of critical points of index $k$. On the other hand, given a critical 
point $a$ of index $l$, if we use Baladi-Tsujii's local result, we know that the 
multiplicity of the eigenvalue 
$1$ near $a$ is equal to $1$ if $k=l$ and to $0$ otherwise. Hence, if we use~\eqref{e:localtoglobal} 
combined with Proposition~\ref{p:multiplicity}, we can then deduce that, for 
$z_0=0$, one can find a basis of generalized eigencurrents for $\text{Ker}(\mathcal{L}_{V_f}^{(k)})^N$ (for some large enough $N$):
$$\left\{U_a:\ \text{dim}\ W^s(a)=k\right\},$$
whose support is equal to $\overline{W^u(a)}$. We would now like to verify that we can indeed pick $N=1$, 
equivalently that there is no Jordan blocks in the kernel. 
Suppose by contradiction that we have a nontrivial Jordan block, i.e. there exists $(u_0,u_1)$ such that
$$\ml{L}_{V_f}^{(k)}u_0=0\quad\text{and}\quad\ml{L}_{V_f}^{(k)}u_1=u_0.$$
We fix $a$ to be a critical point of index $k$ such that $u_0$ is not equal to $0$ near $a$. 
Such a point exists as $u_0$ is a linear combination of the 
$(U_b)_{b:\text{dim} W^s(b)=k}$. Recall from Smale's Theorem that, for every $b$ in $\text{Crit}(f)$, $\overline{W^u(b)}-W^u(b)$ is the union of unstable 
manifolds whose dimension is $<\text{dim}\ W^u(b)$. Hence, as $u_1$ is also a linear combination of the $(U_b)_{b:\text{dim} W^s(b)=k}$, we necessarily have that 
$u_1$ is proportional to $U_a$ near $a$. In a neighborhood of $a$, we then have $u_0=\alpha_0 U_a$ 
(with $\alpha_0\neq 0$) and $u_1=\alpha_1U_a$. If we use 
the eigenvalue equation, we find that, in a neighborhood of $a$:
$$\alpha_0\ml{L}_{V_f}^{(k)}U_a=0\quad\text{and}\quad\alpha_1\ml{L}_{V_f}^{(k)}U_a=\alpha_0 U_a.$$
As $U_a$ is not identically $0$ near $a$, we find the expected contradiction.

We next prove the following Lemma on the local structure of eigencurrents
in $\text{Ker}(\ml{L}_{V_f})$ near critical points~:
\begin{lemm}\label{r:current} Let $y_0$ be a point inside $W^u(a)$. 
Then, one can find a local system of coordinates $(x_1,\ldots x_n)$ such that $W^u(a)$ is given locally near $y_0$ 
by $\left\{x_1=\ldots=x_r=0\right\},$ where $r$ is the index of $a$
and the current
$[W^u(a)]=\delta_0(x_1,\ldots,x_r)dx_1\wedge\ldots \wedge dx_r$ coincides with $U_a$ near $y_0$. 
Similarly, one has $S_a=[W^s(a)]$ near $a$.
\end{lemm}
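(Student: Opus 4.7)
The plan is to prove the claim in three steps: set up suitable local coordinates; verify directly that $[W^u(a)]$ is in the kernel of $\mathcal{L}_{V_f}$ there; and identify it with $U_a$ by a uniqueness-plus-propagation argument. For the coordinates I distinguish two cases. If $y_0=a$, the $\mathcal{C}^1$-linearization hypothesis yields coordinates in which
\[
V_f \;=\; -\sum_{i=1}^{r}\lambda_i x_i \partial_{x_i} + \sum_{j=r+1}^{n}\mu_j x_j \partial_{x_j}, \qquad \lambda_i,\mu_j > 0,
\]
with $W^u(a) = \{x_1=\ldots=x_r=0\}$ the eigenspace of $L_f(a)$ associated with the positive eigenvalues. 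If $y_0 \in W^u(a)\setminus\{a\}$, then $V_f(y_0)\neq 0$ and the flow-box theorem, combined with a local parametrization of $W^u(a)$ along a transverse section at $y_0$, furnishes coordinates in which $V_f = \partial_{x_n}$ and simultaneously $W^u(a) = \{x_1=\ldots=x_r=0\}$.

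In either chart the identity $\mathcal{L}_{V_f}[W^u(a)] = 0$ is a short Leibniz computation. Near $a$, the formulas $\mathcal{L}_{V_f}(dx_i) = -\lambda_i dx_i$ for $i\leq r$ and $x_i\partial_{x_i}\delta_0 = -\delta_0$ on $\mathbb{R}^r$ combine to produce cancelling contributions $\pm(\lambda_1+\cdots+\lambda_r)$; in the flow-box chart, $\delta_0(x_1,\ldots,x_r)\,dx_1\wedge\cdots\wedge dx_r$ is independent of $x_n$ and is therefore annihilated by $\mathcal{L}_{\partial_{x_n}}$. To identify $U_a$ with $[W^u(a)]$ near $a$, I invoke the Baladi--Tsujii local spectral description recalled at the start of Section~\ref{s:Pollicott-Ruelle}: when the degree $k$ equals the index $r$ of $a$, the local Ruelle determinant $\zeta_{R,a}^{(k)}$ has a simple zero at $z_0=0$ and no zero otherwise (see Appendix~\ref{a:holomorphic}). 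Combined with~\eqref{e:baladi-tsujii}, with the representation~\eqref{e:localtoglobal} of $\pi_{a,z_0}^{(k)}$ in terms of the good basis, and with the absence of Jordan blocks established in Section~\ref{ss:noJordan}, this forces $U_a = c\,[W^u(a)]$ on some neighborhood of $a$ for a scalar $c$; the scalar is pinned down to $c=1$ by the normalization built into the good basis of Lemma~\ref{l:goodbasis}, using the transverse intersection identity $\int_M [W^u(a)]\wedge[W^s(a)] = \pm 1$ at $a$ together with the duality pairing $\int_M U_a\wedge S_a = 1$.

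To extend the equality from a neighborhood of $a$ to an arbitrary $y_0 \in W^u(a)$, I apply the support-propagation lemma~\cite[Lemma 7.7]{DaRi17b} to the generalized eigencurrent $U := U_a - [W^u(a)]$, which vanishes on a neighborhood $V$ of $a$ and is supported in $\overline{W^u(a)}$. Since every point of $W^u(a)$ flows into $V$ under $\varphi_f^{-t}$ as $t\to +\infty$, the flow-saturation $\bigcup_{t\in\mathbb{R}}\varphi_f^t(V)$ contains all of $W^u(a)$, so $U$ vanishes near $y_0$; reading the resulting equality in the flow-box coordinates delivers the stated local form. The companion statement $S_a = [W^s(a)]$ near $a$ follows by running the argument for the dual problem, namely for the adjoint $-\mathcal{L}_{V_f}^\dagger = -\mathcal{L}_{V_{-f}}$ acting in degree $n-k$, whose unstable manifold at $a$ is precisely $W^s(a)$. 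I expect the main delicate point to be the identification of the normalizing constant $c=1$: this is the only step where one must reconcile the spectral-theoretic normalization of $U_a$ coming from Lemma~\ref{l:goodbasis} with the metric and orientation data defining the current $[W^u(a)]$; the remainder of the argument reduces to flow-box normal forms, an explicit differentiation, and a single application of the support-propagation lemma.
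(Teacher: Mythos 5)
Your overall architecture (local normal forms, $\mathcal{L}_{V_f}$-invariance of $[W^u(a)]$, spectral identification near $a$, propagation along the flow, duality for $S_a$) is close to the paper's, but the step you yourself single out as the crux --- pinning the constant $c=1$ --- does not work as you propose, and this is a genuine gap. Lemma~\ref{l:goodbasis} contains no normalization of the $U_a$: the good basis is only determined up to rescalings $U_a\mapsto cU_a$, under which the dual basis rescales as $S_a\mapsto c^{-1}S_a$, so the pairing $\int_M U_a\wedge S_a=1$ is insensitive to $c$. If your Baladi--Tsujii rank-one argument gives $U_a=c\,[W^u(a)]$ near $a$ and the dual argument gives $S_a=c'\,[W^s(a)]$ near $a$, then combining $\int_M U_a\wedge S_a=1$ with $\int_M \chi\,[W^u(a)]\wedge[W^s(a)]=\pm 1$ (the supports meet only at $a$ by~\eqref{e:intersection-support}) yields only $cc'=\pm1$; it cannot force $c=1$. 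Since nothing earlier in the paper fixes the scale of $U_a$, equality with constant exactly $1$ cannot be \emph{derived}; it must be \emph{arranged}. This is what the paper does: it notes that $\chi_a[W^u(a)]$ lies in the anisotropic space, that $[W^u(a)]$ satisfies $\varphi_f^{-t*}[W^u(a)]=[W^u(a)]$ away from $\partial W^u(a)$, and then \emph{chooses} $U_a:=\pi_0^{(r)}(\chi_a[W^u(a)])$; the propagation/asymptotic argument of Lemma~\ref{l:goodbasis} then gives $U_a=[W^u(a)]$ near $a$, after which the pairing pins the dual scale and yields $S_a=[W^s(a)]$ near $a$. You should either adopt this normalization explicitly or weaken your conclusion to ``proportional, and normalizable so that $c=1$''.

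Two secondary points. Your proportionality step tacitly requires that $[W^u(a)]$, cut off near $a$, belongs to the (local or global) anisotropic space on which the spectral theory applies, so that $\mathcal{L}_{V_f}$-invariance places it in the range of the rank-one projector at $z_0=0$; this must be stated (the paper asserts it, the point being that the wavefront set of $[W^u(a)]$ is the conormal of $W^u(a)$, contained in $\Gamma_-$ where the order function is very negative). Also, your Leibniz computation at $a$ is carried out in a chart that is only $\mathcal{C}^1$, where writing and differentiating currents requires justification; it is cleaner, and closer to the paper, to get $\mathcal{L}_{V_f}[W^u(a)]=0$ on all of $M\setminus\partial W^u(a)$ directly from the invariance $\varphi_f^{-t*}[W^u(a)]=[W^u(a)]$, which you in any case need for the propagation step. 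The remaining ingredients --- flow-box coordinates at non-critical $y_0$, propagation of the vanishing of $U_a-[W^u(a)]$ via~\cite[Lemma 7.7]{DaRi17b} (keeping in mind that this difference is only defined on $M\setminus\partial W^u(a)$), and the dual argument for $S_a$ --- are sound and consistent with the paper's proof.
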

\begin{proof}
Recall from~\cite{Sm60, Web06} that $W^u(a)$ is an embedded submanifold inside $M$. 
Then, there is a local system of coordinates $(x_1,\ldots x_n)$ such that $W^u(a)$ is given locally near $y_0$ 
by $\left\{x_1=\ldots=x_r=0\right\},$ where $r$ is the index of $a$.  
The current of integration on $W^u(a)$, for the choice of orientation
given by $[dx_1\wedge\ldots \wedge dx_r]$ (see \cite[appendix D]{DaRiequi} for a discussion about orientations of integration currents), 
reads in this system of coordinates~: 
$[W^u(a)]=\delta_0(x_1,\ldots,x_r)dx_1\wedge\ldots \wedge dx_r$ by~\cite[Corollary D.4]{DaRiequi}.  
Moreover, for all test form $\omega$ whose support does not meet the boundary
$\partial W^u(a)=\overline{W^u(a)}\setminus W^u(a)$, one has for all $t\in \mathbb{R}$ the identity~:
$\left\langle \varphi_f^{-t*}[W^u(a)],\omega\right\rangle =\int_{W^u(a)} \varphi_f^{t*}\omega=\int_{\varphi_f^{-t}(W^u(a))=W^u(a)} \omega=
\left\langle [W^u(a)],\omega\right\rangle $
since $\varphi^t_f: M\mapsto M$ is an orientation preserving diffeomorphism which leaves 
$W^u(a)$ invariant.
This implies that in the weak sense $\varphi_f^{-t*}[W^u(a)]=[W^u(a)], \forall t\in \mathbb{R}$ hence $\ml{L}_{V_f}([W^u(a)])=0$. 
Near $a$, $[W^u(a)]$ belongs to the anisotropic Sobolev space 
$\ml{H}^{m_{N_0,N_1}}_r(M)$ for $N_0,N_1$ large enough. 
Hence, if we fix a smooth cutoff function $\chi_a$ near $a$, we can verify, by a propagation argument 
similar to the ones used to prove Lemma~\ref{l:goodbasis}, that $U_a$ can be chosen equal to $\pi_0^{(r)}(\chi_a[W^u(a)])$, 
and one has $U_a=[W^u(a)]$ 
near $a$. Similarly, one has $S_a=[W^s(a)]$ near $a$.
\end{proof}

\section{Proofs of Theorems~\ref{t:maintheo-harveylawsonwitten} to~\ref{t:maintheo-tunneling}}\label{s:proofs}

In this section, we collect the different informations we proved so far and prove the main statements of the introduction except for Theorem~\ref{t:fukaya}
that will be proved in section~\ref{s:fukaya}.

\subsection{Proof of Theorems~\ref{t:maintheo-flow} and~\ref{t:maintheo-current}}\label{ss:proof-ruelle} 
We start by proving the results on the limit operator $-\ml{L}_{V_f}$. Recall that 
these results were already proved in~\cite{dang2016spectral, DaRi17b} under the more restrictive assumptions that the Morse-Smale gradient 
flow is $\ml{C}^{\infty}$-linearizable near its critical points. In order to prove Theorem~\ref{t:maintheo-flow}, we first make use of 
Proposition~\ref{p:correlation} and we integrate the asymptotic expansion from  
Proposition~\ref{p:correlation}
against the function $e^{-zt}$ over 
$[0,+\infty)$~: $
\hat{C}_{\psi_1,\psi_2}(z)
=
\int_0^\infty dt e^{-zt}
\int_M  \varphi^{-t*}_f\psi_1 \wedge\psi_2$.
 This shows that $\hat{C}_{\psi_1,\psi_2}$ has a meromorphic extension to $\IC$ whose poles are contained inside $\IR_-$~:
$$\hat{C}_{\psi_1,\psi_2}(z)=\sum_{z_0\in\ml{R}_k:z_0>-\Lambda}\sum_{l=0}^{d_{z_0}^{(k)}-1}(-1)^l\frac{\int_{M}\left(\ml{L}_{V_f}^{(k)}+z_0\right)^{l}
\left(\pi_{z_0}^{(k)}(\psi_1)\right)\wedge\psi_2}{(z-z_0)^{l+1}} +\text{holomorphic part}.$$
Now, it 
remains to identify these poles. For that purpose, we use the results of section~\ref{s:Pollicott-Ruelle} 
to determine the rank of each residue. More 
specifically, according to Proposition~\ref{p:multiplicity}, 
this is equal to the sum of the rank of the local spectral projectors of Baladi and Tsujii. 
Then, using that this local spectrum is determined by the zeros of the dynamical Ruelle determinant (see the remark following 
Proposition~\ref{p:multiplicity}), we can conclude the proof of Theorem~\ref{t:maintheo-flow}.

To prove Theorem~\ref{t:maintheo-current}, we just need to use Proposition~\ref{p:correlation} combined with 
the fact that we showed in subsection~\ref{ss:noJordan} the 
absence of Jordan blocks in the kernel of the operator $-\ml{L}_{V_f}^{(k)}$.

\subsection{Proof of Theorem~\ref{t:maintheo-instanton}}
Regarding the limit operator, it now remains to show 
the instanton formula of Theorem~\ref{t:maintheo-instanton}. 
For that purpose, we first discuss some orientations issues on curves connecting some pair $(a,b)$ of critical points of $f$.
Choosing some orientation of every unstable manifolds
$(W^u(a))_{a\in \text{Crit}(f)}$ defines a local germ of current
$[W^u(a)]$ near every critical point $a$ and some 
integration current in $\ml{D}^{\prime,\bullet}\left(M\setminus\partial W^u(a)\right)$. 
Both Theorem~\ref{t:maintheo-current} and Lemma~\ref{r:current} 
show us that each germ $[W^u(a)]$
extends 
into a globally well--defined current $U_a$ on $M$ 
which coincides with $[W^u(a)]$ on $M\setminus\partial W^u(a)$. 
As $M$ is oriented, the orientation of $W^u(a)$ induces a
canonical \textbf{coorientation} on $W^s(a)$
so that the intersection pairing
at the level of currents gives $\int_M \chi [W^u(a)]\wedge [W^s(a)]=\chi(a)$ for every $a\in \text{Crit}(f)$ and for all
smooth $\chi$ compactly supported near $a$. Given any two critical points $(a,b)$ verifying $\text{ind}(a)=\text{ind}(b)+1$, recall 
from~\cite[Prop.~3.6]{Web06} 
that there exists finitely many flow lines connecting $a$ and $b$. These curves are called instantons and we shall denote them by 
$\gamma_{ab}$. Such a curve is naturally oriented by the gradient vector field $V_f$, 
hence defines a current of integration of degree $n-1$, $[\gamma_{ab}]\in \mathcal{D}^{\prime, n-1}(M)$.
\begin{def1} 
We define an orientation coefficient
$\sigma(\gamma_{ab})\in \{\pm 1\}$
by the following relation~:
\begin{equation}
\boxed{[\gamma_{ab}]=\sigma(\gamma_{ab})[W^u(a)]\wedge [W^s(b)]}
\end{equation}
in the neighborhood of some $x\in \gamma_{ab}$ where $x$ differs from both $(a,b)$.
\end{def1} 
From the Smale transversality assumption -- see Appendix~\ref{a:order-function}, one has, for 
$x\in \gamma_{ab}\setminus\{a,b\}$, the intersection 
of the conormals $N^*(W^u(a))$ and $N^*(W^s(b))$ is empty. Hence, according to~\cite[p.~267]{Ho90} (see also~\cite{BrDaHe16} or section~\ref{s:fukaya}), 
it makes sense to 
consider the wedge product $[W^u(a)]\wedge [W^s(b)]$ near such a point $x$. Moreover, it defines, near $x$, the germ of integration current along 
$\gamma_{ab}$ using the next Lemma: 
\begin{lemm}\label{l:intersection-current}
Let $X,Y$ be two tranverse submanifolds of $M$ whose intersection is a submanifold denoted by $Z$.
Then choosing an orientation of $X,Y,M$ induces a canonical orientation of $Z$ such that near every point of $Z$, we have a local equation in the sense of currents
$[Z]=[X]\wedge [Y] $.
\end{lemm}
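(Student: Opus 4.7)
The statement is purely local near a point $z_0\in Z$, so I would work in a single chart. By the transverse intersection theorem, setting $a=\operatorname{codim}(X)$ and $b=\operatorname{codim}(Y)$ (so $\operatorname{codim}(Z)=a+b$ thanks to transversality), one can pick local coordinates $(x_1,\dots,x_a,y_1,\dots,y_b,w_1,\dots,w_{n-a-b})$ around $z_0$ in which $X=\{x=0\}$, $Y=\{y=0\}$, and hence $Z=\{x=0,\ y=0\}$ parametrized by $w$. Once these coordinates are fixed, the same computation as in Lemma~\ref{r:current} gives
\[
[X]=\eps_X\,\delta_0(x)\,dx_1\wedge\dots\wedge dx_a,\qquad [Y]=\eps_Y\,\delta_0(y)\,dy_1\wedge\dots\wedge dy_b,
\]
where the signs $\eps_X,\eps_Y\in\{\pm 1\}$ record the chosen orientations of $X$ and $Y$ relative to the coordinate orientation.

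Next, I would check that $[X]\wedge[Y]$ makes sense. The wavefront sets of $[X]$ and $[Y]$ are contained in the conormal bundles $N^*X$ and $N^*Y$ respectively. In our coordinates, a covector in $N^*_{z_0}X$ only has $dx$-components, while one in $N^*_{z_0}Y$ only has $dy$-components, so transversality forces $N^*X\cap N^*Y=\{0\}$ in $T^*M$. H\"ormander's criterion on products of distributions with compatible wavefront sets then allows one to define $[X]\wedge[Y]$ near $z_0$ as an element of $\ml{D}'(M)$.

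With this in hand, the product is just computed in coordinates: since the two delta factors act in disjoint groups of variables,
\[
[X]\wedge[Y]=\eps_X\eps_Y\,\delta_0(x)\delta_0(y)\,dx_1\wedge\dots\wedge dx_a\wedge dy_1\wedge\dots\wedge dy_b,
\]
which is, up to the sign $\eps_X\eps_Y$, the integration current along $Z$ for the coordinate transverse orientation on the slice $(x,y)$. Defining the canonical orientation of $Z$ to be the one for which the coorientation is the exterior product of the coorientations of $X$ and $Y$ restricted to $Z$ yields $[Z]=[X]\wedge[Y]$ near $z_0$.

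The only point requiring a separate, but routine, check is that this orientation of $Z$ is intrinsic, i.e.\ independent of the adapted chart. I expect this to be the main (mild) obstacle: one needs to observe that, by transversality, $N^*_zZ=N^*_zX\oplus N^*_zY$ canonically, so the line bundle $\det N^*Z$ is canonically isomorphic to $\det N^*X\otimes\det N^*Y$; the coorientation built above is precisely the tensor product of the given coorientations of $X$ and $Y$, hence does not depend on the choice of adapted coordinates. This makes the identity $[Z]=[X]\wedge[Y]$ global on the open set of $M$ where both currents are defined.
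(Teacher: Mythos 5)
Your proposal is correct and follows essentially the same route as the paper: choose coordinates adapted to the transverse intersection so that $X=\{x=0\}$, $Y=\{y=0\}$, $Z=\{x=0,y=0\}$, and compute $[X]\wedge[Y]=\delta_0(x)\,dx\wedge\delta_0(y)\,dy=\delta_0(x,y)\,dx\wedge dy=[Z]$. The only difference is that you spell out the wavefront-set justification of the product and the chart-independence of the induced orientation, points the paper leaves implicit (the former having been addressed just before the lemma via H\"ormander's criterion).
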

\begin{proof}
Thanks to the transversality assumption, we can use local coordinates $(x,y,h)$ where locally
$X=\{x=0\}$, $Y=\{y=0\}$
and $Z=\{x=0,y=0\}$
Hence, one has 
$$ [X]\wedge [Y]=\delta_{\{0\}}^{\mathbb{R}^p}(x)dx\wedge \delta_{\{0\}}^{\mathbb{R}^q}(y)dy=\delta^{\mathbb{R}^{p+q}}_{\{0\}}(x,y)dx\wedge dy=[Z]$$
by definition of integration currents.
\end{proof}
All together, this shows that the coefficient $\sigma(\gamma_{ab})$ is well defined.
In fact, using the flow, we see that the formula
$$
[\gamma_{ab}]=\sigma(\gamma_{ab}) [W^u(a)]\wedge [W^s(b)]$$
holds true on $M\setminus\{a,b\}$. We are now ready to prove Theorem~\ref{t:maintheo-instanton} by setting
$$
n_{ab}=(-1)^n\sum_{\gamma_{ab}}\sigma(\gamma_{ab}),$$
where the sum runs over critical points $b$ of index $\text{ind}(a)+1$. In other words, 
the integer $n_{ab}$ counts with sign the number of instantons connecting $a$ and $b$.
We first recall that, as $d$ commutes with $\ml{L}_{V_f}$ and as the currents $(U_a)_{a\in\text{Crit}(f)}$
are elements in\footnote{Recall also that this spectrum is intrinsic, i.e. independent of the 
choice of the anisotropic Sobolev space.} $\text{Ker}(\ml{L}_{V_f})$,
we already know that
$$dU_a=\sum_{b:\text{ind}(b)=\text{ind}(a)+1}n_{ab}'U_b$$
where the coefficients $n_{ab}'$ are a priori real numbers. The goal is to prove that they are indeed equal 
to the integer coefficients $n_{ab}$ we have just defined. Let $a$ be 
some critical point of $f$ of index $k$. 
Choose some arbitrary cutoff function $\chi$ such that
$\chi=1$ in a small neighborhood of $a$ and $\chi=0$ outside some slightly bigger neighborhood of $a$.
Then the following identity
holds true in the sense of currents~:
$$ d\left(\chi[W^u(a)] \right)=d(\chi U_a)=d\chi\wedge U_a + \chi \wedge dU_a=d\chi\wedge [W^u(a)],$$
where we used the fact that $[W^u(a)]=U_a$ on the support of $\chi$, Smale's Theorem~\ref{t:smale} and the fact 
that $\chi\wedge dU_a=0$ since $dU_a$ is a linear combination of the $U_b$ with $\text{ind}(b)=\text{ind}(a)+1.$
In other words, we used the fact that the current $dU_a$ is supported by $\partial W^u(a)$.

Choose now some critical point $b$ such that
$\text{ind}(b)=\text{ind}(a)+1$. Then, for a small open neighborhood $O$ of $\{a\}\cup\partial W^u(a)$, we have the following identity
in the sense of currents in $\mathcal{D}^\prime(M\setminus O)$~:
\begin{equation}
[W^u(a)]\wedge [W^s(b)]|_{M\setminus O}=\sum_{\gamma_{ab}} \sigma(\gamma_{ab})[\gamma_{ab}]|_{M\setminus O}
\end{equation}
where the sum runs over instantons $\gamma_{ab}$ connecting $a$ and $b$. Recall from above that
the wedge product makes sense thanks to Smale's transversality assumption. 
We choose $O$ in such a way that
$O$ does not meet the support of $d\chi$, then the following 
identity holds true~:
\begin{eqnarray*}
\left\langle d(\chi[W^u(a)]),[W^s(b)]  \right\rangle & = &\int_M d\chi\wedge [W^u(a)]\wedge [W^s(b)]\\
&= &(-1)^{(n-1)}\sum_{\gamma_{ab}}\sigma(\gamma_{ab})\int_M[\gamma_{ab}]\wedge d\chi\\
 & = & (-1)^{n-1}\sum_{\gamma_{ab}}  \sigma(\gamma_{ab})\int_{\gamma_{ab}}d\chi \\
&= & (-1)^{n-1}\sum_{\gamma_{ab}}  \sigma(\gamma_{ab})\underset{0-1}{\underbrace{(\chi(b)-\chi(a))}}=n_{ab}.
\end{eqnarray*}
We just proved that, for any function $\chi$ such that
$\chi=1$ near $a$ and $\chi=0$ outside some slightly bigger neighborhood of $a$, one has
$$\left\langle  d(\chi[W^u(a)]) ,[W^s(b)]\right\rangle=n_{ab} .$$
Note that this equality remains true for any $\chi$ such that
$\chi=1$ near $a$ and $\chi=0$ in some neighborhood of $\partial W^u(a)=\overline{W^u(a)}\setminus W^u(a)$. In particular, it applies to
the pull--back $\varphi^{-t*}_f(\chi)$ for all $t\geq 0$. Recall in fact that $\varphi_f^{-t*}[W^u(a)]=[W^u(a)]$ on the support 
of $\varphi_f^{-t*}(\chi)$ by Lemma~\ref{r:current}. Still from this Lemma, 
one knows that $S_b=[W^s(b)]$ on the support of 
$d(\varphi_f^{-t*}(\chi))$. Therefore, one has also
$$\forall t\geq0,\quad \left\langle d \varphi_f^{-t*}(\chi[W^u(a)]),S_b \right\rangle
=\left\langle d \varphi_f^{-t*}(\chi[W^u(a)]),[W^s(b)] \right\rangle=n_{ab}.$$
Still from Lemma~\ref{r:current} and as $\chi$ is compactly supported near $a$, we know that, for an appropriate choice 
of integers $N_0,N_1$, the current $\chi[W^u(a)]$ belongs to the anisotropic Sobolev space $\ml{H}^{m_{N_0,N_1}}_k(M)$ and the spectrum 
of $-\ml{L}_{V_f}^{(k)}$ is discrete on some half plane $\text{Re}(z)>-c_0$ with $c_0>0$. Thanks to Lemma~\ref{l:spectrum-propagator} 
and to the fact that there is no Jordan blocks, we can conclude that, in the Sobolev space $\ml{H}^{m_{N_0,N_1}}_k(M)$,
$$\varphi_f^{-t*}(\chi[W^u(a)])\rightarrow \sum_{a'\in\text{Crit}(f):\text{ind}(a')=k}\left(\int_{M}(\chi[W^u(a)])\wedge S_{a'}\right)U_{a'},
\quad\text{as}\ t\rightarrow+\infty.$$
For every smooth test $(n-k)$-form $\psi_2$ compactly supported in $M\setminus \overline{W^u(a)}$, we can verify that
$$\forall t\geq0,\quad \varphi_f^{-t*}(\chi[W^u(a)])\wedge \psi_2=0,$$
which implies that the above reduces to
$$\varphi_f^{-t*}(\chi[W^u(a)])\rightarrow
\underset{=\left\langle U_a,S_a \right\rangle=1}{ \underbrace{\left(\int_{M}(\chi[W^u(a)])\wedge S_{a}\right)}}U_{a}=U_{a},
\quad\text{as}\ t\rightarrow+\infty,$$
since $\chi(a)=1$, $\text{supp}(S_a)\cap \text{supp}(\chi[W^u(a)])=\{a\} $ by equation~\eqref{e:intersection-support} 
and $S_a=[W^s(a)]$ near $a$.
Then, it follows from continuity of
$d:\ml{H}^{m_{N_0,N_1}}_k(M)\mapsto \ml{H}^{m_{N_0,N_1}-1}_{k+1}(M)$ that
$d\varphi^{-t*}(\chi[W^u(a)]) \rightarrow dU_a $ in $\ml{H}^{m_{N_0,N_1}-1}_{k+1}(M)$. Finally, by continuity of the duality
pairing $(u,v)\in \ml{H}^{m_{N_0,N_1}-1}_{k+1}(M)\times \ml{H}^{1-m_{N_0,N_1}}_{n-(k+1)}(M)\longmapsto \langle u,v\rangle$, 
we deduce that
$$n_{ab} =\lim_{t\rightarrow +\infty} \left\langle S_b,d \varphi^{-t*}(\chi[W^u(a)]) \right\rangle=\left\langle S_b , dU_a  \right\rangle.$$
This shows that
the complex $\left(\text{Ker}(\ml{L}_{V_f}),d\right)$ generated
by the currents $(U_a)_{a\in \text{Crit}(f)}$ is well--defined as a $\mathbb{Z}$--module.
Then, we note that
tensoring the above complex with $\mathbb{R}$ yields a complex $\left(\text{Ker}(\ml{L}_{V_f}),d\right)\otimes_\mathbb{Z} \mathbb{R}$
which is quasi--isomorphic to the De Rham complex $\left(\Omega^\bullet(M),d\right)$ of smooth forms
by~\cite[Theorem 2.1]{DaRi17c} as a consequence of the \textbf{chain homotopy equation}~\cite[paragraph 4.2]{DaRi17c}:
\begin{equation}
\boxed{\exists R:\Omega^{\bullet}(M)\mapsto \ml{D}^{\prime,\bullet-1}(M),\,\ Id-\pi_0=d\circ R + R\circ d.}
\end{equation}
This ends our proof of Theorem~\ref{t:maintheo-instanton}.

\subsection{Proof of the results on the Witten Laplacian}\label{ss:tunnelling} First of all, we note that
the result from Theorem~\ref{t:maintheo-harveylawsonwitten}~:
$$\lim_{\hbar\rightarrow 0^+}\int_M\mathbf{1}_{[0,\eps]}\left(W_{f,\hbar}^{(k)}\right)
 \left(e^{-\frac{f}{\hbar}}\psi_1\right)\wedge\left(e^{\frac{f}{\hbar}}\psi_2\right)=\lim_{t\rightarrow+\infty}\int_M\varphi_f^{-t*}(\psi_1)\wedge\psi_2   $$ 
is a direct consequence of 
Theorem~\ref{t:maintheo-proj} which yields a convergence of spectral 
projectors $\lim_{\hbar\rightarrow 0^+}\int_M\mathbf{1}_{[0,\eps]}\left(W_{f,\hbar}^{(k)}\right)
 \left(e^{-\frac{f}{\hbar}}\psi_1\right)\wedge\left(e^{\frac{f}{\hbar}}\psi_2\right)=\int_M\pi_{z_0}^{(k)}(\psi_1)\wedge \psi_2 $ combined with Theorem~\ref{t:maintheo-current} where the limit term $\lim_{t\rightarrow+\infty}\int_M\varphi_f^{-t*}(\psi_1)\wedge\psi_2$  
 is identified with the term
$\int_M\pi_{z_0}^{(k)}(\psi_1)\wedge \psi_2$ coming 
from the spectral projector corresponding to the 
eigenvalue $0$ -- see section~\ref{s:Pollicott-Ruelle}. 
Concerning 
Theorem~\ref{t:maintheo-eigenvalues}, we observe that it is also a consequence of 
the weak convergence of the spectral projectors given by Theorem~\ref{t:maintheo-proj}. To see that, 
recall that the rank of the spectral projectors of $-\ml{L}_{V_f}^{(k)}$ is completely 
determined by Proposition~\ref{p:multiplicity} and that the results of Baladi and Tsujii give 
a bijection between the zeros of the local Ruelle determinant 
and the rank of the local spectral projectors. Hence, it now remains to 
observe that Theorem~\ref{t:maintheo-proj}, which claims that the 
spectral projectors of the Witten Laplacian converge to the spectral projectors 
of $-\ml{L}_{V_f}$, follows straightforwardly from the content of Theorem~\ref{t:conv-proj}.

We now prove Theorem~\ref{t:maintheo-tunneling} about the Witten--Helffer--Sj\"ostrand tunnelling formula for our WKB states which becomes a direct corollary of Theorem~\ref{t:maintheo-instanton}. 
Indeed, our WKB states were defined by using the spectral projector 
on the small eigenvalues of the Witten Laplacian, i.e.
$$U_a(\hbar)=\mathbf{1}_{[0,\eps_0]}(W_{f,\hbar}^{(k)})\left(e^{\frac{f(a)-f}{\hbar}} U_a\right),$$
where $k$ is the index of the critical point. Thanks to Theorem~\ref{t:maintheo-instanton}, we already know
\begin{equation}\label{e:tunneling-before-projection}d_{f,\hbar}\left(e^{\frac{f(a)-f}{\hbar}} U_a\right)=
\sum_{b:\text{ind}(b)=\text{ind}(a)+1}n_{a,b}e^{-\frac{f(b)-f(a)}{\hbar}} e^{\frac{f(b)-f}{\hbar}}U_b.\end{equation}
Recall now that the spectral projector has the following integral expression
$$\mathbf{1}_{[0,\eps_0]}(W_{f,\hbar}^{(k)})=\frac{1}{2i\pi}\int_{\ml{C}(0,\eps_0)}(z-W_{f,\hbar})^{-1}dz.$$
Hence, $d_{f,\hbar}$ commutes with $\mathbf{1}_{[0,\eps_0]}(W_{f,\hbar}^{(\bullet)})$. It is then sufficient to apply the spectral 
projector to both sides of~\eqref{e:tunneling-before-projection} in order to conclude.

\section{Proof of Theorem~\ref{t:fukaya}}\label{s:fukaya}

In this section, we give the proof of Theorem~\ref{t:fukaya} which states that our WKB states verify the Fukaya's instanton formula. 
Using the conventions of Theorem~\ref{t:fukaya}, we start with the following 
observation~:
\begin{eqnarray*}U_{a_{ij}}(\hbar) & = & \mathbf{1}_{[0,\eps_0]}(W_{f_{ij},\hbar})\left( e^{\frac{f_{ij}(a_{ij})-f_{ij}(x)}{\hbar}}U_{a_{ij}}\right)\\
 & = &   e^{\frac{f_{ij}(a_{ij})-f_{ij}(x)}{\hbar}}\mathbf{1}_{[0,\eps_0]}\left(\ml{L}_{V_{f_{ij}}}+\frac{\hbar\Delta_{g_{ij}}}{2}\right)\left(U_{a_{ij}}\right) ,
\end{eqnarray*}
where $\eps_0>0$ is small enough and where $ij$ belongs to $\{12, 23,31\}$. Hence, we can deduce that~:
$$U_{a_{12}}(\hbar)\wedge U_{a_{23}}(\hbar)\wedge U_{a_{31}}(\hbar)=e^{\frac{f_{12}(a_{12})+f_{23}(a_{23})+f_{31}(a_{31})}{\hbar}}
\tilde{U}_{a_{12}}(\hbar)\wedge \tilde{U}_{a_{23}}(\hbar)\wedge \tilde{U}_{a_{31}}(\hbar),$$
where, for every $ij$ and for $\hbar>0$,
$$\tilde{U}_{a_{ij}}(\hbar):=\mathbf{1}_{[0,\eps_0]}\left(\ml{L}_{V_{f_{ij}}}+\frac{\hbar\Delta_{g_{ij}}}{2}\right)\left(U_{a_{ij}}\right),$$
while $\tilde{U}_a(0):=U_a$. Hence, the proof of Theorem~\ref{t:fukaya} consists in showing that
$$\int_M\tilde{U}_{a_{12}}(\hbar)\wedge \tilde{U}_{a_{23}}(\hbar)\wedge \tilde{U}_{a_{31}}(\hbar)$$
converges as $\hbar\rightarrow 0^+$ to $\int_MU_{a_{12}}\wedge U_{a_{23}}\wedge U_{a_{31}}$, and that this limit is an integer. In particular, we will already have to 
justify that $U_{a_{12}}\wedge U_{a_{23}}\wedge U_{a_{31}}$
is well defined. The proof will be divided in two steps. First, we will show that 
$(\tilde{U}_{a_{ij}}(\hbar))_{\hbar\rightarrow 0^+}$ defines a bounded sequence in some space of currents 
$\ml{D}^{\prime}_{\Gamma_{ij}}(M)$ with prescribed wavefront sets. Then, we will apply theorems on the continuity of 
wedge products for currents with transverse 
wavefront sets.

\subsection{Wavefront set of eigencurrents} In this first paragraph, we fix $V_f$ to be a smooth Morse-Smale gradient 
vector field which is $\ml{C}^1$-linearizable. Fix $0\leq k\leq n$ and $\Lambda>0$. Then, following section~\ref{s:anisotropic}, 
choose some large enough integers $N_0,N_1$ to ensure that
for every $0\leq \hbar< \hbar_0$, the operator
$$-\mathcal{L}_{V_f}-\frac{\hbar\Delta_g}{2}:
\Omega^k(M)\subset \ml{H}_k^{m_{N_0,N_1}}(M)\mapsto \ml{H}_k^{m_{N_0,N_1}}(M) $$
has a discrete spectrum with finite multiplicity on the domain 
$\text{Re}(z)>-\Lambda$. Recall from~\cite[Th.~1.5]{FS} that the eigenmodes are intrinsic and that they do not depend on the 
choice of the order function. Recall also from section~\ref{s:anisotropic} that, up to some uniform constants, the parameter $\Lambda$ 
has to be smaller than $c_0\min\{N_0,N_1\}$ which is the quantity appearing in Lemma~\ref{l:escape-function}. Hence, if we choose 
$N_1'\geq N_1$, we do not change the spectrum on $\text{Re}(z)>-\Lambda$. In particular, any generalized eigenmode
$U\in \ml{H}_k^{m_{N_0,N_1}}(M)$ associated 
with an eigenvalue $z_0$ belongs to any anisotropic Sobolev space $\ml{H}_k^{m_{N_0,N_1'}}(M)$ with $N_1'\geq N_1$. We also 
note from the proof of Theorem~\ref{t:conv-proj} that, for every $N_1'\geq N_1$,
\begin{equation}\label{e:conv-sobolev-norm}\left\|\tilde{U}_a(\hbar)-U_a\right\|_{\ml{H}_k^{m_{N_0,N_1'}}(M)}
 \rightarrow 0\quad\text{as}\quad\hbar\rightarrow 0.
\end{equation}
\begin{rema}
 Note that the proof in section~\ref{s:convergence} shows that the convergence is of order $\ml{O}(\hbar)$ but we omit this information 
 for simplicity of exposition.
\end{rema}
We now have to recall a few facts on the topology of the space $\ml{D}^{\prime, k}_{\Gamma_-(V_f)}(M)$ of currents
whose wavefront set is contained in the closed conic set $\Gamma_-(V_f)=\cup_{a\in \text{Crit}(f)} N^*(W^u(a))\subset T^*M\setminus \underline{0}$ which is defined in Appendix~\ref{a:order-function}. 
Note that we temporarily omit the 
dependence in $V_f$ as we only deal with one Morse function for the moment. 
Recall that on some vector space $E$, given some family of seminorms
$P$, we can define a topology on $E$ which makes it a locally convex topological vector space. A basis of neighborhood of the origin
is defined by the subsets $\{x\in E \text{ s.t. }P(x)<A\}$ with $A\in \mathbb{R}_{+}^*$ and  with $P$ a seminorm. 
In the particular case of currents, we will use the
strong topology~:
\begin{def1}[Strong topology and bounded subsets]\label{r:topology}
The strong topology of $\mathcal{D}^{\prime,k}(M)$ for $M$ compact is defined by the following seminorms. 
Choose some bounded set $B$ in $\Omega^{n-k}(M)$. 
Then, we define a seminorm $P_B$ as $P_B(u)=\sup_{\varphi\in B}\vert \langle u,\varphi \rangle \vert$. A subset $B$ of currents is bounded iff it is
weakly bounded which means for every test form $\varphi\in \Omega^{n-k}(M)$, 
$\sup_{t\in B}\vert \langle t,\varphi \rangle  \vert<+\infty$~\cite[Ch.~3, p.~72]{Schwartz-66}. This is equivalent to $B$ being bounded in some 
Sobolev space $H^s(M,\Lambda^k(T^*M))$ of currents by suitable application of the uniform boundedness principle~\cite[Sect.~5, Lemma~23]{DabBr14}.  
\end{def1}
We can now define the normal topology in the space of currents essentially following~\cite[Sect.~3]{BrDaHe16}:
\begin{def1}[Normal topology on the space of currents] For every closed conic subset $\Gamma\subset T^*M\setminus \underline{0}$,
the topology of $\mathcal{D}^{\prime,k}_{\Gamma}(M)$ is 
defined as the weakest topology which makes continuous the seminorms of the strong topology of $\mathcal{D}^{\prime,k}(M)$ and
the seminorms~:
\begin{eqnarray}
\Vert u\Vert_{N,C,\chi,\alpha,U}= \Vert (1+\|\xi\|)^{N} \ml{F}(u_\alpha\chi)(\xi) \Vert_{L^\infty(C)}
\end{eqnarray}
where $\chi$ is supported on some chart $U$, where $u=\sum_{\vert\alpha\vert=k} u_\alpha dx^\alpha$ where $\alpha$ is a multi--index, where $\ml{F}$ is the Fourier transform calculated in the local chart
and $C$ is a closed cone such that
$\left(\text{supp }\chi\times C\right)\cap \Gamma=\emptyset$. A subset $B\subset \mathcal{D}^{\prime,k}_\Gamma$ is called bounded
in $\mathcal{D}^{\prime k}_{\Gamma}$ if it is bounded in $\mathcal{D}^{\prime k}$ and if all seminorms 
$\Vert .\Vert_{N,C,\chi,\alpha,U}$ are bounded on $B$.
\end{def1}

We emphasize that this definition is given purely 
in terms of local charts without loss of generality. The above topology 
is in fact \emph{intrinsic as a consequence of the continuity of the
pull--back}~\cite[Prop 5.1 p.~211]{BrDaHe16} as emphasized by H\"ormander~\cite[p.~265]{Ho90}. 
Note that it is the same to consider currents or distributions
when we define the relevant topologies since currents are just elements of the form
$\sum u_{i_1,\dots,i_k} dx^{i_1}\wedge \dots \wedge dx^{i_k}$ in local coordinates
$(x^1,\dots,x^n)$ where the coefficients $u_{i_1,\dots,i_k}$ are distributions.

 Note from~\eqref{e:conv-sobolev-norm} that $(\tilde{U}_a(\hbar))_{0\leq \hbar<1}$ is a bounded family in the anisotropic Sobolev space 
 $\ml{H}_k^{m_{N_0,N_1}}(M)$ and is thus bounded in $H^{-s}(M,\Lambda^k(T^*M))$ for $s$ large enough. In particular, from
 definition~\ref{r:topology}, 
 it is a bounded family in $\ml{D}^{\prime,k}(M)$. 
 We would now like to verify that it is a bounded family in 
 $\ml{D}^{\prime,k}_{\Gamma_-}(M)$ which converges in 
 the normal topology as $\hbar$ goes to $0$ in order to apply the results from~\cite{BrDaHe16}. 
 For that purpose, we can already observe that, for some $s$ large enough,
 $\left\|\tilde{U}_a(\hbar)-U_a\right\|_{H^{-s}(M,\Lambda^k(T^*M))}
 \rightarrow 0\quad\text{as}\quad\hbar\rightarrow 0.$
In particular, it converges for the strong topology in $\ml{D}^{\prime,k}(M)$. 
Hence, it remains to discuss the boundedness and the convergence with 
respect to the seminorms $\Vert .\Vert_{N,C,\chi,\alpha,U}$. We note that these seminorms involve the $L^{\infty}$ norm while the 
anisotropic spaces we deal with so far are built from $L^2$ norms. This problem is handled by the 
following Lemma~:
\begin{lemm}[$L^2$ vs $L^\infty$]
\label{l:L2vsinfty}
Let $N$, $\tilde{N}$ be some positive integers and let $W_0$ be a closed cone in $\IR^{n*}$. Then, 
for every closed conic neighborhood $W$ of $W_0$, one can find a constant $C=C(N,\tilde{N},W)>0$ such that, for every 
$u$ in $\ml{C}^{\infty}_c(B_{\IR^n}(0,1))$, one has
$$\sup_{\xi\in W_0}(1+\vert\xi\vert)^N\vert\widehat{u}(\xi)\vert \leqslant C\left(\Vert (1+\vert\xi\vert)^N\widehat{u}(\xi)\Vert_{L^2(W)}+\Vert u\Vert_{H^{-\tilde{N}}}\right).$$
\end{lemm}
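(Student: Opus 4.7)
The plan is to combine the standard Sobolev embedding with a direct duality estimate for $\widehat u$, splitting the supremum over $W_0$ into low- and high-frequency regimes. I would choose a threshold $R_0>0$ large enough that the Euclidean unit ball $B(\xi_0,1)$ is contained in $W$ for every $\xi_0\in W_0$ with $|\xi_0|\geq R_0$; this is possible because the closed conic neighborhood $W$ of the closed cone $W_0$ admits an angular gap $\delta=\delta(W_0,W)>0$ such that $B(\xi,\delta|\xi|)\subset W$ for every $\xi\in W_0$ with $|\xi|\geq 1$, so any $R_0\geq 1/\delta$ works.

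For the low-frequency regime $|\xi_0|\leq R_0$, I would use the duality representation
\[
\widehat u(\xi_0)=\langle u,\chi\, e^{-i\xi_0\cdot x}\rangle,
\]
where $\chi\in\ml{C}_c^\infty(\IR^n)$ equals $1$ on $\overline{B(0,1)}$. Together with the elementary estimate $\|\chi e^{-i\xi_0\cdot x}\|_{H^{\tilde N}}\leq C(1+|\xi_0|)^{\tilde N}$, this yields $(1+|\xi_0|)^N|\widehat u(\xi_0)|\leq C(1+R_0)^{N+\tilde N}\|u\|_{H^{-\tilde N}}$, which is absorbed into the second term on the right-hand side.

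For the high-frequency regime $|\xi_0|>R_0$, I would apply the translation-invariant Sobolev embedding $H^s(B(\xi_0,1))\hookrightarrow L^\infty(B(\xi_0,1))$ for some integer $s>n/2$ to $g(\xi):=(1+|\xi|)^N\widehat u(\xi)$, obtaining
\[
|g(\xi_0)|^2\;\leq\; C\sum_{|\alpha|\leq s}\|\partial^\alpha g\|_{L^2(B(\xi_0,1))}^2.
\]
The $\alpha=0$ contribution is bounded directly by $\|g\|_{L^2(W)}^2$ since $B(\xi_0,1)\subset W$, while for $|\alpha|\geq 1$ the Leibniz rule expands $\partial^\alpha g$ as a sum of products $\partial^\beta[(1+|\xi|)^N]\cdot\partial^{\alpha-\beta}\widehat u$. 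Each factor $\partial^\gamma\widehat u=\mathcal F((-ix)^\gamma u)$ then admits the pointwise duality bound $|\partial^\gamma\widehat u(\xi)|\leq C_\gamma(1+|\xi|)^{\tilde N}\|u\|_{H^{-\tilde N}}$ obtained by applying the same argument as in the bounded regime to the compactly supported distribution $(-ix)^\gamma u\in H^{-\tilde N}$.

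The main obstacle is to make these Leibniz estimates uniform in $\xi_0$: the naive pointwise bound gives contributions of size $(1+|\xi_0|)^{N+\tilde N-|\beta|}\|u\|_{H^{-\tilde N}}$, which grow unless enough derivatives fall on the weight. The resolution is to take $s>N+\tilde N+n/2$ and, for the borderline terms, to invert the Leibniz identity itself so as to express the problematic principal contributions $(1+|\xi|)^N\partial^\alpha\widehat u$ as $\partial^\alpha g$ minus lower-order Leibniz terms, each carrying an additional decaying factor $(1+|\xi_0|)^{-1}$ coming from $\partial^\beta[(1+|\xi|)^N]=O((1+|\xi|)^{N-|\beta|})$ on $B(\xi_0,1)$. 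Iterating this reduction sums to a convergent geometric series in $(1+|\xi_0|)^{-1}$ for $R_0$ sufficiently large, leaving $\|g\|_{L^2(W)}$ as the dominant contribution from the top-order terms where all differentiations hit the weight, and $\|u\|_{H^{-\tilde N}}$ as the remaining error, which is exactly the claimed estimate.
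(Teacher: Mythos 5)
Your low-frequency estimate and the observation that $B(\xi_0,1)\subset W$ for $\xi_0\in W_0$ with $|\xi_0|$ large are fine, but the high-frequency step has a genuine gap. The Sobolev embedding on $B(\xi_0,1)$ requires $L^2$ bounds on $\partial^\alpha g$ for $1\leq|\alpha|\leq s$, hence on $(1+|\xi|)^N\partial^{\gamma}\widehat u$ for $\gamma\neq 0$, whereas the hypothesis only controls the undifferentiated function $(1+|\xi|)^N\widehat u$ in $L^2(W)$. The only available bound on $\partial^\gamma\widehat u=\mathcal{F}((-ix)^\gamma u)$ is the pointwise one through $\|u\|_{H^{-\tilde N}}$, which after multiplication by the weight is of size $(1+|\xi_0|)^{N+\tilde N}\|u\|_{H^{-\tilde N}}$ on $B(\xi_0,1)$ and cannot be absorbed uniformly in $\xi_0$, as you yourself note. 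The proposed repair does not close this: rewriting the $\beta=0$ Leibniz term $(1+|\xi|)^N\partial^\alpha\widehat u$ as $\partial^\alpha g$ minus the $\beta>0$ terms is circular, since $\partial^\alpha g$ is exactly the quantity the embedding needs and is not among your data; because no derivative falls on the weight in that term, it never acquires a factor $(1+|\xi_0|)^{-1}$, so there is no geometric series to sum, and enlarging $s$ gives nothing since differentiating $\widehat u$ produces no decay. What would rescue your scheme is an estimate of $\|(1+|\xi|)^N\partial^\gamma\widehat u\|_{L^2}$ on a slightly shrunk cone by the right-hand side of the Lemma, but that statement is of the same nature as the Lemma itself and requires a genuine additional argument.

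The paper's proof supplies precisely this missing mechanism and bypasses derivatives altogether: choose $\varphi\in\mathcal{C}^\infty_c$ equal to $1$ on the support of $u$, so that $\widehat u=\widehat\varphi\ast\widehat u$, and split the convolution integral according to whether the angle between $\xi\in W_0$ and $\eta$ is smaller or larger than the aperture $\delta$ of $W$. On the near-diagonal region one has $\eta\in W$, and Cauchy--Schwarz together with the Peetre-type bound $(1+|\xi|)^N\leq C(1+|\eta|)^N(1+|\xi-\eta|)^N$ produces the term $\|(1+|\xi|)^N\widehat u\|_{L^2(W)}$; on the complementary region the angular separation gives $|\xi|,|\eta|\leq C_\delta|\xi-\eta|$, so the rapid decay of $\widehat\varphi$ pays for both the weight $(1+|\xi|)^N$ and the factor $(1+|\eta|)^{\tilde N}$, and Cauchy--Schwarz yields $C\|u\|_{H^{-\tilde N}}$. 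You could keep your two-regime skeleton, but the core of the high-frequency estimate must be replaced by an argument of this convolution type (or by another genuine input, e.g.\ a Plancherel--P\'olya type local sup-versus-$L^2$ inequality for entire functions of exponential type, which you have not provided).
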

 
We postpone the proof of this Lemma to appendix~\ref{a:lemma} and we show first how to use it in our context.  We consider the family of 
currents $(\tilde{U}_a(\hbar))_{0\leq\hbar<\hbar_0}$ in $\ml{D}^{\prime,k}(M)$ and we would like to show that it is a bounded family in 
$\ml{D}^{\prime,k}_{\Gamma_-}(M)$ and that $\tilde{U}_a(\hbar)$ converges to $U_a$ in the normal topology we have 
just defined. Recall that this family 
is bounded and that we have convergence in every anisotropic Sobolev space with $\ml{H}_k^{m_{N_0,N_1^\prime}}(M)$ with $N_1^\prime$ large enough.
Fix $(x_0;\xi_0)\notin \Gamma_-$. Fix some $N>0$. Note that, up to shrinking the neighborhood used to define the order function in 
appendix~\ref{a:order-function} and up to increasing $N_1$, we can suppose that $m_{N_0,N_1'}(x;\xi)$ 
is larger than $N/2$ for any $N_1'$ and for every $(x;\xi)$ in a small conical neighborhood $W$ of $(x_0,\xi_0)$. 

Fix now a smooth test function $\chi$ supported near $x_0$ and a closed cone $W_0$ which is 
strictly contained in the conical neighborhood $W$
we have just defined. Thanks to Lemma~\ref{l:L2vsinfty} and to the Plancherel equality, the norm we have to estimate is
\begin{eqnarray*}\|(1+\|\xi\|)^{N}\ml{F}(\chi\tilde{U}_a(\hbar))\|_{L^{\infty}(W_0)} 
& \leq &  C\left(\|\chi_1(\xi)(1+\|\xi\|)^{N}\ml{F}(\chi\tilde{U}_a(\hbar))\|_{L^{2}}+\|\tilde{U}_a(\hbar)\|_{H^{-\tilde{N}}}\right)\\
& \leq &  C\left(\|\Op(\chi_1(\xi)(1+\|\xi\|)^{N}\chi)\tilde{U}_a(\hbar)\|_{L^{2}}+\|\tilde{U}_a(\hbar)\|_{H^{-\tilde{N}}}\right),\end{eqnarray*}
where $\chi_1\in\ml{C}^{\infty}$ is identically equal to $1$ on the conical neighborhood $W$ and equal to $0$ outside a slightly bigger 
neighborhood. For $\tilde{N}$ large enough, we can already observe that the second term $\|\tilde{U}_a(\hbar)\|_{H^{-\tilde{N}}}$ 
in the upper bound is uniformly bounded as $\tilde{U}_a(\hbar)$ is uniformly bounded in some fixed anisotropic 
Sobolev space. Hence, it remains to estimate
$$\|\Op\left(\chi_1(\xi)(1+\|\xi\|)^{N}\chi\right)\Op(\mathbf{A}_{N_0,N_1'}^{(k)})^{-1}\Op(\mathbf{A}_{N_0,N_1'}^{(k)})\tilde{U}_a(\hbar))\|_{L^{2}(M)}.$$
By composition of pseudodifferential operators and as we chose $N_1'$ large enough to ensure that the order function $m_{N_0, N_1}$ is larger than 
$N/2$ on $\text{supp}(\chi_1)$, 
we can deduce that this quantity is bounded (up to some constant) by 
$\|\Op(\mathbf{A}_{N_0,N_1'}^{(k)})\tilde{U}_a(\hbar)\|_{L^{2}}$ which is exactly the norm on the anisotropic Sobolev space. To summarize, 
this argument shows the following
\begin{prop}\label{p:wavefrontset} Let $V_f$ be a Morse-Smale gradient flow which is $\ml{C}^1$-linearizable. Then, there exists $\hbar_0>0$ such that, 
for every $0\leq k\leq n$ and for every $a\in\operatorname{Crit}(f)$ of index $k$, 
the family $(\tilde{U}_a(\hbar))_{0\leq\hbar<\hbar_0}$ is bounded in 
$\ml{D}^{\prime,k}_{\Gamma_-(V_f)}(M)$. Moreover, $\tilde{U}_a(\hbar)$ 
converges to $U_a$ for the normal topology in $\ml{D}^{\prime,k}_{\Gamma_-(V_f)}(M)$ as $\hbar\rightarrow 0^+$.
\end{prop}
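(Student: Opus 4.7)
The plan is to deduce both boundedness in $\ml{D}^{\prime,k}_{\Gamma_-(V_f)}(M)$ and normal-topology convergence from stronger norm control in a suitable anisotropic Sobolev space. First, I would invoke the fact that the generalized eigenmodes of $\widehat{H}_\hbar$ corresponding to eigenvalues near $0$ are intrinsic; hence, up to enlarging $N_0,N_1$, we may assume that $\tilde{U}_a(\hbar)$ lies in every $\ml{H}_k^{m_{N_0,N_1'}}(M)$ with $N_1'\geq N_1$. Then Theorem~\ref{t:conv-proj} yields the quantitative convergence $\|\tilde{U}_a(\hbar)-U_a\|_{\ml{H}_k^{m_{N_0,N_1'}}(M)}\to 0$ as $\hbar\to 0^+$. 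Since any anisotropic Sobolev space embeds continuously into some classical $H^{-s}(M,\Lambda^k(T^*M))$, this handles both boundedness in $\ml{D}^{\prime,k}(M)$ and convergence in the strong topology of currents for free.

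The heart of the argument is to upgrade this convergence to the normal topology of $\ml{D}^{\prime,k}_{\Gamma_-(V_f)}(M)$. I would fix $(x_0;\xi_0)\notin \Gamma_-(V_f)$, a smooth cutoff $\chi$ supported near $x_0$, and a closed cone $W_0$ around $\xi_0$ whose product with $\supp(\chi)$ is disjoint from $\Gamma_-$ in the chart. The key geometric observation, which follows from the construction of the escape function recalled in Appendix~\ref{a:order-function}, is that $m_{N_0,N_1'}(x;\xi)$ can be made pointwise larger than any prescribed constant $N/2$ on a conic neighborhood $W\supset W_0$ by taking $N_1'$ large enough, precisely because $(x_0;\xi_0)$ lies away from the unstable conormal directions where the weight is forced to be negative.

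The main technical obstacle is that the normal-topology seminorms are $L^\infty$ in the Fourier variable, while the anisotropic bounds are $L^2$-based. This is resolved by Lemma~\ref{l:L2vsinfty}: with a conic symbol $\chi_1(\xi)$ supported in $W$ and equal to $1$ on a neighborhood of $W_0$, one has
\[
\|(1+\|\xi\|)^N\ml{F}(\chi\tilde{U}_a(\hbar))\|_{L^\infty(W_0)}\leq C\bigl(\|\Op(\chi_1(\xi)(1+\|\xi\|)^N\chi)\tilde{U}_a(\hbar)\|_{L^2}+\|\tilde{U}_a(\hbar)\|_{H^{-\tilde{N}}}\bigr).
\]
For $\tilde{N}$ large, the second term is uniformly bounded by the anisotropic Sobolev norm via a standard embedding. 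For the first term, I would insert the identity $\Op(\mathbf{A}_{N_0,N_1'}^{(k)})^{-1}\Op(\mathbf{A}_{N_0,N_1'}^{(k)})$ and apply the composition calculus: since $m_{N_0,N_1'}\geq N/2$ on $\supp(\chi_1)$, the operator $\Op(\chi_1(\xi)(1+\|\xi\|)^N\chi)\Op(\mathbf{A}_{N_0,N_1'}^{(k)})^{-1}$ has nonpositive order and is therefore $L^2$-bounded. This reduces everything to $\|\Op(\mathbf{A}_{N_0,N_1'}^{(k)})\tilde{U}_a(\hbar)\|_{L^2}=\|\tilde{U}_a(\hbar)\|_{\ml{H}_k^{m_{N_0,N_1'}}}$, which is uniformly bounded in $\hbar$. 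Applying the same estimate to the difference $\tilde{U}_a(\hbar)-U_a$ and invoking the anisotropic Sobolev convergence from Theorem~\ref{t:conv-proj} yields simultaneously boundedness in $\ml{D}^{\prime,k}_{\Gamma_-(V_f)}(M)$ and convergence in its normal topology, completing the proof.
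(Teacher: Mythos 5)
Your proposal is correct and follows essentially the same route as the paper: intrinsic eigenmodes together with the convergence obtained in the proof of Theorem~\ref{t:conv-proj} give control in every anisotropic space $\ml{H}_k^{m_{N_0,N_1'}}(M)$ (hence boundedness and strong convergence in $\ml{D}^{\prime,k}(M)$), and the $L^\infty$ seminorms are treated exactly as in the paper via Lemma~\ref{l:L2vsinfty}, the insertion of $\Op(\mathbf{A}_{N_0,N_1'}^{(k)})^{-1}\Op(\mathbf{A}_{N_0,N_1'}^{(k)})$ and the pseudodifferential composition calculus, applied also to the difference $\tilde{U}_a(\hbar)-U_a$. The only cosmetic difference is at the escape-function step: the paper secures $m_{N_0,N_1'}\geq N/2$ on a conic neighborhood of $(x_0;\xi_0)$ by allowing itself both to shrink the neighborhoods used in the construction of Appendix~\ref{a:order-function} and to increase $N_1$, rather than by increasing $N_1'$ alone as you state.
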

This proposition is the key ingredient we need in order to apply the theoretical results from~\cite{BrDaHe16}. Before doing that, we can already observe that, 
if we come back to the framework of Theorem~\ref{t:fukaya}, then the generalized Morse-Smale assumptions ensures that the 
wavefront sets of the three family of currents are transverse. In particular, we can define the wedge product even 
for\footnote{For $\hbar\neq 0$, there is no problem as the eigenmodes are smooth by elliptic regularity.} $\hbar=0$, i.e. 
 $U_{a_{12}}\wedge U_{a_{23}}\wedge U_{a_{31}}$ defines an element in $\ml{D}^{\prime,n}(M)$.

\subsection{Convergence of products}
\label{ss:productscurrents}
Given two closed conic sets $(\Gamma_1,\Gamma_2)$ which have empty intersection, the usual
wedge product of smooth forms
$$\wedge:(\varphi_1,\varphi_2)\in \Omega^k(M) \times \Omega^l(M) \longmapsto \varphi_1\wedge\varphi_2\in \Omega^{k+l}(M)$$
extends uniquely as an hypocontinuous map for the normal topology~\cite[Th.~6.1]{BrDaHe16} 
$$\wedge:(\varphi_1,\varphi_2)\in \ml{D}^{\prime k}_{\Gamma_1}(M) \times \ml{D}^{\prime l}_{\Gamma_2}(M) \longmapsto \varphi_1\wedge
\varphi_2 \in\mathcal{D}^{\prime k+l}_{s(\Gamma_1,\Gamma_2)}(M),$$
with $s(\Gamma_1,\Gamma_2)=\Gamma_1\cup \Gamma_2\cup (\Gamma_1+\Gamma_2)$.
\begin{rema} The proof in~\cite{BrDaHe16} was given for product of distributions and it extends to currents as 
$\ml{D}^{\prime,k}_{\Gamma}(M)=\ml{D}^{\prime}_{\Gamma}(M)\otimes_{\ml{C}^{\infty}(M)}\Omega^k(M).$ Recall that the fact that $\wedge$ 
is hypocontinuous means that, for every neighborhood $W\subset \mathcal{D}^{\prime,k+l}_{s(\Gamma_1,\Gamma_2)}(M)$ of zero and for every
bounded set $B_2\subset \mathcal{D}^{\prime,l}_{\Gamma_2(M)}$, there is some open neighborhood
$U_1\subset \mathcal{D}^{\prime,k}_{\Gamma_1}$ of zero
such that for $i=1$ the condition reads $\wedge(U_1\times B_2)\subset W$. The same holds true if we invert 
the roles of $1$ and $2$. We note that hypocontinuity implies boundedness, in the sense that any bounded subset of 
$\ml{D}^{\prime k}_{\Gamma_1}(M) \times \ml{D}^{\prime l}_{\Gamma_2}(M)$ is sent to a bounded subset of 
$\mathcal{D}^{\prime k+l}_{s(\Gamma_1,\Gamma_2)}(M)$.
This follows from the observation that
a set $B$ is bounded iff for every open neighborhood  $U$ of $0$, 
$B$ can be rescaled by multiplication by $\lambda>0$ such that
$\lambda B\subset U$.
\end{rema}

Let us now come back to the proof of Theorem~\ref{t:fukaya}. Combining Proposition~\ref{p:wavefrontset} with the hypocontinuity of the 
wedge product, we find that $(\tilde{U}_{a_{12}}(\hbar)\wedge \tilde{U}_{a_{23}}(\hbar))_{0\leq\hbar<\hbar_0}$ is a bounded family in 
$\mathcal{D}^{\prime,k+l}_{s(\Gamma_-(V_{f_{12}}),\Gamma_-(V_{f_{23}}))}(M)$, where $k$ is the index of $a_{12}$ and $l$ is 
the one of $a_{23}$. Moreover, as $\hbar\rightarrow 0$, one has 
$$\tilde{U}_{a_{12}}(\hbar)\wedge \tilde{U}_{a_{23}}(\hbar)\rightarrow \tilde{U}_{a_{12}}(0)\wedge \tilde{U}_{a_{23}}(0)=
U_{a_{12}}\wedge U_{a_{23}},$$
for the normal topology of $\mathcal{D}^{\prime,k+l}_{s(\Gamma_-(V_{f_{12}}),\Gamma_-(V_{f_{23}}))}(M)$. As our three vector fields verify the 
generalized Morse--Smale assumptions, we can repeat this argument with the 
spaces $\mathcal{D}^{\prime,n-(k+l)}_{\Gamma_-(V_{f_{31}})}(M)$ and $\mathcal{D}^{\prime,k+l}_{s(\Gamma_-(V_{f_{12}}),\Gamma_-(V_{f_{23}}))}(M)$. Hence, we 
get that, as $\hbar\rightarrow 0$,
$$\tilde{U}_{a_{12}}(\hbar)\wedge \tilde{U}_{a_{23}}(\hbar)\wedge\tilde{U}_{a_{31}}(\hbar)\rightarrow 
U_{a_{12}}\wedge U_{a_{23}}\wedge U_{31},$$
in $\mathcal{D}^{\prime,s}_{s(s(\Gamma_-(V_{f_{12}}),\Gamma_-(V_{f_{23}})),\Gamma_-(V_{f_{31}}))}(M)$. Finally, testing against the smooth 
form $1$ in $\Omega^0(M)$, we find that, as $\hbar\rightarrow 0$,
$$\int_M\tilde{U}_{a_{12}}(\hbar)\wedge \tilde{U}_{a_{23}}(\hbar)\wedge\tilde{U}_{a_{31}}(\hbar)\rightarrow 
\int_M U_{a_{12}}\wedge U_{a_{23}}\wedge U_{31},$$
which concludes the proof of Theorem~\ref{t:fukaya} up to the fact that we need to verify that
$\int_M U_{a_{12}}\wedge U_{a_{23}}\wedge U_{31}$
is an integer.

\subsection{End of the proof}

In order to conclude the proof of Theorem~\ref{t:fukaya}, we will show that $\overline{W^u(a_{12})}$, $\overline{W^u(a_{23})}$ and $\overline{W^u(a_{31})}$ 
intersect transversally at finitely many points belonging to the intersection $W^u(a_{12})\cap W^u(a_{23})\cap W^u(a_{31})$. Then, the fact that 
$\int_M U_{a_{12}}\wedge U_{a_{23}}\wedge U_{31}$
is an integer will follow from Lemma~\ref{l:intersection-current}. Let us start by showing that any point in the intersection 
$\overline{W^u(a_{12})}\cap \overline{W^u(a_{23})}\cap \overline{W^u(a_{31})}$ must belong to $W^u(a_{12})\cap W^u(a_{23})\cap 
 W^u(a_{31})$. We proceed by contradiction. 
Suppose that $x$ belongs to $\overline{W^u(a_{12})}\cap \overline{W^u(a_{23})}\cap \overline{W^u(a_{31})}$ but not 
 to $W^u(a_{12})\cap W^u(a_{23})\cap W^u(a_{31})$. From Smale's Theorem~\ref{t:smale}, it means that there exists critical points $b_1$, $b_2$ and 
 $b_3$ such that $x$ belongs to $W^u(b_1)\cap W^u(b_2)\cap W^u(b_3)$ with $W^u(b_i)\subset\overline{W^u(a_{ij})}$ and 
 \emph{at least 
 one} of the $i$ verifies $\text{dim}(W^u(b_i))<\text{dim}(W^u(a_{ij}))$. This implies that
 $$\text{dim}(W^u(b_1))+\text{dim}(W^u(b_2))+\text{dim}(W^u(b_3))<2n.$$
Then, using our transversality assumption, we have that 
$$\text{dim} (W^u(b_1) \cap W^u(b_2)\cap W^u(b_3))=
\text{dim} (W^u(b_1) \cap  W^u(b_2))+\text{dim} ( W^u(b_3))-n.$$
Using transversality one more time, we get that
$$\text{dim} (W^u(b_1) \cap W^u(b_2)\cap W^u(b_3))=
\text{dim} (W^u(b_1))+\text{dim}(  W^u(b_2))+\text{dim} (W^u(b_3))-2n<0,$$
which contradicts the fact that $W^u(b_1) \cap W^u(b_2)\cap W^u(b_3)$ is not empty. 
Hence, we have already shown that
$$\overline{W^u(a_{12})}\cap \overline{W^u(a_{23})}\cap \overline{W^u(a_{31})}=W^u(a_{12})\cap W^u(a_{23})\cap W^u(a_{31}),$$
and it remains to show that this intersection consists of finitely many points. For that purpose, observe that as the intersection 
$W^u(a_{12})\cap W^u(a_{23})\cap W^u(a_{31})$ is transverse, it defines a $0$-dimensional submanifold of $M$. Thus, $x$ belonging to 
$W^u(a_{12})\cap W^u(a_{23})\cap W^u(a_{31})$ is an isolated point  
inside $W^u(a_{ij})$ for the induced topology by the embedding of $W^u(a_{ij})$ in $M$, for every $ij$ in 
$\{12, 23, 31\}$. Moreover, as this intersection coincides with its closure, 
we can deduce there can only be finitely many points in it and this concludes the proof of Theorem~\ref{t:fukaya}.

\subsection{Morse gradient trees.}\label{r:geometry-fukaya} Now that we have shown that the limit in Theorem~\ref{t:fukaya} is an integer, let 
us give a geometric interpretation to this integer in terms of counting gradient flow trees. 
From the above proof, we count with orientation the number of points 
in $W^u(a_{12})\cap W^u(a_{23})\cap W^u(a_{31})$. In dynamical terms, such a point $x_0$ 
corresponds to the intersection of three flow lines starting from $a_{12}$, $a_{23}$ and $a_{31}$ and passing through $x_0$. This represents a one dimensional 
submanifold having the form of a Y shaped tree whose edges are gradient lines. Hence, the integral
$\int_M U_{a_{12}}\wedge U_{a_{23}}\wedge U_{a_{31}}$
counts the number of such Y shaped gradient tree given by a triple of Morse-Smale gradient flows.

\subsection{Cup products}\label{ss:fukaya}

These triple products can be interpreted in terms of the cup-products appearing in Morse theory~\cite{Fu93, Fu97}. Indeed, we can define a bilinear map representing the cup product as follows:
$$\mathfrak{m}_2^{(k,l)}:\text{Ker}(-\ml{L}_{V_{f_{12}}}^{(k)})\times \text{Ker}(-\ml{L}_{V_{f_{23}}}^{(l)})\rightarrow\text{Ker}(-\ml{L}_{V_{f_{13}}}^{(k+l)})\simeq \text{Ker}(-\ml{L}_{V_{f_{31}}}^{(n-k+l)})^{\dagger},$$
whose matrix coefficients in the basis $(U_{a_{12}}, U_{a_{23}}, U_{a_{13}})=(U_{a_{12}}, U_{a_{23}}, S_{a_{31}})$ are given by
$\int_M U_{a_{12}}\wedge U_{a_{23}}\wedge U_{a_{31}}\in\IZ.$
Note that, compared with the classical theory where these maps are defined in an algebraic manner~\cite{Fu97}, our formulation is of purely analytical nature. The spaces (which are 
made of currents) correspond to the kernel of appropriate operators, and the matrix elements are defined in terms of the classical wedge product on currents. Using this analytical 
formulation, we can rapidly verify that $\mathfrak{m}_2$ induces a chain map for the Morse complexes\footnote{Recall from~\cite{dang2016spectral, DaRi17c} that 
this complex is quasi-isomorphic to the De Rham complex $(\Omega(M),d)$ via the spectral projector associated with the eigenvalue $0$.} $(\text{Ker}(-\ml{L}_{V_{f}}),d)$. Indeed, we fix $(U_1, U_2)$ in 
$\text{Ker}(-\ml{L}_{V_{f_{12}}}^{(k)})\times \text{Ker}(-\ml{L}_{V_{f_{23}}}^{(l)})$, and we write, using the Stokes formula,
$$ \mathfrak{m}_2^{(k+1,l)}(dU_1,U_2)+(-1)^k\mathfrak{m}_2^{(k,l+1)}(U_1,dU_2) = (-1)^{k+l+1}\sum_{a_{13}\in\text{Crit}(f_{13})} \left(\int_M U_1\wedge U_2\wedge d U_{a_{31}}\right)U_{a_{13}}.
$$
Then, we can make use of the fact that we have a complex:
\begin{eqnarray*}
d U_{a_{31}}& = &\sum_{b_{31}:\text{ind}(b_{31}) = \text{ind}(a_{31})+1}\left(\int_{M} S_{b_{31}}\wedge d U_{a_{31}}\right) U_{b_{31}}\\
 & = & (-1)^{k+l+1}\sum_{b_{13}:\text{ind}(b_{13})+1=\text{ind}(a_{13})}\left(\int_{M} d U_{b_{13}}\wedge  S_{a_{13}}\right) U_{b_{31}}, 
\end{eqnarray*}
where we used the Stokes formula one more time to write the second equality. Intertwining the sums over $a_{13}$ and $b_{13}$ yields
$$ \mathfrak{m}_2^{(k+1,l)}(dU_1,U_2)+(-1)^k\mathfrak{m}_2^{(k,l+1)}(U_1,dU_2) = \sum_{b_{13}\in\text{Crit}(f_{13})} \left(\int_M U_1\wedge U_2\wedge U_{b_{31}}\right)dU_{b_{13}},
$$
or equivalently
$$\mathfrak{m}_2^{(k+1,l)}(dU_1,U_2)+(-1)^k\mathfrak{m}_2^{(k,l+1)}(U_1,dU_2)= d\left(\mathfrak{m}_2^{(k,l)}(U_1,U_2)\right).$$
This relation shows that $\mathfrak{m}_2$ is a 
cochain map for the Morse complexes 
$(\text{Ker}(-\ml{L}_{V_{f_{ij}}}),d)$, hence induces a 
cup-product on the Morse 
cohomologies. In other terms, this map $\mathfrak{m}_2$ is a 
(spectral) realization in terms of currents of the algebraic cup product coming from Morse theory~\cite{Fu97}.

Fukaya's conjecture states that, up to some exponential factors involving the Liouville period over certain triangles defined by Lagrangian submanifolds, this 
algebraic cup product can be recovered by computing triple products of Witten quasimodes~\cite[Conj.~4.1]{Fu05}. To summarize this section, by giving this analytical 
interpretation of the Morse cup-product, we have been able to obtain Fukaya's instanton formula by considering the limit $\hbar\rightarrow 0^+$ in appropriate 
Sobolev spaces where both $-\ml{L}_{V_f}$ and $W_{f,\hbar}$ have nice spectral properties.

\begin{rema} Note that $\mathfrak{m}_1=d$ and 
$\mathfrak{m}_2=\wedge$ are the two first operations of
the Morse $A_{\infty}$--category discovered by Fukaya~\cite{Fu93, Fu97}. 
Our analysis shows that these algebraic maps can be interpreted 
in terms of analysis 
as Witten deformations of the coboundary operator and exterior products. 
An $A_{\infty}$--category is in fact endowed with graded maps $(\mathfrak{m}_k)_{k\geq 1}$ of algebraic nature, and it is natural to think that all these algebraic 
maps can also be given analytic interpretations by considering appropriate Witten deformations which is the content of
Fukaya's general conjectures~\cite[Conj.~4.2]{Fu05}. 
However, this is at the expense of 
a more subtle combinatorial work and we shall discuss this issue elsewhere. 
\end{rema}

\section{Comparison with the Helffer-Sj\"ostrand quasimodes}\label{s:helffer-sjostrand}

In~\cite[Eq.~(1.37)]{HeSj85}, Helffer and Sj\"ostrand also constructed a natural basis for the bottom of the spectrum of the Witten Laplacian. 
For the sake of completeness\footnote{We note that, 
except for this section, our results are self-contained and they do not rely on~\cite{HeSj85}.}, we will 
compare our family of quasimodes with their one and show that they are equal at leading order. In order to apply the results of~\cite{HeSj85}, we remark that 
the dynamical assumptions~(H1) and~(H2) from this reference are automatically satisfied as soon as the gradient 
flow verifies the Smale transversality assumption. For (H1), this follows from Smale's Theorem~\ref{t:smale} while~(H2) was for instance proved 
in~\cite[Prop.~3.6]{Web06}.

We denote the Helffer-Sj\"ostrand's quasimodes by $(U_a^{HS}(\hbar))_{a\in\text{Crit}(f)}$. By construction, they belong to the same 
eigenspaces as our quasimodes $(U_a(\hbar))_{a\in\text{Crit}(f)}.$ Fix a critical point $a$ of index $k$. 
These quasimodes do not form an orthonormal family. Yet, if $V^{(k)}(\hbar)$ is the matrix whose coefficients are given by $\la U_b^{HS}(\hbar), U_{b'}^{HS}(\hbar)\ra_{L^2}$, then one 
knows from~\cite[Eq.~(1.43)]{HeSj85} that
$$V^{(k)}(\hbar)=\text{Id}+\ml{O}(e^{-C_0/\hbar}),$$ 
for some positive constant $C_0>0$ depending only on $(f,g)$. Hence, if we transform this family into an orthonormal family 
$(\tilde{U}_b^{HS}(\hbar))_{b:\text{Ind}(b)=k}$, then one has
$$\tilde{U}_b^{HS}(\hbar)=\sum_{b':\text{Ind}(b')=k}\left(\delta_{bb'}+\ml{O}(e^{-C_0/\hbar})\right)U_{b'}^{HS}(\hbar).$$
In particular, the spectral projector can be written as
$$\mathbf{1}_{[0,\eps]}\left(W_{f,\hbar}^{(k)}\right)(x,y,dx,dy)=
\sum_{b\in\text{Crit}(f):\text{Ind}(b)=k}\tilde{U}_b^{HS}(\hbar)(x,dx)\tilde{U}_b^{HS}(\hbar)(y,dy).$$
Hence, from the definition of our WKB state $U_a(\hbar)$, one has
$$U_a(\hbar)=\sum_{b\in\text{Crit}(f):\text{Ind}(b)=k}\int_M U_a\wedge 
\star_k\left(e^{-\frac{f-f(a)}{\hbar}} \tilde{U}_b^{HS}(\hbar)\right)\tilde{U}_b^{HS}(\hbar),$$
which can be expanded as follows:
$$U_a(\hbar)=\sum_{b\in\text{Crit}(f):\text{Ind}(b)=k}\sum_{b':\text{Ind}(b')=k}\left(\delta_{bb'}+\ml{O}(e^{-C_0/\hbar})\right)
\left(\int_M U_a\wedge \star_k\left(e^{-\frac{f-f(a)}{\hbar}} U_{b'}^{HS}(\hbar)\right)\right)\tilde{U}_b^{HS}(\hbar).$$
Everything now boils down to the calculation of
$$\alpha_{ab}(\hbar)=\int_M U_a\wedge \star_k\left(e^{-\frac{f-f(a)}{\hbar}} U_b^{HS}(\hbar)\right).$$
More precisely, if we are able to prove that
\begin{equation}\label{e:comparisonHS}
 \alpha_{ab}(\hbar)=\delta_{ab}\alpha_a(\hbar)(1+\ml{O}(\hbar))+\ml{O}(e^{-C_0/\hbar}),
\end{equation}
for a certain $\alpha_a(\hbar)\neq 0$ depending polynomially on $\hbar$ (that has to be determined), then, after gathering all the equalities, we will find that
\begin{equation}\label{e:DRvsHS}U_a(\hbar)=\alpha_a(\hbar)(1+\ml{O}(\hbar))U_a^{HS}(\hbar)+\sum_{b\neq a\in\text{Crit}(f):\text{Ind}(b)=\text{Ind}(a)}
\ml{O}(e^{-C_0/\hbar})U_b^{HS}(\hbar),\end{equation}
showing that our quasimodes are at leading order equal to the ones of Helffer and Sj\"ostrand (up to some normalization factor). 
Let us now prove~\eqref{e:comparisonHS} by making use of the results from~\cite{HeSj85}. First of all, we write that
$$\alpha_{ab}(\hbar)=\int_M U_a\wedge \star_k\left(e^{-\frac{f-f(a)}{\hbar}} U_b^{HS}(\hbar)\right).$$
According to~\cite[Eq.~(1.38)]{HeSj85}, we know that 
$$U_b^{HS}(\hbar)=\Psi_b(\hbar)+\ml{O}_{\Omega^k(M)}(e^{-\frac{C_0}{\hbar}}),$$
where $\Psi_b(\hbar)$ is a certain ``Gaussian state'' centered at $b$ defined by~\cite[Eq.~(1.35)]{HeSj85} and $C_0$ is some positive constant. 
Thus, as $f(x)\geq f(a)$ on the support of $U_a$, we have
$$\alpha_{ab}(\hbar)=\int_M U_a\wedge \star_k\left(e^{-\frac{f-f(a)}{\hbar}} \Psi_b(\hbar)\right)+\ml{O}(e^{-\frac{C_0}{\hbar}}).$$
We now introduce a smooth cutoff function $\chi_a$ which is equal to $1$ in a neighborhood of $a$ and we write
\begin{eqnarray*}\alpha_{ab}(\hbar) & = &\int_M U_a\wedge \star_k\left(\chi_a e^{-\frac{f-f(a)}{\hbar}} \Psi_b(\hbar)\right)\\
&+ &\int_M U_a\wedge \star_k\left((1-\chi_a) e^{-\frac{f-f(a)}{\hbar}} \Psi_b(\hbar)\right)+\ml{O}(e^{-\frac{C_0}{\hbar}}).
\end{eqnarray*}
Thanks to~\cite[Th.~1.4]{HeSj85} and to the fact that the support of $U_a$ is equal to $\overline{W^u(a)}$, we know that the second term, which corresponds to the points which are far from $a$, is also exponentially small. Hence
$$\alpha_{ab}(\hbar)=\int_M U_a\wedge \star_k\left(e^{-\frac{f-f(a)}{\hbar}}\chi_a \Psi_b(\hbar)\right)+\ml{O}(e^{-\frac{C_0}{\hbar}}).$$
Thanks to Lemma~\ref{r:current}, this can be rewritten as
$$\alpha_{ab}(\hbar)=\int_{W^u(a)} \star_k\left(e^{-\frac{f-f(a)}{\hbar}}\chi_a \Psi_b(\hbar)\right)+\ml{O}(e^{-\frac{C_0}{\hbar}}).$$
Using~\cite[Th.~1.4]{HeSj85}, we find that, for $a\neq b$, one has
$$\alpha_{ab}(\hbar)=\ml{O}(e^{-\frac{C_0}{\hbar}}).$$
It remains to treat the case $a=b$. In that case, we can use~\cite[Th.~1.4 and Th.~2.5]{HeSj85} to show 
$$\alpha_{ab}(\hbar)=\alpha_a(\pi \hbar)^{\frac{n-2k}{4}}(1+\ml{O}(\hbar)),$$
for a certain positive constant $\alpha_a\neq 0$ which depends only on the Lyapunov exponents at the critical point $a$ (and not on $\hbar$). Precisely, one has
$$\left|\alpha_a\right|=\left(\frac{\prod_{j=1}^k|\chi_j(a)|}{\prod_{j=k+1}^n|\chi_j(a)|}\right)^{\frac{1}{4}}.$$
This shows that our eigenmodes are not a priori normalized in $L^2$. To fix this, we would need to set, for every critical point $a$ of $f$,
$$\mathbf{U}_a(\hbar):=\frac{1}{|\alpha_a|(\pi\hbar)^{\frac{n-2k}{4}}}U_a(\hbar).$$
With this renormalization, the tunneling formula of Theorem~\ref{t:maintheo-tunneling} can be rewritten as
$$\hbar d_{f,\hbar}\mathbf{U}_a(\hbar)=\left(\frac{\hbar}{\pi}\right)^{\frac{1}{2}}
\sum_{b:\operatorname{ind}(b)=\operatorname{ind}(a)+1}n_{ab}\left(\frac{e^{\frac{f(a)}{\hbar}}}{|\alpha_a|}\right)\left(\frac{e^{\frac{f(b)}{\hbar}}}{|\alpha_b|}\right)^{-1}
\mathbf{U}_{b}(\hbar).$$
Under this form, we now recognize exactly the tunneling formula as it appears in~\cite[Eq.~(3.27)]{HeSj85}. Concerning the Fukaya's instanton formula, we observe 
that it can be rewritten as
$$\lim_{\hbar\rightarrow 0^+} \frac{|\alpha_{a_{12}}\alpha_{a_{23}}\alpha_{a_{31}}|(\pi\hbar)^{\frac{n}{4}}}{e^{\frac{f_{12}(a_{12})+f_{23}(a_{23})+f_{31}(a_{31})}{\hbar}}}
 \int_M \mathbf{U}_{a_{12}}(\hbar)\wedge \mathbf{U}_{a_{23}}(\hbar)\wedge \mathbf{U}_{a_{31}}(\hbar)=
\int_MU_{a_{12}}\wedge U_{a_{23}}\wedge U_{a_{31}}.$$

\appendix

\section{Order functions}\label{a:order-function}

In~\cite{dang2016spectral, dangrivieremorsesmale1}, one of the key difficulty is the construction of an order function adapted to the 
Morse-Smale dynamics induced by the flow $\varphi_f^t$. Here, we recall some of the properties proved in that reference and we also 
recall along the way some properties of Morse-Smale gradient flows. We refer to~\cite{Web06} for a detailed introduction on that topic.

\subsection{Stable and unstable manifolds}

Similarly to the unstable manifold $W^u(a)$, we can define, for every $a\in\text{Crit}(f)$,
$$W^s(a):=\left\{x\in M: \lim_{t\rightarrow+\infty}\varphi_f^t(x)=a\right\}.$$
A remarkable property of gradient flows is that, given any $x$ in $M$, there exists an unique $(a,b)$ in $\text{Crit}(f)^2$ such that $f(a)\leq f(b)$ 
and
$$x\in W^u(a)\cap W^s(b).$$
Equivalently, the unstable manifolds form a partition of $M$. It is known from the works of Smale~\cite{Sm60} that 
these submanifolds are embedded inside $M$~\cite[p.~134]{Web06} 
and that their dimension is equal to $n-r(a)$ where $r(a)$ is the Morse index of $a$. The Smale transversality assumption is the requirement that, given any $x$ 
in $M$, one has
$$T_xM=T_xW^u(a)+T_xW^s(b).$$
Equivalently, it says that the intersection of
$$\Gamma_+=\Gamma_+(V_f):=\bigcup_{a\in\text{Crit}(f)}N^*(W^s(a))\quad\text{and}\quad\Gamma_-=\Gamma_-(V_f):=\bigcup_{a\in\text{Crit}(f)}N^*(W^u(a))$$
is empty, where $N^*(\ml{W})\subset T^*M\backslash 0$ denotes the conormal of the manifold $\ml{W}$. In the proofs of section~\ref{s:anisotropic}, an important role 
is played by the Hamiltonian vector field generated by
$$H_f(x;\xi):=\xi(V_f(x)).$$
Recall that the corresponding Hamiltonian flow can be written
$$\Phi_f^t(x;\xi):=\left(\varphi_f^t(x),(d\varphi^t(x)^T)^{-1}\xi\right),$$
and that it induces a flow on the unit cotangent bundle $S^*M$ by setting
$$\tilde{\Phi}_f^t(x;\xi):=\left(\varphi_f^t(x),\frac{(d\varphi^t(x)^T)^{-1}\xi}{\left\|(d\varphi^t(x)^T)^{-1}\xi\right\|_{g^*\circ\varphi^t(x)}}\right).$$
The corresponding vector field are denoted by $X_{H_f}$ and $\tilde{X}_{H_f}$.

\subsection{Escape function}

In all this paragraph, $V_f$ satisfies the assumption of Theorem~\ref{t:maintheo-harveylawsonwitten}. We recall the following result~\cite[Lemma~2.1]{FS}:
\begin{lemm}\label{l:faure-sjostrand} Let $V^u$ and $V^s$ be small open neighborhoods of $\Gamma_+\cap S^*M$ and 
$\Gamma_-\cap S^*M$ respectively, and let $\eps>0$. Then, 
there exist $\ml{W}^u\subset V^u$ and $\ml{W}^s\subset V^s$, $\tilde{m}$ in $\ml{C}^{\infty}(S^*M,[0,1])$, $\eta>0$ such that 
$\tilde{X}_{H_f}.\tilde{m}\geq 0$ on $S^*M$, 
$\tilde{X}_{H_f}.\tilde{m}\geq\eta>0$ on $S^*M-(\ml{W}^u\cup\ml{W}^s)$, 
$\tilde{m}(x;\xi)>1-\epsilon$ for $(x;\xi)\in \ml{W}^s$ and $\tilde{m}(x;\xi)<\eps$ for $(x;\xi)\in \ml{W}^u$. 
\end{lemm}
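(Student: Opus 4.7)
My plan is to construct $\tilde m$ as a smooth Lyapunov function for the attractor--repeller pair of the normalized cotangent flow $\tilde\Phi_f^t$ on $S^*M$. The first step is to identify the dynamical picture: linearising at a fixed point above a critical point $a$, one checks that $N^*(W^u(a))|_a$ is attracting (after normalisation to $S^*M$) while $N^*(W^s(a))|_a$ is repelling, since $e^{-tL_f(a)^T}$ expands conormals of unstable manifolds and contracts conormals of stable ones. Combined with the Smale transversality assumption and the fact that every orbit of $\varphi_f^t$ on $M$ has its $\alpha$- and $\omega$-limits at critical points, this yields that every orbit of $\tilde\Phi_f^t$ has $\omega$-limit set contained in $\Gamma_-\cap S^*M$ and $\alpha$-limit set contained in $\Gamma_+\cap S^*M$, so the two sets form an attractor--repeller pair.

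Next, I would build preliminary isolating neighbourhoods: a compact $\ml{W}^u_0\subset V^u$ containing $\Gamma_+\cap S^*M$ with $\tilde\Phi_f^{-1}(\overline{\ml{W}^u_0})\subset\operatorname{int}(\ml{W}^u_0)$, and symmetrically $\ml{W}^s_0\subset V^s$ with $\tilde\Phi_f^{1}(\overline{\ml{W}^s_0})\subset\operatorname{int}(\ml{W}^s_0)$. Using the attractor structure one then obtains a uniform escape time $T_0>0$ such that $\tilde\Phi_f^{T_0}(x;\xi)\in\ml{W}^s_0$ for every $(x;\xi)\in S^*M\setminus\ml{W}^u_0$. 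Picking a smooth $m_0\in\ml{C}^\infty(S^*M,[0,1])$ vanishing on a neighbourhood of $\overline{\ml{W}^u_0}$ and equal to $1$ on a neighbourhood of $\overline{\ml{W}^s_0}$, I would set
\[ \tilde m(x;\xi):=\frac{1}{T}\int_0^T m_0(\tilde\Phi_f^t(x;\xi))\,dt,\qquad T\gg T_0. \]
Differentiating along the flow yields
\[ \tilde X_{H_f}.\tilde m=\frac{m_0\circ\tilde\Phi_f^T-m_0}{T}\geq 0 \]
everywhere, since after time $T\geq T_0$ the orbit has either left $\ml{W}^u_0$ and entered $\ml{W}^s_0$ (where $m_0=1$), or started already in $\ml{W}^s_0$ (where it stays).

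To extract the quantitative bounds, I would redefine the neighbourhoods \emph{a posteriori} by $\ml{W}^u:=\{\tilde m<\eps\}\cap V^u$ and $\ml{W}^s:=\{\tilde m>1-\eps\}\cap V^s$. These are open neighbourhoods of $\Gamma_\pm\cap S^*M$: an orbit starting near $\Gamma_+$ spends almost all of $[0,T]$ inside $\ml{W}^u_0$ where $m_0=0$, and symmetrically near $\Gamma_-$. The bounds $\tilde m<\eps$ on $\ml{W}^u$ and $\tilde m>1-\eps$ on $\ml{W}^s$ then hold by construction, while on the compact complement $S^*M\setminus(\ml{W}^u\cup\ml{W}^s)$ the uniform escape estimate forces $m_0\circ\tilde\Phi_f^T-m_0$ to be bounded below by a constant $\delta>0$, giving the strict lower bound $\eta:=\delta/T>0$ for $\tilde X_{H_f}.\tilde m$.

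The delicate part will be establishing the uniform escape time $T_0$. Unlike the Anosov setting of Faure--Sj\"ostrand, where all non-recurrent behaviour lives on a single topologically transitive hyperbolic set, here the cotangent flow on $S^*M$ carries a whole hierarchy of hyperbolic fixed points---one above each critical point of $f$, with further invariant subsets on each cotangent fibre coming from the eigenbasis of $L_f(a)^T$---connected by heteroclinic orbits organised by Smale's partial order. The argument should proceed by induction on Smale's order relation: the Morse--Smale property rules out heteroclinic cycles, so any orbit avoiding $\ml{W}^u_0\cup\ml{W}^s_0$ can only visit a uniformly bounded number of intermediate hyperbolic pieces, and must leave a small neighbourhood of each within uniformly bounded time.
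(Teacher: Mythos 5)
Your overall architecture is the right one: the paper does not reprove this lemma but refers to Faure--Sj\"ostrand~\cite{FS}, whose Lemma~2.1 is proved exactly by the long-time averaging device you propose, together with the results on the lifted dynamics on $S^*M$ established in~\cite[Sect.~3]{dang2016spectral} and~\cite[Sect.~4]{dangrivieremorsesmale1}. The averaging identity $\tilde{X}_{H_f}.\tilde{m}=(m_0\circ\tilde{\Phi}_f^T-m_0)/T$, the a posteriori choice of $\ml{W}^u$ and $\ml{W}^s$, and the compactness argument for the uniform lower bound $\eta$ are all sound in outline, modulo some bookkeeping you gloss over (you must choose $m_0$ so that $\{m_0>1-\delta_0\}$ lies inside $V^s$ and $\{m_0<\delta_1\}$ inside $V^u$, otherwise bulk points where $m_0$ is close to $1$, or points outside $V^u$ where $m_0$ vanishes, defeat the strict bound; and the fact that $\{\tilde m<\eps\}$ and $\{\tilde m>1-\eps\}$ contain genuine neighborhoods of $\Gamma_+\cap S^*M$ and $\Gamma_-\cap S^*M$ uses compactness of these sets).

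The genuine gap is the dynamical input you assert rather than prove. First, your limit-set claim is false as stated: $\Gamma_+\cap S^*M$ is invariant under $\tilde{\Phi}_f^t$, and for instance a covector in $N^*(W^s(a))|_a\cap S^*_aM$ along an eigendirection of $L_f(a)$ is a fixed point, so its $\omega$-limit lies in $\Gamma_+$, not in $\Gamma_-$; the correct dichotomy is that orbits \emph{outside} $\Gamma_+$ converge forward to $\Gamma_-\cap S^*M$ and orbits outside $\Gamma_-$ converge backward to $\Gamma_+\cap S^*M$. Second, and more importantly, even the corrected pointwise statement does not deliver what your construction needs: arbitrarily small positively (resp.\ negatively) invariant neighborhoods $\ml{W}^s_0$ (resp.\ $\ml{W}^u_0$) and the uniform escape time $T_0$, i.e.\ that $(\Gamma_-\cap S^*M,\Gamma_+\cap S^*M)$ is a bona fide attractor--repeller pair. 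Pointwise convergence of orbits does not imply Lyapunov stability of the limit set, and it is not even clear a priori that $\Gamma_\pm\cap S^*M$ are closed (hence compact) invariant sets, since $\Gamma_\pm$ are unions of conormals of the non-closed submanifolds $W^{s/u}(a)$, whose closures could contain extra covectors over $\partial W^u(a)$. Proving closedness, the isolating-neighborhood structure and the uniform transition time --- using the Smale transversality, the $\ml{C}^1$-linearization near the critical points, and an induction over Smale's partial order --- is precisely the content of the sections of~\cite{dang2016spectral, dangrivieremorsesmale1} that the present paper invokes; your closing paragraph correctly identifies this as the delicate point but only gestures at it. So the proposal reproduces the comparatively easy Faure--Sj\"ostrand averaging skeleton while leaving unproved, and partly misstating, the Morse--Smale-specific facts that are the actual substance of the lemma.
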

This Lemma was proved by Faure and Sj\"ostrand in~\cite{FS} in the case of Anosov flows and its extension to gradient 
flows require some results on the Hamiltonian dynamics that were obtained in~\cite[Sect.~3]{dang2016spectral} -- see 
also~\cite[Sect.~4]{dangrivieremorsesmale1} in the more general framework of Morse-Smale flows.

As we have a function $\tilde{m}(x;\xi)$ defined on $S^*M$, we introduce a smooth function $m$ defined on $T^*M$ which satisfies
$$m(x;\xi)=N_1\tilde{m}\left(x,\frac{\xi}{\|\xi\|_x}\right)-N_0\left(1-\tilde{m}\left(x,\frac{\xi}{\|\xi\|_x}\right)\right),\ \text{for}\ \|\xi\|_x\geq 1,$$
and
$$m(x;\xi)=0,\ \text{for}\ \|\xi\|_x\leq \frac{1}{2}.$$
We set the order function of our escape function to be
$$m_{N_0,N_1}(x;\xi)=-f(x)+m(x;\xi).$$
It was shown in~\cite[Lemma~4.1]{dang2016spectral} that it satisfies the following properties (for $V^u$, $V^s$ and $\eps>0$ small enough\footnote{In particular, 
$V^u\cap V^s=\emptyset.$}):
\begin{lemm}[Escape function]\label{l:escape-function} Let $s\in\IR$ and $N_0,N_1>4(\|f\|_{\ml{C}^0}+|s|)$ be two elements in $\IR$. Then, there exist 
$c_0>0$ (depending on $(M,g)$ but not on $s$, $N_0$ and $N_1$) such that $m_{N_0,N_1}(x;\xi)+s$  
\begin{itemize}
 \item takes values in $[-2N_0,2N_1]$,
 \item is $0$ homogeneous for $\|\xi\|_x\geq 1$,
 \item is $\leq -\frac{N_0}{2}$ on a conic neighborhood of $\Gamma_-$ (for $\|\xi\|_x\geq 1$),
 \item is $\geq \frac{N_1}{2}$ on a conic neighborhood of $\Gamma_+$ (for $\|\xi\|_x\geq 1$),
\end{itemize}
and such that there exists $R_0>0$ for which the escape function
$$G_{N_0,N_1}^s(x;\xi):=(m_{N_0,N_1}(x;\xi)+s)\log(1+\|\xi\|_x^2)$$
verifies, for every $(x;\xi)$ in $T^*M$ with $\|\xi\|_x\geq R_0$,
$$X_{H_f}.(G_{N_0,N_1}^s)(x;\xi)\leq -C_N:=-c_0\min\{N_0,N_1\}.$$
\end{lemm}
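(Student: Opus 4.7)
The argument splits into two parts: the four pointwise properties of $m_{N_0,N_1}+s$, which amount to careful bookkeeping from the definition, and the flow-derivative estimate on $G_{N_0,N_1}^s$, which is where the dynamical input from Lemma~\ref{l:faure-sjostrand} enters. Throughout, the parameter $\eps>0$ in Lemma~\ref{l:faure-sjostrand} will be fixed once and for all (say $\eps=1/16$) before choosing $N_0,N_1$ large and $R_0$ large.

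For the pointwise bounds I would proceed as follows. Substituting $m(x;\xi)=(N_0+N_1)\tilde m(x,\xi/\|\xi\|_x)-N_0$ on $\|\xi\|_x\geq 1$ and using $\tilde m\in[0,1]$ from Lemma~\ref{l:faure-sjostrand}, one has $m\in[-N_0,N_1]$, hence $m_{N_0,N_1}+s=-f+m+s$ lies in $[-N_0-\|f\|_{C^0}-|s|,\;N_1+\|f\|_{C^0}+|s|]$. The hypothesis $N_0,N_1>4(\|f\|_{C^0}+|s|)$ forces $\|f\|_{C^0}+|s|\leq N_j/4$, so the range contracts to $[-(5/4)N_0,(5/4)N_1]\subset[-2N_0,2N_1]$. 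The $0$-homogeneity for $\|\xi\|_x\geq 1$ is immediate from the formula, since $-f(x)+s$ does not depend on $\xi$. For the estimates on conic neighborhoods of $\Gamma_\pm$, I would invoke the $\eps$-smallness of $\tilde m$ (resp.\ $1-\tilde m$) on the sets $\ml W^u\subset V^u$ (resp.\ $\ml W^s\subset V^s$), which gives $m$ within $(N_0+N_1)\eps$ of $-N_0$ (resp.\ $+N_1$); combined with $|-f+s|\leq \min(N_0,N_1)/4$ and $\eps\leq 1/16$, this yields the claimed bounds $m_{N_0,N_1}+s\leq -N_0/2$ or $\geq N_1/2$ on the respective conic neighborhoods (which are conic since $\tilde m$ depends only on $\xi/\|\xi\|_x$).

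For the main decay estimate, apply the Leibniz rule
\begin{equation*}
X_{H_f}G_{N_0,N_1}^s=X_{H_f}(m_{N_0,N_1}+s)\,\log(1+\|\xi\|_x^2)+(m_{N_0,N_1}+s)\,\frac{X_{H_f}\|\xi\|_x^2}{1+\|\xi\|_x^2}.
\end{equation*}
A direct computation shows that the factor $X_{H_f}\|\xi\|_x^2/(1+\|\xi\|_x^2)$ is uniformly bounded on $T^*M\setminus\underline 0$ by a constant $C_1(M,g,f)$ independent of $N_0,N_1$, since $X_{H_f}\|\xi\|_x^2$ is polynomial of degree $2$ in $\xi$ with coefficients determined by $g$ and $dV_f$. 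Combined with $|m_{N_0,N_1}+s|\leq 2\max(N_0,N_1)$, the second summand contributes an $O(\max(N_0,N_1))$ error. For the first summand, we decompose $X_{H_f}(m_{N_0,N_1}+s)=X_{H_f}(-f)+X_{H_f}(m)$. Since $X_{H_f}f(x;\xi)=df(V_f(x))=\|V_f(x)\|_g^2$, the first term is bounded in absolute value by $\|V_f\|_{L^\infty}^2=:C_f$. On $\|\xi\|_x\geq 1$, $X_{H_f}(m)=(N_0+N_1)X_{H_f}\tilde m$, and by $0$-homogeneity the latter equals the pullback of $\tilde X_{H_f}\tilde m$ from $S^*M$; this has the correct sign of magnitude $\geq \eta(N_0+N_1)$ off $\ml W^u\cup\ml W^s$ and retains the correct sign everywhere else, thanks to Lemma~\ref{l:faure-sjostrand}.

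With these bounds in hand, I would split $\{\|\xi\|_x\geq R_0\}$ into three regions and check the sign of $X_{H_f}G_{N_0,N_1}^s$ on each. Outside the conic neighborhoods $\ml W^u\cup\ml W^s$, the term $(N_0+N_1)X_{H_f}\tilde m$ dominates the $-\|V_f\|_g^2$ contribution once $N_0,N_1\gtrsim C_f/\eta$, yielding $|X_{H_f}(m_{N_0,N_1})|\gtrsim \eta\min(N_0,N_1)$; then choosing $R_0$ large makes $\log(1+R_0^2)\cdot\eta\min(N_0,N_1)$ dominate the $O(\max(N_0,N_1))$ remainder, and we get the strict bound by $-c_0\min(N_0,N_1)$. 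On the two conic neighborhoods, $m_{N_0,N_1}+s$ itself has a definite sign of magnitude $\geq\min(N_0,N_1)/2$ by the pointwise bounds, and after shrinking the neighborhoods so that $X_{H_f}\|\xi\|_x^2/(1+\|\xi\|_x^2)$ has a definite sign there, the second term of the Leibniz formula provides a contribution of magnitude $\gtrsim \min(N_0,N_1)$ of the right sign, again absorbing the remainders. The main obstacle is precisely this coordination of signs across the three regions and the order of parameter choices ($\eps$, then the radial cutoff, then $R_0$); once verified, $c_0$ comes out depending only on $(M,g,f,\eta)$ and not on $N_0,N_1$, yielding $C_N=c_0\min(N_0,N_1)$ as stated.
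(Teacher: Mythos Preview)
The paper does not prove this lemma itself; it is quoted from \cite[Lemma~4.1]{dang2016spectral}, and your outline follows the standard Faure--Sj\"ostrand scheme (Leibniz split, three-region analysis) that the cited reference uses. So the overall strategy is the right one.

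There is, however, a genuine gap in your treatment of the conic neighborhoods $\ml W^u,\ml W^s$. You claim that the second Leibniz term $(m_{N_0,N_1}+s)\,X_{H_f}\log(1+\|\xi\|_x^2)$ provides a negative contribution of size $\gtrsim\min(N_0,N_1)$ which then ``absorbs the remainders''. But the first Leibniz term $X_{H_f}(m_{N_0,N_1}+s)\log(1+\|\xi\|_x^2)$ is \emph{not} a bounded remainder on these sets: Lemma~\ref{l:faure-sjostrand} gives no strict bound on $\tilde X_{H_f}\tilde m$ there, so $X_{H_f}(m_{N_0,N_1}+s)$ can be of order $N_0+N_1$, and after multiplication by $\log(1+\|\xi\|_x^2)$ it grows without bound in $\|\xi\|$. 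A bounded term cannot absorb this. The correct argument is not absorption but \emph{sign}: one checks that on $\ml W^u\cup\ml W^s$ the first Leibniz term is already $\leq 0$ (both $-\|V_f\|_g^2$ and $(N_0+N_1)X_{H_f}\tilde m$ contribute with the same sign once conventions are aligned), so the second term alone furnishes the uniform bound $-c_0\min(N_0,N_1)$.

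Two related points. First, your assertion that one can shrink the neighborhoods so that $X_{H_f}\|\xi\|_x^2/(1+\|\xi\|_x^2)$ has a definite sign is precisely the non-trivial dynamical input: it holds because $\Gamma_\pm\cap S^*M$ are the attractor and repeller of the normalized lifted flow $\tilde\Phi_f^t$, but for a general Riemannian norm this is not automatic and is exactly what \cite[Sect.~3--4]{dang2016spectral} establishes. Second, watch the sign bookkeeping in the intermediate region: with the convention $\tilde X_{H_f}\tilde m\geq 0$ stated in Lemma~\ref{l:faure-sjostrand}, your computation gives $X_{H_f}(m_{N_0,N_1}+s)>0$, the wrong direction for $X_{H_f}G\leq -C_N$; you slide past this by passing to $|X_{H_f}(m_{N_0,N_1})|$ without justifying the sign. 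The conventions in Lemmas~\ref{l:faure-sjostrand} and~\ref{l:escape-function} need to be reconciled before the inequality can be closed.
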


\section{Holomorphic continuation of the Ruelle determinant}\label{a:holomorphic}

In this appendix, we consider a Morse-Smale gradient flow $\varphi_f^t$. We fix $0\leq k\leq n$ and $a\in\text{Crit}(f)$. 
We recall how to prove that the local Ruelle determinant
$$\zeta_{R,a}^{(k)}(z):=\exp\left(-\sum_{l=1}^{+\infty}\frac{e^{-lz}}{l}
\frac{\text{Tr}\left(\Lambda^k\left(d\varphi_f^{-l}(a)\right)\right)}{\left|\text{det}\left(\text{Id}-d\varphi_f^{-l}(a)\right)\right|}\right)$$
has an holomorphic extension to $\IC$, and we compute explicitely its zeros in terms of the Lyapunov exponents $(\chi_j(a))_{1\leq j\leq n}$. Recall that 
the dynamical Ruelle determinant from the introduction is given by
$$\zeta_{R}^{(k)}(z)=\prod_{a\in\text{Crit}(f)}\zeta_{R,a}^{(k)}(z).$$
By definition of the Lyapunov exponents, we also recall that $d\varphi_f^{-1}(a)=\exp(-L_f(a))$ where $L_{f}(a)$ is a symmetric matrix 
whose eigenvalues are given by the $(\chi_j(a))_{1\leq j\leq n}$. If $a$ is of index $r$, we used the convention:
$$\chi_1(a)\leq \ldots\leq \chi_r(a)<0<\chi_{r+1}(a)\leq\ldots\leq\chi_n(a).$$
In order to show this holomorphic continuation, we start by observing that, in terms of the Lyapunov exponents,
\begin{eqnarray*}\left|\text{det}\left(\text{Id}-d\varphi_f^{-l}(a)\right)\right|^{-1} & = & \prod_{j=1}^{r}(e^{-l\chi_j(a)}-1)^{-1}
\prod_{j=r+1}^n(1-e^{-l\chi_j(a)})^{-1}\\
  &= &e^{l\sum_{j=1}^r\chi_j(a)}\prod_{j=1}^n(1-e^{-l|\chi_j(a)|})^{-1}\\
  &= &e^{l\sum_{j=1}^r\chi_j(a)}\sum_{\alpha\in\IN^n}e^{-l\alpha.|\chi(a)|},
\end{eqnarray*}
where $\IN=\{0,1,2,\ldots\}$ and $|\chi(a)|=(|\chi_j(a)|)_{1\leq j\leq n}$. We now compute the trace
$$\text{Tr}\left(\Lambda^k\left(d\varphi_f^{-l}(a)\right)\right)=\sum_{J\subset\{1,\ldots,n\}:|J|=k}\exp\left(-l\sum_{j\in J}\chi_j(a)\right),$$
which implies that
$$\frac{\text{Tr}\left(\Lambda^k\left(d\varphi_f^{-l}(a)\right)\right)}{\left|\text{det}\left(\text{Id}-d\varphi_f^{-l}(a)\right)\right|}$$
is equal to
$$\sum_{J\subset\{1,\ldots,n\}:|J|=k}\sum_{\alpha\in\IN^n}\exp\left(-l\left(\sum_{j\in J\cap \{r+1,\ldots,n\}}|\chi_j(a)|+
\sum_{j\in J^c\cap\{1,\ldots,r\}}|\chi_j(a)|
+\alpha.|\chi(a)|\right)\right).$$
Under this form, one can verify that $\zeta_{R,a}^{(k)}(z)$ has an holomorphic extension to $\IC$ whose zeros are given by the set
$$\ml{R}_k(a):=\left\{-\sum_{j\in J\cap \{r+1,\ldots,n\}}|\chi_j(a)|-
\sum_{j\in J^c\cap\{1,\ldots,r\}}|\chi_j(a)|
-\alpha.|\chi(a)|: |J|=k\ \text{and}\ \alpha\in\IN^n\right\}.$$
Moreover, the multiplicity of $z_0$ in $\ml{R}_k(a)$ is given by the number of couples $(\alpha,J)$ such that
$$z_0=-\left(\sum_{j\in J\cap \{r+1,\ldots,n\}}|\chi_j(a)|+
\sum_{j\in J^c\cap\{1,\ldots,r\}}|\chi_j(a)|
+\alpha.|\chi(a)|\right).$$
\begin{rema} In particular, we note that $z_0=0$ is a zero of $\zeta_{R,a}^{(k)}(z)$ if and only if the index of $a$ (meaning the dimension 
of $W^s(a)$) is equal to $k$. In that case, the zero is of multiplicity $1$. This implies that the multiplicity of $0$ as a zero 
of $\zeta_{R}^{(k)}(z)$ is equal to the number of critical points of index $k$. 
\end{rema}

\section{Proof of Lemma~\ref{l:L2vsinfty}}
\label{a:lemma}

In this appendix, we give the proof of Lemma~\ref{l:L2vsinfty}. Up to minor modifications due to the fact that we are dealing with $L^2$ norms, 
we follow the lines of~\cite[p.~58]{dangthesis}. We fix $N$, $\tilde{N}$, $W_0$ and $W$ as in the statement of this Lemma.

The cone $W_0$ being given, we can choose $W$ to be a thickening of the cone $W_0$, i.e.
$$W=\left\{\eta\in\mathbb{R}^n\setminus \{0\} |\exists\xi\in V,  \left\vert\frac{\xi}{\vert \xi\vert}- \frac{\eta}{\vert \eta\vert}\right\vert \leqslant \delta \right\},$$
for some fixed positive $\delta$. This means that small angular perturbations of covectors in $W_0$ will lie in the neighborhood $W$. 
Choose some smooth compactly supported function $\varphi$ which equals
$1$ on the support of $u$ hence we have the identity
$\widehat{u}=\widehat{u\varphi}$. 
We compute the Fourier transform
of the product:
$$\vert\widehat{u\varphi}(\xi)\vert\leqslant \int_{\mathbb{R}^n} \vert \widehat{\varphi}(\xi-\eta)    \widehat{u} (\eta) \vert d\eta .$$
We reduce to the estimate $$ \int_{\mathbb{R}^n} \vert \widehat{\varphi}(\xi-\eta)    \widehat{u} (\eta) \vert d\eta = 
\underset{I_1(\xi)}{\underbrace{\int_{ \vert\frac{\xi}{\vert \xi\vert}- \frac{\eta}{\vert \eta\vert}\vert \leqslant \delta  } \vert \widehat{\varphi}(\xi-\eta)    \widehat{u} (\eta) \vert d\eta}} + \underset{I_2(\xi)}{\underbrace{\int_{ \vert\frac{\xi}{\vert \xi\vert}- \frac{\eta}{\vert \eta\vert}\vert \geqslant \delta  }\vert \widehat{\varphi}(\xi-\eta)    \widehat{u} (\eta) \vert d\eta}},$$
and we will estimate separately the two terms $I_1(\xi),I_2(\xi)$.

Start with $I_1(\xi)$, if $\xi\in W_0$ then, by definition of $W$, $\eta$ belongs to $W$.
Hence, using the Cauchy-Shwarz inequality, this yields the estimate
\begin{eqnarray*}
I_1(\xi)&=&\int_{ \vert\frac{\xi}{\vert \xi\vert}- \frac{\eta}{\vert \eta\vert}\vert \leqslant \delta  } \vert\widehat{\varphi}(\xi-\eta)    \widehat{u} (\eta) \vert d\eta  \\
&= & (1+\vert\xi \vert)^{-N}
\int_{ \vert\frac{\xi}{\vert \xi\vert}- \frac{\eta}{\vert \eta\vert}\vert \leqslant \delta  } \vert\widehat{\varphi}(\xi-\eta) (1+\vert \xi-\eta\vert)^{N}     
\widehat{u} (\eta)(1+\vert\eta \vert)^N \vert \frac{ (1+\vert\xi \vert)^N}{(1+\vert\eta \vert)^N (1+\vert \xi-\eta\vert)^{N}} d\eta \\
&\leqslant &  (1+\vert\xi \vert)^{-N} \sup_{\xi,\eta} \frac{ (1+\vert\xi \vert)^N}{(1+\vert\eta \vert)^N (1+\vert \xi-\eta\vert)^{N}} \Vert \varphi \Vert_{H^N}\Vert (1+\vert\xi \vert)^N\widehat{u}(\xi)  \Vert_{L^2(W)}\\
&\leqslant & C_{\varphi,N} (1+\vert\xi \vert)^{-N}\Vert (1+\vert\xi \vert)^N\widehat{u}(\xi)\Vert_{L^2(W)}
\end{eqnarray*}
where we used the triangle inequality $|\xi|\leq |\xi-\eta|+|\eta|$ 
in order to bound $\frac{(1+\vert\xi \vert)^N}{(1+\vert\eta \vert)^N (1+\vert \xi-\eta\vert)^{N}}$ by some 
constant $C$ uniformly in $\xi$ and in $\eta$.

To estimate the second term $I_2(\xi)$, we shall use that the integral is over $\eta$ such that 
$\vert\frac{\xi}{\vert \xi\vert}- \frac{\eta}{\vert \eta\vert}\vert \geqslant \delta$. This implies that the angle 
between $\xi$ and $\eta$ is bounded from below by some $\alpha\in(0,\pi/2)$ which depends only on the aperture 
$\delta$. We now observe that
$$a^2+b^2-2ab\cos c=(a-b\cos c)^2+b^2\sin^2c\geq b^2\sin^2 c,$$
and we apply this lower bound to $a=|\xi|$, $b=|\eta|$ and $c$ the angle between $\xi$ and $\eta$.
Thus, 
$$\forall (\xi,\eta)\in \left(V\times ^cW\right),
\vert( \sin\alpha) \eta\vert  \leqslant \vert \xi-\eta\vert, \vert(\sin\alpha) \xi\vert  \leqslant \vert \xi-\eta\vert .$$
Then, for such $\xi$ and $\eta$, there exists some constant $C$ (depending only on $N$, $\tilde{N}$ and $\delta$) such that 
$$(1+|\xi-\eta|)^{-N-\tilde{N}}\leq(1+\vert(\sin\alpha) \eta\vert )^{-\tilde{N}} (1+ \vert(\sin\alpha) \xi\vert)^{-N} \leq C(1+\vert \eta\vert )^{-\tilde{N}} (1+ \vert \xi\vert)^{-N}.$$
Thus, up to increasing the value of $C$ and by applying the Cauchy-Schwarz inequality, we find
\begin{eqnarray*}
\int_{ \vert\frac{\xi}{\vert \xi\vert}- \frac{\eta}{\vert \eta\vert}\vert \geqslant \delta  } \vert \widehat{\varphi}(\xi-\eta)    \widehat{u} (\eta) \vert d\eta
&\leqslant &C \|\varphi\|_{H^{N+\tilde{N}}} (1+ \vert\xi\vert)^{-N} \left(\int_{\mathbb{R}^n}(1+\vert\eta\vert )^{-2\tilde{N}} \vert\widehat{u} (\eta) \vert^2 d\eta\right)^{1/2}\\
\end{eqnarray*}
Gathering the two estimates yields the final result.


\begin{thebibliography}{10}
\bibitem{Auroux} D.~Auroux, \emph{A beginner's introduction to Fukaya categories}, Contact and symplectic topology, Springer International Publishing (2014) 85--136
\bibitem{Ba16} V.~Baladi, \emph{Dynamical Zeta Functions and Dynamical Determinants for Hyperbolic Maps -- A Functional Approach}, avalaible at 
https://webusers.imj-prg.fr/$\sim$viviane.baladi/baladi-zeta2016.pdf (2016)
\bibitem{BaTs08} V. Baladi and M. Tsujii, \emph{Dynamical determinants and spectrum for hyperbolic diffeomorphisms}, Contemp. Math. 469 (Amer. Math. Soc.), Volume in honour of M. Brin's 60th birthday (2008), 29--68
\bibitem{BDIP02} J.~Bertin, J.P.~Demailly, L.~Illusie and C.~Peters, \emph{Introduction to Hodge Theory}, SMF/AMS Texts and Monographs, vol.~8 (2002) 
\bibitem{Bi86} J.M.~Bismut, \emph{The Witten complex and the degenerate Morse inequalities}, J. Diff. Geom. $\mathbf{23}$ (1986), 207--240
\bibitem{BlKeLi02} M.~Blank, G.~Keller and C.~Liverani, \emph{Ruelle-Perron-Frobenius spectrum for Anosov maps}, Nonlinearity 15 (2002), 1905--1973
\bibitem{Bo88} R.~Bott, \emph{Morse theory indomitable}, Publ. Math. IHES 68 (1988), 99--114
\bibitem{Bow} R.~Bowen, \emph{Notebook}, avalaible at https://bowen.pims.math.ca/
\bibitem{Br} H.~Brezis, \emph{Analyse fonctionnelle}, Dunod Paris (1999)
\bibitem{BrDaHe16} C.~Brouder, N.V.~Dang and F.~H\'elein, \emph{Continuity of the fundamental operations on distributions having a specified wave front set}, 
Studia Math. 232 (2016), 201--226 
\bibitem{ChLeMa14} K.~Chan, N.C.~Leung and Z.N.~Ma, \emph{Witten deformation of product structures on de Rham complex}, preprint arXiv:1401.5867 (2014)
\bibitem{CdV} Y.~Colin de Verdi\`ere, \emph{M\'ethodes semi-classiques et th\'eorie spectrale}, 
lecture notes avalaible at https://www-fourier.ujf-grenoble.fr/$\sim$ycolver/All-Articles/93b.pdf
\bibitem{DabBr14} Y. Dabrowski and C. Brouder, \emph{Functional properties of H\"ormander's space of distributions having a specified wavefront set}, 
Comm. Math. Phys. 332 (2014), 1345--1380
\bibitem{dangthesis}
N.V.~Dang, \emph{Renormalization of quantum field theory on curved space-times, a causal approach}, PhD Thesis, arXiv:1312.5674 (2013)
\bibitem{DaRiequi} N.V.~Dang and G.~Rivi\`ere, \emph{Equidistribution of the conormal cycle of random nodal sets}, to appear JEMS (2016)
\bibitem{dang2016spectral} N.V.~Dang and G.~Rivi\`ere, \emph{Spectral analysis of Morse-Smale gradient flows}, preprint arXiv:1605.05516 (2016)
\bibitem{dangrivieremorsesmale1} N.~V. Dang and G.~Rivi\`ere, \emph{Spectral analysis of Morse-Smale flows I: construction of the
  anisotropic spaces}, preprint arXiv:1703.08040 (2017)
\bibitem{DaRi17b} N.V.~Dang and G.~Rivi\`ere, \emph{Spectral analysis of Morse-Smale flows II:resonances and resonant states}, Preprint arXiv:1703.08038 (2017)
\bibitem{DaRi17c} N.V.~Dang and G.~Rivi\`ere, \emph{Topology of Pollicott-Ruelle resonant states}, Preprint arXiv:1703.08037 (2017)
\bibitem{Dr16} A.~Drouot, \emph{Pollicott--Ruelle resonances via kinetic Brownian motion}, preprint arXiv:1607.03841 (2016)
\bibitem{dyatlov2016pollicott} S.~Dyatlov and C.~Guillarmou, \emph{Pollicott-Ruelle resonances for open systems}, Ann. H. Poincar\'e 17 (2016), 3089--3146 
\bibitem{DyZw13} S.~Dyatlov and M.~Zworski, \emph{Dynamical zeta functions for Anosov flows via microlocal analysis}, Ann. Sci. ENS 49 (2016), 543--577
\bibitem{DZresonances} S.~Dyatlov and M.~Zworski, \emph{Mathematical theory of scattering resonances}, avalaible at 
http://math. mit. edu/dyatlov/res/ (2016)
\bibitem{DZsto} S.~Dyatlov and M.~Zworski, \emph{Stochastic stability of Pollicott--Ruelle resonances}, Nonlinearity, 2015, vol. 28, no 10, p. 3511.
\bibitem{EnNa00} K.J.~Engel and R.~Nagel, \emph{One-Parameter Semigroups for Linear Evolution Equations}, Grad. Texts in Math. 194, Springer-Verlag New York (2000)
\bibitem{FRS08} F.~Faure, N.~Roy and J.~Sj\"ostrand, \emph{Semi-classical approach for Anosov diffeomorphisms and Ruelle resonances}, Open Math. Journal, vol. 1, 35--81, (2008)
\bibitem{FS} F.~Faure and J.~Sj{\"o}strand, \emph{Upper bound on the density of Ruelle resonances for Anosov flows}, Comm. Math. Phys. 308 (2011), 
325--364
\bibitem{frenkel2011instantons} E.~Frenkel, A.~Losev, and N.~Nekrasov, \emph{Instantons beyond topological theory. I}, J. Inst. Math. Jussieu 10 (2011), 463--565
\bibitem{Fu93} K.~Fukaya, \emph{Morse homotopy, $A^{\infty}$--category, and Floer homologies}, Proceedings of GARC Workshop on Geometry and Topology '93 (Seoul, 1993), 1--102, 
Lecture Notes Ser., 18, Seoul Nat. Univ., Seoul (1993) 
\bibitem{Fu97} K.~Fukaya, \emph{Morse homotopy and its quantization}, Geometric topology (Athens, GA, 1993), 409--440, AMS/IP Stud. Adv. Math., 2.1, Amer. Math. Soc., Providence, RI, (1997)
\bibitem{Fu05} K.~Fukaya, \emph{Multivalued Morse theory, asymptotic analysis and mirror symmetry}, Proc. Sympos. Pure Math., 73, Amer. Math. Soc., Providence, RI, 2005
\bibitem{Get12} A.~Getmanenko, \emph{Resurgent analysis of the Witten Laplacian in one dimension.} Funkcialaj Ekvacioj, 54(3) (2011), 383--438
\bibitem{Get13} A.~Getmanenko, \emph{Resurgent Analysis of the Witten Laplacian in One Dimension II.} Funkcialaj Ekvacioj, 56(1) (2013), 121--176
\bibitem{GMS} M.~Giaquinta, G.~Modica and J.~Soucek \emph{Cartesian currents in the calculus of variations. I} Ergebnisse der Mathematik und ihrer Grenzgebiete 
37 (1998)
\bibitem{GiLiPo13} P.~Giulietti, C.~Liverani and M.~Pollicott, \emph{Anosov flows and dynamical zeta functions}, Ann. of Math. (2) 178 (2013), no. 2, 687--773
\bibitem{Go15} S.~Gou\"ezel, \emph{Spectre du flot g\'eod\'esique en courbure n\'egative [d'apr\`es F. Faure et M. Tsujii]},  S\'eminaire Bourbaki (2015)
\bibitem{GoLi08} S.~Gou\"ezel and C.~Liverani, \emph{Compact locally maximal hyperbolic sets for smooth maps: fine statistical properties},  J. Diff. Geometry 79 (2008) 433--477
\bibitem{GuSt90} V.~Guillemin and S.~Sternberg, \emph{Geometric asymptotics}, 2nd edition, Math. Surveys and Monographs 14, AMS (1990)
\bibitem{HaLa00} F.R.~Harvey and H.B.~Lawson, \emph{Morse theory and Stokes Theorem}, Surveys in Diff. Geom. VII (2000), 259--311
\bibitem{HaLa01} F.R.~Harvey and H.B.~Lawson, \emph{Finite volume flows and Morse theory}, Ann. of Math. Vol. 153 (2001), 1--25
\bibitem{HeNi05} B.~Helffer and F.~Nier, \emph{Hypoelliptic Estimates and Spectral Theory for Fokker-Planck Operators andWitten Laplacians}, Lecture Notes in Mathematics 1862, Springer Berlin Heidelberg New York (2005)
\bibitem{HeSj85} B. Helffer and J.~Sj\"ostrand, \emph{Points multiples en m\'ecanique semi-classique IV, \'etude du complexe de Witten}, CPDE 10 (1985), 245--340
\bibitem{Ho90} L.~H\"ormander, \emph{The Analysis of Linear Partial Differential Operators I. Distribution Theory and Fourier Analysis}, second ed., Springer Verlag, Berlin, 1990
\bibitem{Ku63} I.~Kupka, \emph{Contribution \`a la th\'eorie des champs g\'en\'eriques}, Contributions to Differential Equations Vol. 2 (1963), 457--484
\bibitem{Lau92} F. Laudenbach, \emph{On the Thom-Smale complex}, in \emph{An Extension of a Theorem of Cheeger and M\"uller}, by J.-M. Bismut and W. Zhang, 
Ast\'erisque 205, Soci\'et\'e Math. de France, Paris (1992)
\bibitem{Li05} C.~Liverani, \emph{Fredholm determinants, Anosov maps and Ruelle resonances}, DDCS Vol. 13 (2005), 1203--1215
\bibitem{MiZw17} L.~Michel and M.~Zworski, \emph{A semiclassical approach to the Kramers Smoluchowski equation}, preprint arXiv:1703.07460 (2017)
\bibitem{Mi15} G.~Minervini, \emph{A current approach to Morse and Novikov theories}, Rend. di Matematica, Serie VII, Vol. 36 (2015), 95--195
\bibitem{Ne69} E.~Nelson, \emph{Topics in dynamics. I: Flows.}, Mathematical Notes. Princeton University Press, Princeton, N.J.; University of Tokyo Press, Tokyo (1969) iii+118 pp.
\bibitem{Po85} M.~Pollicott, \emph{On the rate of mixing of Axiom A flows}, Invent. Math. 81 (1985), no. 3, 413--426
\bibitem{Ru87a} D.~Ruelle, \emph{Resonances for Axiom A flows}, J. Differential Geom. 25 (1987), no. 1, 99--116
\bibitem{Schwartz-66} L.~Schwartz, \emph{Th{\'e}orie des distributions}, Hermann, Paris, second edition (1966)
\bibitem{Si83} B.~Simon, \emph{Semiclassical analysis of low lying eigenvalues. I. Nondegenerate minima: asymptotic expansions}, Ann. Inst. H. Poincar\'e Sect. A Vol.~38 (1983), 295--308
\bibitem{Sm60} S.~Smale, \emph{Morse inequalities for a dynamical system}, Bull. Amer. Math. Soc., 66 (1960), 43--49.
\bibitem{Sm63} S.~Smale, \emph{Stable manifolds for differential equations and diffeomorphisms}, Ann. Scuola Norm. Sup. Pisa Vol.~17 (1963), 97--116
\bibitem{Val} B.~Vallette, \emph{Algebra + homotopy = operad.} Symplectic, Poisson, and noncommutative geometry, 62, 229 (2014)
\bibitem{Web06} J.~Weber, \emph{The Morse-Witten complex via dynamical systems}, Expo. Math. 24 (2006), 127--159.
\bibitem{Wi82} E.~Witten, \emph{Supersymmetry and Morse theory}, J. Diff. Geom. 17 (1982), 661--692
\bibitem{Wong} M.W.~Wong, \emph{An introduction to pseudo-differential operators}, Vol. 6. World Scientific Publishing Co Inc (2014)
\bibitem{ZhangWitten} W.~Zhang, \emph{Lectures on Chern-Weil theory and Witten deformations}, Vol. 4. World Scientific (2001)
\bibitem{Zworski} M.~Zworski, \emph{Semiclassical analysis}, Vol. 138. Providence, RI: American Mathematical Society (2012)
\bibitem{Zw15} M.~Zworski, \emph{Scattering resonances as viscosity limits}, to appear in "Algebraic and Analytic Microlocal Analysis", M. Hitrik, D. Tamarkin, B. Tsygan, and S. Zelditch, eds. Springer (2015)
\bibitem{Zw17} M.~Zworski, \emph{Mathematical study of scattering resonances}, Bulletin of Mathematical Sciences 7 (2017), 1--85
\end{thebibliography}
\end{document}